\def\Diff{\mathop{\rm Diff}\nolimits}
\def\GL{\mathop{\rm GL}\nolimits}
\def\act{\mathop{\rm act}\nolimits}
\def\Id{\mathop{\rm Id}\nolimits}
\def\id{\mathop{\rm Id}\nolimits}
\def\Ad{\mathop{\rm Ad}\nolimits}
\def\ad{\mathop{\rm ad}\nolimits}
\def\Ker{\mathop{\rm Ker}\nolimits}
\def\Tr{\mathop{\rm Tr}\nolimits}
\def\Hom{\mathop{\rm Hom}\nolimits}
\def\Tot{\mathop{\rm Tot}\nolimits}
\def\Ch{\mathop{\rm Ch}\nolimits}
\def\cs{\mathop{\rm cs}\nolimits}
\def\Cb{{\mathbb C}}
\def\Nb{{\mathbb N}}
\def\Rb{{\mathbb R}}
\def\Zb{{\mathbb Z}}
\def\Ac{{\cal A}}
\def\Bc{{\cal B}}
\def\Ec{{\cal E}}
\def\Fc{{\cal F}}
\def\Hc{{\cal H}}
\def\Jc{{\cal J}}
\def\Ic{{\cal I}}
\def\Lc{{\cal L}}
\def\Sc{{\cal S}}
\def\Uc{{\cal U}}
\def\Vc{{\cal V}}
\def\Qc{{\cal Q}}
\def\Kc{{\cal K}}
\def\Lc{{\cal L}}
\def\Cc{{\cal C}}
\def\Dc{{\cal D}}
\def\a{\alpha}
\def\b{\beta}
\def\d{\delta}
\def\D{\Delta}
\def\g{\gamma}
\def\G{\Gamma}
\def\lb{\lambda}
\def\Lba{\Lambda}
\def\om{\omega}
\def\Om{\Omega}
\def\s{\sigma}
\def\t{\theta}
\def\z{\zeta}
\def\ve{\varepsilon}
\def\vp{\varphi}
\def\x{\xi}
\def\bash{\backslash}
\def\fl{\forall}
\def\ot{\otimes}
\def\ra{\rightarrow}
\def\sbs{\subset}
\def\rt{\triangleright}
\def\lt{\triangleleft}
\def\cl{\blacktriangleright\hspace{-4pt} < }
\def\al{>\hspace{-4pt}\vartriangleleft}
\def\acl{\blacktriangleright\hspace{-4pt}\vartriangleleft }
\def\brt{\blacktriangleright}
\def\hd{\overset{\ra}{\partial}}
\def \vd{\uparrow\hspace{-4pt}\partial}
\def\hs{\overset{\ra}{\sigma}}
\def \vs{\uparrow\hspace{-4pt}\sigma}
\def\hta{\overset{\ra}{\tau}}
\def \vta{\uparrow\hspace{-4pt}\tau}
\def\hb{\overset{\ra}{b}}
\def \vb{\uparrow\hspace{-4pt}b}
\def\hB{\overset{\ra}{B}}
\def \vB{\uparrow\hspace{-4pt}B}
\def\p{\partial}
\def\dbar{{^-\hspace{-7pt}\d}}
\def\gbar{{-\hspace{-9pt}\g}}
\def\0D{\Delta^{(0)}}
\def\1D{\Delta^{(1)}}
\def\Db{\blacktriangledown}
\def\ts{\times}
\def\wg{\wedge}
\def\td{\tilde}
\def\cop{{^{\rm cop}}}
\newcommand{\FD}{\mathfrak{D}}
\newcommand{\Fa}{\mathfrak{a}}
\newcommand{\Fd}{\mathfrak{d}}
\newcommand{\Fg}{\mathfrak{g}}
\newcommand{\Fh}{\mathfrak{h}}
\newcommand{\Fl}{\mathfrak{l}}
\newcommand{\Fn}{\mathfrak{n}}
\newcommand{\Fq}{\mathfrak{q}}
\newcommand{\FN}{\mathfrak{N}}
\def\tpsi{\tilde{\psi}}
\newtheorem{theorem}{Theorem}[section]
\newtheorem{remark}[theorem]{Remark}
\newtheorem{proposition}[theorem]{Proposition}
\newtheorem{lemma}[theorem]{Lemma}
\newtheorem{corollary}[theorem]{Corollary}
\def\limproj{\mathop{\oalign{lim\cr
\hidewidth$\longleftarrow$\hidewidth\cr}}}
\def\build#1_#2^#3{\mathrel{
\mathop{\kern 0pt#1}\limits_{#2}^{#3}}}
\newcommand{\ps}[1]{~\hspace{-4pt}_{^{(#1)}}}
\newcommand{\ns}[1]{~\hspace{-4pt}_{_{{<#1>}}}}
\newcommand{\lu}[1]{\;^{#1}\hspace{-2pt}}
\def\odots{\ot\cdots\ot}
\def\wdots{\wedge\dots\wedge}
\def\one{{\bf 1}}
\numberwithin{equation}{section}
 \newcommand{\eg}{{\it e.g.\/}\ }
 \newcommand{\ie}{{\it i.e., }\ }
 \def\cf{{\it cf.\/}\ }
\def\a{\alpha}
\def\b{\beta}
\def\d{\delta}
\def\g{\gamma}
\def\i{\iota}
\def\lb{\lambda}
\def\om{\omega}
\def\s{\sigma}
\def\t{\theta}
\def\ve{\varepsilon}
\def\ph{\phi}
\def\vp{\varphi}
\def\z{\zeta}
\def\D{\Delta}
\def\G{\Gamma}
\def\Lb{\Lambda}
\def\Om{\Omega}
\def\dt{\left.\frac{d}{dt}\right|_{_{t=0}}}
\def\fl{\forall}
\def\ot{\otimes}
\def\part{\partial}
\def\sbs{\subset}
\def\ts{\times}
\def\wdg{\wedge}
\def\ra{\rightarrow}
\def\text{\hbox}
\def\bash{\backslash}
\def\fl{\forall}
\def\ot{\otimes}
\def\ra{\rightarrow}
\def\sbs{\subset}
\def\ts{\times}
\def\wdg{\wedge}
\def\Ad{\mathop{\rm Ad}\nolimits}
\def\Diff{\mathop{\rm Diff}\nolimits}
\def\Hom{\mathop{\rm Hom}\nolimits}
\def\GL{\mathop{\rm GL}\nolimits}
\def\Id{\mathop{\rm Id}\nolimits}
\def\exp{\mathop{\rm exp}\nolimits}
\def\Ker{\mathop{\rm Ker}\nolimits}
\def\build#1_#2^#3{\mathrel{
\mathop{\kern 0pt#1}\limits_{#2}^{#3}}}
\def\limproj{\mathop{\oalign{lim\cr
\hidewidth$\longleftarrow$\hidewidth\cr}}}
\numberwithin{equation}{section}
\newcommand{\comment}[1]{\relax}
\def\sb{{\bf s}}
\def\tb{{\bf t}}
\def\Gb{{\bf G}}
\begin{document}
\title{\bf Hopf cyclic cohomology and transverse characteristic classes}
\author{
\begin{tabular}{cc}
Henri Moscovici \thanks{Research
    supported by the National Science Foundation
    awards DMS-0652167 and DMS-0969672.}~~\thanks{Department of Mathematics,
    Ohio State University, Columbus, OH 43210, USA }\quad and \quad  Bahram Rangipour
    \thanks {Department of Mathematics  and   Statistics,
     University of New Brunswick, Fredericton, NB, Canada}
      \end{tabular}}

\date{ \ }

\maketitle

\begin{abstract}
We refine the cyclic cohomological apparatus for computing the Hopf
cyclic cohomology of the Hopf algebras associated to infinite primitive
Cartan-Lie pseudogroups, and for the transfer of their characteristic
classes to foliations. The main novel feature is the precise
  identification as a Hopf cyclic complex of the image of the
 canonical homomorphism from the Gelfand-Fuks complex to the Bott complex
for equivariant cohomology. This provides a convenient new model for the Hopf cyclic cohomology of the geometric Hopf algebras, which
allows for an efficient transport of the Hopf cyclic classes
via characteristic homomorphisms.
We illustrate the latter aspect
by indicating how to realize the universal Hopf cyclic Chern classes in terms
of explicit cocycles in
the cyclic cohomology of \'etale foliation groupoids.
\end{abstract}

\section*{Introduction}

Hopf algebras and their Hopf cyclic cohomology
emerged as a new geometric tool
in the work~\cite{cm2} of A. Connes and of the first named author computing
the local index formula for transversely hypoelliptic operators
on foliations. One of the key results was the construction
of a canonical isomorphism~\cite[Theorem 11]{cm2} from the Gelfand-Fuks
cohomology~\cite{GF} of the Lie algebra of formal vector fields on $\Rb^n$ to
the Hopf cyclic cohomology of the Hopf algebra $\Hc (n)$ associated
to the pseudogroup of local diffeomorphisms of $\Rb^n$. This isomorphism
made it possible to recognize the cyclic cocycle expressing the Connes-Chern character
of the hypoelliptic signature operator as representing a classical transverse
characteristic class~\cite{BottHaef}
in the cyclic cohomology of the transverse \'etale groupoid associated to the
foliation.

Extending the construction of
$\Hc (n)$ to all classical types of transverse geometries,
in~\cite{mr09} we have associated a similar Hopf algebra $\Hc(\Gb)$ to each
 infinite primitive Cartan-Lie pseudogroup~\cite{ECar, Guill}, or equivalently
 to each classical subgroup of diffeomorphisms $\Gb \sbs \Diff (\Rb^n)$.
 We showed that all these
 geometric Hopf algebras can be reconstituted as bicrossed products
 of traditional Hopf algebras, with one of the factors corresponding
 to a Lie algebra while
 the other is associated to a group. The bicrossed product structure was
 used in turn to express the Hopf cyclic cohomology of $\Hc(\Gb)$
 in terms of adapted bicocyclic complexes.
\medskip

The present paper clarifies
the relationship between the Hopf cyclic cohomology of the Hopf algebras
$\Hc(\Gb)$ and the
classical characteristic classes (\cf~\cite{BottHaef}, \cite{BernRos}, \cite{Bott})
of foliations with transverse $\Gb$-structure.
At the level of universal classes, we extend
the canonical cohomological isomorphism
 of~\cite{cm2} to all classical types of transverse geometries.
More significantly, by refining the cyclic cohomological apparatus for computing the Hopf
cyclic cohomology of $\Hc(\Gb)$, we succeed in
explicitly identifying as a Hopf cyclic complex the image of the
Gelfand-Fuks complex through the canonical homomorphism of~\cite{cm2} into the Bott complex
for equivariant cohomology. Distinct from the original realization
of the Hopf cyclic
cohomology of $\Hc(\Gb)$ as differentiable cyclic cohomology~\cite{cm2, cm4},
this provides a front-end model for the Hopf cyclic cohomology of $\Hc(\Gb)$.
Furthermore, we produce
characteristic homomorphisms from the newly developed models for
Hopf cyclic characteristic classes to the cyclic cohomology of the convolution
algebras of \'etale holonomy groupoids, which work in the relative case with
no compactness restriction. Finally, we concretely illustrate the latter feature by indicating
how to realize the universal Chern classes in terms of explicit cocycles in
the cyclic cohomology of the convolution
algebras of action groupoids.
\medskip

We now outline the plan of the paper and
highlight its main results. Section \ref{S: HCC} starts by
recalling from~\cite{mr07, mr09} the construction
of the geometric Hopf algebras $\Hc(\Gb)$ and their bicrossed product
decomposition $\Hc(\Gb) =  \Fc (\FN)\acl \Uc (\Fg)$ (\S \ref{Ss: geom HA}), as well as
the  Hopf cyclic bicomplexes associated to them (\S \ref{Ss:Hopf bic}) and their
reduction to a Chevalley-Eilenberg-type complex (\S \ref{Ss: H-C-E}). We then
give in \S \ref{Ss:gH} a new Hopf cyclic interpretation to the latter, as a
 $\Fg$-equivariant Hopf cyclic complex for the abelian Hopf factor $\Fc (\FN)$
 with coefficients in the Koszul resolution of  $\wdg^\bullet \Fg^\ast$, viewed as a
 differential graded stable anti-Yetter-Drinfeld module (\cf Theorem \ref{thm:dgH}).
 \smallskip

 Section~\ref{S: van est} is devoted to the construction (\S\S \ref{Ss: GF to EC}-\ref{Ss: EC to HC})
of an explicit quasi-isomorphism between the Gelfand-Fuks
cohomology complex of the Lie algebra of formal vector fields $\Fa (\Gb) = \Fg\bowtie\Fn$
and the Hopf cyclic cohomology of  $\Hc(\Gb)$. While this is patterned on
the corresponding construction in~\cite{cm2}, the novel feature consists in
the exact identification in Hopf cyclic terms
of the target of the canonical quasi-isomorphism.
Theorem \ref{thm:KvE}  in \S \ref{Ss: thm.vE}
identifies it as being precisely the
 $\Fg$-equivariant Hopf cyclic complex of Theorem \ref{thm:dgH}.
 \smallskip

The transfer of the universal Hopf cyclic classes from the Hopf cyclic cohomology
of $\Hc(\Gb)$ to the cyclic cohomology of
the convolution algebras of $\Gb$-holonomy \'etale groupoids is treated
in Section~\ref{S: HCmaps}.  In \S \ref{Ss: Hact}
we construct  characteristic maps adapted to the bicrossed product
models for Hopf cyclic cohomology, and landing in the Getzler-Jones models
\cite{gj} for the cyclic cohomology of crossed product algebras.
Theorem \ref{thm:actmap} reconciles these maps with the original characteristic
map of~\cite{cm2}, by showing that they are all equivalent.
With the goal of facilitating the transport of characteristic classes from the Gelfand-Fuks
cohomology via the Hopf cyclic cohomology,
in \S \ref{Ss: dgchar} we introduce a
differential graded characteristic map from the
$\Fg$-equivariant Hopf cyclic complex of $\Hc(\Gb)$ to
 differential  \'etale groupoids.
 Remark \ref{rem:dgactmap} explains how this transfer can be concretely implemented,
 by relating the above characteristic map to
 Connes' map~\cite[Theorem 14, p. 220]{book} from the
 Bott bicomplex for equivariant cohomology to the cyclic cohomology bicomplex,
 and to Gorokhovsky's differential graded analogue~\cite[Theorem 19]{Sasha02}.

Finally, we treat the relative case in Section \ref{S:rel}. Unlike
the original characteristic map (\cf ~\cite{cm2}), the graded
characteristic map does not require an invariant trace and thus
descends to the relative complex even when
 the underlying reductive subgroup of the linear isotropy group is not compact.
In particular, this allows to realize the Hopf cyclic version of
the universal Chern classes both in terms of
 the Bott bicomplex,
and as cocycles in the cyclic cohomology complex of an
 \'etale foliation groupoid.

\tableofcontents

\section{Hopf cyclic cohomological models} \label{S: HCC}
\subsection{Geometric Hopf algebras and their
bicrossed product structure} \label{Ss: geom HA}

Inspired by the G. I. Kac procedure~\cite{Kac} for producing
non-commutative and non-cocommutative quantum groups
out of  matched pairs of finite groups, the construction of the
Hopf algebra associated to an infinite primitive Cartan-Lie pseudogroup
(\cf~\cite{cm2} for the general pseudogroup and~\cite{mr09} for all others)
relies on splitting the corresponding group $\Gb$ of
globally defined diffeomorphisms of $\Rb^n$
as a set-theoretical product of two subgroups:
$\, \Gb = G \cdot N$, $\,G \cap N= \{e\}$. When the primitive pseudogroup
is flat, \ie contains all the Euclidean translations,
$G$ is the subgroup of the affine transformations of $\Rb^n$ that belong to
$\Gb$, and $N \sbs \Gb$ is the subgroup of those diffeomorphisms
that preserve the origin to order $1$. The only primitive pseudogroup
which is not flat is the pseudogroup of contact transformations.
In that case (\cf~\cite[\S 2.2]{mr09}),
 the Euclidean translations are replaced by Heisenberg
translations. More precisely, regarding
$\Rb^{2n+1}$ as the underlying manifold of the Heisenberg group $H_n$,
$G$ is the semidirect product $H_n \ltimes G_0$, with $H_n$ acting by
 left translations, and $G_0$ consisting of the linear contact transformations.
 The complementary factor $N$ is the subgroup
of all contact diffeomorphisms preserving the origin
to order $1$ in the sense of Heisenberg calculus, \ie whose differential
at $0$ is the identity map of
the Heisenberg tangent bundle.

The splitting $\Gb= G \cdot N$ allows to represent uniquely
any $\phi \in \Gb$ as a product
$\phi  = \varphi \cdot \psi$, with $\varphi \in G$ and  $\psi \in N$.
Factorizing the product of any two elements $\varphi \in G$ and  $\psi \in N$
in the reverse order,
$\, \psi \cdot \varphi \, = \, (\psi\rt
\varphi)\cdot(\psi \lt \varphi)$,
one obtains a left action $\psi  \mapsto \tpsi (\varphi) :=
\psi \rt \varphi$ of $N$ on $G$, along with
a right action $\lt$
of $G$ on $N$. Equivalently, these actions are the restrictions of
the natural actions of $ \Gb$ on the coset spaces
$\Gb \slash N \cong G$ and $G \bash \Gb \cong N$.

Denoting by $\Fg$ the Lie algebra of $G$ and by $\Fn$ the Lie algebra of $N$,
we abbreviate $\Uc = \Uc (\Fg)$ and let $\Fc$ stand for the
algebra of functions on $N$ generated by the components of the jet
of $ \tpsi$ at $e \in G$, as $\psi$ runs through $N$.  $\Uc$ is endowed with the
standard Hopf algebra structure of a universal enveloping algebra, while
 $\Fc$ is a restricted dual to $ \Uc (\Fn)$ and is a Hopf algebra with respect
 to the coproduct inherited from the group multiplication on $N$. While referring
 the reader to~\cite[\S 2.1-2.2]{mr09} for details, we succinctly recall the
 salient points regarding the construction of the Hopf algebra $\, \Hc = \Hc (\Gb)$.

  First of all,  $ \, \Uc$ is a right  $\Fc$-comodule coalgebra with respect to the coaction
$\Db:\Uc \ra\Uc \ot \Fc$ which can be concretely described as follows.
Let $\, \{X_1, \ldots , X_m \}$ be a basis of $\Fg$, and let
$\{X_I = X_1^{i_1} \cdots X_m^{i_m} \, ; \, i_1, \ldots , i_m \in \Zb^+   \}$
be the corresponding Poincar\'e-Birkhoff-Witt  basis  of $\Uc$. Then
\begin{align}  \label{gij}
\begin{split}
&\Db (X_I)\,  = \, \sum_J  X_J \ot \eta_I^J  , \quad
\text{with} \quad  \eta_I^J (\cdot) := \,
\g_I^J (\cdot)(e) , \\
&\text{where} \qquad
 U_\psi \, X_I \, U_{\psi^{-1}} \, = \, \sum_J  \g_I^J (\psi)\, X_J, \qquad \psi \in N .
 \end{split}
\end{align}
The properties which make
$\Db:\Uc\ra\Uc\ot \Fc$ a right coaction are:
\begin{align} \label{Fcoact}
\begin{split}
&u\ns{0}\ps{1}\ot u\ns{0}\ps{2}\ot u\ns{1}= u\ps{1}\ns{0}\ot
u\ps{2}\ns{0}\ot u\ps{1}\ns{1}u\ps{2}\ns{1}\\
&\epsilon(u\ns{0})u\ns{1}=\epsilon(u)1 , \qquad \qquad \forall \, u\in \Uc.
\end{split}
\end{align}
One can then  forms a  cocrossed product coalgebra $\Fc\cl\Uc$ that has
$\Hc\ot \Kc$ as underlying vector space and the coalgebra
structure given by:
\begin{align} \label{bicrosscoalg}
\begin{split}
&\Delta(f\cl u)= f\ps{1}\cl u\ps{1}\ns{0}\ot  f\ps{2}u\ps{1}\ns{1}\cl u\ps{2}, \\
&\epsilon(h\cl k)=\epsilon(h)\epsilon(k).
\end{split}
\end{align}

 On the other hand, the
  right action $\lt$ of $G$ on $N$ induces a left action of $G$ on $\Fc$, which in turn
 gives rise to a left action $\rt$ of $\Uc$ on $\Fc$; that makes  $\Fc$
 a  left $\Uc$-module algebra, \ie
  for any $u\in \Uc$, and
$f,g\in \Fc$,
\begin{align}
\begin{split}
 &u\rt(fg)=(u\ps{1}\rt
f)(u\ps{2}\rt g) , \\
&u\rt 1=\epsilon(u)1.
\end{split}
\end{align}
This allows to endow the underlying vector space $\Fc\ot \Uc$
 an algebra structure, which we denote it by $\Fc\al \Uc$; it
 has  $1\al 1$ as its unit, and the product
\begin{equation*}
(f\al u)(g\al v)=f \;u\ps{1}\rt g\al u\ps{2}v .
\end{equation*}
Furthermore, $\Uc$ and $\Fc$ form  a matched pair
of Hopf algebras in the sense of \cite{maj}. More precisely,
the above action and coaction satisfy the
following properties:
\begin{align}  \label{mp2}
\begin{split}
&\epsilon(u\rt f)=\epsilon(u)\epsilon(f), \qquad \forall \, u\in\Uc , \, f\in \Fc ,\\
&\Delta(u\rt f)=u\ps{1}\ns{0} \rt f\ps{1}\ot
u\ps{1}\ns{1}(u\ps{2}\rt f\ps{2}), \\
 &\Db(1)=1\ot 1, \\
 &\Db(uv)=u\ps{1}\ns{0} v\ns{0}\ot
u\ps{1}\ns{1}(u\ps{2}\rt v\ns{1}),\\
&u\ps{2}\ns{0}\ot
(u\ps{1}\rt f)u\ps{2}\ns{1}=u\ps{1}\ns{0}\ot
u\ps{1}\ns{1}(u\ps{2}\rt f) .
\end{split}
\end{align}
Superimposing the coalgebra structure $\Fc\al \Uc$ and the algebra structure
$\Fc\cl \Uc$, one obtains
the bicrossed product Hopf
algebra $\Hc = \Fc\acl \Uc$, which has as antipode
\begin{equation*}
S(f\acl u)=(1\acl S(u\ns{0}))(S(fu\ns{1})\acl 1) .
\end{equation*}

The following auxiliary results will be needed later in this paper.

\begin{lemma}\label{S-linear}
For any $u\in \Uc$, and $f\in \Fc$ one has
\begin{equation*}
S(u\rt f)= u\ns{0}\rt S(f)\; S(u\ns{1}).
\end{equation*}
\end{lemma}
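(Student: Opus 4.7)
The plan is to apply the bicrossed-product antipode $S_\Hc$ of $\Hc = \Fc \acl \Uc$ to the cross-relation
\begin{equation*}
(1 \acl u)(f \acl 1) \;=\; (u\ps{1} \rt f) \acl u\ps{2}
\end{equation*}
in two distinct ways, and then to extract the desired scalar identity in $\Fc$ via projection. On one side, antimultiplicativity of $S_\Hc$, combined with the fact that $\Fc \acl 1$ is a sub-Hopf-algebra (so $S_\Hc(f\acl 1) = S(f)\acl 1$) and the bicrossed antipode formula $S_\Hc(1\acl u) = (1\acl S(u\ns{0}))(S(u\ns{1})\acl 1)$, expresses the image as an explicit element of $\Fc\acl\Uc$. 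On the other side, writing $(u\ps{1}\rt f)\acl u\ps{2}$ as the product $((u\ps{1}\rt f)\acl 1)(1\acl u\ps{2})$ and again using antimultiplicativity gives $S_\Hc(1\acl u\ps{2})\cdot(S(u\ps{1}\rt f)\acl 1)$; the matched-pair compatibility axiom (8), $u\ps{2}\ns{0}\ot(u\ps{1}\rt f)u\ps{2}\ns{1} = u\ps{1}\ns{0}\ot u\ps{1}\ns{1}(u\ps{2}\rt f)$, is precisely what allows one to rewrite this so that $S(u\ps{2}\rt f)$ appears explicitly in the $\Fc$-component.

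Equating the two expansions in $\Hc$ and applying the linear projection $\id\acl\epsilon$---which is a coalgebra map thanks to the counit-coaction identity $\epsilon(u\ns{0})u\ns{1}=\epsilon(u)1$ from (\ref{Fcoact})---and then simplifying using the antipode-counit relation $a\ps{1}\epsilon(a\ps{2})=a$ and the $\Uc$-module-algebra property, collapses the $\Hc$-identity into a scalar equation in $\Fc$ of the form
\begin{equation*}
S(f)\,\bigl(S(u\ns{0})\rt S(u\ns{1})\bigr) \;=\; S(u\ps{1}\ns{0})\rt\bigl[S(u\ps{2}\rt f)\,S(u\ps{1}\ns{1})\bigr].
\end{equation*}
Commutativity of $\Fc$ is used throughout to reorder factors.

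The stated formula then follows by induction on the PBW-filtration degree of $u\in\Uc(\Fg)$. The base case $u=1$ is immediate from $\Db(1)=1\ot 1$. For a primitive element $X\in\Fg$---for which $X\ns{0}$ is again primitive, by the coaction-coalgebra axiom (\ref{Fcoact}) applied to $X$---expanding $\Delta(X)=X\ot 1+1\ot X$ in the right-hand side of the displayed identity yields two summands; after expanding $X\ns{0}\rt(S(f)S(X\ns{1}))$ via the module-algebra axiom and the primitivity of $X\ns{0}$, one of them cancels the left-hand side while the other equals precisely $S(X\rt f)$, yielding $S(X\rt f)=X\ns{0}\rt S(f)\cdot S(X\ns{1})$. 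The inductive step for $Yu$ with $Y\in\Fg$ primitive uses the module property $(Yu)\rt f = Y\rt(u\rt f)$ and the coaction formula $\Db(Yu) = Y\ps{1}\ns{0}u\ns{0}\ot Y\ps{1}\ns{1}(Y\ps{2}\rt u\ns{1})$ from the fourth relation of (\ref{mp2}), together with the primitive-case formula applied both to $Y$ and to $u\ns{1}\in\Fc$. The main technical obstacle is the careful tracking of nested Sweedler indices through the bicrossed-product multiplications and iterated coactions; axiom (8) is the crucial structural input, as it is exactly what transports $S(u\ps{2}\rt f)$ into a form amenable to isolation after projection.
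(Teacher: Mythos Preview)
Your argument is correct, but it takes a substantially longer and structurally different route than the paper's.

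The paper works entirely in the convolution algebra $\Hom(\Uc\ot\Fc,\Fc)$, where $\Uc\ot\Fc$ carries the tensor coproduct. It defines $K(u\ot f)=u\rt f$, $L(u\ot f)=u\rt S(f)$, and $M(u\ot f)=S(u\ns{0}\rt f)\,u\ns{1}$. The module-algebra axiom gives $K\ast L=\epsilon=L\ast K$; the second relation of \eqref{mp2}, which identifies $u\ps{1}\ns{0}\rt f\ps{1}\ot u\ps{1}\ns{1}(u\ps{2}\rt f\ps{2})$ as $\Delta(u\rt f)$, gives $M\ast K=\epsilon$. Hence $M=L$, which is equivalent to the stated formula. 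No induction, no case analysis, and no use of the bicrossed antipode $S_\Hc$.

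Your approach is different on two counts. First, you extract the key scalar identity in $\Fc$ by pushing the full bicrossed-product antipode $S_\Hc$ through the cross-relation $(1\acl u)(f\acl 1)=(u\ps{1}\rt f)\acl u\ps{2}$ and projecting via $\id\acl\epsilon$; this makes explicit how the formula is a shadow of antimultiplicativity of $S_\Hc$. Second, you then run a PBW induction, using that identity only to settle the primitive case. Both steps check out: the projection step uses axiom (8) of \eqref{mp2} exactly as you say, and the inductive step goes through using the coaction formula $\Db(Yu)=Y\ns{0}u\ns{0}\ot Y\ns{1}u\ns{1}+u\ns{0}\ot Y\rt u\ns{1}$ for $Y$ primitive.

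What each buys: the paper's convolution argument is shorter, uniform in $u$, and uses only the matched-pair axioms, so it generalizes verbatim to any matched pair of Hopf algebras. Your argument is more explicit about the role of $S_\Hc$, but the PBW induction ties it to the enveloping-algebra structure of $\Uc=\Uc(\Fg)$ and is therefore less portable. A minor remark: the coalgebra-map property of $\id\acl\epsilon$ that you invoke is correct but unnecessary---linearity of the projection suffices for your purposes.
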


\begin{proof}
Let us  endow $\Uc \ot \Fc$  with tensor coproduct, and  consider
the convolution algebra $\Hom(\Uc\ot \Fc, \Fc)$.  Define three
elements $K, L$ and $M$ of the latter algebra as follows:
\begin{align*}
& K(u\ot f)= u\rt f, \quad
 L(u\ot f)=u\rt S(f),\quad
M(u\ot f)= S(u\ns{0}\rt f)\; u\ns{1}.
\end{align*}
Using the fact that $\Fc$ is a $\Uc$-module algebra,  one checks that
$K$ and $L$ are left and right inverse of one another; indeed,
\begin{align*}
&(K\ast L)(u\ot f)= (u\ps{1}\rt f\ps{1})( u\ps{2}\rt S(f\ps{2}))=\\
& u\rt(f\ps{1}S(f\ps{2}))=u\rt\epsilon(f)=\epsilon(u)\epsilon(f) ,
\end{align*}
and the verification of other identity is similar.
On the other hand, making use of \eqref{mp2}, one sees  that $M\ast
K=\Id$. So $M=L$, which is equivalent to the claimed identity.
\end{proof}
Although $\Uc$ is not $\Fc$ comodule algebra, \ie  $\Db$ is not
algebra map,
 the properties \eqref{mp2} seem to suggest that there could be an algebra
 structure on $\Uc\ot \Fc$ which would make $\Db$ an algebra map.
With this in mind, we define a new action
 of $\Uc$ on $\Fc$,  $\, \brt:\Uc\ot \Fc\ra \Fc$, which convolutes the
above action and coaction, by
\begin{align} \label{eq:action}
u\brt f=S^{-1}(u\ns{1})\; u\ns{0}\rt f , \qquad u \in \Uc, \, f \in \Fc .
\end{align}

\begin{lemma}\label{action}
The above formula defines an action which makes $\Fc$ a $\Uc\cop$-module
algebra.
\end{lemma}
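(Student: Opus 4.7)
The plan is to verify the four defining properties of a left $\Uc\cop$-module algebra structure on $\Fc$ supplied by $\brt$: the unit axiom $1\brt f = f$, the normalization $u\brt 1 = \epsilon(u)\,1$, associativity of the action, $(uv)\brt f = u\brt(v\brt f)$, and the Leibniz rule with respect to the opposite coproduct, $u\brt(fg) = (u\ps{2}\brt f)(u\ps{1}\brt g)$.

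The two normalization axioms are essentially free: since $\Db(1)=1\ot 1$, one has $1\brt f = S^{-1}(1)\cdot f = f$; while the second line of \eqref{Fcoact} gives $u\brt 1 = S^{-1}(u\ns{1})\epsilon(u\ns{0}) = S^{-1}(\epsilon(u)\,1) = \epsilon(u)\,1$.

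For the $\Uc\cop$-Leibniz rule, I would expand $u\brt(fg)$ using the $\Uc$-module-algebra structure of $\Fc$ and then substitute the coaction-coproduct identity from \eqref{Fcoact}, rewriting $u\ns{0}\ps{1}\ot u\ns{0}\ps{2}\ot u\ns{1}$ as $u\ps{1}\ns{0}\ot u\ps{2}\ns{0}\ot u\ps{1}\ns{1}u\ps{2}\ns{1}$. After collecting the $S^{-1}$ factors in front using commutativity of $\Fc$, the resulting sum differs from the parallel expansion of $(u\ps{2}\brt f)(u\ps{1}\brt g)$ only by an interchange of the Sweedler labels $u\ps{1}\leftrightarrow u\ps{2}$ in the two action slots. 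That interchange is precisely what the cocommutativity of $\Uc = \Uc(\Fg)$ permits.

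The substantive step is associativity. Expanding $(uv)\brt f$ requires the matched-pair coaction formula $\Db(uv) = u\ps{1}\ns{0}v\ns{0}\ot u\ps{1}\ns{1}(u\ps{2}\rt v\ns{1})$ from \eqref{mp2}, which produces the awkward factor $S^{-1}(u\ps{2}\rt v\ns{1})$. I would handle this by invoking Lemma~\ref{S-linear} in its $S^{-1}$-version (legitimate because $\Fc$ is commutative and hence $S^2|_{\Fc} = \Id$), which rewrites it as $(u\ps{2}\ns{0}\rt S^{-1}(v\ns{1}))\cdot S^{-1}(u\ps{2}\ns{1})$. On the other side, $u\brt(v\brt f)$ is expanded via the Leibniz rule in $\Fc$, the associativity of $\rt$, and \eqref{Fcoact}. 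The two resulting expressions differ only by an interchange of the pairs $(u\ps{1}\ns{0},u\ps{1}\ns{1})\leftrightarrow(u\ps{2}\ns{0},u\ps{2}\ns{1})$, which cocommutativity of $\Uc$ combined with commutativity of $\Fc$ dispatches. The main obstacle is keeping the nested Sweedler bookkeeping clean and recognizing at the end that the asymmetry built into the bicrossed-product formula for $\Db(uv)$ is compensated precisely by the combined commutativity and cocommutativity of the two Hopf factors.
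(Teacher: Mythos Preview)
Your direct verification is correct, but the paper takes a slicker and more conceptual route. Instead of checking the module-algebra axioms one by one, the paper first establishes the single identity
\[
S(u\brt f)\;=\;u\rt S(f),
\]
which follows from Lemma~\ref{S-linear} together with coassociativity of the coaction (expanding $S(u\ns{0}\rt f)\,u\ns{1}$ and collapsing $S(u\ns{1})u\ns{2}$ to $\epsilon$). Once this is in hand, both associativity and the $\Uc\cop$-Leibniz rule are immediate consequences of the corresponding properties of $\rt$: for instance $S((uv)\brt f)=uv\rt S(f)=u\rt(v\rt S(f))=u\rt S(v\brt f)=S(u\brt(v\brt f))$, and similarly $S(u\brt(fg))=u\rt S(fg)=(u\ps{1}\rt S(g))(u\ps{2}\rt S(f))=S\big((u\ps{2}\brt f)(u\ps{1}\brt g)\big)$.

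Your approach, by contrast, unpacks $\Db(uv)$ via the matched-pair formula and then rescues the resulting asymmetry by appealing to the cocommutativity of $\Uc=\Uc(\Fg)$; this is legitimate here, but the paper's argument never needs cocommutativity and would go through for an arbitrary $\Uc$ in a matched pair with commutative $\Fc$. The paper's proof also makes the structural point transparent: the action $\brt$ is essentially the antipode-conjugate of $\rt$, so its module-algebra properties are inherited rather than re-derived. Your route buys nothing extra and costs more bookkeeping, but it is a valid proof.
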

\begin{proof}
To prove that it is an action, we note that by Lemma \ref{S-linear},
\begin{align*}
&S(u\brt f)= S(S^{-1}(u\ns{1})\; u\ns{0}\rt f)=\\
 &S(u\ns{0}\rt f)\;u\ns{1}= u\ns{0}\rt S(f) S(u\ns{1})u\ns{2}= u\rt
 S(f) .
\end{align*}
This implies that
\begin{equation*}
S(uv\brt f)= uv\rt S(f)=u\rt (v\rt S(f))= u\rt S(v\brt f)= S(u\brt(v
\brt f)).
\end{equation*}
Next, we show  $\Uc^{cop}$ acts on $\Fc$ by a Hopf action, as follows:
\begin{align*}
&S(u\brt (fg))= u\rt S(fg)= u\ps{1}\rt S(g)\; u\ps{2}\rt S(f)= \\
&S(u\ps{1}\brt g)S(u\ps{2}\brt f)=S((u\ps{2}\brt f)( u\ps{1}\brt
g)).
\end{align*}
\end{proof}

In view of the above lemma, one can now form a new crossed product
algebra, which we denote by  $\Fc \al \Uc^{\rm cop}$.

Let $\Db^{\top}= \top_{1,2}\circ\Db:\Uc\ra \Fc\ot \Uc$, where
$\top_{1,2}: \Uc\ot \Fc\ra \Fc\ot \Uc$ stands for the flip map.

\begin{lemma}\label{D-algebra}
The map $\Db^{\top}: \Uc\ra \Fc\al \Uc^{\rm cop}$ is an algebra
map.
\end{lemma}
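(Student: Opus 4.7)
The plan is to verify the multiplicativity $\Db^\top(uv)=\Db^\top(u)\Db^\top(v)$, since unit preservation $\Db^\top(1)=1\al 1$ is immediate from $\Db(1)=1\ot 1$ in \eqref{mp2}. Recall that the algebra $\Fc\al\Uc^{\rm cop}$ has multiplication
\[
(f\al u)(g\al v)\;=\;f\,(u\ps{2}\brt g)\,\al\,u\ps{1}v,
\]
the $(u\ps{2},u\ps{1})$-ordering arising because the coproduct of $\Uc^{\rm cop}$ is the flip of that of $\Uc$.

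First I would expand both sides in Sweedler notation. Using \eqref{mp2} for $\Db(uv)$, the left-hand side reads
\[
\Db^\top(uv)\;=\;u\ps{1}\ns{1}\,(u\ps{2}\rt v\ns{1})\,\al\,u\ps{1}\ns{0}\,v\ns{0},
\]
while the product formula above, applied to $\Db^\top(u)=u\ns{1}\al u\ns{0}$ and $\Db^\top(v)=v\ns{1}\al v\ns{0}$, gives
\[
\Db^\top(u)\,\Db^\top(v)\;=\;u\ns{1}\,(u\ns{0}\ps{2}\brt v\ns{1})\,\al\,u\ns{0}\ps{1}\,v\ns{0}.
\]
Inserting the definition \eqref{eq:action}, namely $u\ns{0}\ps{2}\brt v\ns{1}=S^{-1}((u\ns{0}\ps{2})\ns{1})\,(u\ns{0}\ps{2})\ns{0}\rt v\ns{1}$, now expresses the right-hand side entirely in terms of $\Uc$, $\Fc$ and the original action $\rt$.

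The heart of the argument is then a Sweedler reduction of this right-hand side. I would invoke the comodule–coalgebra compatibility \eqref{Fcoact} to rewrite $u\ns{0}\ps{1}\ot u\ns{0}\ps{2}\ot u\ns{1}$ as $u\ps{1}\ns{0}\ot u\ps{2}\ns{0}\ot u\ps{1}\ns{1}u\ps{2}\ns{1}$, followed by coassociativity of the coaction on $u\ps{2}$ to replace $(u\ps{2}\ns{0})\ns{0}\ot(u\ps{2}\ns{0})\ns{1}\ot u\ps{2}\ns{1}$ by $u\ps{2}\ns{0}\ot u\ps{2}\ns{1}\ps{1}\ot u\ps{2}\ns{1}\ps{2}$. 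The right-hand side then becomes
\[
u\ps{1}\ns{1}\,u\ps{2}\ns{1}\ps{2}\,S^{-1}(u\ps{2}\ns{1}\ps{1})\,(u\ps{2}\ns{0}\rt v\ns{1})\,\al\,u\ps{1}\ns{0}\,v\ns{0}.
\]
Applying the antipode identity $f\ps{2}S^{-1}(f\ps{1})=\epsilon(f)1$ to $f=u\ps{2}\ns{1}$ collapses the middle $\Fc$-factor to $\epsilon(u\ps{2}\ns{1})$, and the right comodule counit axiom $u\ps{2}\ns{0}\epsilon(u\ps{2}\ns{1})=u\ps{2}$ then recovers $u\ps{1}\ns{1}(u\ps{2}\rt v\ns{1})\al u\ps{1}\ns{0}v\ns{0}$, exactly the left-hand side.

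The conceptual content is that the twist $\brt=S^{-1}(\cdot\ns{1})\,(\cdot\ns{0}\rt)$ has been engineered precisely to absorb the extra $\Fc$-factor $u\ps{1}\ns{1}u\ps{2}\ns{1}$ produced by \eqref{Fcoact} when the coproduct and the coaction are swapped, thereby repairing the failure of $\Db$ itself to be an algebra map into $\Fc\al\Uc$. The only real obstacle is the careful bookkeeping of nested Sweedler indices needed to make this cancellation visible.
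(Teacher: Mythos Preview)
Your proof is correct and follows essentially the same route as the paper's: both start from $\Db^\top(u)\Db^\top(v)=u\ns{1}(u\ns{0}\ps{2}\brt v\ns{1})\al u\ns{0}\ps{1}v\ns{0}$, invoke \eqref{Fcoact} and coassociativity of the coaction, and then collapse the resulting $u\ps{2}\ns{1}\ps{2}\,S^{-1}(u\ps{2}\ns{1}\ps{1})$ (equivalently $u\ps{2}\ns{2}\,S^{-1}(u\ps{2}\ns{1})$ in the paper's three-index notation) via the antipode and counit axioms to recover $\Db^\top(uv)$. The only cosmetic difference is that the paper applies \eqref{Fcoact} before unfolding $\brt$, whereas you unfold $\brt$ first; the bookkeeping is otherwise identical.
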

\begin{proof} Indeed, using \eqref{mp2} we can write
\begin{align*}
&\Db^{\top}(u)\Db^{\top}(v)= (u\ns{1}\ot u\ns{0})(v\ns{1}\ot v\ns{0})= \\
&u\ns{1}( u\ns{0}\ps{2}\brt v\ns{1})\ot u\ns{0}\ps{1}v\ns{0}=\\
& u\ps{1}\ns{1}u\ps{2}\ns{1}( u\ps{2}\ns{0}\brt v\ns{1})\ot u\ps{1}\ns{0}v\ns{0}=\\
&u\ps{1}\ns{1}u\ps{2}\ns{2}  S^{-1}(u\ps{2}\ns{1}) (u\ps{2}\ns{0}\rt v\ns{1})\ot u\ps{1}\ns{0}v\ns{0}=\\
& u\ps{1}\ns{1}(u\ps{2}\rt v\ns{1})\ot u\ps{1}\ns{0}v\ns{0}=(uv)\ns{1}\ot (uv)\ns{0}=\Db^{\top}(uv). \\
\end{align*}
\end{proof}

\begin{lemma}\label{antipode}
The antipode of $\Uc$ satisfies the following colinearity property:
\begin{equation*}
\Db(S(u))= S(u\ps{2}\ns{0})\ot S(u\ps{1})\rt u\ps{2}\ns{1}  , \qquad \fl \, u \in \Uc .
\end{equation*}
\end{lemma}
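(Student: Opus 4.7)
The plan is to recast the identity using the flipped coaction $\Db^{\top}=\top_{1,2}\circ\Db:\Uc\to \Fc\otimes\Uc$, in which form it reads
\[
\Db^{\top}(S(u))\,=\,S(u\ps{1})\rt u\ps{2}\ns{1}\,\otimes\,S(u\ps{2}\ns{0}).
\]
By Lemma \ref{D-algebra}, $\Db^{\top}$ is an algebra homomorphism into $\Fc\al \Uc^{\rm cop}$. Consequently $\Db^{\top}\circ S$ is the two-sided convolution inverse of $\Db^{\top}$ in the convolution algebra $\Hom(\Uc,\Fc\al\Uc^{\rm cop})$, since for any algebra map $\phi$ out of a Hopf algebra one has $\phi\circ S=\phi^{*-1}$. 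By uniqueness of convolution inverses, it suffices to verify that the candidate
\[
\widetilde T(u)\,:=\,S(u\ps{1})\rt u\ps{2}\ns{1}\,\otimes\,S(u\ps{2}\ns{0})
\]
is a one-sided inverse of $\Db^{\top}$, that is,
\[
\Db^{\top}(u\ps{1})\cdot\widetilde T(u\ps{2})\,=\,\epsilon(u)(1\otimes 1)\quad\text{in }\Fc\al\Uc^{\rm cop}.
\]

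To carry out the verification, I would expand the product on the left using the rule $(f\otimes a)(g\otimes b)=f(a\ps{2}\brt g)\otimes a\ps{1}b$ in $\Fc\al \Uc^{\rm cop}$, and split $\Delta(u\ps{1}\ns{0})$ through the comodule-coalgebra relation \eqref{Fcoact}. The definition \eqref{eq:action} of $\brt$ then introduces the subfactor
\[
u\ps{2}\ns{1}\,S^{-1}\bigl((u\ps{2}\ns{0})\ns{1}\bigr)\bigl((u\ps{2}\ns{0})\ns{0}\rt \cdots\bigr),
\]
which by coassociativity of $\Db$ rewrites as $(u\ps{2}\ns{1})\ps{2}\,S^{-1}((u\ps{2}\ns{1})\ps{1})\,(u\ps{2}\ns{0}\rt \cdots)$. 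Using the Hopf identity $x\ps{2}S^{-1}(x\ps{1})=\epsilon(x)$ in $\Fc$ together with the counit property of the coaction, this subfactor collapses to $u\ps{2}\rt(\cdots)$. Working with $\Delta^{(3)}(u)=u\ps{1}\otimes u\ps{2}\otimes u\ps{3}\otimes u\ps{4}$, the resulting expression $u\ps{2}\rt\bigl(S(u\ps{3})\rt u\ps{4}\ns{1}\bigr)=\bigl(u\ps{2}S(u\ps{3})\bigr)\rt u\ps{4}\ns{1}$ collapses further by the antipode axiom applied to the middle Sweedler factors, leaving
\[
u\ps{1}\ns{1}u\ps{2}\ns{1}\,\otimes\,u\ps{1}\ns{0}S(u\ps{2}\ns{0}).
\]

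Finally, this equals $\epsilon(u)(1\otimes 1)$ by applying $\mu\circ(\id\otimes S)$ to the first two factors of the identity $(u\ns{0})\ps{1}\otimes(u\ns{0})\ps{2}\otimes u\ns{1}=u\ps{1}\ns{0}\otimes u\ps{2}\ns{0}\otimes u\ps{1}\ns{1}u\ps{2}\ns{1}$ from \eqref{Fcoact}, followed by the counit axiom $\epsilon(u\ns{0})u\ns{1}=\epsilon(u)1$ for the coaction. The only real obstacle is bookkeeping, i.e.\ keeping track of which iterate of $\Delta$ or $\Db$ is being applied at each step and which matched-pair relation is being used; all algebraic ingredients are already in place (the matched-pair relations \eqref{mp2}, the comodule-coalgebra compatibility \eqref{Fcoact}, and Lemma \ref{S-linear}), and no new identities are needed.
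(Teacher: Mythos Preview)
Your proof is correct and follows essentially the same approach as the paper: define the candidate $\widetilde T$ (the paper calls it $L$), compute $\Db^{\top}*\widetilde T$ in $\Hom(\Uc,\Fc\al\Uc^{\rm cop})$ using \eqref{Fcoact}, the definition \eqref{eq:action} of $\brt$, and the antipode collapse, and conclude by invoking that $\Db^{\top}\circ S$ is the two-sided convolution inverse of the algebra map $\Db^{\top}$. The sequence of simplifications you outline is exactly the one carried out line by line in the paper's proof.
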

\begin{proof}
We first show that the operator
\begin{equation*}
L(u)= S(u\ps{1})\rt u\ps{2}\ns{1}\ot S(u\ps{2}\ns{0}) .
\end{equation*}
is the right inverse of $\Db^\top$ in
the convolution algebra  $ \Hom(\Uc, \Fc\al \Uc^{\text{~cop}})$.
Indeed, one has
\begin{align*}
&(\Db^\top\ast L)(u)= \Db^\top(u\ps{1})L(u\ps{2})= \\
& (u\ps{1}\ns{1}\ot u\ps{1}\ns{0})(S(u\ps{2})\rt u\ps{3}\ns{1}\ot
S(u\ps{3}\ns{0}))=\\
& u\ps{1}\ns{1} u\ps{1}\ns{0}\ps{2}\brt (S(u\ps{2})\rt
u\ps{3}\ns{1})\ot u\ps{1}\ns{0}\ps{1}S(u\ps{3}\ns{0})= \\
&u\ps{1}\ps{1}\ns{1} u\ps{1}\ps{2}\ns{1}u\ps{1}\ps{2}\ns{0}\brt
(S(u\ps{2})\rt
u\ps{3}\ns{1})\ot u\ps{1}\ps{1}\ns{0}S(u\ps{3}\ns{0})= \\
&u\ps{1}\ns{1}u\ps{2}\ns{1}u\ps{2}\ns{0}\brt (S(u\ps{3})\rt
u\ps{4}\ns{1})\ot u\ps{1}\ns{0}S(u\ps{4}\ns{0})=\\
&u\ps{1}\ns{1}u\ps{2}\ns{2} S^{-1}(u\ps{2}\ns{1})u\ps{2}\ns{0}\rt
(S(u\ps{3})\rt
u\ps{4}\ns{1})\ot u\ps{1}\ns{0}S(u\ps{4}\ns{0})=\\
&u\ps{1}\ns{1} (u\ps{2}S(u\ps{3}))\rt u\ps{4}\ns{1}\ot
u\ps{1}\ns{0}S(u\ps{4}\ns{0})=\\
&u\ps{1}\ns{1}u\ps{2}\ns{1}\ot u\ps{1}\ns{0}S(u\ps{2}\ns{0})=
u\ns{1}\ot u\ns{0}\ps{1}S(u\ns{0}\ps{2})=\\
&u\ns{1}\ot \epsilon(u\ns{0})=\epsilon(u)\ot 1.
\end{align*}
On the other hand, since according to Lemma \ref{D-algebra}, $\Db^\top$ is
an algebra map, it is automatically two sided invertible, with
$\Db^\top\circ S$ as its inverse. It follows that $\Db^\top \circ S=L$,
which proves the claim.
\end{proof}

\subsection{Hopf cyclic complexes based on bicrossed product structure} \label{Ss:Hopf bic}

The Hopf algebra $\Hc = \Hc (\Gb)$  comes equipped with the character
$\d \in \Hc^\ast$, the unique extension of the
infinitesimal modular character of $\Fg$, $ \d (X) = \Tr (\ad X)$.
Moreover, $\d:\Hc\ra \Cb$ satisfies $S_\d^2=\Id$, where $S_\d(h)=\d(h\ps{1})S(h\ps{2})$.
This means that the
 corresponding module $\Cb_\d$, viewed also as a trivial comodule,
defines a {\em modular pair in involution} (\cf~\cite{cm3}), and a particular case
of  {\em SAYD module-comodule} (\cf~\cite{hkrs1}).
Such a datum allows to
specialize Connes'  $Ext_\Lba$-definition~\cite{Cext} of cyclic cohomology
 to the context of Hopf algebras
(\cf~\cite{cm2, cm3}). The resulting Hopf
cyclic cohomology, introduced in~\cite{cm2} and extended to SAYD coefficients
in~\cite{hkrs2},
incorporates both Lie algebra and group
cohomology and provides the appropriate cohomological tool
for the treatment of symmetry in noncommutative geometry.
However, its computation for general
Hopf algebras poses quite a challenge.

For the convenience of the reader, we recall below
from~\cite{mr07, mr09} how one can replace
 the standard Hopf cyclic $(b, B)$-complex $(C^\ast (\Hc, \Cb_\d), b, B)$
of~\cite{cm2, cm3}
 by cohomological models which exploit
the bicrossed product structure. In fact,
 such bicocyclic bicomplexes are associated in~\cite[\S 3.2-3.3]{mr09} to
 any bicrossed product Hopf algebra equipped with an SAYD
 module. A bicocyclic bicomplex   is a bigraded module
whose rows and  columns  are  cocyclic modules and, in addition, the
vertical operators and the horizontal operators  commute.

 In the particular case of $\Hc=\Hc (\Gb)$ with
 coefficients in $\Cb_\d$, the first bicocyclic complex $C_\Hc (\Uc, \Fc, \Cb_\d)$
consists of the collection of vector spaces
 \begin{equation}\label{FX}
C_\Hc^{(p,q)}(\Uc, \Fc, \Cb_\d) :=   \Cb_\d \ot_\Hc (\Uc^{\ot p+1}\ot \Fc^{ \ot q+1}) , \quad
(p, q) \in \Zb^+ \times \Zb^+,
\end{equation}
the tensor product over $\Hc$ being taken with respect
to the following action of $\Hc$ on $\Uc$ and $\Fc$ :
\begin{align*}
&(f\acl u)v=\ve(f)uv,\quad (f\acl u)g= fu\rt g, \quad u,v\in\Uc,\quad f,g\in\Fc .
\end{align*}
To simplify the formulas for the boundary operators, we introduce the abbreviated
notation
\begin{align} \label{short}
\begin{split}
&\tilde{u}=u^0\odots  u^p\, , \qquad \qquad
 \tilde{f}=f^0\odots  f^q \, ,\\
 &\td{u}\ns{0}\ot \td{u}\ns{1}= u^1\ns{0}\odots u^p\ns{0}\ot u^1\ns{1}\dots u^p\ns{1}.
  \end{split}
 \end{align}
 With this convention, the horizontal  operators
are defined by

\begin{align}
\begin{split}
&\hd_{i}(\one\ot \td{u} \ot \td{f})= \one\ot u^0\odots
\D(u^i)\odots u^p\ot \td f,\\
&\hd_{p+1}(\one\ot \td{u} \ot \td{f})=\\
&\one \ot u^0\ps{2}\odots
u^p\ot u^0\ps{1}\ns{0} \ot f^0u\ps{1}\ns{1}\odots f^qu\ps{1}\ns{q+1},\\
&\hs_j(\one\ot \td{u} \ot \td{f})= \one\ot  u^0\odots
\ve(u^{j+1})\odots u^p\ot  \td{f},\\
&\hta(\one\ot \td{u} \ot \td{f})=\\
 &\one\ot u^1\odots u^p\ot u^0\ns{0}\ot  f^0u\ns{1}\odots f^qu\ns{q+1} ;
 \end{split}
\end{align}
and the vertical operators
 are defined by
\begin{align}
\begin{split}
 &\vd_{i}(\one\ot \td{u} \ot \td{f})=\one\ot \td u\ot f^0\odots \D(f^i)\odots f^q,\\
&\vd_{q+1}(\one\ot \td{u} \ot \td{f})= \\
&~~\one\ot u^0\ns{0}\odots u^q\ns{0}\ot f^0\ps{2}\odots f^q\ot f^0\ps{1} S(u^0\ns{1}\dots u^p\ns{1}),\\
 &\vs_j(\one\ot \td{u} \ot \td{f})= \one \ot \td u \ot  f^0\odots \ve(f^{j+1})\odots f^q,\\
 &\vta(\one\ot \td{u} \ot \td{f})=\\
 &~~~~~\one\ot u^0\ps{0}\odots u^q\ps{0}\ot f^1\odots f^q\ot f^0 S(u^0\ns{1}\dots u^p\ns{1}).
 \end{split}
\end{align}

The Hopf algebra $\Uc$ admits the following
 action on on $\Fc^{\ot q}$, which plays
a key role in the definition of the next bicocyclic module:
 \begin{align} \nonumber
& u\bullet( f^1\odots f^n)= (1\acl u)\cdot( f^1\odots f^n) :=\\ \label{specact}
&(1\acl u)\ps{1}f^1\odots (1\acl u)\ps{n}f^n=\\ \nonumber
& u\ps{1}\ns{0}\rt f^1\ot u\ps{1}\ns{1}u\ps{2}\ns{0}\rt f^2\odots
u\ps{1}\ns{n-1}\dots u\ps{n-1}\ns{1} u\ps{n}\rt  f^n.
 \end{align}
 It is worth mentioning that,
although this action does not make $\Fc$ a $\Uc$-module coalgebra,
it does commute with the coproduct.

With this understood, the second bicocyclic module $C (\Uc, \Fc, \Cb_\d) $
 is given by the following diagram:

\begin{align}\label{UF}
\begin{xy} \xymatrix{  \vdots\ar@<.6 ex>[d]^{\uparrow B} & \vdots\ar@<.6 ex>[d]^{\uparrow B}
 &\vdots \ar@<.6 ex>[d]^{\uparrow B} & &\\
\Cb_\d \ot \Uc^{\ot 2} \ar@<.6 ex>[r]^{\hb}\ar@<.6
ex>[u]^{  \uparrow b  } \ar@<.6 ex>[d]^{\uparrow B}&
  \Cb_\d\ot \Uc^{\ot 2}\ot \Fc   \ar@<.6 ex>[r]^{\hb}\ar@<.6 ex>[l]^{\hB}\ar@<.6 ex>[u]^{  \uparrow b  }
   \ar@<.6 ex>[d]^{\uparrow B}&\Cb_\d\ot \Uc^{\ot 2}\ot\Fc^{\ot 2}
   \ar@<.6 ex>[r]^{~~\hb}\ar@<.6 ex>[l]^{\hB}\ar@<.6 ex>[u]^{  \uparrow b  }
   \ar@<.6 ex>[d]^{\uparrow B}&\ar@<.6 ex>[l]^{~~\hB} \hdots&\\
\Cb_\d \ot \Uc \ar@<.6 ex>[r]^{\hb}\ar@<.6 ex>[u]^{  \uparrow b  }
 \ar@<.6 ex>[d]^{\uparrow B}&  \Cb_\d \ot \Uc \ot\Fc \ar@<.6 ex>[r]^{\hb}
 \ar@<.6 ex>[l]^{\hB}\ar@<.6 ex>[u]^{  \uparrow b  } \ar@<.6 ex>[d]^{\uparrow B}
 &\Cb_\d\ot \Uc \ot \Fc^{\ot 2}  \ar@<.6 ex>[r]^{~~\hb}\ar@<.6 ex>[l]^{\hB}\ar@<.6 ex>[u]^{  \uparrow b  }
  \ar@<.6 ex>[d]^{\uparrow B}&\ar@<.6 ex>[l]^{~~\hB} \hdots&\\
\Cb_\d  \ar@<.6 ex>[r]^{\hb}\ar@<.6 ex>[u]^{  \uparrow b  }&
\Cb_\d\ot\Fc \ar@<.6 ex>[r]^{\hb}\ar[l]^{\hB}\ar@<.6
ex>[u]^{  \uparrow b  }&\Cb_\d\ot\Fc^{\ot 2}  \ar@<.6
ex>[r]^{~~\hb}\ar@<.6 ex>[l]^{\hB}\ar@<1 ex >[u]^{  \uparrow b  }
&\ar@<.6 ex>[l]^{~~\hB} \hdots& ,}
\end{xy}
\end{align}
whose horizontal maps are given by
\begin{align}
\begin{split}
&\hd_0(\one\ot \td{u}\ot \td{f})= \one\ot 1\ot u^1\ot\dots\ot u^p\ot  \td{f}\\
&\hd_j(\one\ot \td{u}\ot \td{f})= \one\ot u^1\ot\dots\ot
\Delta(u^i)\ot\dots \ot u^p\ot  \td{f}\\
&\hd_{p+1}(\one\ot \td{u}\ot \td{f})=\one\ot u^1\ot \dots \ot
u^p\ot 1\ot \td{f}\\
 &\hs_j(\one\ot \td u\ot \td f)=\one\ot u^1\ot \dots \ot \epsilon(u^{j+1})\ot\dots\ot u^p\ot
 \td{f}\\
&\hta(\one\ot \td{u}\ot  \td{f})=\\
 &\d(u^1\ps{1})\ot S(u^1\ps{3})\cdot(u^2\ot\dots\ot u^p\ot 1)\ot S(u^1\ps{2})\bullet \td{f} ,
 \end{split}
\end{align}
while the vertical operators
 are defined by
\begin{align}
\begin{split}
&\vd_0(\one\ot \td{u}\ot \td{f})= \one\ot \td u\ot  1\ot \td{f},\\
 &\vd_j(\one\ot \td{u}\ot \td{f})= \one \ot \td u \ot    f^1\odots\Delta(f^j)\odots f^q,\\
&\vd_{q+1}(\one\ot \td{u}\ot \td{f})=\one \ot \td u\ns{0}\ot \td{f}\ot S(\td u\ns{1}),\\
 &\vs_j(\one\ot \td u\ot  \td f)=\one\ot \td u\ot  f^1\odots \epsilon(f^{j+1})\odots f^n, \\
 &\vta(m\ot \td{u}\ot  \td{f})=\\
 &\one\ot \td u\ns{0} \ot  S(f^1)\cdot(f^2\odots f^n\ot S(\td u\ns{1})) .
  \end{split}
\end{align}

The next step consists in identifying the standard Hopf cocyclic module $C^\ast(\Hc,
\Cb_\d)$ to the diagonal subcomplex  $D^\ast$ of $C^{\ast,\ast}$.
This is achieved by means of the map
$\, \Psi_{\acl}:  D^\ast \longrightarrow  C^\ast(\Hc,\Cb_\d)$
together with its inverse $\Psi^{-1}_{\acl}:C^\ast(\Hc, \Cb_\d) \longrightarrow D^{\ast}$ .
They are explicitly defined as follows:
 \begin{align}\label{PSI-1}
 \begin{split}
&\Psi_{\acl}(\one\ot u^1\odots u^n\ot f^1\odots f^n)=\\
&\one\ot f^1\acl u^1\ns{0}\ot f^2  u^1\ns{1}\acl u^2\ns{0}\odots \\
& \odots f^n u^1\ns{n-1} \dots u^{n-1}\ns{1}\acl u^n ,
 \end{split}
\end{align}
respectively
 \begin{align}\label{PSI}
 \begin{split}
&\Psi^{-1}_{\acl}(\one\ot   f^1\acl u^1\ot \dots\ot f^n\acl u^n)=\\
& \one\ot u^1\ns{0}\odots u^{n-1}\ns{0}\ot u^n\ot f^1\ot\\
&\ot f^2S(u^1\ns{n-1})\ot f^3S(u^1\ns{n-2}u^2\ns{n-2}) \odots
f^nS(u^1\ns{1} \dots u^{n-1}\ns{1})  .
 \end{split}
\end{align}

With $Tot(C)$ designating the total mixed complex
$\, Tot(C)_n=\bigoplus_{p+q=n} \Cb_\d \ot  \Uc^{p}\ot \Fc^{q}$, we
denote by $\Tot(C)$ the associated \textit{normalized} subcomplex,
obtained by retaining only the elements annihilated by all degeneracy
operators. Its total boundary is $b_T+B_T$, with $b_T$ and $B_T$
defined as follows:
\begin{align} \notag
&\hb_p=\sum_{i=0}^{p+1} (-1)^{i}\hd_i,
&&\vb_q=\sum_{i=0}^{q+1} (-1)^{i}\vd_i, \\ \label{bT}
&b_T=\sum_{p+q=n}\hb_p+\vb_q,\\  \notag
&\hB_p=(\sum_{i=0}^{p-1}(-1)^{(p-1)i}\hta^i)\hs_{p-1}\hta,  &&\vB_q=
(\sum_{i=0}^{q-1}(-1)^{(q-1)i}\vta^i)\hs_{q-1}\vta,\\ \label{BT}
&B_T=\sum_{p+q=n}\hB_p+\vB_q.\
\end{align}

The total complex of a bicocyclic module is a mixed complex, and so
 its cyclic cohomology is well-defined. By means of the analogue of
 the Eilenberg-Zilber theorem for bi-paracyclic
modules~\cite{gj, kr2}, the diagonal mixed complex
$(D, b, B)$ and
the total mixed complex $(\Tot C,b_T, B_T)$ can be seen to
be quasi-isomorphic in both
Hochschild and cyclic cohomology.
At the level of Hochschild cohomology the quasi-isomorphism is
implemented by the Alexander-Whitney map
$\, AW:= \bigoplus_{p+q=n} AW_{p,q}: \Tot(C)^n\ra D^n $,
\begin{align}\label{AW}
\begin{split}
&AW_{p,q}: \Cb_\d\ot \Uc^{\ot p}\ot \Fc^{\ot q}\longrightarrow \Cb_\d\ot  \Uc^{\ot
p+q}\ot \Fc^{\ot p+q} \\
&AW_{p,q}=(-1)^{p+q}\underset{q\,\text{times}}{\underbrace{\vd_0\vd_0\dots
\vd_0}}\hd_n\hd_{n-1}\dots \hd_{q+1} \, .
\end{split}
\end{align}
Using a standard homotopy operator $H$, this can be supplemented by
a cyclic Alexander-Whitney map
$AW':= AW\circ B\circ H:  D^n\ra \Tot(C)^{n+2}$, and thus
upgraded to an $S$-map $\overline{AW} = (AW, AW')$,
 of mixed complexes. The inverse quasi-isomorphisms are
provided by the shuffle maps $\, Sh:= D(C)^n\ra \Tot(C)^n$, resp.
$\overline{Sh} = (Sh, Sh')$, which are discussed in detail in~\cite{kr2}.

  We refer the reader to
~\cite[Theorem 3.16 and \S 3.3]{mr09} for a complete proof of
 the statement recorded below.

\begin{proposition} \label{mix}
The mixed complex $(\Tot C^\ast (\Uc, \Fc, \Cb_\d), b_T, B_T)$ is quasi-isomorphic to
the standard total Hopf cyclic complex $(\Tot C^\ast (\Hc, \Cb_\d), b, B)$.
\end{proposition}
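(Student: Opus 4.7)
The plan is to establish the quasi-isomorphism in two independent stages: first, identify the diagonal $D^\ast$ of the bicocyclic module $C^{\ast,\ast}(\Uc, \Fc, \Cb_\d)$ with the standard Hopf cocyclic module $C^\ast(\Hc, \Cb_\d)$ on the nose; second, invoke an Eilenberg--Zilber type argument relating the diagonal of a bi-paracyclic module to its total complex, both at the Hochschild level and, through an $S$-map refinement, at the mixed complex level.

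For the first stage, I would verify directly that the map $\Psi_\acl$ of \eqref{PSI-1} and its proposed inverse $\Psi_\acl^{-1}$ of \eqref{PSI} are mutually inverse isomorphisms of cocyclic modules $D^\ast \cong C^\ast(\Hc, \Cb_\d)$. Concretely, after showing $\Psi_\acl\circ \Psi_\acl^{-1} = \Id = \Psi_\acl^{-1}\circ \Psi_\acl$ using the coaction axioms \eqref{Fcoact} and the defining bicrossed product coproduct \eqref{bicrosscoalg}, one has to check intertwining with each structural operator of the diagonal, namely $\d_i = \hd_i\vd_i$, $\s_j = \hs_j\vs_j$, and $\tau = \hta\vta$. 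The compatibility with faces and degeneracies reduces to the definition of the bicrossed coproduct on $\Hc = \Fc \acl \Uc$; the crucial point is the cyclic map, where one must unwind the combined effect of $\vta\circ\hta$ applied to a diagonal element and match it to the antipode-twist $\d \otimes S(h\ps{2})\cdot(h^2\odots h^n\ot 1)$ of the standard cocyclic module. Here the colinearity formula of Lemma \ref{antipode}, the module-algebra and comodule-coalgebra conditions of \eqref{mp2}, and the fact that $S^2_\d = \Id$ (so that $\d$ is a modular pair in involution) will do the job; this is precisely the bookkeeping performed in \cite[\S 3.2--3.3]{mr09}.

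For the second stage, I would appeal to the Eilenberg--Zilber theorem for bi-paracyclic modules as developed in \cite{gj, kr2}. This supplies the Alexander--Whitney map \eqref{AW} and the shuffle map $Sh$ as quasi-inverse maps of the Hochschild complexes $\Tot C^\ast$ and $D^\ast$. To upgrade the statement to mixed complexes, one supplements $AW$ and $Sh$ by their $B$-twisted partners $AW' = AW\circ B\circ H$ and $Sh'$, where $H$ is a standard cyclic contracting homotopy, producing $S$-maps $\overline{AW} = (AW, AW')$ and $\overline{Sh} = (Sh, Sh')$ between the mixed complexes $(\Tot C, b_T, B_T)$ and $(D, b, B)$. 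That $\overline{AW}$ and $\overline{Sh}$ are mutually quasi-inverse morphisms of mixed complexes follows from the standard arguments of \cite{kr2}; composing with the cocyclic isomorphism of the first stage yields the desired quasi-isomorphism with $(\Tot C^\ast(\Hc, \Cb_\d), b, B)$.

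The main obstacle is the verification, in stage one, that $\Psi_\acl$ intertwines the cyclic operators: here one must carefully track how the cocrossed coproduct \eqref{bicrosscoalg}, the SAYD twist by $\d$, and the antipode formula $S(f\acl u) = (1\acl S(u\ns{0}))(S(fu\ns{1})\acl 1)$ interact under iterated applications of $\hta$ and $\vta$. All the algebraic identities needed for this reconciliation are precisely those collected in Lemmas \ref{S-linear}, \ref{action}, \ref{D-algebra}, and \ref{antipode}, so the argument should proceed without obstruction once the right substitutions are made, but the length of the required calculation is considerable. By contrast, stage two is formal, since it only uses the paracyclic Eilenberg--Zilber theorem applied to $C^{\ast,\ast}(\Uc, \Fc, \Cb_\d)$ as a bi-paracyclic module.
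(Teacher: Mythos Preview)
Your proposal is correct and follows essentially the same two-stage strategy outlined in the paper: first the cocyclic identification $D^\ast \cong C^\ast(\Hc,\Cb_\d)$ via $\Psi_{\acl}$, then the bi-paracyclic Eilenberg--Zilber theorem (Alexander--Whitney/shuffle $S$-maps) relating the diagonal to the total mixed complex, with the paper deferring the detailed verifications to \cite[Theorem 3.16 and \S 3.3]{mr09}. One small caveat: Lemmas \ref{S-linear}--\ref{antipode} are actually introduced for later use (the Koszul coaction in \S\ref{Ss:gH}) rather than for the $\Psi_{\acl}$ verification itself, so while invoking them is not wrong, the cyclic-operator check really rests on the bicrossed product axioms \eqref{Fcoact}--\eqref{mp2} and the antipode formula already recorded before those lemmas.
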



The bicocyclic module \eqref{UF} is an amalgam between the Lie algebra homology
of $\Fg$ with coefficients in $\Cb_\d\ot \Fc^{\ot \bullet}$
 and Hopf cyclic cohomology of $\Fc$ with
coefficients in the comodule $\Uc(\Fg)^{\ot \bullet}$.
It can be further reduced to  the bicomplex
$C^{\bullet, \bullet}(\Fg,\Fc, \Cb_\d):=\Cb_\d\ot \wedge^{\bullet}\Fg\ot \Fc^{\ot \bullet} $,
obtained by replacing the tensor algebra
 of $\Uc (\Fg)$ with the exterior algebra of  $\Fg$. To this end,
 recall the action \eqref{specact}, which restricted to $\Fg$ reads
 \begin{align} \label{specactg}
&X\bullet (f^1 \ot \cdots \ot f^q) = (\one \acl X)(\td f)= \\ \notag
&X\ps{1}\ns{0}\rt f^1\ot X\ps{1}\ns{1}(X\ps{2}\ns{0}\rt f^2)\odots X\ps{1}\ns{q-1}\dots X\ps{q-1}\ns{1}(X\ps{q}\rt f^q),
\end{align}
and define
 $\,  \p_\Fg : \Cb_\d\ot \wg^p\Fg\ot\Fc^{\ot q} \ra \Cb_\d\ot \wg^{p-1}\Fg\ot\Fc^{\ot q} $
as being the Lie algebra homology boundary with respect to the action of
$\Fg$ on $\Cb_\d\ot\Fc^{\ot q}$ defined by
\begin{equation} \label{Liebdact}
(\one \ot\td f)\lt X= \d(X)\ot \td{f}-\one \ot X\bullet \td f .
\end{equation}
Via the antisymmetrization map
\begin{align} \label{antsym1}
&\td \a_\Fg:\Cb_\d\ot
\wg^q\Fg \ot \Fc^{\ot p}\ra \Cb_\d\ot \Uc^{\ot q} \ot \Fc^{\ot p} ,
\qquad \td\a_\Fg= \a\ot  \Id, \\ \notag
&\a(\one \ot X^1\wdots X^p)= \frac{1}{p!} \sum_{\s\in S_p}(-1)^\s  \one \ot X^{\s(1)}\odots X^{\s(p)} ,
\end{align}
the pullback of the vertical $b$-coboundary in \eqref{UF}  vanishes, while
the vertical $B$-coboundary becomes
$\p_\Fg$ (\cf~\cite[Proposition 7]{cm2}).

On the other hand, since $\Fc$ is commutative, the coaction \eqref{gij} extends
 from $\Fg$ to
 a unique coaction $\Db_\Fg:\wg^p\Fg\ra \wg^p\Fg\ot \Fc$ satisfying
\begin{align} \label{wgcoact}
\Db_\Fg(X^1\wdots X^q)\,=\, X^1\ns{0}\wdots X^q\ns{0}\ot
X^1\ns{1}\dots X^q\ns{1} .
\end{align}
\medskip
The coboundary $b_\Fc$ is given by
\begin{align} \label{b-cobd}
\begin{split}
&b_\Fc(\one\ot\a\ot   f^1\odots f^q )=\one \ot\a\ot  1\ot f^1\odots f^q+\\
&\sum_{1\ge i\ge q} (-1)^i \one \ot\a\ot  f^1\odots \D(f^i)\odots f^q+\\
&(-1)^{q+1}\one\ot\a\ns{0}\ot   f^1\odots f^q\ot  S(\a\ns{1})  ,
\end{split}
\end{align}
while  the $B$-boundary is
\begin{align} \label{B-bd}
& B_\Fc= \left(\sum_{i=0}^{q-1}(-1)^{(q-1)i}\tau_\Fc^{i}\right) \s \tau_\Fc , \qquad \qquad
\text{where} \\ \notag
&\tau_\Fc(\one\ot \a\ot f^1\odots f^q)= \one\ot\a\ns{0}\ot
S(f^1)\cdot(f^2\odots f^q\ot S(\a\ns{1})) ,\\ \notag
&\s(\one\ot\a\ot f^1\odots f^q)= \ve(f^q)\ot\a\ot f^1\odots f^{q-1} .
\end{align}
Thus we arrive at the bicomplex $C^{\bullet, \bullet}(\Fg, \Fc, \Cb_\d)$, described by
the diagram
\begin{align}\label{UF+}
\begin{xy} \xymatrix{  \vdots\ar[d]^{\p_\Fg} & \vdots\ar[d]^{\p_\Fg}
 &\vdots \ar[d]^{\p_\Fg} & &\\
\Cb_\d \ot \wg^2\Fg \ar@<.6 ex>[r]^{b_{\Fc}} \ar[d]^{\p_\Fg}&\ar@<.6 ex>[l]^{~~B_\Fc}
 \Cb_\d\ot \wg^2\Fg\ot\Fc  \ar@<.6 ex>[r]^{b_{\Fc}} \ar[d]^{\p_\Fg}
 &\ar@<.6 ex>[l]^{~~B_\Fc}\Cb_\d\ot \wg^2\Fg\ot\Fc^{\ot 2} \ar[r]^{~~~~~~b_{\Fc}}   \ar[d]^{\p_\Fg}& \ar@<.6 ex>[l]^{~~B_\Fc}\hdots&\\
\Cb_\d \ot \Fg \ar@<.6 ex>[r]^{b_{\Fc}} \ar[d]^{\p_\Fg}&\ar@<.6 ex>[l]^{~~B_\Fc}  \Cb_\d\ot \Fg\ot\Fc \ar@<.6 ex>[r]^{b_{\Fc}} \ar[d]^{\p_\Fg}&\ar@<.6 ex>[l]^{~~B_\Fc}\Cb_\d\ot \Fg\ot\Fc^{\ot 2}
   \ar[d]^{\p_\Fg} \ar[r]^{~~~~~~~b_{\Fc}}&\ar@<.6 ex>[l]^{~~B_\Fc} \hdots&\\
\Cb_\d \ot \Cb \ar@<.6 ex>[r]^{b_{\Fc}}& \ar@<.6 ex>[l]^{~~B_\Fc} \Cb_\d\ot \Cb\ot\Fc \ar@<.6 ex>[r]^{b_{\Fc}}&\ar@<.6 ex>[l]^{~~B_\Fc}\Cb_\d\ot \Cb\ot\Fc^{\ot 2} \ar[r]^{~~~~~b_{\Fc}} & \ar@<.6 ex>[l]^{~~B_\Fc}\hdots&  . }
\end{xy}
\end{align}
Actually, since $\Fc$ is commutative and its action on $\Cb_\d\ot \wg^q\Fg$ is trivial,
by~\cite[Theorem 3.22]{kr1}  $B_\Fc$ vanishes in Hochschild cohomology
and therefore can be omitted from the above diagram.

Referring to~\cite[Prop. 3.21 and \S 3.3]{mr09} for more details, we state the
conclusion as follows.

\begin{proposition} \label{mixCE}
The map \eqref{antsym1} induces a quasi-isomorphism
between the total complexes  $\Tot C^{\bullet, \bullet}(\Fg, \Fc, \Cb_\d)$
and $\Tot C^{\bullet, \bullet} (\Uc, \Fc, \Cb_\d) $.
\end{proposition}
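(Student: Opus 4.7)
The plan is to verify first that $\td\a_\Fg$ is a morphism of mixed complexes between $\Tot C^{\bullet,\bullet}(\Fg,\Fc,\Cb_\d)$ and $\Tot C^{\bullet,\bullet}(\Uc,\Fc,\Cb_\d)$, and then to invoke a spectral sequence comparison, filtering by $\Fc$-degree, in order to reduce the problem to the classical antisymmetrization quasi-isomorphism in each row.

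For the first step, notice that $\td\a_\Fg=\a\ot\Id$ acts as the identity on the $\Fc^{\ot q}$ factors. Consequently, compatibility with the horizontal operators $b_\Fc,B_\Fc$ reduces to checking that the coaction \eqref{wgcoact} of $\Fc$ on $\wg^p\Fg$ is obtained from the coaction \eqref{gij} on $\Uc^{\ot p}$ via antisymmetrization, together with the multiplicativity property in \eqref{mp2}; both are straightforward. For compatibility with the vertical operators, we invoke the classical Connes--Moscovici result \cite[Proposition 7]{cm2}: under $\a$, the Hopf cyclic Hochschild coboundary $\vb$ for $\Uc(\Fg)$ vanishes, while the Connes coboundary $\vB$ is transported to the Chevalley--Eilenberg Lie algebra homology boundary. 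The only point requiring verification is that, with the $\Fc$-tensor factors viewed as part of the coefficients $\Cb_\d\ot\Fc^{\ot q}$, the induced adjusted $\Fg$-action coming from $\vB$ matches the intrinsic action \eqref{Liebdact} on $\Cb_\d\ot\Fc^{\ot q}$; this is a direct computation using the explicit formula \eqref{specactg} for the $\bullet$-action, together with the relation between $S$-twisted coactions and the modular character $\d$.

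For the second step, we equip both total complexes with the filtration by $\Fc$-degree. In each bicomplex the associated graded in the $E_0$-page is, at level $q$, exactly the vertical complex on the $q$-th row. On the $\Fc$-side, this row is the Chevalley--Eilenberg chain complex $\Cb_\d\ot\wg^\bullet\Fg\ot\Fc^{\ot q}$ with boundary $\p_\Fg$; on the $\Uc$-side, it is the Hopf cyclic mixed subcomplex $\Cb_\d\ot\Uc^{\ot\bullet}\ot\Fc^{\ot q}$ with its $(\vb,\vB)$-structure, viewed as the Hopf cyclic complex of $\Uc(\Fg)$ with coefficients in the SAYD module $\Cb_\d\ot\Fc^{\ot q}$. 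The classical theorem cited above then asserts that $\td\a_\Fg$ realizes a quasi-isomorphism of mixed complexes on each such row.

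The third step is the spectral sequence comparison theorem: since $\td\a_\Fg$ induces an isomorphism on the $E_1$-page (row-wise cohomology) and each total degree involves only finitely many rows on which the map acts, convergence is automatic and the conclusion follows. The main obstacle is the bookkeeping in the verification that antisymmetrization intertwines the transported vertical $B$-coboundary with the intrinsically defined $\p_\Fg$ of \eqref{Liebdact}; all other ingredients are either formal or already established in \cite{cm2, mr09}.
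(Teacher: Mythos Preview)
Your proposal is correct and follows essentially the same approach as the paper, which does not give a self-contained argument but defers to \cite[Prop.~3.21 and \S 3.3]{mr09} after recording the key input (already cited in your outline) that under antisymmetrization the vertical $b$-coboundary of \eqref{UF} vanishes while the vertical $B$-coboundary becomes $\p_\Fg$, \cf~\cite[Proposition~7]{cm2}. Your spectral-sequence comparison filtered by $\Fc$-degree, reducing to the classical antisymmetrization quasi-isomorphism row by row, is exactly the mechanism behind the referenced result.
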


\bigskip

\subsection{Passage to Lie algebra cohomology} \label{Ss: H-C-E}

In order to convert the Lie algebra homology into Lie algebra cohomology,
 we shall resort to the Poincar\'e isomorphism
\begin{align}\label{theta}
\begin{split}
&\FD_\Fg =  \Fd_\Fg \ot \Id : \wedge^q\Fg^\ast\ot \Fc^{\ot p} \ra
  \wedge^{m-q}\Fg \ot  \wedge^{m}\Fg^\ast \ot \Fc^{\ot p} \\
 &\Fd_\Fg(\eta) \, = \,   \iota(\eta) \varpi \ot \varpi^\ast ,
 \end{split}
\end{align}
where $\varpi$ is a covolume element and
$\varpi^\ast $ is the
dual volume element. The contraction operator is defined as follows:
for $\lambda \in \Fg^\ast $,
$\iota(\lambda) : \wedge^{\bullet}\Fg \ra \wedge^{\bullet-1}\Fg$ is
the unique derivation of degree $-1$ which on $\Fg$ is the evaluation map
\begin{align*}
\iota(\lambda) (X) \, = \, \langle \lambda, X \rangle , \qquad \fl \, X \in \Fg ,
\end{align*}
while for $\eta = \lambda_1 \wdots \lambda_q \in \wedge^{q}\Fg^\ast$,
 $\iota(\eta):\wedge^{\bullet}\Fg \ra \wedge^{\bullet-q}\Fg$ is given by
\begin{align*}
\iota (\lambda_1 \wdots \lambda_q) := \iota (\lambda_q) \circ \ldots
 \circ \iota(\lambda_1) , \qquad \fl \, \lambda_1, \ldots , \lambda_q \in \Fg^\ast .
\end{align*}
Noting that the coadjoint action of $\Fg$ induces on $ \wedge^{m}\Fg^\ast$ the action
\begin{align}
\ad^\ast (X) \varpi^\ast = \d (X) \varpi^\ast , \qquad \fl \, X \in \Fg ,
\end{align}
we shall identify $\wedge^{m}\Fg^\ast$ with $\Cb_\d$ as $\Fg$-modules.

To define the coaction of $\Fc$ on $\wg^\bullet\Fg^\ast$, we proceed in a manner similar to
the definition of its coaction on $\Uc (\Fg)$. First, we identify  $\wg^\bullet\Fg^\ast$
with the graded algebra $\Om^\bullet (G)^G$ of left-invariant differential forms on $G$.
We fix a basis
$\{\om_1 , \ldots , \om_m\}$ of $\Om^1 (G)^G$,
and denote by $\{\om_A = \om_{a_1} \wdots  \om_{a_p} \, ;
a_1< \ldots < a_p \}$ the corresponding basis of  $\Om^p (G)^G$. Given any $\om \in \wg^p \Fg^\ast \equiv
\Om^p (G)^G$, we define a polynomial function on $N$ with values in  $\wg^p \Fg^\ast$
by restricting the (not necessarily invariant) form $ \psi^\ast (\om)$ to the tangent space at $e
\in G$:
   \begin{equation} \label{coact3}
(\Db_{\Fg^\ast} \om) (\psi) \, = \,  (\psi^{-1})^\ast (\om)\mid_e  , \qquad  \psi \in N .
 \end{equation}
Explicitly,  ${\Db_{\Fg^\ast}} \om  \in \wg^p \Fg^\ast \ot \Fc (N)$ is defined as follows.
 Express $\om \in \wg^p \Fg^\ast$  in the form
 \begin{equation} \label{coact1}
(\psi^{-1})^\ast (\om) \, = \, \sum_B  \g_\om^B (\psi)\, \om_B, \qquad \psi \in N ,
 \end{equation}
with $\g_A^B  (\psi)$ in
  the algebra of functions on $G$ generated by $\{ \g^\bullet_{\bullet \ldots
  \bullet} (\psi) \}$.
Then
 \begin{equation} \label{coact2}
 \Db_{\Fg^\ast} \om = \sum_B\om_B  \ot \eta_\om^B  , \qquad \text{where} \quad
  \eta_\om^B(\psi) = \g_\om^B  (\psi) (e) .
 \end{equation}


\begin{lemma}
\begin{equation}\label{F-coaction-g*}
\Db_{\Fg^\ast}:\wg^p\Fg^\ast\ra \wg^p \Fg^\ast \ot \Fc (N),
\end{equation} defines a right coaction.
\end{lemma}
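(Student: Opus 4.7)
The plan is to verify the two coaction axioms directly from the geometric definition \eqref{coact3}. Two structural facts about the left action $\psi \mapsto \tpsi$ of $N$ on $G$ do the heavy lifting. First, the action fixes the identity: since $\psi \cdot e = \psi = (\psi \rt e)(\psi \lt e)$ and $G \cap N = \{e\}$, uniqueness of this factorization forces $\tpsi (e) = \psi \rt e = e$. Second, $\psi \mapsto \tpsi$ is a group homomorphism $N \to \Diff (G)$, i.e. $\widetilde{\psi_1 \psi_2} = \tpsi_1 \circ \tpsi_2$; this is the associativity $(\psi_1 \psi_2) \rt \varphi = \psi_1 \rt (\psi_2 \rt \varphi)$ already exploited in \S\ref{Ss: geom HA}. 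In addition, $\eta_\om^B \in \Fc (N)$ since, by construction, $\g_\om^B (\psi)$ is a polynomial in the Taylor coefficients of $\tpsi$ at $e$, so its value at $e$ belongs to the jet algebra generating $\Fc$.

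The counit axiom $(\Id \ot \epsilon) \circ \Db_{\Fg^\ast} = \Id$ reduces to $(\tilde{e}^{-1})^\ast \om \mid_e = \om$, which is immediate from $\tilde{e} = \Id_G$.

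For coassociativity, evaluate both sides at an arbitrary pair $(\psi_1, \psi_2) \in N \times N$. Using the homomorphism property together with the contravariance of pullback,
\begin{equation*}
(\Id \ot \Delta)\, \Db_{\Fg^\ast}(\om)\big|_{(\psi_1, \psi_2)} \, = \, (\widetilde{\psi_1 \psi_2}^{-1})^\ast \om \big|_e \, = \, (\tpsi_1^{-1})^\ast \bigl[ (\tpsi_2^{-1})^\ast \om \bigr] \big|_e .
\end{equation*}
Expand $(\tpsi_2^{-1})^\ast \om = \sum_B \g_\om^B (\psi_2)\, \om_B$ as a form on $G$ via \eqref{coact1}, pull back by $\tpsi_1^{-1}$, and evaluate at $e$. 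Because $\tpsi_1^{-1}(e) = e$, each coefficient $\g_\om^B (\psi_2) \circ \tpsi_1^{-1}$ contributes only its value $\g_\om^B (\psi_2)(e) = \eta_\om^B (\psi_2)$. Hence the expression collapses to
\begin{equation*}
\sum_B \eta_\om^B (\psi_2) \cdot (\tpsi_1^{-1})^\ast \om_B \big|_e \, = \, \sum_{B, C} \om_C \cdot \eta_{\om_B}^C (\psi_1)\, \eta_\om^B (\psi_2) ,
\end{equation*}
which is precisely $(\Db_{\Fg^\ast} \ot \Id)\, \Db_{\Fg^\ast}(\om)\mid_{(\psi_1, \psi_2)}$.

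The calculation is essentially bookkeeping once the two structural facts above are in place. The only delicate point, which I would highlight as the main conceptual obstacle, is that $(\tpsi_2^{-1})^\ast \om$ is no longer left-invariant, so the coefficient functions $\g_\om^B (\psi_2)$ genuinely depend on the point of $G$; it is precisely the fixed-point property $\tpsi_1^{-1}(e) = e$ that localizes their contribution to jet data at $e$, and thereby matches the coalgebra structure of $\Fc$ inherited from group multiplication on $N$.
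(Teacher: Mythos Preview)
Your proof is correct and follows essentially the same route as the paper's: both compute $(\tpsi_1^{-1})^\ast(\tpsi_2^{-1})^\ast\om$ versus $(\widetilde{\psi_1\psi_2}^{-1})^\ast\om$, expand in the basis $\{\om_B\}$, and evaluate at $e\in G$ to obtain $\Delta\eta_\om^C = \sum_B \eta_B^C \ot \eta_\om^B$. Your version is in fact more explicit, since you isolate and justify the fixed-point property $\tpsi(e)=e$---which the paper uses silently when passing from $\g_\om^B(\psi_2)\circ\tpsi_1^{-1}$ to $\eta_\om^B(\psi_2)$---and you also verify the counit axiom, which the paper omits.
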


\proof On applying \eqref{coact1} twice one obtains
\begin{align*} \notag
&(\psi_1^{-1})^{\ast} \left((\psi_2^{-1})^{\ast}(\om)\right) \,  =\,(\psi_1^{-1})^{\ast}
 \big(\sum_B  \g_\om^B (\psi_2)\, \om_B\big) \\
 &\qquad \qquad =
 \,  \sum_{C} \left(\sum_{B} \big(\g_\om^B (\psi_2)\circ (\psi_1^{-1})\big) \, \g_B^C (\psi_1)\right) \, \om_C  .
  \end{align*}
On the other hand,
 \begin{align*}
\left((\psi_1\circ  \psi_2)^{-1}\right)^\ast  (\om)  \, =\, \sum_C  \g_\om^C(\psi_1 \psi_2)\, \om_C .
\end{align*}
Equating the two results, one obtains
\begin{align*}
 \g_\om^C (\psi_1  \psi_2) \, =\,\sum_{B} \left(\g_\om^B (\psi_2)\circ (\psi_1^{-1})\right) \, \g_B^C (\psi_1) ,
\end{align*}
which by evaluation at $e \in G$ yields
\begin{align*}
&\D \eta_\om^C \, = \, \eta_B^C \ot \eta_\om^B .
\end{align*}
In turn, this gives
\begin{align*}
(\Db_{ \Fg^\ast}  \ot \Id) ( \Db_{\Fg^\ast} \om)& \, = \, (\Db_{ \Fg^\ast}  \ot \Id)
 \sum_B  \om_B \ot \eta_\om^B  \\
 &=  \sum_{B, C} \om_C \ot  \eta_B^C \ot \eta_\om^B
\, = \, \sum_{C}  \om_C \ot \D \left(\eta_\om^C\right) \,
 =\, (\Id \ot \D)  ( \Db_{\Fg^\ast} \om ) .
\end{align*}
 \endproof

\begin{remark}\label{remark} \label{ractg*}
{\rm The above coaction
gives rise to a left action
 $ \rt : \wg^p\Fg^\ast\ts N\ra \wg^p\Fg^\ast$
of $N$ on $\wg^\bullet\Fg^\ast$, via
\begin{align}\label{Naction}
{\Db_{\Fg^\ast}} \om = \om\ns{0} \ot \om\ns{1}\quad \Longrightarrow \quad \psi\rt \om\,
=\, \om\ns{1}(\psi) \, \om\ns{0} , \quad \fl \, \psi \in  N
\end{align}
This action coincides in fact with the natural action of $N$ on $\wg^\bullet\Fg^\ast$, that is
 \begin{equation}  \label{natcoact}
 \psi \rt \om \, = \, (\psi^{-1})^\ast  \om \mid_e.
 \end{equation}
 }
 \end{remark}
Indeed, by \eqref{coact2}, for any basis element one has
 \begin{equation*}
\psi \rt \om_A = \sum_B  \g_A^B  (\psi^{-1}) (e) \om_B \,
  = \, (\psi^{-1})^\ast  \om_A \mid_e.
 \end{equation*}

\begin{lemma} The coactions $\Db_{\Fg^\ast}:\wg^\bullet \Fg^\ast\ra \wg^\bullet\Fg^\ast\ot \Fc$ and
$\Db_\Fg:\wg^\bullet \Fg\ra \wg^\bullet\Fg\ot \Fc$ are Poincar\'e dual to each other, {\rm i.e.}
for any $\z \in \wg^\bullet \Fg^\ast$,
\begin{align}\label{PDcoaction}
\Db_{\Fg^\ast}(\z)=\FD_\Fg^{-1}\circ\Db_\Fg\circ\FD_\Fg(\z)
\end{align}
\end{lemma}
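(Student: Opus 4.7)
The plan is to exploit the common geometric origin of both coactions: they arise from the family of diffeomorphisms $\tilde\psi: G \to G$ ($\psi \in N$), each fixing $e$ and hence inducing a linear automorphism $M(\psi) := d\tilde\psi|_e \in \GL(\Fg)$. By \eqref{gij} the matrix coefficients of $M(\psi)$ in the basis $\{X_1,\ldots,X_m\}$ are precisely the evaluations $\eta^j_i(\psi)$, so $\Db_\Fg$ is the universal $\Fc$-valued coaction realizing the assignment $\psi\mapsto M(\psi)$; by Remark \ref{ractg*} (\ie by \eqref{natcoact}) $\Db_{\Fg^\ast}$ is the universal coaction realizing the contragredient map $\psi \mapsto (M(\psi)^{-1})^T$ on $\Fg^\ast$.

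I would then verify the identity basis-element by basis-element. Writing $\FD_\Fg(\omega_A) = \epsilon_A\, X_{A^c} \otimes \varpi^\ast$ with sign $\epsilon_A$ determined by $\iota(\omega_A)\varpi = \epsilon_A X_{A^c}$, the application of $\Db_\Fg$ via \eqref{wgcoact} collapses by antisymmetry to a sum $\sum_B \epsilon_A\epsilon_B\, X_{B^c}\otimes \varpi^\ast\otimes \Pi^B_A$, where $\Pi^B_A \in \Fc$ is the determinant of the $(A^c,B^c)$-submatrix of $(\eta^b_a)$. Applying $\FD_\Fg^{-1}$ absorbs the signs and the $\varpi^\ast$ factor, producing $\sum_B \omega_B \otimes \Pi^B_A$ in $\wg^q\Fg^\ast\otimes\Fc$. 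On the other hand, Remark \ref{ractg*} combined with \eqref{coact2} gives $\Db_{\Fg^\ast}(\omega_A) = \sum_B \omega_B \otimes \nu^B_A$, where $\nu^B_A$ is the $(A,B)$-entry of $\wg^q\bigl((\eta^{-1})^T\bigr)$ in the matrix ring $M_m(\Fc)$.

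The identification $\Pi^B_A = \nu^B_A$ in $\Fc$ is then Jacobi's classical identity for complementary minors: the determinant of a complementary minor of an invertible matrix equals, up to a sign, the overall determinant of the matrix times the corresponding entry of the exterior power of its inverse transpose. The compensating factor $\det(\eta^b_a) \in \Fc$ is absorbed by the $\wg^m\Fg^\ast$ factor in the range of $\FD_\Fg$, in accordance with the identification $\wg^m\Fg^\ast = \Cb_\delta$ and the triviality of the $\Fc$-comodule structure on $\Cb_\delta$. The main obstacle I expect is the sign-bookkeeping between the $\iota$-contraction convention in \eqref{theta}, the Laplace signs appearing in $\Pi^B_A$, and the inverse-transpose operation; once these are aligned, the argument reduces to the complementary-minor identity applied entrywise in $M_m(\Fc)$.
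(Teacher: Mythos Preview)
Your approach is correct but considerably more laborious than the paper's. The paper's proof is two lines: it invokes the tautological pairing identity
\[
\langle (\psi^{-1})^\ast \om |_e, \psi_{\ast e}(X)\rangle = \langle \om, X\rangle
\]
(which encodes exactly your observation that the coactions on $\Fg$ and $\Fg^\ast$ are realized by mutually inverse-transpose matrices, \cf\ Remark~\ref{Db-inverse}), together with the $\Gb$-invariance of the covolume $\varpi$. From these two facts the intertwining with the Poincar\'e map is immediate, with no explicit minor computation needed. Your route unpacks the same content in coordinates: Jacobi's complementary-minor identity is precisely the linear-algebraic shadow of ``duality plus top-form invariance,'' and the $\det(\eta^b_a)$ factor you worry about is not so much ``absorbed'' as literally equal to $1$ in $\Fc$---that is what covolume invariance says, and it is equivalent to (not merely consistent with) the triviality of the $\Fc$-comodule structure on $\wedge^m\Fg^\ast\cong\Cb_\d$. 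Once you recognize this, the sign-bookkeeping you flag as the main obstacle dissolves, since both sides are determined by the same pair of mutually inverse matrices acting on dual exterior powers. The paper's argument buys brevity and makes the geometric reason transparent; yours would give an explicit matrix-entry verification, which is reassuring but unnecessary here.
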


\proof This follows from the obvious identity
\begin{align} \label{dualaction}
 \langle (\psi^{-1})^\ast (\om )\mid_e , \psi_{\ast e} (X) \rangle
 \, = \,  \langle \om  , X \mid_e \rangle , \qquad
 \om \in \Fg^\ast , \, X \in \Fg , \, \psi \in N ,
\end{align}
combined with the fact that any covolume element $\varpi \in \wg^{m} \Fg$ is  $\Gb$-invariant,
\begin{align*}
  \vp^\ast (\varpi) = \varpi , \qquad
 \fl \, \vp \in  \Gb \sbs \Diff (\Rb^n).
\end{align*}
 \endproof

\begin{remark}\label{Db-inverse}
{\rm The equality \eqref{dualaction} together with the definitions \eqref{gij} and
\eqref{coact1} imply}
\begin{align*}
\begin{split}
 \langle \om , X \rangle \, &= \, \langle (\psi^{-1})^\ast (\om )\mid_e , \psi_{\ast e} (X) \rangle
\, = \, \langle \sum_B  \eta_\om^B (\psi)\, \om_B , \sum_C \eta_X^C (\psi) X_C \rangle \\
 &= \, \sum_A  \eta_\om^A (\psi) \, \eta_X^A (\psi)  .
\end{split}
\end{align*}
{\rm This shows that for $\om$ and $X$ dual to each other the corresponding matrices
$\left(\eta^A_{\om, B} \right)$ and $\left(\eta^A_{X, B} \right)$ are
inverse to each other.}
\end{remark}

By transfer of structure,  the bicomplex  \eqref{UF+} becomes
$(C^{\bullet,\bullet}(\Fg^\ast,  \Fc) ,\p_{\Fg^\ast}, b^\ast_\Fc)$,
 \begin{align}\label{UF+*}
\begin{xy} \xymatrix{  \vdots & \vdots
 &\vdots &&\\
 \wdg^2\Fg^\ast  \ar[u]^{\p_{\Fg^\ast}}\ar@<.6 ex>[r]^{b^\ast_\Fc~~~~~~~}& \ar@<.6 ex>[l]^{~~B^\ast_\Fc} \wdg^2\Fg^\ast\ot\Fc \ar[u]^{\p_{\Fg^\ast}} \ar@<.6 ex>[r]^{b^\ast_\Fc}& \ar@<.6 ex>[l]^{~~B^\ast_\Fc}\wdg^2\Fg^\ast\ot\Fc^{\ot 2} \ar[u]^{\p_{\Fg^\ast}} \ar@<.6 ex>[r]^{~~~~~~~~~b^\ast_\Fc} & \ar@<.6 ex>[l]^{~~B^\ast_\Fc}\hdots&  \\
 \Fg^\ast  \ar[u]^{\p_{\Fg^\ast}}\ar@<.6 ex>[r]^{b^\ast_\Fc~~~~~}& \ar@<.6 ex>[l]^{~~B^\ast_\Fc} \Fg^\ast\ot\Fc \ar[u]^{\p_{\Fg^\ast}} \ar@<.6 ex>[r]^{b^\ast_\Fc}& \ar@<.6 ex>[l]^{~~B^\ast_\Fc} \Fg^\ast\ot \Fc^{\ot 2} \ar[u]^{\p_{\Fg^\ast}} \ar@<.6 ex>[r]^{~~~~~b^\ast_\Fc }&\ar@<.6 ex>[l]^{~~B^\ast_\Fc} \hdots&  \\
 \Cb  \ar[u]^{\p_{\Fg^\ast}}\ar@<.6 ex>[r]^{b^\ast_\Fc~~~~~~~}& \ar@<.6 ex>[l]^{~~B^\ast_\Fc} \Cb\ot \Fc \ar[u]^{\p_{\Fg^\ast}}\ar@<.6 ex>[r]^{b^\ast_\Fc}& \ar@<.6 ex>[l]^{~~B^\ast_\Fc} \Cb\ot \Fc^{\ot 2} \ar[u]^{\p_{\Fg^\ast}} \ar@<.6 ex>[r]^{~~~~~b^\ast_\Fc} &\ar@<.6 ex>[l]^{~~B^\ast_\Fc} \hdots&. }
\end{xy}
\end{align}
 The vertical coboundary $\p_{\Fg^\ast}:C^{p,q}\ra C^{p,q+1}$ is the Lie algebra
cohomology coboundary of the Lie algebra $\Fg$ with coefficients in $\Fc^{\ot p}$,
 where the action of $\Fg$ is
\begin{equation}
( f^1\odots f^p) \blacktriangleleft X=-X\bullet(f^1\odots f^p).
\end{equation}
The horizontal $b$-coboundary $b^*_\Fc$, resp.
the horizonal  $B$-boundary $B_\Fc^\ast$  are
 the Hochschild coalgebra cohomology coboundary
 resp. the $B$-boundary operator of $\Fc$  with
coefficients in the comodule $\wedge^q\Fg^\ast$ with respect to the coaction of $\Fc$ defined by
\eqref{coact2}, given by formulae similar to \eqref{b-cobd}, resp. \eqref{B-bd}.

For future reference, we record the conclusion.

\begin{proposition} \label{mixCE*}
The map \eqref{theta} induces a quasi-isomorphism
between the total complexes  $\Tot C^{\bullet, \bullet} (\Uc, \Fc, \Cb_\d) $
and $\Tot C^{\bullet, \bullet} (\Fg^\ast,  \Fc) $.
\end{proposition}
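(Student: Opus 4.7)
The plan is to upgrade the linear bijection $\FD_\Fg$ of \eqref{theta} to an isomorphism of bicomplexes between $(C^{\bullet,\bullet}(\Fg^\ast,\Fc),\p_{\Fg^\ast},b^\ast_\Fc)$ and (a regrading of) $(C^{\bullet,\bullet}(\Fg,\Fc,\Cb_\d),\p_\Fg,b_\Fc)$, and then to chain the resulting quasi-isomorphism with the one of Proposition~\ref{mixCE}. Since $\eta\mapsto\iota(\eta)\varpi$ is already a bijection on each bigraded component, the content of the proof is the verification that $\FD_\Fg$ intertwines both the horizontal and the vertical coboundaries of the two bicomplexes.

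For the horizontal coboundaries, both $b_\Fc$ and $b^\ast_\Fc$ are constructed in a uniform way from the coalgebra structure of $\Fc$ together with the $\Fc$-coaction on the coefficient module; the required compatibility is therefore exactly the identity $\Db_{\Fg^\ast}=\FD_\Fg^{-1}\circ\Db_\Fg\circ\FD_\Fg$ already established in \eqref{PDcoaction}. The same observation handles the Connes $B$-operator, which only involves the cyclic structure built from the coaction. Thus $\FD_\Fg$ intertwines the horizontal rows tautologically, and no further computation is needed in that direction.

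The vertical direction is where the work lies and where I expect the main obstacle. Here I would invoke the classical Koszul--Poincar\'e duality: contraction with a fixed covolume $\varpi\in\wg^m\Fg$ identifies the Chevalley--Eilenberg cohomology differential on $\wg^\bullet\Fg^\ast\ot M$ with the Lie algebra homology boundary on $\wg^{m-\bullet}\Fg\ot M\ot\wg^m\Fg^\ast$, for any $\Fg$-module $M$. The delicate point is the modular twist: since $\ad^\ast(X)\varpi^\ast=\d(X)\varpi^\ast$, the natural $\Fg$-module $\wg^m\Fg^\ast$ is precisely $\Cb_\d$, and the induced action on the transported coefficients becomes exactly the twisted action $\one\ot\td f\lt X=\d(X)\ot\td f-\one\ot X\bullet\td f$ of \eqref{Liebdact}. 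The bookkeeping to be carried out consists in expanding $\iota(\p_{\Fg^\ast}\eta)\varpi$ using the commutation relation $[\iota(\lb),\ad(X)]=\iota(\ad^\ast(X)\lb)$ for $\lb\in\Fg^\ast$, $X\in\Fg$, and verifying that the contribution coming from the $\Fg$-action on $\Fc^{\ot p}$ transports term by term to $\p_\Fg$ under $\FD_\Fg$.

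Once $\FD_\Fg$ is promoted to a bicomplex isomorphism, passage to total complexes yields an isomorphism $\Tot C^{\bullet,\bullet}(\Fg^\ast,\Fc)\cong\Tot C^{\bullet,\bullet}(\Fg,\Fc,\Cb_\d)$ up to an overall degree shift by $m$, and in particular a quasi-isomorphism. Composing with the quasi-isomorphism induced by the antisymmetrization map $\td\a_\Fg$ of Proposition~\ref{mixCE} then gives the desired quasi-isomorphism between $\Tot C^{\bullet,\bullet}(\Fg^\ast,\Fc)$ and $\Tot C^{\bullet,\bullet}(\Uc,\Fc,\Cb_\d)$, which is exactly the statement.
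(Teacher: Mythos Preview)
Your proposal is correct and follows essentially the same route as the paper. The paper presents Proposition~\ref{mixCE*} as a recorded conclusion: the bicomplex $C^{\bullet,\bullet}(\Fg^\ast,\Fc)$ is introduced ``by transfer of structure'' from $C^{\bullet,\bullet}(\Fg,\Fc,\Cb_\d)$ via $\FD_\Fg$, so that $\FD_\Fg$ is tautologically a bicomplex isomorphism, and the quasi-isomorphism with $\Tot C^{\bullet,\bullet}(\Uc,\Fc,\Cb_\d)$ then follows by composing with Proposition~\ref{mixCE}. Your argument spells out the two verifications the paper leaves implicit---that the transported horizontal structure agrees with the Hochschild/Connes operators for the dual coaction (via \eqref{PDcoaction}), and that the transported vertical boundary is the Lie algebra cohomology coboundary with the $\Cb_\d$-twist absorbed into the coefficients---which is exactly the intended content.
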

\medskip

\subsection{$\Fg$-equivariant Hopf cyclic interpretation} \label{Ss:gH}

We shall now give a genuine Hopf cyclic interpretation to
the bicomplex \eqref{UF+*}, by recasting it as a
 $\Fg$-equivariant Hopf cyclic complex for $\Fc$ with coefficients in a graded
 SAYD built out of the Koszul resolution.

We recall that the Koszul resolution associated to the  Lie algebra $\Fg$ is the complex
\begin{equation}
\xymatrix{
V(\Fg^\ast):& \Uc\ar[r]^{\p_K} &\Fg^\ast\ot \Uc\ar[r]^{\p_K}&\wdg^2\Fg^\ast\ar[r]^{\p_K}\ot \Uc&\dots  ,}
\end{equation}
where the coboundary $\p_K$ is the usual
Lie algebra cohomology coboundary of $\Fg$ with coefficients in $\Uc$,
and $\Fg$ acts on $\Uc$ from the right via
\begin{equation}\label{Koszul-coplex}
 \lt:\Uc\ot\Fg\ra \Uc, \quad u\lt X=-Xu, \quad X\in \Fg, \;\;u\in \Uc.
\end{equation}

We endow $V^\oplus(\Fg^\ast) = \wdg^\bullet \Fg^\ast \ot \Uc $ with a coaction
$\Db_K:V^\oplus(\Fg^\ast)\ra  \Fc\ot V^\oplus(\Fg^\ast)$ defined as follows:
\begin{align}\label{Db-Koszul}
 \Db_K(\om\ot u)= S(u\ns{0}\ps{2})\rt S(u\ns{1}\om\ns{1})\ot\om\ns{0}\ot u\ns{0}\ps{1} .
\end{align}
Equivalently, using \eqref{Fcoact} we can rephrase the definition as
\begin{equation}\label{Db-Koszul-=}
\Db_K(\om\ot u)= S(u\ps{2}\ns{0})\rt S(u\ps{1}\ns{1}u\ps{2}\ns{1}\om\ns{1})\ot\om\ns{0}\ot u\ps{1}\ns{0} ;
\end{equation}
here $\om\ns{0}\ot\om \ns{1}$ refers to
 the same coaction of $\Fc$ on $\wdg^\bullet\Fg^\ast$ used in \eqref{F-coaction-g*}.
\begin{lemma}
The map $\Db_K:V^\oplus(\Fg^\ast)\ra  \Fc\ot V^\oplus(\Fg^\ast)$ is a coaction.
\end{lemma}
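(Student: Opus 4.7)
The plan is to verify the two coaction axioms: counitality $(\ve\ot\Id)\circ\Db_K=\Id$ and coassociativity $(\D_\Fc\ot\Id)\circ\Db_K=(\Id\ot\Db_K)\circ\Db_K$.

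Counitality is straightforward. Applying $\ve\ot\Id\ot\Id$ to \eqref{Db-Koszul} and using $\ve\circ S=\ve$, multiplicativity of $\ve$ on $\Fc$, and the relation $\ve(u\rt f)=\ve(u)\ve(f)$ from \eqref{mp2}, we get
\begin{equation*}
\ve(u\ns{0}\ps{2})\,\ve(u\ns{1})\,\ve(\om\ns{1})\ot\om\ns{0}\ot u\ns{0}\ps{1}\;=\;\om\ot u,
\end{equation*}
after invoking counitality of $\D$ on $\Uc$, of $\Db_{\Fg^\ast}$ on $\wdg^\bullet\Fg^\ast$, and of $\Db$ on $\Uc$.

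For coassociativity, I would expand both sides in Sweedler notation. On the left-hand side, $\D_\Fc$ applied to the first tensor factor $S(u\ns{0}\ps{2})\rt S(u\ns{1}\om\ns{1})$ is computed using the module-coalgebra compatibility
\begin{equation*}
\D(v\rt g)\;=\;v\ps{1}\ns{0}\rt g\ps{1}\ot v\ps{1}\ns{1}\bigl(v\ps{2}\rt g\ps{2}\bigr)
\end{equation*}
from \eqref{mp2}, together with the antipode-coproduct rule $\D(S(x))=S(x\ps{2})\ot S(x\ps{1})$ on both $\Uc$ and $\Fc$, and Lemma \ref{antipode} to recognize $\Db\circ S$ in the resulting $\Uc$-expression. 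On the right-hand side, $\Db_K$ applied to $\om\ns{0}\ot u\ns{0}\ps{1}$ is unfolded using coassociativity of $\Db_{\Fg^\ast}$, coassociativity of $\Db$ on $\Uc$, and the compatibility identity
\begin{equation*}
u\ns{0}\ps{1}\ot u\ns{0}\ps{2}\ot u\ns{1}\;=\;u\ps{1}\ns{0}\ot u\ps{2}\ns{0}\ot u\ps{1}\ns{1}u\ps{2}\ns{1}
\end{equation*}
of \eqref{Fcoact}.

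Matching the two expansions then amounts to a systematic application of the five matched-pair axioms \eqref{mp2}, of Lemma \ref{S-linear} to handle the interaction between $S$ and the $\rt$-action, and of the antipode identity $x\ps{1}S(x\ps{2})=\ve(x)$ in $\Fc$ to cancel the surplus Sweedler components produced when $\D_\Fc$ distributes across $S(u\ns{1}\om\ns{1})$. The main obstacle is bookkeeping rather than strategy: the antipode reverses tensor factors in $\Fc$, so the Sweedler indices coming from $u\ns{0}\ps{2}$ become intertwined with those of $u\ns{1}\om\ns{1}$ after the coproduct is applied, and Lemma \ref{antipode} must be invoked at precisely the right stage so that the coactions on the resulting $\Uc$-factor align with those on the $\wdg^\bullet\Fg^\ast$-factor dictated by the Poincar\'e-dual identity \eqref{PDcoaction}. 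Once those alignments are made, the remaining identity between $\Fc$-valued expressions is manifest.
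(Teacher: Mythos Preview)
Your approach is essentially the same as the paper's: both proofs verify coassociativity by expanding $(\D\ot\Id)\circ\Db_K$ and $(\Id\ot\Db_K)\circ\Db_K$ directly, using the matched-pair identity \eqref{mp2} for $\D(v\rt g)$, the compatibility \eqref{Fcoact}, and Lemma~\ref{antipode} to handle $\Db\circ S$ on the $\Uc$-factor. The paper omits the counitality check you include, and it does not actually invoke Lemma~\ref{S-linear} or the Poincar\'e-dual identity \eqref{PDcoaction}---those are extraneous to this particular computation---but otherwise your outline matches the paper's argument.
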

\begin{proof}
We need to check that
$(\id\ot \Db_K)\circ \Db_K= (\D\ot \id)\circ \Db$.
Using  \eqref{Fcoact} we can write
\begin{align*}
\begin{split}
&(\id\ot \Db_K)\circ \Db_K(\om\ot u)=\\
&(\id\ot \Db_K)(S(u\ns{0}\ps{2})\rt S(u\ns{1}\om\ns{1})\ot\om\ns{0}\ot u\ns{0}\ps{1})=\\
&S(u\ns{0}\ps{2})\rt S(u\ns{1}\om\ns{2})\ot S(u\ns{0}\ps{1}\ns{0}\ps{2})\rt S(u\ns{0}\ps{1}\ns{1}\om\ns{1})  \ot\\
& \om\ns{0}\ot u\ns{0}\ps{1}\ns{0}\ps{1}=\\
& S(u\ns{0}\ps{3})\rt S(u\ns{1}\om\ns{2})\ot S(u\ns{0}\ps{2}\ns{0})\rt S(u\ns{0}\ps{1}\ns{1} u\ns{0}\ps{2}\ns{1}\om\ns{1})  \ot\\
& \om\ns{0}\ot u\ns{0}\ps{1}\ns{0}=\\
& S(u\ps{3}\ns{0})\rt S(u\ps{1}\ns{2}u\ps{2}\ns{2}u\ps{3}\ns{1}\om\ns{2})\ot S(u\ps{2}\ns{0})\rt S(u\ps{1}\ns{1}u\ps{2}\ns{1} \om\ns{1})  \ot\\
& \om\ns{0}\ot u\ps{1}\ns{0}.
\end{split}
\end{align*}
On the other hand, using \eqref{Db-Koszul} and Lemma \ref{antipode} we see that
\begin{align*}
\begin{split}
&(\D\ot \id)\circ \Db_K(\om\ot u)= \\
&(\D\ot \id)(S(u\ps{2}\ns{0})\rt S(u\ps{1}\ns{1}u\ps{2}\ns{1}\om\ns{1})\ot\om\ns{0}\ot u\ps{1}\ns{0})=\\
&S(u\ps{2}\ns{0}\ps{2})\ns{0}\rt S(u\ps{1}\ns{2}u\ps{2}\ns{2}\om\ns{2})\ot\\
&S(u\ps{2}\ns{0}\ps{2})\ns{1} S(u\ps{2}\ns{0}\ps{1})\rt S(u\ps{1}\ns{1}u\ps{2}\ns{1}\om\ns{1})\ot \om\ns{0}\ot u\ps{1}\ns{0}=\\
&S(u\ps{2}\ns{0}\ps{3}\ns{0})\rt S(u\ps{1}\ns{2}u\ps{2}\ns{2}\om\ns{2})\ot\\
&(S(u\ps{2}\ns{0}\ps{2})\rt\big( u\ps{2}\ns{0}\ps{3}\ns{1}S(u\ps{1}\ns{1}u\ps{2}\ns{1}\om\ns{1})\big)\ot \om\ns{0}\ot u\ps{1}\ns{0}=\\
& S(u\ps{3}\ns{0})\rt S(u\ps{1}\ns{2}u\ps{2}\ns{2}u\ps{3}\ns{1}\om\ns{2})\ot S(u\ps{2}\ns{0})\rt S(u\ps{1}\ns{1}u\ps{2}\ns{1} \om\ns{1})  \ot\\
& \om\ns{0}\ot u\ps{1}\ns{0}.
\end{split}
\end{align*}
\end{proof}

Since $\Fc$ is commutative,
$V^\oplus(\Fg^\ast)$ endowed
with the trivial $\Fc$-action can be viewed as an SAYD over $\Fc$,
hence its Hopf cyclic complex $C^{\bullet}(\Fc, V^\bullet(\Fg^\ast))$
is well-defined \cite{hkrs1}.
On the other hand, recall that $\Uc(\Fg)$ acts on $\Fc^{\ot q}$ via the action $\bullet$
 defined in \eqref{specactg}. We denote the graded module $V^\oplus(\Fg^\ast)\ot_{\Uc}\ot \Fc^{\ot q}$
 by $C^q_\Uc(\Fc,V^\oplus(\Fg^\ast))$.

\begin{proposition}
The cyclic structure of $C^{\bullet}(\Fc, V^\oplus(\Fg^\ast))$ descends to the quotient
complex $C^{\bullet}_\Uc(\Fc, V^\oplus(\Fg^\ast))$.
\end{proposition}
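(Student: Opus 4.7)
The statement asserts that the cocyclic operators — cofaces $\d_i$, codegeneracies $\s_j$, and cyclic $\tau$ — of $C^\bullet(\Fc, V^\oplus(\Fg^\ast))$ descend through the coequalizer defining $\ot_\Uc$. Concretely, $\Uc$ acts on $V^\oplus(\Fg^\ast) = \wg^\bullet\Fg^\ast \ot \Uc$ by right multiplication on the $\Uc$-tensorand, and on $\Fc^{\ot q}$ via $\bullet$ from \eqref{specact}; the quotient imposes $(\om\ot uv)\ot \td f \sim (\om\ot u)\ot (v\bullet \td f)$. The task is therefore to verify, for each structure map $T$ and each $v\in\Uc$, that $T((\om\ot uv)\ot \td f)$ and $T((\om\ot u)\ot (v\bullet \td f))$ agree modulo the relations defining $\ot_\Uc$.

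The plan is to dispose of the easy operators first. The interior cofaces $\d_i$ (for $1\le i\le q$) and the codegeneracies $\s_j$ touch only interior slots of $\Fc^{\ot q}$ via the coproduct and counit of $\Fc$. Since, as noted after \eqref{specact}, the action $\bullet$ commutes with the coproduct of $\Fc^{\ot \bullet}$ and respects the counit, these operators descend automatically. The coface $\d_0$, which prepends $1\in\Fc$, is handled by the unit property $u\rt 1 = \ve(u)1$ from \eqref{mp2}, which gives $v\bullet (1\ot \td f)=1\ot (v\bullet \td f)$.

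The substantial work concerns $\d_{q+1}$ and $\tau$, both of which invoke the coaction $\Db_K$ of \eqref{Db-Koszul}. Both reduce to a single compatibility identity: modulo the relations defining $\ot_\Uc$, the element $\Db_K(\om \ot uv)$ equals $\Db_K(\om\ot u)$ with the extra $v$ transferred to act via $\bullet$ on the freshly-produced $\Fc$-factor. This is precisely what the antipode-twisted formula \eqref{Db-Koszul} was designed to ensure. The argument proceeds by expanding $\Db_K(\om\ot uv)$ using the multiplicativity relation $\Db(uv)=u\ps{1}\ns{0} v\ns{0}\ot u\ps{1}\ns{1}(u\ps{2}\rt v\ns{1})$ from \eqref{mp2}, Lemma \ref{S-linear} to commute antipodes past the action $\rt$, and Lemma \ref{antipode} to control $\Db\circ S$.

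The principal obstacle is bookkeeping: three independent Sweedler indexings appear simultaneously — the coproduct of $\Uc$, the coaction of $\Uc$ into $\Fc$, and the coaction of $\Fc$ on $\wg^\bullet\Fg^\ast$ — so the verification for $\tau$, which additionally carries a cyclic rotation and an action of $S(f^1)$ on the tail, is the longest. No new geometric input is required; the proof is a structural unwinding using only the matched-pair identities \eqref{mp2} together with the lemmas of Section \ref{Ss: geom HA}.
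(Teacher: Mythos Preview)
Your proposal is correct and follows essentially the same route as the paper: dispose of the interior cofaces and codegeneracies via the compatibility of $\bullet$ with the coproduct and counit of $\Fc$, then carry out the substantive verification for the cyclic operator by a direct Sweedler computation using the matched-pair identities \eqref{mp2} and Lemma \ref{S-linear}. The only minor difference is that the paper checks $\tau$ alone (the last coface then follows from the cocyclic relations) and cites \eqref{Fcoact} rather than Lemma \ref{antipode}, but the underlying computation is the same structural unwinding you describe.
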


\begin{proof}
The action $\bullet$ commutes with comultiplication of $\Fc$,
 although  $\Fc$ is not  $U(\Fg)$-module coalgebra. This ensures that  $\bullet$
 commutes with all cofaces of the Hopf cyclic complex  $C^{\bullet}(\Fc, V^\oplus(\Fg^\ast))$,  except possibly  the very last one. On the other hand, using
 the fact that $\ve(u\rt f)=\ve(u)\ve(f)$ one sees that $\bullet $ also commutes with codegeneracies.
It remains to check that $\bullet$ commutes with $\tau$, more precisely to show that
 \begin{equation*}
 \tau(\om\ot u\ot f^1\odots f^q)=\tau(\om\ot 1\ot u\bullet(f^1\odots f^q)).
 \end{equation*}
By the very definition,
 \begin{align*}
 \begin{split}
 &\tau(\om\ot u\ot f^1\odots f^q)=\\
 &\om\ns{0}\ot_\Uc u\ns{0}\ps{1}\ot S(f^1)\cdot(f^2\odots f^q \ot S(u\ns{0}\ps{2})\rt S(u\ns{1}\om\ns{1})).
 \end{split}
 \end{align*}
 On the other hand  by using \eqref{Fcoact}, \eqref{mp2}, and Lemma \ref{S-linear},
  we see that
 \begin{align*}
 \begin{split}
 &\tau(\om\ot 1\ot u\bullet(f^1\odots f^q))=\\
 &\om\ns{0}\ot 1\ot_\Uc S(u\ps{1}\ns{0}\rt f^1))\cdot\big( u\ps{1}\ns{1} u\ps{2}\ns{0}\rt f^2\ot\dots\\
  &~~~~~~~~~~~~~~~~~~~~~~~\dots \ot u\ps{1}\ns{q-1}\dots u\ps{q-1}\ns{1} u\ps{1}\ns{0}\rt f^q \ot S(\om\ns{1})\big)=\\
 &\om\ns{0}\ot 1\ot_\Uc (u\ps{1}\ns{0}\rt S(f^1))\cdot\big( u\ps{2}\ns{0}\rt f^2\ot\dots\\
  &~~~~~~~~~~~~~\dots \ot u\ps{2}\ns{q-2}\dots u\ps{q-1}\ns{1} u\ps{1}\ns{0}\rt f^q \ot S(u\ps{1}\ns{1})S(\om\ns{1})\big)=\\
  &\om\ns{0}\ot 1\ot_\Uc u\ps{1}\ns{0}\ps{1}\ns{0}\rt S(f^1\ps{q}) u\ps{2}\ns{0}\rt f^2\ot\dots\\
  &\dots \ot u\ps{1}\ns{0}\ps{1}\ns{q-1}\dots u\ps{1}\ns{0}\ps{q-1}\ns{1} u\ps{1}\ns{0}\ps{q}\ns{0}\rt S(f^1\ps{2}) \\
  &u\ps{1}\ns{0}\ps{1}\ns{0}\rt S(f^1\ps{q}) u\ps{2}\ns{q-2}\dots u\ps{q-1}\ns{1} u\ps{1}\ns{0}\rt f^q \ot \\
  &u\ps{1}\ns{0}\ps{1}\ns{q}\dots u\ps{1}\ns{0}\ps{q}\ns{1} u\ps{1}\ns{0}\ps{q+1}\rt  S(u\ps{1}\ns{1})S(\om\ns{1})=\\
  &\om\ns{0}\ot 1\ot_\Uc u\ns{0}\ps{1}\bullet(S(f^1)\cdot(f^2\odots f^q\ot S(u\ns{0}\ps{2})\rt
S(u\ns{1}\om\ns{1}))) .
   \end{split}
 \end{align*}
 \end{proof}

As a result, via the induced structure,
 the graded module $C^q_\Uc(\Fc, V^{\oplus}(\Fg^\ast))$ becomes a cocyclic module.
 To emphasize the differential graded structure of the coefficients
 $V^{\oplus}(\Fg^\ast)$, we
 now switch to denoting it $V^{\bullet}(\Fg^\ast)$. Accordingly,
 $C^\bullet_\Uc(\Fc, V^{\bullet}(\Fg^\ast))$ will be viewed as a $(b, B)$-cyclic
 complex
\begin{equation}
\xymatrix{
  V^{\bullet}(\Fg^\ast)\ot _\Uc\Cb \ar@<.6 ex>[r]^{b_{\Fc,K}}&\ar@<.6 ex>[r]^{b_{\Fc,K}} \ar@<.6 ex>[l]^{B_{\Fc,K}} V^{\bullet}(\Fg^\ast)\ot_\Uc \Fc& \ar@<.6 ex>[l]^{B_{\Fc,K}}\ar@<.6 ex>[r]^{~~~~~b_{\Fc,K}} V^{\bullet}(\Fg^\ast)\ot_\Uc \Fc^{\ot 2}&  \ar@<.6 ex>[l]^{~~~~~~~B_{\Fc,K}}\cdots
}
\end{equation}
 with the induced boundaries
\begin{align*}
&b_{\Fc,K}: C^q_\Uc(\Fc, V^{p}(\Fg^\ast))\ra C^{q+1}_\Uc(\Fc, V^{p}(\Fg^\ast)),\\
&B_{\Fc,K}: C^q_\Uc(\Fc, V^{p}(\Fg^\ast))\ra C^{q-1}_\Uc(\Fc, V^{p}(\Fg^\ast)),
\end{align*}
supplemented by the Koszul coboundary
\begin{align*}
\p_{\Fc,K} :=
  \p_K\ot \id_\Fc: C^q_\Uc(\Fc, V^{p}(\Fg^\ast))\ra C^{q+1}_\Uc(\Fc, V^{p+1}(\Fg^\ast)).
\end{align*}

\begin{theorem} \label{thm:dgH}
$C^\bullet_\Uc(\Fc, V^{\bullet}(\Fg^\ast))$ is
a $\Fg$-equivariant Hopf cyclic complex of $\Fc$ with coefficients in the
differential graded SAYD  $V^\bullet(\Fg^\ast)$, and
the map $\kappa:\; C^\bullet_\Uc(\Fc, V^\bullet(\Fg^\ast))\ra C^\bullet(\Fc,\Fg^\ast)$ defined by
\begin{align*}
 \kappa(\om\ot  u\ot_\Uc f^1\odots f^q)= \om\ot u\bullet(f^1\odots f^q) ,
\end{align*}
induces an isomorphism of total complexes.
\end{theorem}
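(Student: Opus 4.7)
The proof breaks into three components corresponding to the three claims of the theorem. First, I verify that $V^\bullet(\Fg^\ast)$ is a differential graded SAYD module over $\Fc$ with trivial $\Fc$-action and coaction $\Db_K$. Since $\Fc$ is commutative and the action is trivial, both the anti-Yetter-Drinfeld and stability conditions collapse to the counit property of the coaction, which I check by applying $\epsilon$ to the first factor of \eqref{Db-Koszul} and using $\epsilon(u \rt f) = \epsilon(u) \epsilon(f)$ from \eqref{mp2} together with counitality of the $\Fc$-coaction on $\Uc$. The differential graded compatibility of $\p_K$ with $\Db_K$ follows from the right-$\Uc$-linearity of $\p_K$ recorded in \eqref{Koszul-coplex}, combined with Lemmas \ref{S-linear}, \ref{D-algebra} and \ref{antipode}.

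Second, the quotient $C^\bullet_\Uc(\Fc, V^\bullet(\Fg^\ast))$ needs to inherit a well-defined cocyclic structure equipped with a compatible Koszul differential. The preceding proposition has already shown that the action $\bullet$ of $\Uc$ on $\Fc^{\ot q}$ commutes with every cocyclic operator. Since $\p_K$ is $\Uc$-equivariant by \eqref{Koszul-coplex}, $\p_{\Fc,K} = \p_K \ot \id$ descends to the quotient; its anti-commutation with the $(b_{\Fc,K}, B_{\Fc,K})$ pair is an immediate consequence of step one.

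Third, I address the map $\kappa$. It respects $\ot_\Uc$ because $\bullet$ is an action:
\[
 \kappa((\om \ot uv) \ot_\Uc \td f) \, = \, \om \ot (uv) \bullet \td f \, = \, \om \ot u \bullet (v \bullet \td f) \, = \, \kappa((\om \ot u) \ot_\Uc (v \bullet \td f)) .
\]
Bijectivity is immediate: every class in $V^p(\Fg^\ast) \ot_\Uc \Fc^{\ot q}$ admits the canonical representative $\om \ot 1 \ot_\Uc \td f$, and $\kappa$ sends it to $\om \ot \td f$; the inverse is $\om \ot \td f \mapsto \om \ot 1 \ot_\Uc \td f$. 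For intertwining the vertical differentials, evaluating $\p_{\Fc,K}$ on this representative produces a sum of the form $\sum_i \om \wg \om^i \ot X_i \ot_\Uc \td f$, which $\kappa$ maps to $\sum_i \om \wg \om^i \ot X_i \bullet \td f$, matching $\p_{\Fg^\ast}$ through the action \eqref{Liebdact}. For the horizontal Hochschild direction, all faces but the last match tautologically, since they involve only the coproduct of $\Fc$; the last face of $b_{\Fc,K}$ involves $\Db_K(\om \ot 1) = S(\om\ns{1}) \ot \om\ns{0} \ot 1$, which under $\kappa$ recovers the last face in \eqref{b-cobd} determined by the coaction $\Db_{\Fg^\ast}$, and the analogous computation handles $B_{\Fc,K}$.

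The main computational obstacle is the last assertion, namely that $\Db_K$ reduces under $\kappa$ to $\Db_{\Fg^\ast}$: the apparently complicated formula \eqref{Db-Koszul} is engineered precisely so that, modulo the $\Uc$-action $\bullet$, the $\Uc$-twisting collapses into the Poincar\'e-dual coaction on $\wdg^\bullet \Fg^\ast$. Once this matching is established, $\kappa$ is a morphism of bicomplexes, and bijectivity upgrades it to an isomorphism of total complexes.
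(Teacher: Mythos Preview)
Your argument is correct and follows essentially the same route as the paper's proof: exhibit the inverse $\kappa^{-1}(\om\ot \td f)=\om\ot 1\ot_\Uc \td f$, observe that the cocyclic operators match on canonical representatives (the paper phrases this as ``$\kappa^{-1}$ evidently commutes with the cyclic structure''), and then verify that $\kappa$ intertwines the Koszul coboundary with $\p_{\Fg^\ast}$. Two small cosmetic points: in your description of $\p_{\Fc,K}$ on the canonical representative you omitted the $\p\om\ot 1\ot_\Uc \td f$ term (which the paper does include and which matches the $d\om$ part of $\p_{\Fg^\ast}$ trivially), and the reference \eqref{Liebdact} pertains to the homology action rather than the cohomology action actually used in $\p_{\Fg^\ast}$; neither affects the validity of the argument.
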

\begin{proof}
Indeed, the inverse map $\kappa^{-1}$ is given by
\begin{equation*}
\kappa^{-1}(\om\ot \td f)= \om\ot 1\ot_\Uc \td f.
\end{equation*}
Evidently, $\kappa^{-1}$ commutes with the cyclic structure. In addition,
\begin{align*}
&\kappa(\p_{\Fc, K}(\om\ot u\ot_\Uc\td f))=\kappa(\p\om\ot u\ot_\Uc \td f-
\sum_i \om\wdg \t^i\ot X_iu\ot_\Uc \td f)=\\
&\p\om \ot u\bullet\td f-\sum  \om\wdg \t^i\ot  X_iu\bullet \td f=\p_{\Fg^\ast}(\kappa(\om\ot u\ot_\Uc \td f)).
\end{align*}
\end{proof}

  \subsection{Linkage with group cohomology} \label{Ss: Group C}

Recalling that $\Fc$ can be viewed as an algebra of regular functions on $N$,
it makes sense to try to relate the complex $C^{\bullet,\bullet}(\Fg^\ast,  \Fc)$ to the
group cohomology of $N$ with coefficients in $\wedge^\bullet \Fg^\ast$. To this end
we introduce the homogeneous version of the complex \eqref{UF+*},
\begin{equation}\label{coinv}
C^{p,q}_{\rm coinv}(\Fg^\ast, \Fc):= ( \wg^p\Fg^\ast\ot  \Fc^{\ot q+1})^\Fc  ,
\end{equation}
defined as follows. An element  $ \sum \a\ot \td f\in ( \wg^p\Fg^\ast\ot  \Fc^{\ot q+1})^\Fc $
if it satisfies the $\Fc$-coinvariance condition:
\begin{equation}\label{coinv-condition}
\sum \a\ns{0}\ot \;\td{f}\ot S(\a\ns{1})=\sum \; \a\ot\td{f}\ns{0}\ot \td{f}\ns{1} ;
\end{equation}
here for  $\td f=f^0\odots f^q$, we have denoted
\begin{align} \label{coinv-not}
\td f\ns{0}\ot \td f\ns{1}\, = \, f^0\ps{1}\odots
f^q\ps{1}\ot f^0\ps{2}\cdots f^q\ps{2}.
\end{align}
One can identify the cochains of \eqref{UF+*} and \eqref{coinv} via the Connes duality  isomorphism defined originally in \cite{kr3} and is recalled here by
$\Ic:  \wg^p\Fg^\ast\ot\Fc^{\ot q}\ra (\wg^p\Fg^\ast\ot\Fc^{\ot q+1})^\Fc$ given by
\begin{align}\label{I-1}
\begin{split}
 &\Ic( \a\ot \td f) \,=\\
 &\, =a\ns{0}\ot f^1\ps{1}\ot S(f^1\ps{2})f^2\ps{1}\odots S(f^{q-1}\ps{2})f^q\ps{1}  \ot S(\a\ns{1} f^q\ps{2}).
 \end{split}
\end{align}
To check that the image of  $\Ic$ is formed of coinvariant elements, let
$\t\ot \td g:= \a\ns{0}\ot f^1\ps{1}\ot S(f^1\ps{2})f^2\ps{1}\odots
S(f^{q-1}\ps{2})f^q\ps{1}  \ot S(\a\ns{1} f^q\ps{2})$, we compute both sides of the
condition \eqref{coinv-condition}.
On the one hand,
\begin{align*}
&\t\ns{0}\ot \td g\ot S(\t\ns{1})= \\
&\a\ns{0}\ot f^1\ps{1}\ot S(f^1\ps{2})f^2\ps{1}\odots S(f^{q-1}\ps{2})f^q\ps{1}  \ot S(\a\ns{2} f^q\ps{2})\ot S(\a\ns{1})
\end{align*}
On the other hand,
\begin{align*}
&\t\ot \td g\ns{0}\ot \td g\ns{1}=\\
&\a\ns{0}\ot f^1\ps{1}\ot S(f^1\ps{4})f^2\ps{1}\ot S(f^2\ps{4})f^3\ps{1} \ot\dots \ot S(f^{q-1}\ps{4})f^q\ps{1}  \ot  S(\a\ns{2} f^q\ps{4})\ot \\
&  f^1\ps{2}S(f^1\ps{3})f^2\ps{2}S(f^2\ps{3})f^3\ps{2} \dots S(f^{q-1}\ps{3})f^q\ps{2}  S(\a\ns{1} f^q\ps{3})=\\
&\a\ns{0}\ot f^1\ps{1}\ot S(f^1\ps{2})f^2\ps{1}\odots S(f^{q-1}\ps{2})f^q\ps{1}  \ot S(\a\ns{2} f^q\ps{2})\ot S(\a \ns{1}).
\end{align*}
The fact that $\Ic$ is indeed an isomorphism can be seen by verifying that the formula
\begin{align}\label{I-1-inverse}
\Ic^{-1}(\a\ot \td f)= \a\ot f^0\ps{1}\ot  f^0\ps{2}f^1\ps{1}\odots f^0\ps{q}\dots f^{q-2}\ps{2}f^{q-1}\ve(f^q)
\end{align}
gives its inverse $\Ic^{-1}:  (\wg^p\Fg^\ast\ot\Fc^{\ot q+1})^\Fc \ra \wg^p\Fg^\ast\ot\Fc^{\ot q}$.

The isomorphism $\Ic$ turns the action \eqref{specactg} into the diagonal action
\begin{align} \label{diagact}
X\rt (f^0\odots f^q )=\sum_{i=0}^q f^0\odots X\rt f^i\odots f^q  .
\end{align}
Specifically, using
 the fact that  $\Fg$ acts on $\Fc$ by derivations in conjunction with the relations
\eqref{mp2} and  Lemma \eqref{S-linear}, one checks the identity
\begin{align} \label{switch}
&\Ic(\a\ot X\bullet \td f)=\\ \notag
& \a\ns{0}\ot X\rt( f^1\ps{1}\ot S(f^1\ps{2})f^2\ps{1}\odots S(f^{q-1}\ps{2})f^q\ps{1}  \ot S(\a\ns{1} f\ps{2})).
\end{align}

By transfer of structure, $ C^{\bullet,\bullet}_{\rm coinv}(\Fg^\ast, \Fc)$
can be equipped with the boundary operators
\begin{align*}
 &\p_\Fg^{\rm coinv}:C^{p,q}_{\rm coinv}(\Fg^\ast, \Fc)\longrightarrow  C^{p+1,q}_{\rm coinv}(\Fg^\ast, \Fc),\\\notag
 &\\
& b_\Fc^{\rm coinv}: C^{p,q}_{\rm coinv}(\Fg^\ast, \Fc)\longrightarrow  C^{p,q+1}_{\rm coinv}(\Fg^\ast, \Fc), \\\notag
&\\
&B_\Fc^{\rm coinv}: C^{p,q}_{\rm coinv}(\Fg^\ast, \Fc)\ra C^{p,q-1}_{\rm coinv}(\Fg^\ast, \Fc),
\end{align*}
as follows:
 \quad $\p_\Fg^{\rm coinv}$ is the restriction of  the Lie algebra cohomology coboundary of $\Fg$ with coefficients in $ \Fc^{\ot q+1}$ on which now $\Fg$ acts  from the right by
\begin{align*}
&( f^0\odots f^{q})\lt X:= - X \rt (f^0\odots  f^{q});
\end{align*}
 \begin{align*}
 &b_\Fc^{\rm coinv}( \a\ot f^0\odots f^q):=\\
 &~~~~~~~~~~~~~~~~\sum_{i=0}^{q+1}(-1)^i \a\ot f^0\odots f^{i-1}\ot 1\ot f^i\odots f^q ; \\
& B_\Fc^{\rm coinv}:=\sum_{i=0}^{q-1}(-1)^{(q-1)i}\tau^i_{\rm coinv}\s_{\rm coinv}\tau_{\rm coinv}, \\
 &\tau_{\rm coinv}(\a\ot f^0\odots f^q):=  \a\ot f^1\odots f^q\ot f^0,\\
 & \s_{\rm coinv}(\a\ot f^0\odots f^q):= \a\ot f^0\odots f^{q-1}f^{q}.
  \end{align*}
 One obtains the bigraded module
\begin{align}\label{g*F}
\begin{xy} \xymatrix{  \vdots & \vdots
 &\vdots &&\\
 (\wdg^2\Fg^\ast\ot \Fc)^\Fc  \ar[u]^{\p_\Fg^{\rm coinv}}\ar@<.6 ex>[r]^{b_\Fg^{\rm coinv}~~~~~~~}& \ar@<.6 ex>[l]^{~~B_\Fg^{\rm coinv}} (\wdg^2\Fg^\ast\ot\Fc^{\ot 2})^\Fc \ar[u]^{\p_\Fg^{\rm coinv}} \ar@<.6 ex>[r]^{b_\Fg^{\rm coinv}}& \ar@<.6 ex>[l]^{~~B_\Fg^{\rm coinv}}(\wdg^2\Fg^\ast\ot\Fc^{\ot 3})^\Fc \ar[u]^{\p_\Fg^{\rm coinv}} \ar@<.6 ex>[r]^{~~~~~~~~~b_\Fg^{\rm coinv}} & \ar@<.6 ex>[l]^{~~B_\Fg^{\rm coinv}}\hdots&  \\
 (\Fg^\ast \ot \Fc)^\Fc  \ar[u]^{\p_\Fg^{\rm coinv}}\ar@<.6 ex>[r]^{b_\Fg^{\rm coinv}~~~~~}& \ar@<.6 ex>[l]^{~~B_\Fg^{\rm coinv}} (\Fg^\ast\ot\Fc^{\ot 2})^\Fc \ar[u]^{\p_\Fg^{\rm coinv}} \ar@<.6 ex>[r]^{b_\Fg^{\rm coinv}}& \ar@<.6 ex>[l]^{~~B_\Fg^{\rm coinv}} (\Fg^\ast\ot \Fc^{\ot 3})^\Fc \ar[u]^{\p_\Fg^{\rm coinv}} \ar@<.6 ex>[r]^{~~~~~b_\Fg^{\rm coinv} }&\ar@<.6 ex>[l]^{~~B_\Fg^{\rm coinv}} \hdots&  \\
 (\Cb\ot \Fc)^\Fc  \ar[u]^{\p_\Fg^{\rm coinv}}\ar@<.6 ex>[r]^{b_\Fg^{\rm coinv}~~~~~~~}& \ar@<.6 ex>[l]^{~~B_\Fg^{\rm coinv}} (\Cb\ot \Fc^{\ot 2})^\Fc \ar[u]^{\p_\Fg^{\rm coinv}}\ar@<.6 ex>[r]^{b_\Fg^{\rm coinv}}& \ar@<.6 ex>[l]^{~~B_\Fg^{\rm coinv}} (\Cb\ot \Fc^{\ot 3})^\Fc \ar[u]^{\p_\Fg^{\rm coinv}} \ar@<.6 ex>[r]^{~~~~~b_\Fg^{\rm coinv}} &\ar@<.6 ex>[l]^{~~B_\Fg^{\rm coinv}} \hdots&,  }
\end{xy}
\end{align}

\begin{proposition} \label{tocoinv}
     The map $\Ic:  C^{\bullet, \bullet}_{\rm coinv}(\Fg^\ast, \Fc)\ra C^{\bullet, \bullet}(\Fg^\ast, \Fc)$
is an isomorphism of bicomplexes.
\end{proposition}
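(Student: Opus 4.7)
The plan is to verify each of the three requirements for $\Ic$ (or equivalently $\Ic^{-1}$, depending on direction) to realize an isomorphism of bicomplexes: bijectivity, and intertwining with the vertical coboundary $\p_{\Fg^\ast}$, the horizontal Hochschild coboundary $b_\Fc^\ast$, and the operator $B_\Fc^\ast$. That the image of $\Ic$ lands in the coinvariant subspace is already verified in the computation preceding the proposition, and the formula \eqref{I-1-inverse} provides a candidate inverse.

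First I would establish bijectivity by a direct Sweedler-indexed computation that $\Ic \circ \Ic^{-1}$ and $\Ic^{-1} \circ \Ic$ both reduce to the identity. The telescoping structure of both formulas, combined with repeated application of the antipode axiom $f\ps{1} S(f\ps{2}) = \ve(f) = S(f\ps{1}) f\ps{2}$, causes all intermediate factors to collapse. Next, the intertwining with $\p_{\Fg^\ast}$ is essentially the content of the identity \eqref{switch}: applying $\Ic$ replaces the action $\bullet$ on $\Fc^{\ot q}$ by the diagonal action $\rt$ on $\Fc^{\ot q+1}$. Since the Chevalley--Eilenberg coboundary depends on the coefficient action in a purely functorial way, this intertwining is then automatic.

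For the horizontal coboundary $b_\Fc$, one must match the coalgebra Hochschild coboundary, whose terms involve insertions of $1$, applications of $\D$, and a final antipode-twisted insertion, with the homogeneous coboundary $b_\Fc^{\rm coinv}$, which only inserts $1$'s. Expanding $b_\Fc^{\rm coinv} \circ \Ic$ and $\Ic \circ b_\Fc^\ast$ in Sweedler notation, the alternating signs telescope the intermediate $\D$-insertions via $f\ps{1} \ot f\ps{2} = f\ps{1}\ps{1} S(f\ps{1}\ps{2}) f\ps{2} \ot \ldots$, while the extremal face is handled by the coinvariance relation \eqref{coinv-condition} together with Lemma \ref{S-linear}. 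The intertwining with $B_\Fc$ then reduces to verifying $\Ic \circ \tau_\Fc = \tau_{\rm coinv} \circ \Ic$, since both $B$-operators are assembled from the respective cyclic operator and the (matching) degeneracies in the same way.

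This last step is the main obstacle: the cyclic operator $\tau_\Fc$ carries the full twist involving the coaction on $\wg^\bullet\Fg^\ast$, the diagonal action $\bullet$, and the antipode $S$, while $\tau_{\rm coinv}$ is merely cyclic rotation. Matching them is the heart of Connes' duality from \cite{kr3}. One tracks Sweedler indices carefully, uses coinvariance to slide the ``hidden'' $(q{+}1)$-th slot produced by $\Ic$ across the cycle, and repeatedly invokes the antipode axiom and Lemma \ref{S-linear} to collapse internal factors; after this bookkeeping the two sides coincide, and the bicomplex isomorphism follows.
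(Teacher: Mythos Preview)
Your proposal is correct and follows essentially the same strategy as the paper: bijectivity from the explicit inverse \eqref{I-1-inverse}, the $\p$-intertwining from \eqref{switch}, a direct Sweedler check for $b$, and reduction of the $B$-intertwining to $\Ic\tau = \tau_{\rm coinv}\Ic$ (plus the easy $\sigma$-check). The only minor discrepancy is that you invoke Lemma \ref{S-linear} and the coinvariance condition for the $b$- and $\tau$-steps, whereas the paper's actual computations there are pure antipode telescoping (using $S(f\ps{2})f\ps{1}=\ve(f)$ repeatedly) and do not need Lemma \ref{S-linear}; this is harmless but worth noting so you do not overcomplicate the bookkeeping.
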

\begin{proof}
Using \eqref{switch}, one checks that $\p_\Fg^{\rm coinv}=\Ic\circ\p^\ast_\Fg\circ\Ic^{-1}$.

 A similar but easier calculation, using that $\D(1)=1\ot 1$, verifies
the relation $b_\Fc^{\rm coinv}=\Ic\circ b_\Fc^\ast\circ \Ic^{-1}$.

One shows that $\Ic\tau=\tau_{\rm coinv}\Ic$ as follows:
\begin{align*}
&\Ic\tau(\a\ot f^1\odots f^q)=\\
&\Ic(\a\ns{0}\ot S(f^1\ps{q})f^2\odots S(f^1\ps{2})f^q\ot S(f^1\ps{1})S(\a\ns{1}))=\\
&\a\ns{0}\ot (S(f^1\ps{q})f^2)\ps{1}\ot S((S(f^1\ps{q})f^2)\ps{2})(S(f^1\ns{q})f^2)\ps{1} \ot\dots\\
   &\dots\ot S((S(f^1\ps{2})f^q)\ps{2}) (S(f^1\ps{2})f^q)\ps{1}\ot \\
   &S((S(f^1\ps{2})f^q)\ps{2})(S(f^1\ps{1})S(\a\ns{2}))\ps{1}\ot S((S(f^1\ps{1})S(\a\ns{2}))\ps{2}) S(\a\ns{1})=\\
   &\a\ns{0}\ot  S(f^1\ps{2})f^2\ps{1}\odots S(f^{q-1}\ps{2})f^q\ps{1}  \ot S(\a\ns{1} f^q\ps{2})\ot f^1\ps{1}=\\
   &\tau_{\rm coinv}\Ic(\a\ot f^1\odots f^q).
\end{align*}
Leaving to the reader the verification of
 the similar relation $\Ic\s=\s_{\rm coinv}\Ic$, we conclude
 that $\p_\Fg^{\rm coinv}$,  $b_\Fc^{\rm coinv}$ and $B_\Fc^{\rm coinv}$ are well-defined
 and form a bicomplex. By the very construction, $\Ic$ is automatically a map of bicomplexes.
\end{proof}

The appropriate group cohomology complex for our purposes is that for
 the cohomology with polynomial cochains of the inverse limit group
\begin{align}
\FN : = \limproj_{k \ra \infty} \FN_k
\end{align}
with $\FN_k$ denoting the group of invertible
 $k$-jets at $0$ of diffeomorphisms in $N$. Recall, \cf~\cite[\S 1.2-1.3]{mr09}, that
the algebra
$\Fc$ is precisely the polynomial algebra generated by the components of
such jets.
 When there is no danger of confusion, we shall abuse
the notation by simply
writing $\psi \in \FN$ instead of $j_0^\infty(\psi) \in \FN \,$ with $\psi \in N$.
We denote by
$C^q_{\rm pol}(\FN,\wedge^p \Fg^\ast)$  the set of polynomial functions
$$c: \underset{q+1\,\text{times}}{\underbrace{\FN \times \ldots \times \FN}} \ra \wedge^p\Fg^\ast $$
 satisfying the covariance condition
 \begin{align} \label{covcond}
   c (\psi_0 \psi, \dots ,\psi_q \psi)= \,\psi^{-1} \rt c (\psi_0,\dots,\psi_q) , \qquad \fl \, \psi \in \FN ,
\end{align}
where $\rt$ stands for the left action defined in Remark \ref{ractg*}. The coboundary
for group cohomology with homogeneous cochains is
\begin{align} \label{homb}
b_{\rm pol} c (\psi_0, \dots, \psi_{q+1}) =\sum_{i=0}^{q+1}(-1)^i
c (\psi_0,\dots,\hat{\psi_i},\dots, \psi_{q+1}) .
\end{align}
In addition, we equip $C^\bullet_{\rm pol}(\FN,\wedge^\bullet \Fg^\ast)$ with a
$B$-boundary $B_{\rm pol}$
made out  as usual (\cf \eg  \eqref{B-bd}) of the cyclic operator $\tau_{\rm pol}$
and the codegeneracy $\s_{\rm pol}$, which in turn  are given by
\begin{align}\label{t-s-pol}
\begin{split}
&\tau_{\rm pol}(c)(\psi_0, \dots, \psi_q)= c(\psi_1,  \dots, \psi_q,\psi_0), \\
&\s_{\rm pol}(c)(\psi_0,\dots, \psi_{q-1})=c(\psi_0,\dots, \psi_{q-1}, \psi_{q-1}).
\end{split}
\end{align}
The vertical operators are given by the  coboundaries
 $\p_{\rm pol}: C^q_{\rm pol}(\FN,\wedge^p \Fg^\ast)\ra C^q_{\rm pol}(\FN,\wedge^{p+1} \Fg^\ast) $,
\begin{align}\label{p-pol}
(\p_{\rm pol} c) (\psi_0, \dots, \psi_{q}) =\p (c(\psi_0, \dots, \psi_{q})) -
 \sum_i \t^i\wdg (X_i\rt c)(\psi_0, \dots, \psi_{q}) .
\end{align}
Here  $\p:\wedge^p \Fg^\ast\ra \wedge^{p+1} \Fg^\ast$ stands for
the  Lie algebra cohomology
coboundary of $\Fg$ with trivial coefficients,
 $\{X_i\}_{1\leq i \leq m}$ is a basis of $\Fg$ with dual basis  $\{\t_i\}_{1\leq i \leq m}$,
and the
action of $\Fg $ on a group cochain $c\in C^q_{\rm pol}(\FN,\wedge^p \Fg^\ast)$ is given by
\begin{align} \label{Xactchain}
&(X\rt c)(\psi_0, \dots, \psi_{q})= \sum_i \dt c(\psi_0, \dots, \psi_{i}\lt\exp(tX), \dots, \psi_{q}) .
\end{align}

We thus arrive at a bicomplex which mixes group cohomology of $\FN$ with
coefficients in $\wg^\bullet\Fg^\ast$ and Lie algebra cohomology of $\Fg$ with
coefficients in $\Fc$, described by the diagram
\begin{align} \label{g*N}
\begin{xy} \xymatrix{ \vdots & \vdots
 &\vdots  & \\
C_{\rm pol}^0 \big(\FN , \wdg^2\Fg^\ast \big)
  \ar@<.6 ex>[r]^{b_{\rm pol}} \ar[u]^{\p_{\rm pol}}&\ar@<.6 ex>[l]^{B_{\rm pol}}
C_{\rm pol}^1 \big(\FN , \wdg^2\Fg^{\ast}  \big)
  \ar@<.6 ex>[r]^{b_{\rm pol}} \ar[u]^{\p_{\rm pol}}
 & \ar@<.6 ex>[l]^{B_{\rm pol}}C_{\rm pol}^2 \big(\FN ,  \Fg^{\ast}  \big)
 \ar[u]^{\p_{\rm pol}}  \ar@<.6 ex>[r]^{b_{\rm pol}}& \ar@<.6 ex>[l]^{B_{\rm pol}}\hdots\\
C_{\rm pol}^0 \big(\FN , \Fg \big)
 \ar@<.6 ex>[r]^{b_{\rm pol}} \ar[u]^{\p_{\rm pol}}& \ar@<.6 ex>[l]^{B_{\rm pol}}C_{\rm pol}^1 \big(\FN , \Fg^{\ast}  \big)
\ar@<.6 ex>[r]^{b_{\rm pol}} \ar[u]^{\p_{\rm pol}}& \ar@<.6 ex>[l]^{B_{\rm pol}}C_{\rm pol}^2 \big(\FN ,  \Fg^{\ast}\big)
\ar[u]^{\p_{\rm pol}} \ar@<.6 ex>[r]^{b_{\rm pol}}& \ar@<.6 ex>[l]^{B_{\rm pol}}\hdots\\
C_{\rm pol}^0 \big(\FN , \Cb \big) \ar@<.6 ex>[r]^{b_{\rm pol}}\ar[u]^{\p_{\rm pol}}& \ar@<.6 ex>[l]^{B_{\rm pol}} C_{\rm pol}^1
\big(\FN , \Cb  \big)
 \ar@<.6 ex>[r]^{b_{\rm pol}}\ar[u]^{\p_{\rm pol}}& \ar@<.6 ex>[l]^{B_{\rm pol}}C_{\rm pol}^2 \big(\FN , \Cb \big)\ar[u]^{\p_{\rm pol}}
\ar@<.6 ex>[r]^{b_{\rm pol}}& \ar@<.6 ex>[l]^{B_{\rm pol}} \hdots  }
\end{xy}
\end{align}
In turn, this can be related to the bicomplex \eqref{g*F} via the obvious map
$\Jc:~C^{\bullet,\bullet}_{\rm coinv}(\Fg^\ast, \Fc)\ra C^\bullet_{\rm pol}(\FN,\wg^\bullet\Fg^\ast)$
defined, with the self-explanatory notation, by the formula
\begin{align} \label{Jmap}
\Jc\big(\sum \a\ot \td f\big) \,(\psi_0,\dots, \psi_q) \,=\,\sum f^0(\psi_0)\dots f^q(\psi_q)\a .
\end{align}

\begin{proposition} \label{co-pol}
The map $\Jc: C^{\bullet,\bullet}_{\rm coinv}(\Fg^\ast, \Fc)\ra C^\bullet_{\rm pol}(\FN,\wg^\bullet\Fg^\ast)$
 is an isomorphism of bicomplexes.
\end{proposition}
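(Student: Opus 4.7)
The plan is to reduce the statement to two essentially independent observations: (i) an identification of $\Fc^{\otimes q+1}$ with the algebra $\mathrm{Pol}(\FN^{q+1})$ of polynomial functions in $q+1$ variables, under which the coinvariance condition \eqref{coinv-condition} translates verbatim into the covariance condition \eqref{covcond}; and (ii) a verification, purely formal once (i) is established, that $\Jc$ carries every structural map of \eqref{g*F} to the corresponding structural map of \eqref{g*N}.

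For (i), recall that $\Fc$ is by construction the algebra of polynomial functions on the pro-algebraic group $\FN$, with coproduct dual to the group multiplication and antipode dual to inversion. Consequently the linear map $\Fc^{\otimes q+1}\to\mathrm{Pol}(\FN^{q+1})$ sending $f^0\odots f^q$ to the function $(\psi_0,\ldots,\psi_q)\mapsto f^0(\psi_0)\cdots f^q(\psi_q)$ is an isomorphism, and its inverse allows me to build the inverse of $\Jc$. To match the two covariance conditions, I evaluate
\[
\Jc(\a\ot\td f)(\psi_0\psi,\ldots,\psi_q\psi)
\;=\;\a\,f^0\ps{1}(\psi_0)f^0\ps{2}(\psi)\cdots f^q\ps{1}(\psi_q)f^q\ps{2}(\psi),
\]
and apply Remark \ref{ractg*} in the form $\psi^{-1}\rt\a=S(\a\ns{1})(\psi)\,\a\ns{0}$ (using that the antipode on the function algebra of $N$ pulls back to inversion). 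The resulting identity is precisely the image of \eqref{coinv-condition} under the evaluation at $(\psi_0,\ldots,\psi_q,\psi)$, so $\Jc$ restricts to a bijection between coinvariants and polynomial group cochains.

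For (ii), the compatibilities are direct:
the horizontal simplicial data are matched by construction, because inserting the unit $1\in\Fc$ at slot $i$ corresponds to the constant function $1$ at $\psi_i$ and hence to the omission of the $i$-th variable in $b_{\rm pol}$; cyclic permutation of tensor factors matches cyclic permutation of arguments ($\Jc\tau_{\rm coinv}=\tau_{\rm pol}\Jc$); and $\s_{\rm coinv}$ matches $\s_{\rm pol}$ because the product in $\Fc$ is pointwise multiplication of polynomial functions, i.e.\ $(f^{q-1}f^q)(\psi_{q-1})=f^{q-1}(\psi_{q-1})f^q(\psi_{q-1})$. Since $B_\Fc^{\rm coinv}$ and $B_{\rm pol}$ are built out of these by the same formula, they correspond automatically.

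The only genuinely structural point is to match $\p_\Fg^{\rm coinv}$ with $\p_{\rm pol}$. Here I use \eqref{Xactchain}, which describes the $\Fg$-action on a group cochain by differentiating the right $\FN$-action, together with Remark \ref{ractg*}, whereby the coaction of $\Fc$ on $\wg^\bullet\Fg^\ast$ is dual to that same right action on $\wg^\bullet\Fg^\ast$. Unwinding this identifies the infinitesimal action on $C^q_{\rm pol}(\FN,\wg^p\Fg^\ast)$ with the diagonal action on $\a\ot\td f$ inherited via $\Ic$ from \eqref{specactg}. Combined with the intrinsic Lie algebra cohomology coboundary being independent of the coefficient realization, this yields $\Jc\circ\p_\Fg^{\rm coinv}=\p_{\rm pol}\circ\Jc$ and completes the proof. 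The main obstacle is the bookkeeping in (i): keeping straight whether the group action is left or right, and matching $S$ in $\Fc$ with inversion in $\FN$ while keeping track of the action direction on $\wg^\bullet\Fg^\ast$; once that is settled, everything else is formal.
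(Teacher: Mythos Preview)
Your proposal is correct and follows essentially the same route as the paper: translate coinvariance into the covariance condition \eqref{covcond} by evaluating at $(\psi_0\psi,\ldots,\psi_q\psi)$, then check that $\Jc$ intertwines $b$, $\tau$, $\s$ (hence $B$) and $\p$ term by term. One small imprecision worth cleaning up: in the $\p$-compatibility step, Remark~\ref{ractg*} is not the relevant fact (it concerns the $N$-action on $\wg^\bullet\Fg^\ast$, used only for well-definedness); what is actually needed is the defining identity $(X\rt f)(\psi)=\left.\frac{d}{dt}\right|_{t=0}f(\psi\lt\exp tX)$ for the $\Fg$-action on $\Fc$, and on the coinvariant side the action is already the diagonal one \eqref{diagact}, so there is no need to route through $\Ic$ and \eqref{specactg}.
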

\begin{proof}
Let us first show that $\Jc$ is well-defined. Suppressing the summation sign in the
 coinvariance condition \eqref{coinv-condition}, we
assume that $\a\ot \td f$  satisfies it and compute, using the identity \eqref{Naction},
\begin{align*}
&\Jc(\a\ot f^0\odots f^q)(\psi_0\psi, \dots \psi_q\psi)=\a f^0(\psi_0\psi)\dots f^q(\psi_q\psi)=\\
&\a f^0\ps{1}(\psi_0)f^0\ps{2}(\psi)\dots f^q\ps{1}(\psi_q)f^q\ps{2}(\psi)=\a f^0\ps{1}(\psi_0)\dots f^q\ps{1}(\psi_q)(f^0\ps{2}\dots f^q\ps{2})(\psi)=\\
&\a\ns{0} f^0(\psi_0)\dots f^q(\psi_q) S(\a\ns{1})(\psi)=\a\ns{0} f^0(\psi_0)\dots f^q(\psi_q) \a\ns{1}(\psi^{-1})=\\
&(\psi^{-1}\rt\a) f^0(\psi_0)\dots f^q(\psi_q)=\psi^{-1}\rt\big(\Jc(\a \ot f^0\odots f^q)(\psi_0,\dots, \psi_q)\big).
\end{align*}
By the very definitions of $\Fc$ and $C^\bullet(\FN, \wg^\bullet\Fg^\ast)$, it is obvious that $\Jc$ is isomorphism of vector spaces. We just need to show that
\begin{equation*}
b_{\rm pol}\Jc=\Jc b_{\rm coinv}, \quad \p_{\rm pol}\Jc=\Jc\p_{\rm coinv},
\quad \tau_{\rm pol}\Jc=\Jc\tau_{\rm coinv}, \quad \s_{\rm pol}\Jc=\Jc\s_{\rm coinv}.
\end{equation*}
The verification of the first commutation relation is straightforward. Indeed,
\begin{align*}
&(b_{\rm pol}\Jc)(\a\ot f^0\odots f^q)(\psi_0,\dots, \psi_{q+1})=\\
&\sum_i (-1)^i\Jc(\a\ot f^0\odots f^q)(\psi_0,\dots,\hat{\psi_i},\dots, \psi_{q+1})=\\
&\sum_i (-1)^i\a f^0(\psi_0)\dots f^{i-1}(\psi_{i-1})f^{i}(\psi_{i+1})\dots  f^q(\psi_{q+1})=\\
&\sum_i(-1)^i \Jc(\a\ot f^0\odots f^{i-1}\ot 1\ot f^{i}\odots f^q)(\psi_0,\dots, \psi_{q+1})=\\
&\Jc \big(b_{\rm coinv}(\a\ot f^0\odots f^q)\big)(\psi_0,\dots,\psi_{q+1}).
\end{align*}
To check the second relation, recalling \eqref{Xactchain}, we compute

\begin{align*}
&(\p_{\rm pol}\Jc)(\a\ot f^0\odots f^q)(\psi_0,\dots, \psi_{q})=
\p\big(\Jc(\a\ot f^0\odots f^q)(\psi_0,\dots, \psi_{q})\big)-\\
&\sum_i \t^i\wdg (X_i\rt \Jc(\a\ot f^0\odots f^q))(\psi_0, \dots, \psi_{q})=\\
&\p\big(\a f^0(\psi_0)\dots f^q(\psi_{q})\big)-\sum_{i,j} \t^i\wdg \a f^0(\psi_0)\dots  \dt f^j(\psi_j\lt \exp(tX_i))\dots f^q(\psi_{q})=\\
&\p(\a)f^0(\psi_0)\dots f^q(\psi_{q})-\sum_{i,j} \t^i\wdg \a f^0(\psi_0)\dots  (X_i\rt f^j)(\psi_j)\dots f^q(\psi_{q})=\\
&\Jc(\p_{\rm coinv}(\a\ot f^0\odots f^q))(\psi_0,\dots,\psi_q).
\end{align*}
The remaining two commutation relations are obvious.
\end{proof}

Since $\Fc$ is commutative, the coaction of  $\Fc$ on $\Fc^{\ot q+1}$
restricts to $ \wedge^{q+1}\Fc$, giving the coaction
\begin{align}
\Db_{\rm wedge}(f^0\wdots f^q)= f^0\ps{1}\wdots f^q\ps{1}\ot f^0\ps{2}\dots f^q\ps{2}.
\end{align}
With this understood, we consider the
subcomplex  $\, C^{p,q}_{\rm c-w}(\Fg^\ast, \Fc) \sbs
C^{\bullet,\bullet}_{\rm coinv}(\Fg^\ast, \Fc)$ formed of the
$\Fc$-coinvariant cochains $(\wedge ^{p}\Fg^\ast\ot \wedge ^{q+1}\Fc)^{\Fc}$.
Diagrammatically,
\begin{align}\label{wedge-coinv}
\begin{xy} \xymatrix{  \vdots & \vdots
 &\vdots &&\\
 \wdg^2\Fg^\ast  \ar[u]^{\p_{\rm c-w}}\ar[r]^{b_{\rm c-w}~~~~~~~}&  (\wdg^2\Fg^\ast\ot\wdg^2 \Fc)^\Fc \ar[u]^{\p_{\rm  c-w}} \ar[r]^{b_{\rm c-w}}& (\wdg^2\Fg^\ast\ot\wdg^3\Fc)^\Fc \ar[u]^{\p_{\rm c-w}} \ar[r]^{~~~~~~~~~b_{\rm c-w}} & \hdots&  \\
 \Fg^\ast  \ar[u]^{\p_{\rm c-w}}\ar[r]^{b_{\rm c-w}~~~~~}&  (\Fg^\ast\ot\wdg^2 \Fc)^\Fc \ar[u]^{\p_{\rm c-w}} \ar[r]^{b_{\rm c-w}}& (\Fg^\ast\ot\wdg^3\Fc)^\Fc \ar[u]^{\p_{\rm c-w}} \ar[r]^{~~~~~b_{\rm c-w} }& \hdots&  \\
 \Cb  \ar[u]^{\p_{\rm c-w}}\ar[r]^{b_{\rm c-w}~~~~~~~}&  (\Cb\ot\wdg^2 \Fc)^\Fc \ar[u]^{\p_{\rm c-w}} \ar[r]^{b_{\rm c-w}}& (\Cb\ot\wdg^3\Fc)^\Fc \ar[u]^{\p_{\rm c-w}} \ar[r]^{~~~~~b_{\rm c-w}} & \hdots&,  }
\end{xy}
\end{align}
with the coboundaries
\begin{align}\label{b-F-wedge}
b_{\rm c-w}(\a\ot f^0\wdots f^q)= \a\ot 1\wdg f^0\wdots f^q,
\end{align}
and

\begin{align} \label{p-F-wedge}
&\p_{\rm c-w}(\a\ot f^0\wdots f^q)(X^0,X^2,\dots,X^{p})=\\\notag
&\sum_{0\le i<j\le p} (-1)^{i+j}\a([X^i,X^j], X^0,\dots \check{X^i},\dots \check{X^j},\dots, X^p) f^0\wdots f^q-\\\notag
&\sum_{0\le i\le p, \;\;0\le j\le q} (-1)^i\a(X^0,\dots \check{X^i},\dots X^p) f^0\wdots X^i\rt f^j\wdots f^q ,
\end{align}
or equivalently,
\begin{align}
\begin{split}
&\p_{\rm c-w}(\a\ot f^0\wdots f^q)=\\
& \p\a\ot f^0\wdots f^q \, -\, \sum_i \t^i\wdg \a\ot \wedge\ot X_i\rt(f^0\wdots f^q).
\end{split}
\end{align}
One remarks that
\begin{multline*}
\tau_{\rm coinv}(\a\ot f^0\wdots f^q)= \a\ot f^1\wdots f^q\wdg f^0=\\
 (-1)^q  \a\ot f^0\wdots f^q,
 \end{multline*}
 which implies that $B^{\rm coinv}_\Fc\equiv 0$ in $C^{\bullet,\bullet}_{\rm c-w}(\Fg^\ast, \Fc)$.

We denote by $\td\a_\Fc: C^{p,q}_{\rm c-w}(\Fg^\ast, \Fc)\ra C^{p,q}_{\rm coinv}(\Fg^\ast, \Fc)$
 the $\Fc$-antisymmetrization map
\begin{align}\label{anti-sym}
\td\a_\Fc(\a\ot f^0\wdots f^q)=\frac{1}{(q+1)!} \sum_{\s\in S_{q+1}}(-1)^\s \a\ot f^{\s(0)}\odots f^{\s(q)}.
\end{align}
and note that it commutes with coboundaries. One also observes that  the projection
$\pi_\Fc: C^{p,q}_{\rm coinv}(\Fg^\ast, \Fc)\ra  C^{p,q}_{\rm c-w}(\Fg^\ast, \Fc)$,
\begin{align}\label{pi}
\pi_\Fc(\a\ot f^0\odots f^q)= \a\ot f^0\wdots f^q,
\end{align}
is a map of bicomplexes which is a left inverse for $A$.

\begin{proposition}\label{iso-coinv-c-w}
The map $\td\a_\Fc: C^{p,q}_{\rm c-w}(\Fg^\ast, \Fc)\ra C^{p,q}_{\rm coinv}(\Fg^\ast, \Fc)$
 is a quasi-isomorphism.
\end{proposition}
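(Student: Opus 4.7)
The strategy is to transfer the assertion, via the isomorphism $\Jc$ of Proposition~\ref{co-pol}, into the classical framework of group cochains, and then apply the Eilenberg--MacLane comparison theorem between the homogeneous standard complex and its alternating subcomplex.

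Since $\pi_\Fc\circ\td\a_\Fc=\id$ by construction, the map $\td\a_\Fc$ is already split injective on cochains, so it suffices to show that $\td\a_\Fc\circ\pi_\Fc$ is cochain-homotopic to the identity on $C^{\bullet,\bullet}_{\rm coinv}(\Fg^\ast,\Fc)$. Fix $p$: the row $(C^{p,\bullet}_{\rm coinv},b_\Fc^{\rm coinv})$ is carried by $\Jc$ onto the polynomial group-cohomology complex $C^\bullet_{\rm pol}(\FN,\wg^p\Fg^\ast)$, while the sub-row $C^{p,\bullet}_{\rm c-w}$ is carried onto its alternating subcomplex, consisting of those homogeneous polynomial cochains $c(\psi_0,\dots,\psi_q)$ which are totally skew-symmetric in the group arguments. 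A classical Eilenberg--MacLane-type argument then furnishes an explicit shuffle-type chain homotopy exhibiting antisymmetrization as a homotopy equivalence between these two row complexes. Since the homotopy is defined purely by permutations of the $\psi_i$'s (equivalently, of the tensor positions in $\Fc^{\otimes q+1}$), it preserves both polynomiality and the $\FN$-covariance \eqref{covcond}, and so restricts to the subcomplexes of interest.

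To upgrade this row-by-row homotopy to a homotopy of bicomplexes, one verifies that the shuffle homotopy commutes with the vertical coboundary $\p_\Fg^{\rm coinv}$; this is automatic because the $\Fg$-action on $\Fc^{\otimes q+1}$ that enters $\p_\Fg^{\rm coinv}$ is the diagonal action \eqref{diagact}, hence manifestly $S_{q+1}$-equivariant, while the exterior differential on $\wg^\bullet\Fg^\ast$ has nothing to do with the $\Fc$-factors at all. Assembling the row homotopies into a total homotopy (equivalently, comparing the spectral sequences attached to the vertical filtrations of the two bicomplexes, which agree on the $E_1$-page by the row quasi-isomorphism just established) then yields that $\td\a_\Fc$ induces an isomorphism on the cohomology of the total complexes. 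The delicate point of the argument is precisely this compatibility of the horizontal shuffle homotopy with the vertical Chevalley--Eilenberg direction; all else is a routine translation of well-known facts on group cohomology through the identification $\Jc$.
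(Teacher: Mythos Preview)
Your proposal is correct and follows essentially the same route as the paper: both transfer the question via the isomorphism $\Jc$ of Proposition~\ref{co-pol} to the polynomial group-cohomology complex $C^\bullet_{\rm pol}(\FN,\wg^\bullet\Fg^\ast)$, and then invoke the classical fact that the inclusion of the totally antisymmetric subcomplex is a quasi-isomorphism. The paper compresses this into a commutative square with $\Jc$ and its wedge analogue as vertical isomorphisms, simply declaring the bottom inclusion ``known to be a quasi-isomorphism''; you supply more of the underlying mechanism (the Eilenberg--MacLane shuffle homotopy, its $S_{q+1}$-equivariance guaranteeing compatibility with the diagonal $\Fg$-action in $\p_\Fg^{\rm coinv}$, and the spectral-sequence upgrade to total complexes), but the architecture is identical.
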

\begin{proof}
Denote by $C^\bullet_{\rm pol-w}(\FN,\wg^\bullet\Fg)$
the subcomplex of $C^\bullet_{\rm pol}(\FN,\wg^\bullet\Fg)$ consisting of
the totally antisymmetric group cochains, equipped
with the induced   coboundaries, and define
\begin{align*}
\Jc^{\rm wedge}(\a\ot f^0\wdots f^q)(\psi_0,\dots,\psi_q)=\frac{1}{(q+1)!}\sum_{\s\in S_{q+1}}(-1)^{\s}\a f^{\s(0)}(\psi_0)\cdots f^{\s(q)}(\psi_q).
\end{align*}
 It is clear that the following diagram is commutative.
\begin{equation}
\xymatrix{
 C^{p,q}_{\rm c-w}(\Fg^\ast, \Fc)\ar@{^{(}->}[r]^{\td\a_\Fc}_\simeq \ar[d]^{\Jc^{\rm wedge}}& C^{p,q}_{\rm coinv}(\Fg^\ast, \Fc)\ar[d]^{\Jc}\\
C^{q}_{\rm pol-w}(\FN,\wdg^p\Fg^\ast) \ar@{^{(}->}[r]& C^{q}_{\rm pol}(\FN,\wdg^p\Fg^\ast) .
}
\end{equation}
The lower row of the above square is known to be
a quasi-isomorphism, and the vertical maps
$\Jc$ and $\Jc^{\rm wedge}$ are clearly isomorphisms.
\end{proof}

\bigskip


\section{Lie algebra, equivariant and Hopf cyclic cohomology} \label{S: van est}

In the first part of this section we shall
recall from~\cite{cm2} the construction of a canonical homomorphism
from the Gelfand-Fuks cohomology to the
$\Diff$-equivariant cohomology of the frame bundle. This homomorphism implements
a key component of the classical construction realizing the Gelfand-Fuks classes as
characteristic classes of foliations. The novel feature, brought out in the
second part of this section, consists in identifying its image as a bicomplex computing the
Hopf cyclic cohomology. The last part of the section establishes the isomorphism
of the latter bicomplex with the Gelfand-Fuks Lie algebra cohomology.

\subsection{From Gelfand-Fuks to equivariant cohomology} \label{Ss: GF to EC}

 With $\Gb$ designating as above
the group of global diffeomorphisms of an infinite primitive pseudogroup,
we let $\Fa = \Fa (\Gb)$ denote the corresponding Lie algebra of formal vector fields. A
formal vector field $v \in \Fa$ can be identified with the infinite jet of a vector field at $0$.
Letting $\Jc_0^\infty = \Jc_0^\infty(\Gb) $ stand for the manifold of
(invertible) jets of infinite order at $0$ of
local diffeomorphisms in $\Gb$,
and denoting by $j_0^\infty (\phi)$ the jet of $\phi \in \Gb$,
any formal vector field $v \in \Fa$ can be represented as the jet at $0$ of a
$1$-parameter group of local diffeomorphisms $\{\phi_t\}_{t \in \Rb}$,
$\displaystyle \quad v = j_0^\infty \left(\frac{d}{dt} \mid_{t=0} \phi_t\right)$,
and gives rise to a left invariant vector field
$\tilde v$ on $\Jc_0^\infty$, whose value at $ j_0^\infty (\phi) \in \Jc_0^\infty$ is given by

\begin{align*}
 \tilde{v} \mid_{j_0^\infty (\phi)}  = j_0^\infty \left(\frac{d}{dt} \mid_{t=0} (\phi \circ \phi_t ) \right) .
\end{align*}
We can thus associate to each $\, \om  \in C_{\rm top}^{m}(\Fa)$
the left invariant form $\tilde\om$ on $\Jc_0^\infty$ defined by
 \begin{align} \label{form}
 \tilde\om (\tilde{v}_1\mid_{j_0^\infty (\phi)} , \ldots , \tilde{v}_m \mid_{j_0^\infty (\phi)}) \,
 = \, \om (v_1, \ldots , v_m) .
 \end{align}
 We  let $J\pi^\infty_1 : \Jc_0^\infty  \rightarrow \Jc_0^1 \simeq G$ stand for
 the projection onto the first jet bundle, and denote
 by $\pi_G: \Jc_0^\infty \rightarrow G$ and $\pi_N : \Jc_0^\infty \rightarrow \FN$ the projections
 relative to the Kac decomposition $\Jc_0^\infty = G \cdot \FN$.
 \medskip

 To construct the desired homomorphism, we associate to any
 $\, \om  \in C^r (\Fa)$ and any pair of integers $(p, q)$, $p+ m = q +r$,
 $0 \leq q \leq m=\dim G$,
 a $p$-cochain $\Cc_{p,q}(\om)(\psi_0,\dots, \psi_p)$ on the group $\FN$ with
values in $q$-currents on $G$ as follows.
For each $q$-form with compact support
$\z \in \Om^q_{c} (G)$, one sets
\begin{align}\label{vanest}
\langle \Cc_{p,q}(\om)(\psi_0,\dots, \psi_p), \, \z \rangle \, = \,
 \int _{\Sigma(\psi_0,\dots, \psi_p)}
 J\pi_1^{\infty \, \ast} (\z) \wedge {\tilde\om} \, ;
\end{align}
the integration is taken over the finite dimensional
 cycle in $\Jc_0^\infty$
 \begin{align} \label{intcycle}
  \Sigma = \Sigma(\psi_0,\dots, \psi_p) = \{ j_0^\infty(\rho) \in \Jc_0^\infty \, ; \quad
 \pi_N(\rho^{-1})  \in \D(\psi_0,\dots, \psi_p) \} ,
 \end{align}
where
  \begin{align} \label{simplex}
   \D(\psi_0,\dots,\psi_p)\, =\,
\left\{\sum_{i=0}^p t_i \,  j_0^\infty(\psi_i) \, \mid t_i\geq 0, \, \sum_{i=0}^pt_i=1\right\} .
\end{align}
We note that $\, \D(\psi_0,\dots,\psi_p)\, \sbs \, \FN$,
because at the first jet level
  \begin{align} \label{Nsimplex}
\sum_{i=0}^p t_i \,  j_0^1(\psi_i)  \, =\, (0, \id) .
\end{align}
Also,
the integration makes sense because the form $ {\tilde\om}$ lives on a finite-order jet bundle
and has an algebraic expression, with polynomial coefficients, in the canonical coordinates.

As mentioned
in the preceding section, we shall often
abbreviate the notation by not marking the distinction between
$\psi \in \FN$ and its jet $j_0^\infty(\psi) \in \FN$. Manifestly, the simplex definition is
covariant with respect to right translations by elements of $\FN$,
\begin{align} \label{simplex1}
   \D(\psi_0 \psi,\dots,\psi_p \psi)\, =\,
 \D(\psi_0 ,\dots,\psi_p)\, \psi ,  \qquad \fl \, \psi \in \FN .
\end{align}
A similar relation holds with respect to the right action of $G$ on $\FN$. For
$\vp \in G$ this action amounts to passing from  $\psi \in \FN$ to
 $\psi \lt \vp \in \FN$.
Recall from~\cite[Ch.1]{mr09} that,  with the shorthand notation
$\, \eta = \eta^i_{j  \ell_1 \ldots \ell_r}$  and $\, \g = \g^i_{j \ell_1 \ldots \ell_r}$,
one has
\begin{equation} \label{Gact1}
 \g (\psi \lt \vp) \, = \, \g (\psi \circ \vp) \, = \, \g (\psi) \circ \tilde{\vp}
  , \qquad \forall \, \vp \in G ,
\end{equation}
in particular
\begin{equation} \label{Gact2}
\eta (\psi \lt \vp) \, = \, \g (\psi)(\tilde{\vp}(e)) ,
\end{equation}
Thus, tautologically, one has
  \begin{align} \label{simplex2}
   \D(\psi_0 \lt \vp,\dots,\psi_p \lt \vp)\, =\,
 \D(\psi_0 ,\dots,\psi_p)  \lt \vp  , \qquad \forall \, \vp \in G ,
 \end{align}
and the identities \eqref{simplex1} and \eqref{simplex2} taken together express the
covariance of the simplex with respect to the right action of $\Gb$ on $\FN$:
  \begin{align} \label{simplex3}
   \D(\psi_0 \lt \phi,\dots,\psi_p \lt \phi)\, =\,
 \D(\psi_0 ,\dots,\psi_p)  \lt \phi , \qquad \forall \, \phi \in \Gb .
 \end{align}

\begin{lemma} \label{lem:vanestinv}
The group cochain $\Cc_{p,\bullet} (\om)$ satisfies the covariance
property
\begin{align}\label{eq:vanestinv}
 \Cc_{p,\bullet}(\om)(\psi_0 \lt \phi,\dots,\psi_p \lt \phi) =\,
^tL^\ast_{\phi^{-1} \rt} \, \Cc_{p,\bullet}(\om)(\psi_0,\dots, \psi_p) , \quad \forall \, \phi \in \Gb ,
\end{align}
where the left translation operator in the right hand side refers to the left action of
$\Gb$ on G.
\end{lemma}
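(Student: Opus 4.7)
The natural strategy is to realise the passage $(\psi_0,\dots,\psi_p)\mapsto (\psi_0\lt\phi,\dots,\psi_p\lt\phi)$ as a change of variables in the defining integral \eqref{vanest}. Specifically, I would introduce the left-translation diffeomorphism
\begin{align*}
L_{\phi^{-1}}:\Jc_0^\infty\ra\Jc_0^\infty, \qquad \rho\mapsto \phi^{-1}\rho,
\end{align*}
and verify three facts: (i) it carries $\Sigma(\psi_0,\dots,\psi_p)$ diffeomorphically onto $\Sigma(\psi_0\lt\phi,\dots,\psi_p\lt\phi)$; (ii) it intertwines the first-jet projection $J\pi_1^\infty$ with the left translation $L_{\phi^{-1}\rt}$ on $G$; and (iii) it preserves $\td\om$.

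For (i) one computes $\pi_N\big((\phi^{-1}\rho)^{-1}\big)=\pi_N(\rho^{-1}\phi)$, and writes $\rho^{-1}=\vp'\psi'$ via the Kac decomposition. The $G$-factor $\vp'$ drops out under the quotient $G\bash\Gb\cong N$ that defines $\pi_N$, so $\pi_N(\rho^{-1}\phi)=\pi_N(\psi'\phi)=\psi'\lt\phi$ by the very definition of the right $\Gb$-action on $N\cong G\bash\Gb$. Combined with the simplex covariance \eqref{simplex3}, this produces the claimed identification of cycles. For (ii), $\pi_G:\Jc_0^\infty\ra G\cong \Gb\slash N$ intertwines left multiplication by $\phi^{-1}$ on $\Jc_0^\infty$ with the descended left action on $\Gb\slash N$; under $G\cong\Gb\slash N$ this descended action is precisely $\vp\mapsto\phi^{-1}\rt\vp$ by the very definition of $\rt$. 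Pulling back along $J\pi_1^\infty$ then yields $L_{\phi^{-1}}^*\circ J\pi_1^{\infty\,*}=J\pi_1^{\infty\,*}\circ L_{\phi^{-1}\rt}^*$. Finally, (iii) is immediate from the construction \eqref{form} of $\td\om$ as a left-invariant form on $\Jc_0^\infty$.

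Assembling the three facts via the change of variables $\rho\mapsto \phi^{-1}\rho$ in \eqref{vanest} gives
\begin{align*}
\langle \Cc_{p,q}(\om)(\psi_0\lt\phi,\dots,\psi_p\lt\phi),\,\z\rangle
&=\int_{\Sigma(\psi_0,\dots,\psi_p)} L_{\phi^{-1}}^*\big(J\pi_1^{\infty\,*}\z\wedge \td\om\big) \\
&=\langle \Cc_{p,q}(\om)(\psi_0,\dots,\psi_p),\,L_{\phi^{-1}\rt}^*\z\rangle,
\end{align*}
which, upon dualising, is exactly \eqref{eq:vanestinv}. I expect the main subtle point to be the cycle identification in step (i): the matched-pair interaction between the $G$- and $N$-factors of the Kac decomposition must be properly disentangled, but once $N$ is identified with $G\bash\Gb$ the calculation is essentially one line, and steps (ii) and (iii) are then formal consequences of the left-invariant construction of $\td\om$ and of the coset description of $\pi_G$.
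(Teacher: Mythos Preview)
Your proposal is correct and follows essentially the same route as the paper's proof: both identify $\Sigma(\psi_0\lt\phi,\dots,\psi_p\lt\phi)=L_{\phi^{-1}}\Sigma(\psi_0,\dots,\psi_p)$ via the key identity $\pi_N(\rho^{-1}\phi)=\pi_N(\rho^{-1})\lt\phi$, then invoke the left-invariance of $\td\om$ and the intertwining $J\pi_1^\infty\circ L_{\phi^{-1}}=L_{\phi^{-1}\rt}\circ J\pi_1^\infty$ to conclude. Your coset-quotient phrasing of step~(i) is a clean way to see the identity that the paper verifies by a direct Kac-decomposition computation (its equation \eqref{pin}).
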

\begin{proof} Indeed, by \eqref{simplex3},
 when computing $\langle \Cc_{p,\bullet}(\om)(\psi_0 \lt \phi,\dots,\psi_p \lt \phi), \, \z \rangle$ one
integrates on the cycle
$\quad \Sigma_\phi= \{ \rho  \in {\Gb} \, ; \, \pi_N(\rho^{-1}) \lt \phi^{-1} \in
\D(\psi_0,\dots, \psi_p) \}$. Now remark that
\begin{align} \label{pin}
\pi_N(\rho^{-1}) \lt \phi^{-1}  = \pi_N(\rho^{-1} \cdot \phi^{-1}) ;
\end{align}
for  $\phi \in \FN$
this absolutely obvious, while for $\phi \in G$ it can be seen by writing
\begin{equation*}
\begin{split}
\rho^{-1}\cdot  \phi^{-1} &= \pi_G (\rho^{-1})\cdot \pi_N (\rho^{-1}) \cdot \phi^{-1}  \\
&=\pi_G (\rho^{-1})\cdot (\pi_N (\rho^{-1}) \rt \phi^{-1}) \cdot (\pi_N (\rho^{-1}) \lt \phi^{-1})  .
\end{split}
\end{equation*}
By \eqref{pin},
$\, \rho \in \Sigma_\phi $ iff $\,\phi \cdot \rho \in \Sigma$, hence
$\,\Sigma_\phi  \, = \, L_{\phi^{-1}} \, \Sigma$. Thus,
\begin{equation*}
\begin{split}
\langle \Cc_{p,\bullet}(\om)&(\psi_0 \lt \phi,\dots,\psi_p \lt \phi) , \,  \z \rangle \, = \,
\int _{L_\phi^{-1}\Sigma}  J\pi_1^{\infty \, \ast} (\z) \wedge {\tilde\om} \, =\\
 &=\int _\Sigma  (J\pi_1^{\infty} \circ L_{\phi^{-1}})^\ast (\z) \wedge L_{\phi^{-1}}^\ast ({\tilde\om})
 = \int _\Sigma  (J\pi_1^{\infty} \circ L_{\phi^{-1}})^\ast (\z) \wedge {\tilde\om} ,
 \end{split}
\end{equation*}
since $\tilde\om$ is left invariant. Furthermore, noting that
 $\, J\pi^\infty_1 \circ L_\phi = L_{\phi \rt} \circ  J\pi^\infty_1$, which is
 the jet counterpart of the relation
 \begin{align*}
  \pi_G (\phi \cdot \rho) \, = \, \pi_G (\phi)\cdot \pi_N (\phi) \rt \rho \, = : \, \phi \rt \pi_G (\rho) .
 \end{align*}
one sees that $\, (J\pi_1^{\infty} \circ L_{\phi^{-1}})^\ast (\z) =
(L_{\phi^{-1} \rt} \circ  J\pi^\infty_1)^\ast (\z)
\, =\,  J\pi_1^{\infty \, \ast}  \big(L_{\phi^{-1} \rt}^\ast (\z)\big)$.
Thus
\begin{equation*}
\begin{split}
\langle \Cc_{p,\bullet}(\om)&(\psi_0 \lt \phi,\dots,\psi_p \lt \phi) , \,  \z \rangle \,= \,
  \int _\Sigma   J\pi_1^{\infty \, \ast}  \big(L_{\phi^{-1} \rt}^\ast (\z)\big)\wedge {\tilde\om} \, = \\
   &=\, \langle \Cc_{p,\bullet}(\om)(\psi_0,\dots,\psi_p) , \, L_{\phi^{-1}  \rt}^\ast (\z) \rangle ,
 \end{split}
\end{equation*}
and the identity \eqref{eq:vanestinv} follows by
transposing the action from forms to currents.
 \end{proof}

   \begin{proposition} \label{vEcomplex}
 The map $\Cc : C_{\rm top}^\bullet (\Fa) \ra
 C_{\rm top}^\bullet (\FN,   \Om_\bullet (G))$  is a map of complexes.
\end{proposition}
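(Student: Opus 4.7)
The target complex $C^{\bullet}_{\rm top}(\FN,\Om_\bullet(G))$ should be understood as the total complex of a bicomplex whose horizontal differential is the group-cohomology coboundary $b_{\rm pol}$ on $\FN$ (applied coefficient-wise to currents) and whose vertical differential is the transposed de Rham operator $d_G^{\,t}$ on $G$ (which raises current degree by one). Accordingly, for $\om\in C^r_{\rm top}(\Fa)$ I have to verify, for every test form $\z\in\Om^{q}_{c}(G)$, an identity of the form
\begin{align*}
\big\langle \Cc_{p,q}(d_{\rm CE}\om)(\psi_0,\dots,\psi_p),\,\z\big\rangle
\;=\;&\big\langle b_{\rm pol}\Cc_{p-1,q}(\om)(\psi_0,\dots,\psi_p),\,\z\big\rangle\\
&+(-1)^{p}\big\langle \Cc_{p,q-1}(\om)(\psi_0,\dots,\psi_p),\,d\z\big\rangle .
\end{align*}

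The first ingredient is the standard Maurer--Cartan identity on $\Jc_0^\infty$: since $\om\mapsto\tilde\om$ is the left-invariant extension to $\Jc_0^\infty$ and the Lie bracket on $\Fa$ corresponds to the bracket of left-invariant vector fields, one has $\widetilde{d_{\rm CE}\om}=d\,\tilde\om$. Substituting this into \eqref{vanest} and applying Leibniz,
\begin{align*}
J\pi_1^{\infty\,\ast}(\z)\wdg d\tilde\om \;=\; (-1)^{q}\,d\!\left(J\pi_1^{\infty\,\ast}(\z)\wdg \tilde\om\right) - (-1)^{q}\,J\pi_1^{\infty\,\ast}(d\z)\wdg \tilde\om ,
\end{align*}
so the computation reduces to an exact term plus a term that visibly represents $\Cc_{p,q-1}(\om)$ paired with $d\z$. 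The second ingredient is Stokes' theorem applied to the exact term on the $(p+m)$-dimensional cycle $\Sigma(\psi_0,\dots,\psi_p)$; this is legitimate because the pullback $J\pi_1^{\infty\,\ast}(\z)$ provides compact support in the directions transverse to $\D(\psi_0,\dots,\psi_p)$ and the remaining integrand is polynomial on a finite-order jet bundle.

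The boundary $\p\Sigma(\psi_0,\dots,\psi_p)$ is computed from \eqref{simplex}: since $\Sigma$ fibers over $\D$ with fiber $G$, and the faces of the affine simplex $\D(\psi_0,\dots,\psi_p)$ are $\D(\psi_0,\dots,\hat\psi_i,\dots,\psi_p)$ with the standard alternating signs, one obtains
\begin{align*}
\p\Sigma(\psi_0,\dots,\psi_p)\;=\;\sum_{i=0}^{p}(-1)^{i}\,\Sigma(\psi_0,\dots,\hat\psi_i,\dots,\psi_p) .
\end{align*}
Integrating the exact term over this boundary produces, up to the overall $(-1)^{q}$, exactly the alternating sum \eqref{homb} that defines $b_{\rm pol}\Cc_{p-1,q}(\om)$ when paired against $\z$. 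Combining with the interior term yields the required identity.

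The main obstacle will be purely bookkeeping: pinning down the correct signs (there is a $(-1)^{q}$ from Leibniz, a sign from the orientation induced on $\p\Sigma$ via the simplex orientation, and a convention choice in passing from $d$ on forms to $d^{\,t}$ on currents) so that the two pieces assemble precisely into the total differential of the bicomplex rather than into some conjugate. A secondary check, handled by the covariance Lemma \ref{lem:vanestinv}, is that each face $\Cc_{p-1,q}(\om)(\psi_0,\dots,\hat\psi_i,\dots,\psi_p)$ really lies in $C^{\bullet}_{\rm top}(\FN,\Om_\bullet(G))$, so that the right-hand side belongs to the target complex.
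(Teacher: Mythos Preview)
Your proposal is correct and follows essentially the same route as the paper: use $\widetilde{d_{\rm CE}\om}=d\tilde\om$, apply Leibniz to split $\int_\Sigma J\pi_1^{\infty\,\ast}(\z)\wedge d\tilde\om$ into an exact piece and a $d\z$-piece, then invoke Stokes on $\Sigma(\psi_0,\dots,\psi_p)$ together with the simplicial boundary $\p\Sigma=\sum_i(-1)^i\Sigma(\psi_0,\dots,\hat\psi_i,\dots,\psi_p)$ to recover the group coboundary. The paper's version is terser (it writes $\pm$ rather than tracking the sign you isolate) but the argument is the same.
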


\begin{proof} One has
\begin{align*}
&\langle \Cc_{p, \bullet}(d\om)(\psi_0,\dots, \psi_{p}), \, \z \rangle =  \int _{\Sigma(\psi_0,\dots, \psi_{p})}
 J\pi_1^{\infty \, \ast} (\z) \wedge {d\tilde\om} \,  =\\
&= \int _{\Sigma(\psi_0,\dots, \psi_{p})}
d\big( J\pi_1^{\infty \, \ast} (\z) \wedge {\tilde\om}\big)
\, \pm \, \int _{\Sigma(\psi_0,\dots, \psi_{p})}
 J\pi_1^{\infty \, \ast} (d\z) \wedge {\tilde\om} .
\end{align*}
The second integral corresponds to the de Rham boundary
$\, \p^{\rm t}  \Cc(\om)(\psi_0,\dots, \psi_{p})$
while the first becomes,
on applying Stokes, the group coboundary:
\begin{align*}
&=\int _{\p\Sigma(\psi_0,\dots, \psi_{p})}
J\pi_1^{\infty \, \ast} (\z) \wedge {\tilde\om} \,=\sum_{i=0}^{p+1}(-1)^{i}\int _{\Sigma(\psi_0 ,
\dots,\widehat{\psi_i},\dots, \psi_{p})}
J \pi_1^{\infty \, \ast} (\z) \wedge {\tilde\om} \\
 &= \sum_{i=0}^{p}(-1)^{i}
 \langle \Cc_{p-1, \bullet}(\om) (\psi_0 , \dots,\widehat{\psi_i},\dots, \psi_{p}) , \z\rangle .
 \end{align*}
 \end{proof}

 The current $\, \Cc_{p, \bullet} (\om)$ in \eqref{vanest} is defined
 as the integration along the fibre,
 in the fibration $\, J\pi^\infty_1 : \Jc_0^\infty \ra \Jc_0^1 \simeq G$, of
 the current $\tilde{\om} \cap \D $. In order to better understand it,
 we shall revisit its definition.

 Let  $\, \rho = \vp \cdot \psi \in \Gb$, with $\vp \in G$ and $\psi \in \FN$. Then
 $$ \rho^{-1} \, =\,  \psi^{-1} \cdot \vp^{-1}  \, =\,
 (\psi^{-1} \rt \vp^{-1})\cdot  (\psi^{-1} \lt \vp^{-1})  ,
 $$
and so
\begin{equation} \label{recycle}
\begin{split}
j_0^\infty (\rho) \in \Sigma (\psi_0,\dots, \psi_p) & \iff \psi^{-1}  \lt \vp^{-1} \in
\D (\psi_0,\dots, \psi_p) \\
& \iff \psi \in \big(\D (\psi_0,\dots, \psi_p) \lt \vp\big)^{-1}
\end{split}
\end{equation}

In view of the equation \eqref{simplex2}, this
shows that the cycle \eqref{intcycle}
can be described as the image of the map
$\, \s_{(\psi_0,\dots,\psi_p)} : G \times \D_p \rightarrow  \Jc_0^\infty$
given by
  \begin{align} \label{sigmap}
   \s_{(\psi_0,\dots,\psi_p)}  (\vp , \tb) \, = \,
   \vp \cdot \big(\sb_{(\psi_0,\dots,\psi_p)}  (\tb) \lt \vp \big)^{-1},
\end{align}
where $\, {\sb}_{(\psi_0,\dots,\psi_p)} : \D_p \rightarrow  \FN$ is the map of the standard
$p$-simplex
\begin{align*}
\begin{split}
\D_p \, &=\,\{ \tb = (t_0 , \ldots t_p) \, \mid t_i\geq 0, \, \sum_{i=0}^pt_i=1\} , \\
\sb_{(\psi_0,\dots,\psi_p)}  (\tb) \, &= \,
    \sum_{i=0}^p t_i \, j_0^\infty(\psi_i) .
 \end{split}
\end{align*}
 Therefore the definition of the current \eqref{vanest} can be restated as follows:
\begin{align}\label{revanest}
\langle \Cc_{p,\bullet}(\om)(\psi_0,\dots, \psi_p), \, \z \rangle \, = \,
 \int _{G \times \D_p} \s_{(\psi_0,\dots,\psi_p)}^\ast
 \left(J\pi_1^{\infty \, \ast} (\z) \wedge {\tilde\om}\right)\, .
\end{align}

Since $\,\, J\pi_1^{\infty} \circ \s_{(\psi_0,\dots,\psi_p)} :G \times \D_p \rightarrow G$
coincides with the projection on the first factor $\pi_1: G \times \D_p \ra G$, it follows that
\begin{align} \label{vanest2}
\begin{split}
\langle \Cc_{p,\bullet}(\om)(\psi_0,\dots, \psi_p), \, \z \rangle &= \int _{G \times \D_p}
\pi_1^\ast (\z) \wedge \s_{(\psi_0,\dots,\psi_p)}^\ast ({\tilde\om}) \\
&=  \int _G \z \wedge  \int_{\pi_\D} \s_{(\psi_0,\dots,\psi_p)}^\ast ({\tilde\om}) ,
\end{split}
\end{align}
where $\, \displaystyle  \int_{\pi_\D} $ stands for the integration
along the fibre of  $\pi_\D: G \times \D_p \ra G$.
Thus,
the current $\, \Cc_{p, \bullet} (\om)(\psi_0,\dots, \psi_p)$ is actually
the Poincar\'e dual of  the form
\begin{align} \label{veform}
 \Dc_{p, \, \dim G-\bullet} (\om)(\psi_0,\dots, \psi_p) \, = \,
   \int_{\pi_\D}  \s_{(\psi_0,\dots,\psi_p)}^\ast ({\tilde\om}) \,  \in \, \Om^{\dim G-\bullet} (G) .
\end{align}

Transposing the covariance property in Lemma \ref{lem:vanestinv},
by Poincar\'e duality yields the identity
\begin{align} \label{veinv}
\begin{split}
\Dc_{p, \bullet}  ({\tilde\om}) (\psi_0 \lt \phi,\dots,\psi_p \lt \phi) \, = \,
L^\ast_{\phi \rt} \, \Dc_{p, \bullet}  ({\tilde\om}) (\psi_0,\dots, \psi_p) , \quad \forall \, \phi \in \Gb.
\end{split}
\end{align}
In particular, for any $\,   \vp \in G$ and any $q$-tuple of left invariant tangent vector
fields $(X^1, \ldots , X^q)$ on $G$  one has
\begin{align} \label{Gveinv}
\begin{split}
\Dc_{p, q} & ({\tilde\om}) (\psi_0,\dots, \psi_p)(X_\vp^1, \ldots , X_\vp^q) \\
&\, = \Dc_{p, q}  ({\tilde\om}) (\psi_0 \lt \vp,\dots,\psi_p \lt \vp)
(L_{\vp^{-1} \ast} X_\vp^1, \ldots , L_{\vp^{-1} \ast}X_\vp^q) \\
&\qquad = \Dc_{p, q}  ({\tilde\om}) (\psi_0 \lt \vp,\dots,\psi_p \lt \vp)
(X_e^1, \ldots , X_e^q) .
\end{split}
\end{align}

\subsection{From equivariant to Hopf cyclic cohomology} \label{Ss: EC to HC}

 Equation \eqref{Gveinv} shows that, as a
group cochain with values in $ \Om^ \bullet (G)$,
$\, \Dc_{p, \bullet} (\om)$ is completely
determined by the values taken at  $e \in G$,  i.e. by
 the ``core'' cochain
  \begin{align} \label{restunit}
\Ec_{p,  \bullet} (\om)(\psi_0,\dots, \psi_p) : = \,
 \int_{\pi_\D}  \s_{(\psi_0,\dots,\psi_p)}^\ast ({\tilde\om})  \mid_{\vp=e} \, \,
  \in \, \wedge^ \bullet\Fg^\ast .
\end{align}
To give a precise meaning to the localization at the unit $e \in G$, let us fix
a basis of left $G$-invariant forms on $G$,
$$
\{ \a_I = \a_{i_1} \wdg \ldots \wdg \a_{i_q}  \, ; \, I = (i_1 < \ldots < i_q) , \quad q \geq 0 \}  ,
$$
 and note that the form \eqref{veform} can be uniquely written as a sum
  \begin{align*}
 \int_{\pi_\D}  \s_{(\psi_0,\dots,\psi_p)}^\ast ({\tilde\om})   \, = \,
 \sum_I h_{(\psi_0,\dots,\psi_p)}^I \, \a_I \, , \quad
 \text{with}\quad h_{(\psi_0,\dots,\psi_p)}^I  \in C^\infty (G) .
\end{align*}
The right hand side of \eqref{restunit} is then given by the equality
  \begin{align} \label{restunit2}
 \int_{\pi_\D}  \s_{(\psi_0,\dots,\psi_p)}^\ast ({\tilde\om})  \mid_{\vp=e} \, \,= \,
\sum_I h_{(\psi_0,\dots,\psi_p)}^I (e) \, \a_I \mid_e \, \quad \in \, \wedge^ \bullet\Fg^\ast .
\end{align}
Denoting by $\, \{ X^I \}$ the basis of left invariant vector fields on $G$ dual to $\{ \a_I \}$, one
can express the above coefficients as genuine
integrals of forms over simplices,
  \begin{align} \label{coeffs}
 h_{(\psi_0,\dots,\psi_p)}^I (e)\,= \,
  \int_{\D_p}  \i_{X_e^I} \s_{(\psi_0,\dots,\psi_p)}^\ast ({\tilde\om})   \, ,
\end{align}
which leads to the following unambiguous definition of $\, \Ec_{p, \bullet} (\om)$ :
  \begin{align} \label{intrestunit}
\Ec_{p,  q} (\om)(\psi_0,\dots, \psi_p) \, = \,  \sum_{|I| = q}
\left( \int_{\D_p}  \i_{X_e^I} \s_{(\psi_0,\dots,\psi_p)}^\ast ({\tilde\om}) \right)
 \,\a_I \mid_e \,
  \in \, \wedge^q \Fg^\ast .
\end{align}

Observe now that one can factor the map defined in \eqref{sigmap} as follows:
\begin{align} \label{nu}
\begin{split}
\s_{(\psi_0,\dots,\psi_p)} &= \nu \circ (\Id_G \times \, \sb_{(\psi_0,\dots,\psi_p)} ) , \\
 \text{where} \qquad \nu : G \times \FN \rightarrow  \Jc_0^\infty &, \qquad
   \nu(\vp, \psi) \, = \, \vp \, \imath(\psi \lt \vp) , \\
  \text{and} \qquad \imath : \Gb \rightarrow  \Gb &, \qquad
   \imath (\psi)\, = \, \psi^{-1}  .
   \end{split}
\end{align}
Substituting this in  \eqref{intrestunit} yields the equivalent definition
  \begin{align} \label{intrestunit2}
\Ec_{p,  q} (\om)(\psi_0,\dots, \psi_p) \, = \,  \sum_{|I| = q}
\left(  \int_{\D(\psi_0,\dots, \psi_p)}
\imath^\ast \big( \i_{X_e^I} \nu^\ast ({\tilde\om})\big) \right)
 \,\a_I \mid_e .
\end{align}

\begin{remark} \label{rem:etod}
With the above notation,
 the relation between the cochains $\, \Dc_{p, \bullet} (\om)$ and
$\, \Ec_{p, \bullet} (\om)$ is encapsulated in the following identity:
\begin{align} \label{etod}
\begin{split}
 \Dc_{p,  q} (\om)(\psi_0,\dots, \psi_p) \mid_\vp \, = \,
  \sum_{|I| = q}
  \left(  \int_{\D(\psi_0 \lt \, \vp,\dots, \psi_p  \lt \, \vp)}
\imath^\ast \big( \i_{X_e^I} \nu^\ast ({\tilde\om})\big) \right)
 \,\a_I  \mid_\vp.
    \end{split}
\end{align}
\end{remark}

We next focus on further clarifying the significance of the cochain $\, \Ec_{p, \bullet} (\om)$.

\begin{lemma} \label{lem:redunit}
The form
   \begin{align} \label{muform}
\mu (\om) \, = \,  \sum_{|I| = q}
\imath^\ast \big( \i_{X_e^I} \nu^\ast ({\tilde\om})\big) \ot \a_I \mid_e \quad  \in
\Om^p (\FN, \wedge^q \Fg^\ast)
\end{align}
is invariant under the action of $\FN$, operating on $\FN$ by right translations and on the
coefficients via the differential of the action $\rt$. In addition,
  \begin{align} \label{eq:redunit}
\Ec_{p, q} (\om)(\psi_0,\dots, \psi_p)  \, = \,
 \int_{\D(\psi_0,\dots, \psi_p)}  \mu (\om)  .
 \end{align}
\end{lemma}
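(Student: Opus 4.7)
The second claim \eqref{eq:redunit} is a tautology once the degree count is verified. The constraint $p+m = q+r$ in \S\ref{Ss: GF to EC} together with the Poincar\'e duality passage from currents of degree $q$ to forms of complementary degree translates, in the formulation \eqref{intrestunit2}, into $r - q = p$. Hence $\imath^\ast(\iota_{X_e^I}\nu^\ast\tilde\om)$ is indeed a genuine $p$-form on $\FN$, $\mu(\om)\in\Om^p(\FN,\wedge^q\Fg^\ast)$ is well-defined, and integrating it term by term against $\D(\psi_0,\dots,\psi_p)$ reproduces \eqref{intrestunit2} by linearity.

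\textbf{Invariance.} For $\psi_0\in\FN$ I will first determine how the map $\nu:G\times\FN\to\Jc_0^\infty$ of \eqref{nu} intertwines the action of $\psi_0$ on $G\times\FN$ by right translation in the second factor with an action on $\Jc_0^\infty$. Writing out both decompositions of $\psi\psi_0\cdot\vp$ in $\Gb=G\cdot N$ yields the bicrossed cocycle identity
\begin{equation*}
\psi\psi_0\lt\vp \,=\, (\psi\lt(\psi_0\rt\vp))\cdot(\psi_0\lt\vp),
\end{equation*}
and therefore
\begin{equation*}
\nu(\vp,\psi\psi_0) \,=\, \nu(\vp,\psi_0)\cdot\bigl(\psi\lt(\psi_0\rt\vp)\bigr)^{-1}.
\end{equation*}
Restricted to $\vp=e$ this collapses to the harmless $\nu(e,\psi\psi_0)=\psi_0^{-1}\psi^{-1}$, which is nothing but $\imath\circ R_{\psi_0}=L_{\psi_0^{-1}}\circ\imath$. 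Differentiating in $\vp$ at $e$ shows that the $G$-directions at $e$ are transported through $\nu$ by the differential $(L_{\psi_0\rt})_{\ast e}:T_eG\to T_eG$ — the linear isotropy action of $\psi_0\in\FN$ on $G\cong\Gb/N$ fixing $e\in G$.

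\textbf{Conclusion.} I now combine these ingredients with the left $\Gb$-invariance of $\tilde\om$ on $\Jc_0^\infty$. For each basis multi-index $I$, pulling the $q$ vectors $X_e^I$ through the twist introduced in the $G$-direction produces, up to the substitution of contracted vectors $X_e^I \leadsto (L_{\psi_0\rt})_{\ast e}^{\wedge q}X_e^I$, the form $\iota_{X_e^I}\nu^\ast\tilde\om|_{(e,\psi)}$ back again; the remaining $p$ slots in the $\FN$-direction are then transported by $R_{\psi_0\,\ast}$, and the intertwiner $\imath$ converts this to $L_{\psi_0^{-1}}^\ast$ on the $\imath^\ast$-pulled form, which does nothing on our invariant integrand. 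Summing over $I$ and using the duality between $\{X_e^I\}$ and $\{\a_I|_e\}$, the reshuffling of contracted vectors becomes, on the $\wedge^q\Fg^\ast$-coefficient side, the transpose action $(L_{\psi_0\rt})_{\ast e}^\ast$. By the discussion of \eqref{Naction}--\eqref{natcoact} in Remark~\ref{ractg*}, this transpose is exactly the action $\psi_0^{-1}\rt$ on $\wedge^q\Fg^\ast$, yielding $R_{\psi_0}^\ast\mu(\om)=\psi_0^{-1}\rt\mu(\om)$.

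\textbf{Main obstacle.} The subtlety lies in disentangling the coupling of $G$- and $\FN$-directions through the twisted factor $\psi\lt(\psi_0\rt\vp)$ in the transformation of $\nu$: a naive first-order Taylor expansion at $\vp=e$ mixes derivatives in both directions, and only the left-invariance of $\tilde\om$ cleanly extracts the pure linear isotropy contribution $(L_{\psi_0\rt})_{\ast e}$. Making this step precise — recognizing that the remaining higher-order cross terms disappear because the vectors being contracted are left-invariant vectors at $e$, and that inversion $\imath$ exactly converts the right $\FN$-action into its left counterpart — is the crux of the argument.
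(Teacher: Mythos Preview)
Your approach is genuinely different from the paper's. The paper does \emph{not} compute the transformation of $\nu$ directly; instead it specializes the already-established covariance \eqref{veinv} of $\Dc_{p,\bullet}$ (the Poincar\'e dual of Lemma~\ref{lem:vanestinv}) to $\phi=\psi\in\FN$, restricts both sides to $e\in G$, and reads off
\[
\int_{\D(\psi_0,\dots,\psi_p)} R_\psi^\ast\mu(\om)\;=\;\int_{\D(\psi_0,\dots,\psi_p)} \psi^{-1}\rt\mu(\om)
\]
for \emph{every} simplex, whence $R_\psi^\ast\mu(\om)=\psi^{-1}\rt\mu(\om)$. This costs nothing beyond what was already proved. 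Your direct route is more hands-on and, if carried through, yields the same conclusion without invoking Lemma~\ref{lem:vanestinv}; but as written it has a gap.

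The gap is in your ``Conclusion'' paragraph. Your bicrossed identity is correct, but the useful reformulation is
\[
\nu\circ(\Id_G\times R_{\psi_0})\;=\;L_{\psi_0^{-1}}\circ\nu\circ(L_{\psi_0\rt}\times\Id_\FN),
\]
which follows from $\psi_0^{-1}(\psi_0\rt\vp)=\vp(\psi_0\lt\vp)^{-1}$. Combined with the left-invariance of $\tilde\om$ this gives $(\Id\times R_{\psi_0})^\ast\nu^\ast\tilde\om=(L_{\psi_0\rt}\times\Id)^\ast\nu^\ast\tilde\om$, and contracting the $G$-slots at $(e,\psi)$ yields $R_{\psi_0}^\ast\tilde\mu=\psi_0^{-1}\rt\tilde\mu$, where $\tilde\mu:=\sum_I(\iota_{X_e^I}\nu^\ast\tilde\om)\otimes\a_I|_e$ is the form \emph{before} applying $\imath^\ast$. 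The trouble is the final step: since $\mu=\imath^\ast\tilde\mu$ and $\imath\circ R_{\psi_0}=L_{\psi_0^{-1}}\circ\imath$, one has $R_{\psi_0}^\ast\mu=\imath^\ast L_{\psi_0^{-1}}^\ast\tilde\mu$, and what you now need is $L_{\psi_0^{-1}}^\ast\tilde\mu=\psi_0^{-1}\rt\tilde\mu$, \emph{not} the $R_{\psi_0}^\ast$-identity you established. Your sentence ``which does nothing on our invariant integrand'' papers over exactly this mismatch: left and right translation on the non-abelian group $\FN$ act differently on $\tilde\mu$, and the required left-translation law does not follow from your transformation formula for $\nu$ under $\Id\times R_{\psi_0}$. (An analogous computation for $\Id\times L_{\psi_0^{-1}}$ produces a base-point-dependent right multiplication on $\Jc_0^\infty$, and the cross-terms you flag in your ``Main obstacle'' do not obviously cancel.) The paper sidesteps all of this by arguing at the level of integrals over simplices.
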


\begin{proof}
The identity \eqref{eq:redunit} is merely the reformulation of \eqref{intrestunit2} in view of
the notation introduced in \eqref{muform}.

By specializing the covariance property \eqref{veinv} to elements $\, \psi \in \FN$, one
obtains
\begin{align} \label{Nveinv}
\begin{split}
\Dc_{p, \bullet}  ({\tilde\om}) (\psi_0 \psi ,\dots,\psi_p  \psi) \, = \,
L^\ast_{\psi \rt} \, \Dc_{p, \bullet}  ({\tilde\om}) (\psi_0,\dots, \psi_p)  .
\end{split}
\end{align}
Now restricting the right hand side to $e \in G$, and noting that
$\, L_{\psi \rt} (e) = e$, while by the very definitions
\begin{align*}
\left( L^\ast_{\psi \rt} \a_I \right) \mid_e  \, = \, \psi^{-1} \rt  \left(\a_I \mid_e \right) ,
\end{align*}
one obtains
\begin{align*}
\begin{split}
L^\ast_{\psi \rt} \, \Dc_{p, \bullet}  ({\tilde\om}) (\psi_0,\dots, \psi_p) \mid_e \, = \,
\psi^{-1} \rt  \int_{\D(\psi_0,\dots, \psi_p)}  \mu (\om) \, = \,
 \int_{\D(\psi_0,\dots, \psi_p)}\, \psi^{-1} \rt  \mu (\om) .
\end{split}
\end{align*}
On the other hand, the left hand side restricted to $e \in G$ yields
\begin{align*}
\begin{split}
\Dc_{p, \bullet}  ({\tilde\om}) (\psi_0 \psi ,\dots,\psi_p  \psi) \mid_e \, = \,
 \int_{\D(\psi_0,\dots, \psi_p)  \psi}  \mu (\om) \, = \,
  \int_{\D(\psi_0,\dots, \psi_p)}  R^\ast_\psi \mu (\om) .
\end{split}
\end{align*}
The equality of the two sides gives the identity
\begin{align*}
 \int_{\D(\psi_0,\dots, \psi_p)}  R^\ast_\psi \mu (\om) \, = \,
  \int_{\D(\psi_0,\dots, \psi_p)}\, \psi^{-1} \rt  \mu (\om) , \qquad \fl \, \psi \in \FN .
\end{align*}
Since this is valid for any simplex, the two integrands must coincide.
\end{proof}

\bigskip

  \begin{lemma} \label{vEcomplex2}
 The map $\Ec : C_{\rm top}^\bullet (\Fa) \ra
 C_{\rm pol}^\bullet (\FN,   \wedge^\bullet \Fg^\ast)$  is a map of complexes.
\end{lemma}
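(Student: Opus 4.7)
The strategy is to promote Proposition \ref{vEcomplex} from the current-valued cochain $\Cc$ to the form-valued cochain $\Dc$, and then to restrict to the identity $e\in G$ via the identification $\Dc_{p,\bullet}(\om)|_e = \Ec_{p,\bullet}(\om)$ of \eqref{restunit}. Concretely, I would first establish an intermediate identity of forms on $G$,
\[
d_{\mathrm{dR}}\,\Dc_{p,k-1}(\om)\,=\,\pm\,\Dc_{p,k}(d\om)\,\mp\,b_{\mathrm{pol}}\,\Dc_{p-1,k}(\om),
\]
for $\om\in C^r_{\mathrm{top}}(\Fa)$ with $p+k=r+1$. This follows exactly as in the proof of Proposition \ref{vEcomplex}: starting from $\Dc_{p,k-1}(\om)=\int_{\pi_\D}\s^\ast_{(\psi_0,\dots,\psi_p)}\tilde\om$ and applying the fiber-integration-with-boundary identity, the term $d\tilde\om=\widetilde{d\om}$ produces $\Dc_{p,k}(d\om)$, while Stokes on $\partial\D_p$ produces the group coboundary $b_{\mathrm{pol}}\Dc_{p-1,k}(\om)$, as in the computation in Proposition \ref{vEcomplex}.

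Next I would restrict both sides to $e\in G$. Since $b_{\mathrm{pol}}$ acts only on the group arguments it commutes with restriction, giving $(b_{\mathrm{pol}}\Dc_{p-1,k}(\om))|_e = b_{\mathrm{pol}}\Ec_{p-1,k}(\om)$. The substantive step is to analyze $(d_{\mathrm{dR}}\Dc_{p,k-1}(\om))|_e$. Fix a left-invariant basis $\{\a_I\}$ of $\Om^{k-1}(G)$ with dual basis $\{X^I\}$ of left-invariant multivectors, and expand
\[
\Dc_{p,k-1}(\om)(\psi_0,\dots,\psi_p)=\sum_I f_I(\psi_0,\dots,\psi_p;\cdot)\,\a_I,
\]
so that
\[
(d_{\mathrm{dR}}\Dc_{p,k-1}(\om))|_e=\sum_I (df_I)|_e\wedge \a_I|_e+\sum_I f_I(e)\,d\a_I|_e.
\]

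For the second summand, $d\a_I|_e$ is, by Maurer--Cartan, exactly the Chevalley--Eilenberg coboundary applied to $\a_I|_e\in\wedge^{k-1}\Fg^\ast$, so this piece equals $\p\,\Ec_{p,k-1}(\om)$. For the first summand I would invoke the covariance relation \eqref{Gveinv}, which reads
\[
f_I(\psi_0,\dots,\psi_p;\vp)=\langle \Ec_{p,k-1}(\om)(\psi_0\lt\vp,\dots,\psi_p\lt\vp),\,X^I\rangle;
\]
differentiating at $\vp=e$ along $X_i$ and comparing with the definition \eqref{Xactchain} identifies $X_i f_I|_e$ with the pairing of $(X_i\rt \Ec_{p,k-1}(\om))(\psi_0,\dots,\psi_p)$ against $X^I$. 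Summing over $I$ and using the dual bases yields $\sum_I (df_I)|_e\wedge\a_I|_e=\sum_i \t^i\wedge(X_i\rt \Ec_{p,k-1}(\om))$. Adding the two contributions reproduces $\p_{\mathrm{pol}}\Ec_{p,k-1}(\om)$ of \eqref{p-pol} (up to the overall sign convention), and combining everything gives the required chain-map identity
\[
\Ec_{p,k}(d\om)\,=\,\pm\,b_{\mathrm{pol}}\Ec_{p-1,k}(\om)\,\pm\,\p_{\mathrm{pol}}\Ec_{p,k-1}(\om).
\]

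The main obstacle is the sign bookkeeping: one has to carefully track the signs produced by Stokes' theorem on $\partial\D_p$, by the graded commutation of $d_{\mathrm{dR}}$ with integration along the fiber of $\pi_\D$, and by the Chevalley--Eilenberg vs.\ Maurer--Cartan convention, and then reconcile them with the signs in the coboundaries $b_{\mathrm{pol}}$ and $\p_{\mathrm{pol}}$ fixed in \eqref{homb} and \eqref{p-pol}. Everything else---the covariance that underlies the restriction at $e$, the identification of the restricted form $\mu(\om)$ of Lemma \ref{lem:redunit}, and the expression of $\Ec$ as a fiber integral over the simplex---is either already recorded or is a routine computation with left-invariant forms on $G$.
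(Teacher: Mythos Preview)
Your proposal is correct and follows essentially the same route as the paper: both arguments lift Proposition~\ref{vEcomplex} to the form-valued cochain $\Dc$, restrict to $e\in G$, and use the covariance identity \eqref{Gveinv} to convert the de Rham differential at $e$ into the Lie algebra cohomology coboundary $\p_{\rm pol}$; the paper packages the latter step via the Cartan formula on left-invariant vector fields rather than your basis expansion $\sum_I f_I\a_I$, but the content is identical. The only item you leave implicit that the paper spells out is the verification that $\Ec_{p,q}(\om)$ actually lands in $C_{\rm pol}^p(\FN,\wedge^q\Fg^\ast)$---polynomial dependence on the jets and the covariance condition \eqref{covcond}---both of which follow from Lemma~\ref{lem:redunit} as you indicate.
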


\begin{proof} The cochain $\, \Ec_{p, q} (\om)(\psi_0,\dots, \psi_p) $
depends {\em polynomially} on the coordinates of the jet of some finite order
of $\psi_0,\dots, \psi_p$. This follows from the fact that the form
$\, \mu(\om)$, {\em cf.} \eqref{muform}, can be expressed as a linear
combination with polynomial coefficients in the canonical jet-coordinates
of the standard basis for invariant forms on $\FN $ relative to the same coordinates.
Furthermore, the covariance relation \eqref{Nveinv}
shows that  $\, \Ec_{p, q} (\om) \in C_{\rm pol}^p (\FN, \wedge^q\Fg^\ast)$.

By Proposition \ref{vEcomplex}, one has
\begin{align*}
\Ec_{p, \bullet}(d\om)(\psi_0, \ldots, \psi_{p}) = d \big(\Ec_{p, \bullet}(\om)(\psi_0, \ldots, \psi_{p})\big)+
\big(\d\Ec_{p-1, \bullet}(\om)\big)(\psi_0, \ldots, \psi_{p}) .
\end{align*}
  The de Rham coboundary term in the right hand side is obtained by evaluating
at $\, \vp = e$ the expressions
\begin{align*} & d \big(\Dc_{p, q}  ({\tilde\om}) (\psi_0,\dots, \psi_p)\big)
({\tilde X}_0, \ldots , {\tilde X}_q) \, = \\
&\quad = \sum_i (-1)^i {\tilde X}_i \cdot \Dc_{p, q}  ({\tilde\om}) (\psi_0,\dots, \psi_p)
({\tilde X}_0, \ldots , \check{\tilde X}_i, \ldots , {\tilde X}_q) \\
&\quad + \sum_{i <j} (-1)^{i+j} \,
 \Dc_{p, q}  ({\tilde\om}) (\psi_0,\dots, \psi_p)
  ([{\tilde X}_i, {\tilde X}_j], {\tilde X}_0, \ldots , \check{\tilde X}_i,
  \ldots , \check{\tilde X}_j , \ldots , {\tilde X}_q) ,
\end{align*}
where ${\tilde X}$ stands for the left invariant vector field on $G$ associated
to $X \in \Fg$.
The second sum evaluated at $\, e \in G$ gives
\begin{align*}
 \sum_{i <j} (-1)^{i+j}  \Ec_{p, q}  ({\tilde\om}) (\psi_0,\dots, \psi_p)
  ([{X}_i, {X}_j], {X}_0, \ldots , \check{X}_i,
  \ldots , \check{X}_j , \ldots , {X}_q) .
\end{align*}
On the other hand, expressing the action of ${\tilde X}_0$ at $\, e \in G$
as the derivative   $\displaystyle \frac{d}{ds}\mid_{s=0} L(\exp sX_0)$,
the first term in the first sum yields, on applying
the identity \eqref{Gveinv},
\begin{align*}
& \frac{d}{ds}\mid_{s=0} \Dc_{p, q}  ({\tilde\om}) (\psi_0,\dots, \psi_p)
 \mid_{\exp sX_0} ({\tilde X}_1, \ldots , {\tilde X}_q) = \\
&=\frac{d}{ds}\mid_{s=0} \Ec_{p, q}  ({\tilde\om})(\psi_0 \lt \exp sX_0,\dots,\psi_p \lt \exp sX_0)
({X}_1, \ldots , {X}_q) .
  \end{align*}
 Repeating this argument for each of the remaining terms
 shows that the operator $d$ in the right hand side coresponds
 precisely the coboundary operator for
  Lie algebra cohomology  with coefficients.
   \end{proof}

 In conjunction with the previous isomorphisms
    \begin{eqnarray*}
&C^{\bullet}_{\rm pol}(\FN,\wg^\bullet\Fg^\ast) \, \cong \,
C^{\bullet,\bullet}_{\rm coinv}(\Fg^\ast, \Fc)
 \qquad &\text{\cf Proposition \ref{co-pol} } , \\
&C^{\bullet,\bullet}_{\rm coinv}(\Fg^\ast, \Fc)  \, \cong \, C^{\bullet, \bullet}_{\rm c-w}(\Fc,\Fg^\ast)  \qquad &\text{\cf Proposition \ref{iso-coinv-c-w} } , \\
&C^{\bullet, \bullet}_{\rm coinv}(\Fc,\Fg^\ast) \, \cong \,  C^{\bullet, \bullet}(\Fg^\ast, \Fc) \qquad  &\text{\cf Proposition \ref{tocoinv} } ,\\
&C^{\bullet, \bullet}(\Fc,\Fg^\ast) \, \cong \,  C^{\bullet}_\Uc(\Fc, V^\bullet(\Fg^\ast))
\qquad  &\text{\cf Theorem \ref{thm:dgH} } ,
\end{eqnarray*}
 this allows us to conclude with the following statement.

 \begin{proposition} \label{3id}
 Under the above identifications, the resulting homomorphism
 $\Ec_{\rm LH}$ maps
the Gelfand-Fuks Lie algebra cohomology complex $C_{\rm top}^\bullet (\Fa)$
to the $\Fg$-equivariant Hopf cyclic complex of $\Fc$ with coefficients in the
Koszul resolution $V^\bullet(\Fg^\ast)$.
 \end{proposition}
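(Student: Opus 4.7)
The plan is to define $\Ec_{\rm LH}$ as the composition of the constituent maps erected in the preceding subsections, taken in the order dictated by the chain of isomorphisms listed immediately above the statement. The three bicomplex isomorphisms ($\Jc$, $\Ic$, $\kappa$) together with the starting cochain map $\Ec$ have each been verified to intertwine their respective differentials, so functoriality will do most of the work. The quasi-isomorphism $\td\a_\Fc$ of Proposition \ref{iso-coinv-c-w} is not strictly needed for the composition, but provides the link to the $\Fc$-antisymmetric sub-bicomplex used later.

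Concretely, I would first invoke Lemma \ref{vEcomplex2} to obtain the entry-level map of complexes $\Ec\colon C^\bullet_{\rm top}(\Fa)\to C^\bullet_{\rm pol}(\FN,\wg^\bullet\Fg^\ast)$. The crucial point already addressed there is that the form $\mu(\om)$ of Lemma \ref{lem:redunit} has polynomial coefficients in the canonical jet coordinates on $\FN$, so that the image genuinely lies in the \emph{polynomial} subcomplex, and the covariance \eqref{Nveinv} yields the required condition \eqref{covcond}. Next I compose with $\Jc^{-1}$ from Proposition \ref{co-pol} to land in $C^{\bullet,\bullet}_{\rm coinv}(\Fg^\ast,\Fc)$, then with $\Ic^{-1}$ from Proposition \ref{tocoinv} to reach $C^{\bullet,\bullet}(\Fg^\ast,\Fc)$, and finally with $\kappa^{-1}$ from Theorem \ref{thm:dgH} to arrive at $C^\bullet_\Uc(\Fc, V^\bullet(\Fg^\ast))$. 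Setting
$$
\Ec_{\rm LH} \,:=\, \kappa^{-1}\circ \Ic^{-1}\circ \Jc^{-1}\circ \Ec ,
$$
we obtain a map of total complexes by functoriality, since each factor already intertwines the bicomplex coboundaries on both sides.

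The main obstacle, such as it is, is essentially notational and verificatory: one must match the target of $\kappa$ in Theorem \ref{thm:dgH} with the bicomplex $C^{\bullet,\bullet}(\Fg^\ast,\Fc)$ described in diagram \eqref{UF+*}, which is achieved by the identification $V^\bullet(\Fg^\ast)\ot_\Uc \Fc^{\ot q}\cong \wg^\bullet\Fg^\ast\ot \Fc^{\ot q}$ together with the matching of $\p_{\Fc,K}$ with the vertical Chevalley--Eilenberg coboundary $\p_{\Fg^\ast}$; both ingredients are already built into the proof of Theorem \ref{thm:dgH}. Beyond these bookkeeping checks, no new computation is required, so the statement is essentially a corollary of the four preceding identifications, with the explicit formula for $\Ec_{\rm LH}$ produced by unwinding the definitions of $\Jc^{-1}$, $\Ic^{-1}$ and $\kappa^{-1}$ on the cochain $\Ec(\om)$.
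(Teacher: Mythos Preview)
Your proposal is correct and matches the paper's own treatment: the proposition is stated there without a separate proof, simply as the conclusion drawn from Lemma~\ref{vEcomplex2} together with the chain of identifications (Propositions~\ref{co-pol}, \ref{iso-coinv-c-w}, \ref{tocoinv} and Theorem~\ref{thm:dgH}) listed immediately before it. Your explicit composition $\Ec_{\rm LH}=\kappa^{-1}\circ\Ic^{-1}\circ\Jc^{-1}\circ\Ec$ and your remark that the quasi-isomorphism $\td\a_\Fc$ is not needed at this stage (it enters only in the later Theorem~\ref{thm:KvE}, where the target is described as $C^{\bullet,\bullet}_{\rm c\text{-}w}(\Fg^\ast,\Fc)$) are both accurate elaborations of what the paper leaves implicit.
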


 \subsection{Explicit van Est isomorphism} \label{Ss: thm.vE}

Note that while the original assignment maps
$$ \om \in C_{\rm top}^s (\Fa) \mapsto \bigoplus_{p+ \dim G - r = s} \Cc_{p, r} (\om) \in
C_{\rm pol}^p (\FN,   \Om_r (G))  ,
$$
after the passage from $\Cc_{p, r}(\om)$ to $\Ec_{p, q} (\om)$, $ r + q = \dim G$,
by Poincar\'e duality it takes the form
$$ \om \in C_{\rm top}^s (\Fa) \mapsto \bigoplus_{p+ q= s} \Ec_{p, q} (\om) \in
C_{\rm pol}^p (\FN,    \wedge^q \Fg^\ast) .
$$
Furthermore, this assignment can be promoted to a map of bicomplexes,
if one identifies the Lie algebra $\Fa$ with the double crossed sum Lie algebra
 $\Fg\bowtie \Fn$.
Let us recall that a pair of Lie algebras  $(\Fg, \Fn)$ forms a matched pair  if there
 is a right action of $\Fg$ on $\Fn$ and a left action of $\Fn$ on $\Fg$,
\begin{equation}
\alpha: \Fn\ot \Fg\ra \Fn, \quad \alpha_X(\z)=\z\lt X, \quad \beta:\Fn\ot \Fg\ra \Fg, \quad \beta_\z(X)=\z\rt X,
\end{equation}
satisfying the following conditions:
\begin{align*}
&[\z,\x]\rt X=\z\rt(\x\rt X)-\x\rt(\z\rt X), \quad \z\lt[X, Z]=(\z\lt X)\lt Z-(\z\lt Z)\lt X, \\
&\z\rt[X, Z]=[\z\rt X, Z]+[X,\z\rt Z] + (\z\lt X)\rt Z-(\z\lt Z)\rt X\\
&[\z,\x]X=[\z\lt X,\x]+[\z,\x\lt X]+ \z\lt(\x\rt X)-\x\lt(\z\rt X).
\end{align*}
Given such a matched pair, one defines a Lie algebra with
underlying vector space $\Fg\oplus\Fn$ by setting:
\begin{equation*}
[X\oplus\z,  Z\oplus\x]=([X, Z]+\z\rt Z-\x\rt X)\oplus ([\z,\x]+\z\lt Z-\x\lt X).
\end{equation*}
 Conversely, given a Lie algebra $\Fa$ and
 two Lie subalgebras $\Fg$ and $\Fn$ so that $\Fa=\Fg\oplus\Fn$ as vector  spaces,  then
 $(\Fg, \Fn)$  forms a matched pair of Lie algebras and $\Fa\cong \Fg\bowtie \Fn$ as Lie algebras.
  In this case the actions of $\Fg$ on $\Fn$ and $\Fn$ on $\Fg$  for $\z\in \Fn$ and $X\in\Fg$ are uniquely determined  by:
\begin{equation*}
[\z,X]=\z\rt X+\z\lt X
\end{equation*}
This is precisely our case, and we shall identify from now on $\Fa$ with $ \Fg\bowtie \Fn$.

\begin{lemma}\label{biLie}
The complex $\{C_{\rm top}^\bullet(\Fa), d\}$ is isomorphic to the total complex of  the bicomplex
$\{C_{\rm top}^\bullet(\Fg)\ot C^\bullet(\Fn), \p_\Fg, \p_\Fn\}$, where $\p_\Fg$ and $ \p_\Fn$
denote the respective coboundaries for Lie algebra cohomology with coefficients.
\end{lemma}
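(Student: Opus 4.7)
The plan is to construct the identification at the level of cochain spaces from the vector-space splitting $\Fa=\Fg\oplus\Fn$ and then to decompose the Chevalley-Eilenberg differential of $\Fa$ by bidegree. Since $\Fg$ is finite-dimensional and continuity on $\wedge^\bullet \Fa^\ast$ is governed only by the $\Fn$-factor, the natural identification
\begin{equation*}
C_{\rm top}^n(\Fa) \;\cong\; \bigoplus_{p+q=n} \wg^p\Fg^\ast\,\ot\, C^q(\Fn) \;=\; \bigoplus_{p+q=n} C_{\rm top}^p(\Fg)\,\ot\, C^q(\Fn)
\end{equation*}
is immediate.

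Next I would express the Chevalley-Eilenberg coboundary $d_\Fa$ on $\Fa^\ast=\Fg^\ast\oplus\Fn^\ast$ by dualizing the matched-pair bracket
\begin{equation*}
[X\oplus\z,Z\oplus\x]=\bigl([X,Z]+\z\rt Z-\x\rt X\bigr)\oplus\bigl([\z,\x]+\z\lt Z-\x\lt X\bigr) .
\end{equation*}
For $\om\in\Fg^\ast$ the value $d_\Fa\om(X+\z,\,Z+\x)=-\om([X+\z,Z+\x])$ splits into a $\wg^2\Fg^\ast$-part $-\om([X,Z])$ and a $\Fg^\ast\!\ot\Fn^\ast$-part $-\om(\z\rt Z-\x\rt X)$; dually for $\eta\in\Fn^\ast$. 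Extending by the Leibniz rule, $d_\Fa$ preserves total degree but splits into exactly two homogeneous components $d_\Fa=\p_\Fg+\p_\Fn$ of respective bidegree $(1,0)$ and $(0,1)$.

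The final identification step is to recognize each component as a Lie-algebra cohomology coboundary with coefficients. The $(1,0)$-component on $\wg^p\Fg^\ast\ot\wg^q\Fn^\ast$ combines the pure CE differential of $\Fg$ on the first factor with a coefficient term coming from the contragredient of the right $\Fg$-action $\lt$ on $\Fn$; this is precisely the CE coboundary $\p_\Fg$ of $\Fg$ with values in the $\Fg$-module $\wg^\bullet\Fn^\ast$. Dually, the $(0,1)$-component is the CE coboundary $\p_\Fn$ of $\Fn$ with values in the $\Fn$-module $\wg^\bullet\Fg^\ast$, the $\Fn$-action being dual to $\rt:\Fn\ot\Fg\ra\Fg$. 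The identity $d_\Fa^2=0$, together with the disjoint bidegrees, then yields $\p_\Fg^2=0$, $\p_\Fn^2=0$ and $\p_\Fg\p_\Fn+\p_\Fn\p_\Fg=0$, so the summands form a genuine bicomplex whose total complex is exactly $(C_{\rm top}^\bullet(\Fa),d)$.

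The only real bookkeeping obstacle is to verify that the coefficient pieces produced by the ``cross'' terms of the matched-pair bracket really assemble (up to sign) into the contragredient $\Fg$- and $\Fn$-actions rather than just ad hoc corrections; this is a direct but careful computation on $p=1$, $q=0$ and $p=0$, $q=1$ generators, after which the general case follows from the graded Leibniz rule for $d_\Fa$.
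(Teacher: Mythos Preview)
Your proposal is correct and follows essentially the same approach as the paper: identify cochain spaces via the splitting $\Fa=\Fg\oplus\Fn$ and then decompose $d_\Fa$ according to the matched-pair bracket. The paper's proof is slightly more explicit in that it writes down the isomorphism $\natural$ (evaluation on pure $\Fg$- and $\Fn$-arguments) together with its inverse via $(p,q)$-shuffles, and then appeals to the three bracket identities $[Z_i\oplus 0,Z_j\oplus 0]=[Z_i,Z_j]\oplus 0$, $[0\oplus\z_k,0\oplus\z_l]=0\oplus[\z_k,\z_l]$, $[Z_i\oplus 0,0\oplus\z_k]=-\z_k\rt Z_i-\z_k\lt Z_i$ to match the differentials; but this is exactly the content of your bidegree computation.
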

\begin{proof} The isomorphism is implemented by the map \\
$\natural:  C^{s}(\Fg\bowtie\Fn)\ra \bigoplus_{p+q=s}C^p(\Fg)\ot C^q(\Fn)$,
\begin{align*}
\natural(\omega)(Z_1,\dots, Z_p\mid \z_1, \dots, \z_q)=\omega(Z_1\oplus 0,\dots, Z_p\oplus 0, 0\oplus\z_1, \dots, 0\oplus \z_q) ,
\end{align*}
whose inverse is given by
 \begin{align*}
&\natural^{-1}(\mu\ot\nu)(Z_1\oplus\z_1, \dots,Z_{p+q}\oplus\z_{p+q})= \\
&\sum_{\s\in Sh(p,q)}(-1)^{\s}\mu(Z_{\s(1)}, \dots,Z_{\s(p)})\nu(\z_{\s(p+1)}, \dots, \z_{\s(p+q)}) .
\end{align*}
Using  the obvious identities
\begin{align*}
& [Z_i\oplus 0, Z_j\oplus 0]=[Z_i,Z_j]\oplus 0, \quad [0\oplus \z_k, 0\oplus \z_l]=0\oplus[\z_k, \z_l], \\
&[Z_i\oplus 0, 0\oplus \z_k]= -\z_k\rt Z_i-\z_k\lt Z_i ,
\end{align*}
it is straightforward to check that $\natural$ takes the coboundary $d$ of
 $C_{\rm top}^\bullet(\Fa)$ into the total coboundary of $C_{\rm top}^\bullet(\Fg)\ot C^\bullet(\Fn)$.
 \end{proof}

 \bigskip

 \begin{lemma}
Let   $X \in \Fg = T_e G$. Then
\begin{align} \label{nustar}
\nu_{ \ast (e, \psi)}(X) \, = \,  - \imath_{\ast  \psi} ({\tilde X}_\psi ) \, + \,
  \imath_{\ast  \psi} (X_\psi^\lt ), \qquad
\fl \,  \psi \in \FN ,
\end{align}
where ${\tilde X}$ denotes the corresponding left invariant vector field on $\Jc_0^\infty$
and and $X^\lt$ the vector field on $\Jc_0^\infty$ associated via the right action $\lt$ of $G$.
\end{lemma}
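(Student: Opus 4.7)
The plan is to evaluate $\nu_{\ast (e,\psi)}(X)$ by differentiating the composed curve $t \mapsto \nu(\exp(tX), \psi)$ in $\Jc_0^\infty$ at $t = 0$. By the definition \eqref{nu} of $\nu$ this curve is the product
\begin{equation*}
\nu(\exp(tX), \psi) \, = \, \exp(tX) \cdot \imath\big(\psi \lt \exp(tX)\big) \, = \, f(t) \cdot g(t) ,
\end{equation*}
with $f(t) = \exp(tX) \in G$ and $g(t) = \imath(\psi \lt \exp(tX)) \in \FN$, satisfying $f(0) = e$, $g(0) = \psi^{-1}$.

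Applying the Leibniz rule for the product map $\Gb \times \Gb \to \Gb$ at $(e, \psi^{-1})$, which holds in this pro-Lie setting because the expression depends only on jets of finite order, gives
\begin{equation*}
\nu_{\ast (e,\psi)}(X) \, = \, (R_{\psi^{-1}})_\ast f'(0) \, + \, (L_e)_\ast g'(0) \, = \, (R_{\psi^{-1}})_\ast X \, + \, g'(0) .
\end{equation*}
The second summand is immediate: by the chain rule, $g'(0) = \imath_{\ast\psi}\bigl(\tfrac{d}{dt}\big|_{t=0} \psi \lt \exp(tX)\bigr) = \imath_{\ast\psi}(X_\psi^\lt)$, where $X^\lt$ is the fundamental vector field on $\FN \sbs \Jc_0^\infty$ of the right action $\lt$ of $G$, precisely as in the statement.

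To identify the first summand, I would invoke the standard formula for the differential of inversion in a Lie group: for $v \in T_\psi \Gb$,
\begin{equation*}
\imath_{\ast\psi}(v) \, = \, -(R_{\psi^{-1}})_\ast (L_{\psi^{-1}})_\ast v .
\end{equation*}
Applied to the left-invariant vector $\tilde X_\psi = (L_\psi)_\ast X$, this yields $\imath_{\ast\psi}(\tilde X_\psi) = -(R_{\psi^{-1}})_\ast X$, hence $(R_{\psi^{-1}})_\ast X = -\imath_{\ast\psi}(\tilde X_\psi)$. Substituting back, $\nu_{\ast(e,\psi)}(X) = -\imath_{\ast\psi}(\tilde X_\psi) + \imath_{\ast\psi}(X_\psi^\lt)$, which is \eqref{nustar}. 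The only conceptual point to watch is that although $\Jc_0^\infty$ (and its ambient group $\Gb$) is a pro-finite-dimensional object rather than an honest Lie group, both the Leibniz rule and the inversion-differential formula descend from the finite jet levels and are therefore valid here; there is no genuine obstacle.
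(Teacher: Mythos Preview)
Your proof is correct and follows essentially the same approach as the paper's. The paper carries out the identical computation by pairing against test functions of the form $f_1 \otimes f_2$ (using the product decomposition $\Jc_0^\infty = G \cdot \FN$) and then invoking the identity $\exp(tX)\cdot\psi^{-1} = \imath(\psi\exp(-tX))$ directly, whereas you invoke the abstract Leibniz rule and the standard formula for the differential of inversion; these are the same steps in slightly different dress.
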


\begin{proof}
Let $\, f  \in C^\infty (\Jc^\infty_0)$, which we assume of the form
$\, f = f_1 \ot f_2 $ with $\, f_1 \in C^\infty (G)$ and
 $\, f_2 \in C^\infty (\FN)$. One has
\begin{align*}
 \begin{split}
& \nu_{\ast (e, \psi)}  (X) f = \,  \frac{d}{dt} \mid_{t=0} (f \circ \nu)( \exp t X , \psi)  \, = \\
 & = \frac{d}{dt} \mid_{t=0} f (\exp t X \cdot \imath (\psi \lt \exp tX)) \\
  &= \frac{d}{dt} \mid_{t=0} \big(f_1 ( \exp t X )\cdot f_2 ( \imath (\psi \lt \exp tX)) \big) \\
  &= \frac{d}{dt} \mid_{t=0} f_1 ( \exp t X )\cdot f_2 (\imath (\psi)) \, + \, f_1(e) \cdot
  \frac{d}{dt} \mid_{t=0}  f_2 ( \imath (\psi \lt \exp tX)) \\
  &=  \frac{d}{dt} \mid_{t=0} f ( \exp t X \cdot \psi^{-1}) \, + \,
  \frac{d}{dt} \mid_{t=0}  f (\imath(\psi \lt \exp tX)) \\
  &= -\frac{d}{dt} \mid_{t=0} f ( \imath( \psi \exp t X)) \, + \, \imath_{\ast  \psi} (X_\psi^\lt ) f  \\
&=\, - \imath_{\ast  \psi} ({\tilde X}_\psi )f  \, + \, \imath_{\ast  \psi} (X_\psi^\lt ) f  .
      \end{split}
\end{align*}
 \end{proof}

 \begin{corollary} \label{biLie2}
 The map $\,  \mu_e : C^\bullet_{\rm top}(\Fa) \ra
 \wdg^\bullet \Fg^\ast \ot   \wdg_{\rm top}^\bullet \Fn^\ast$, $\, \mu_e (\om) := \mu (\om) \mid_e$
 coincides with the isomorphism
 $\, \natural:  C_{\rm top}^\bullet (\Fg\bowtie\Fn)\ra C^\bullet(\Fg)\ot C_{\rm top}^\bullet(\Fn)$.
 \end{corollary}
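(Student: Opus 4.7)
The plan is to reduce the identity $\mu_e = \natural$ to a direct computation of $\nu_{\ast(e,e)}$ at the corner $(e,e)\in G\times\FN$, combined with the left-invariance of $\tilde\om$. By definition \eqref{muform},
\begin{equation*}
\mu_e(\om) \; = \; \sum_I \imath^\ast\bigl(\iota_{X_e^I}\nu^\ast\tilde\om\bigr)\bigr|_e \ot \a_I\bigr|_e \; \in \; \wdg^p \Fn^\ast \ot \wdg^q \Fg^\ast,
\end{equation*}
so it suffices to evaluate $\nu^\ast\tilde\om$ at $(e,e)$ on tuples of vectors from $\Fg\oplus\Fn$.

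First I would compute $\nu_{\ast(e,e)}\colon \Fg\oplus\Fn\to\Fa$ using formula \eqref{nustar} at $\psi=e$. For $X\in\Fg$ I would observe that the right action $\lt$ of $G$ on $\FN$ fixes the identity, since $e\cdot\vp=\vp\cdot e$ forces $e\lt\vp=e$ for every $\vp\in G$; consequently $X_e^\lt=0$, and combined with $\imath_{\ast e}=-\Id$ on $T_e\Jc_0^\infty$ and $\tilde X_e=X$ this gives $\nu_{\ast(e,e)}(X)=X\in\Fg\subset\Fa$. For $\z\in\Fn=T_e\FN$, the direct calculation $\nu(e,\exp t\z)=\imath(\exp t\z\lt e)=\imath(\exp t\z)=\exp(-t\z)$ yields $\nu_{\ast(e,e)}(\z)=-\z\in\Fn\subset\Fa$.

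Since $\tilde\om$ is left-invariant with $\tilde\om_e=\om$, for any ordered multi-index $I=(i_1<\dots<i_q)$ and any $\z_1,\dots,\z_p\in\Fn$ we then get
\begin{equation*}
\bigl(\iota_{X_e^I}\nu^\ast\tilde\om\bigr)_{(e,e)}(\z_1,\dots,\z_p) \; = \; \om(X_{i_1},\dots,X_{i_q},-\z_1,\dots,-\z_p) \; = \; (-1)^p\,\om(X_{i_1},\dots,X_{i_q},\z_1,\dots,\z_p).
\end{equation*}
Pulling back by $\imath$ and evaluating at $e\in\FN$ contributes an additional $(-1)^p$ because $\imath_{\ast e}=-\Id$ on $\Fn$, and the two signs cancel. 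Using the duality $\a_I|_e(X_{j_1},\dots,X_{j_q})=\d_I^J$, one concludes
\begin{equation*}
\mu_e(\om)(\z_1,\dots,\z_p;X_{i_1},\dots,X_{i_q}) \; = \; \om(X_{i_1}\oplus 0,\dots,X_{i_q}\oplus 0,\,0\oplus\z_1,\dots,0\oplus\z_p),
\end{equation*}
which by Lemma \ref{biLie} is precisely $\natural(\om)(X_{i_1},\dots,X_{i_q}\mid\z_1,\dots,\z_p)$, modulo the obvious identification $\wdg^p\Fn^\ast\ot\wdg^q\Fg^\ast\cong\wdg^q\Fg^\ast\ot\wdg^p\Fn^\ast$ implicit in the statement.

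The main obstacle is the careful bookkeeping of signs and bidegree conventions: the contraction order in $\iota_{X_e^I}$, the action of $\imath_\ast$ on both the $G$- and $\FN$-tangent directions at $(e,e)$, and the fact that $\mu_e$'s natural target has the $\Fn^\ast$-factor in front whereas the formulation of the corollary (matching $\natural$) places the $\Fg^\ast$-factor first. Once these conventions are reconciled, the two sign contributions from inverting the $\Fn$-directions cancel and the matching is exact; the isomorphism assertion is then automatic from Lemma \ref{biLie}.
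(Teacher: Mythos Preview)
Your proof is correct and follows the same approach as the paper's own argument, namely computing $\nu_{\ast(e,e)}$ and using the left-invariance of $\tilde\om$. The paper's proof is considerably terser: it only records that $\nu_{\ast(e,e)}(X)=X$ for $X\in\Fg$ (because $X_e^\lt=0$) and leaves the $\Fn$-direction and the sign bookkeeping implicit, whereas you have carefully worked out $\nu_{\ast(e,e)}(\z)=-\z$ for $\z\in\Fn$ and verified that the two $(-1)^p$ factors from $\nu_\ast$ and $\imath^\ast$ cancel.
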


 \begin{proof} The evaluation of  $\, \mu (\om)$ at the identity element $e \in \Gb$
 only involves the map $\nu_{ \ast (e, e)} : \Fg \ra \Fa$. Applying \eqref{nustar},
 for $\psi =e$, one simply obtains
 \begin{align*}
 \nu_{ \ast (e, e)} (X) \, = \, X , \qquad \fl \, X \in \Fg ,
 \end{align*}
 because obviously $\, X_e^\lt \, = \, 0$.
  \end{proof}

\bigskip


Identifying the two complexes as in Corollary \ref{biLie2} and relying
on Lemma \ref{vEcomplex2}, we shall view the map $\Ec$
as a map of bicomplexes $\Ec_{\bullet, \bullet} : C_{\rm top}^\bullet (\Fn, \wedge^\bullet \Fg^\ast) \ra
 C_{\rm pol}^\bullet (\FN, \wedge^\bullet \Fg^\ast)$.

\begin{proposition}\label{nn*}
The map $\Ec_{\bullet, \bullet} : C_{\rm top}^\bullet (\Fn, \wedge^\bullet \Fg^\ast) \ra
 C_{\rm pol}^\bullet (\FN, \wedge^\bullet \Fg^\ast)$
 induces a quasi-isomorphism  of total complexes.
\end{proposition}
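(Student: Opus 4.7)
The plan is to filter both bicomplexes by the vertical degree $q$ in $\wg^q\Fg^\ast$ and compare the resulting spectral sequences. Since $0\leq q\leq m=\dim G$, each filtration is bounded and the associated spectral sequences converge strongly to the cohomology of the total complex. The $E_0$ pages read
\begin{align*}
E_0^{p,q}(L)\,&=\,\wg^p\Fn^\ast\ot\wg^q\Fg^\ast,\qquad d_0=\p_\Fn,\\
E_0^{p,q}(R)\,&=\,C_{\rm pol}^p(\FN,\wg^q\Fg^\ast),\qquad d_0=b_{\rm pol},
\end{align*}
so that the $E_1$ pages compute, respectively, the continuous Lie algebra cohomology $H^p(\Fn,\wg^q\Fg^\ast)$ and the polynomial group cohomology $H^p_{\rm pol}(\FN,\wg^q\Fg^\ast)$, both with coefficients in the finite-dimensional module $\wg^q\Fg^\ast$.

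By Lemma \ref{lem:redunit}, the $\wg^q\Fg^\ast$-valued $p$-form $\mu(\om)$ is $\FN$-invariant (with respect to the induced action on coefficients), and by Corollary \ref{biLie2} its value at $e\in\FN$ coincides with $\natural(\om)$. Specializing \eqref{eq:redunit} to a decomposable element $\om=\natural^{-1}(\a\ot\nu)$ with $\a\in\wg^q\Fg^\ast$ and $\nu\in\wg^p\Fn^\ast$ therefore expresses $\Ec_{p,q}(\om)(\psi_0,\dots,\psi_p)$ as the integral, over the affine simplex $\D(\psi_0,\dots,\psi_p)$, of the unique left invariant $\wg^q\Fg^\ast$-valued $p$-form on $\FN$ whose value at $e$ is $\nu\ot\a$. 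This is precisely the classical simplicial van Est cochain, so the $E_1$-map induced by $\Ec$ is the standard van Est homomorphism from $H^\bullet(\Fn,\wg^q\Fg^\ast)$ to $H^\bullet_{\rm pol}(\FN,\wg^q\Fg^\ast)$.

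To see that this van Est homomorphism is an isomorphism, I would write $\FN=\limproj_k\FN_k$ as the inverse limit of the unipotent algebraic groups of $k$-jets, with nilpotent Lie algebras $\Fn_k$; both continuous Lie algebra cohomology and polynomial group cohomology are computed as direct limits over $k$ of the corresponding cohomologies at finite jet level. The action \eqref{Naction} of $\FN$ on $\wg^q\Fg^\ast$ factors through the first jet level $\FN_1$, so the coefficients form a rational module at every finite level. Hochschild's theorem on rational cohomology of unipotent algebraic groups over a characteristic-zero field then yields $H^\bullet(\Fn_k,\wg^q\Fg^\ast)\cong H^\bullet_{\rm rat}(\FN_k,\wg^q\Fg^\ast)$, compatibly with the simplicial van Est map; passing to the direct limit produces the desired $E_1$-isomorphism.

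By the comparison theorem for convergent spectral sequences of filtered complexes, the fact that $\Ec$ induces an isomorphism on $E_1$ implies that it induces an isomorphism of total cohomologies, proving the proposition. The main technical point to check carefully is the explicit identification of the $E_1$-map with the simplicial van Est cochain; this reduces, via Lemma \ref{lem:redunit} and Corollary \ref{biLie2}, to making formula \eqref{eq:redunit} explicit on pure products $\a\ot\nu$, after which invoking Hochschild's theorem level-by-level is standard.
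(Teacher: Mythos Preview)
Your approach is correct and, at the structural level, parallels the paper's: both reduce to showing that for each fixed $q$ the map $\Ec_{\bullet,q}$ realizes a van Est isomorphism $H_{\rm top}^\bullet(\Fn,\wg^q\Fg^\ast)\cong H_{\rm pol}^\bullet(\FN,\wg^q\Fg^\ast)$. Your spectral-sequence phrasing is equivalent to the paper's ``check each row'' reduction.

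The genuine difference lies in how the van Est isomorphism itself is established. The paper does \emph{not} invoke Hochschild's theorem. Instead it runs the original van Est argument directly: it introduces the auxiliary bicomplex $\Cc_{\rm pol}^\bullet(\FN,\Om_{\rm pol}^\bullet(\FN,\wg^q\Fg^\ast))$, exhibits explicit contracting homotopies for its rows (evaluation at $e$) and for its columns (the Poincar\'e-lemma homotopy along the radial vector field in jet coordinates), and reads off the abstract isomorphism from the two edge maps. It then produces an explicit quasi-isomorphism $\digamma_q$ via the collating formula and observes that at each finite jet level $\digamma_q$ and $E_{\bullet,q}$ are automatically chain homotopic, which pins the isomorphism to the specific map $\Ec$. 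Your route---pass to finite jet levels, cite Hochschild's theorem for unipotent algebraic groups, take the direct limit---is cleaner to state but relies on a black box and still needs the remark (which you make only in passing, ``compatibly with the simplicial van Est map'') that the simplicial integration map actually realizes Hochschild's abstract isomorphism; the paper's explicit homotopy argument handles this identification more carefully.

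One small inaccuracy: the action of $\FN$ on $\wg^q\Fg^\ast$ does \emph{not} factor through $\FN_1$. By the definition of $N$ every $\psi\in N$ has trivial $1$-jet at $0$, so $\FN_1$ is the trivial group, whereas the coaction \eqref{coact2} is manifestly nontrivial (its components are the generators $\eta^\bullet_\bullet$ of $\Fc$). What is true, and is all your argument needs, is that $\wg^q\Fg^\ast$ is finite-dimensional and the coaction lands in a finitely generated piece of $\Fc$, so the action factors through some finite $\FN_k$ and the coefficients are rational $\FN_k$-modules for all $k$ large enough. With that correction your argument goes through.
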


\proof
In order to show that  the map of bicomplexes
$\Ec_{\bullet, \bullet} $ implements a quasi-isomorphism
 of total complexes
it suffices to check that for  each row $q \in \Nb$,
 $\Ec_{\bullet, q}:C_{\rm top}^\bullet (\Fn, \wg^q \Fg^\ast  )
 \ra \Cc_{\rm pol}^\bullet (\FN, \wg^q \Fg^\ast )$
is a quasi-isomorphism. We recall that, by Lemma
 \ref{lem:redunit}, this map associates to a right invariant
 form  ${\tilde \om} \in \Om^p (\FN, \wg^q \Fg^\ast )^\FN$ the cochain
   \begin{align*}
\Ec_{p, \bullet} (\om)(\psi_0,\dots, \psi_p) \, = \,
  \int_{\D(\psi_0,\dots, \psi_p)} \,\mu (\om)\,
  \in \, \wedge^\bullet \Fg^\ast .
\end{align*}
 This shows that $\, \Ec_{\bullet, q}$ coincides with the standard
explicit chain map implementing the van Est homomorphism
for Lie groups ({\rm cf. e.g.} \cite[Prop. 5.1]{Dupont}). However, since $\FN$ is
only a projective limit of Lie groups, the usual argument based
on continuously injective resolutions ({\it cf.} \cite{HochMost})
does not properly apply. Instead, we shall first give a direct argument for
the isomorphism
\begin{align} \label{abs-iso}
  H_{\rm top}^\bullet (\Fn, \wg^\bullet \Fg^\ast ) \cong H_{\rm pol}^\bullet (\FN, \wg^\bullet \Fg^\ast ) ,
   \end{align}
  along the lines of the original proof by van Est \cite{vEst}, and only then
  conclude that it is implemented by the chain map $\, \Ec_{\bullet, q}$.

   To this end, let us consider the bicomplex
$\Cc_{\rm pol}^\bullet \big(\FN , \Om_{\rm pol}^\bullet (\FN,\wg^\bullet \Fg^\ast  )\big)$
consisting of {\it inhomogeneous
cochains} on $\FN$ with values in $\wg^\bullet \Fg^\ast$-valued forms
on $\FN$ with polynomial coefficients. A $(p, q)$-cochain in this bicomplex
is a function $\om :  \underset{p\,\text{times}}{\underbrace{\FN \times \ldots \times \FN}} \ra
 \Om_{\rm pol}^q (\FN,\wg^\bullet \Fg^\ast)$
which depends polynomially on the jet coordinates. The group $\FN$ acts on the coefficients
$\a \in \Om_{\rm pol}^\bullet (\FN,\wg^\bullet \Fg^\ast )$ via
\begin{align} \label{actonform}
  \psi \star \a \, = \, \psi \rt  R_\psi^\ast \a \, , \qquad  \psi  \in \FN .
   \end{align}
The horizontal coboundary is the group cohomology
 operator
\begin{align} \label{horb}
&d_{\rm h}\om(\psi_1, \dots, \psi_{p+1})\, = \, \psi_1 \star \om(\psi_2,\dots, \psi_{p+1}) \, + \\
\notag &\sum_{i=1}^{p}(-1)^{i} \om(\psi_1,\dots, \psi_i\psi_{i+1} \dots , \psi_{p+1})+
(-1)^{p+1} \om(\psi_1,\dots, \psi_p) .
\end{align}
The vertical coboundary is obtained by applying the exterior derivative $d$,
\begin{align} \label{verb}
&(d_{\rm v}\om) (\psi_1, \dots, \psi_{p})\, = \, d \big(\om (\psi_1, \dots, \psi_{p})\big) ,
\end{align}
operation which commutes with the action \eqref{actonform}, and therefore with
the horizontal coboundary.

 One augments the resulting bicomplex
\begin{align}\label{ve++}
\begin{xy} \xymatrix{ \vdots & \vdots
 &\vdots  & \\
\Cc_{\rm pol}^0 \big(\FN , \Om_{\rm pol}^2 \big)
  \ar[r]^{d_{\rm h}} \ar[u]^{d_{\rm v}}&
\Cc_{\rm pol}^1 \big(\FN , \Om_{\rm pol}^2 \big)
  \ar[r]^{d_{\rm h}} \ar[u]^{d_{\rm v}}
 &\Cc_{\rm pol}^2 \big(\FN , \Om_{\rm pol}^2 \big)
 \ar[u]^{d_{\rm v}}  \ar[r]^{d_{\rm h}}&\hdots\\
\Cc_{\rm pol}^0 \big(\FN , \Om_{\rm pol}^1 \big)
 \ar[r]^{d_{\rm h}} \ar[u]^{d_{\rm v}}&\Cc_{\rm pol}^1 \big(\FN , \Om_{\rm pol}^1 \big)
\ar[r]^{d_{\rm h}} \ar[u]^{d_{\rm v}}&\Cc_{\rm pol}^2 \big(\FN , \Om_{\rm pol}^1 \big)
\ar[u]^{d_{\rm v}} \ar[r]^{d_{\rm h}}&\hdots\\
\Cc_{\rm pol}^0 \big(\FN , \Om_{\rm pol}^0 \big) \ar[r]^{d_{\rm h}}\ar[u]^{d_{\rm v}}& \Cc_{\rm pol}^1
\big(\FN , \Om_{\rm pol}^0 \big)
 \ar[r]^{d_{\rm h}}\ar[u]^{d_{\rm v}}&\Cc_{\rm pol}^2 \big(\FN , \Om_{\rm pol}^0 \big)\ar[u]^{d_{\rm v}}
\ar[r]^{d_{\rm h}}& \hdots  }
\end{xy}
\end{align}
 with the extra column
\begin{align} \label{quasi1}
\iota_{\rm h}: \Ker \left[ d_{\rm h} : \Cc_{\rm pol}^0 \big(\FN , \Om_{\rm pol}^\bullet \big)
 \ra  \Cc_{\rm pol}^1 \big(\FN , \Om_{\rm pol}^\bullet \big) \right] \hookrightarrow
 \Cc_{\rm pol}^0 \big(\FN , \Om_{\rm pol}^\bullet \big) ,
 \end{align}
 and with the bottom row
\begin{align}   \label{quasi2}
\iota_{\rm v}: \Ker \left[ d_{\rm v} : \Cc_{\rm pol}^\bullet \big(\FN , \Om_{\rm pol}^0 \big)
 \ra \Cc_{\rm pol}^\bullet \big(\FN , \Om_{\rm pol}^1 \big) \right] \hookrightarrow
\Cc_{\rm pol}^\bullet \big(\FN , \Om_{\rm pol}^0 \big) .
\end{align}
The added column gives the de Rham complex of right
$\FN$-invariant $\wg^\bullet \Fg^\ast $-valued
forms $\{ \Om_{\rm pol}^\bullet (\FN,\wg^\bullet \Fg^\ast )^\FN , d \}$. The latter is
isomorphic to $C_{\rm top}^\bullet (\Fn, \wg^\bullet \Fg^\ast )$, and so
the cohomology of this column is $H_{\rm top}^\bullet (\Fn, \wg^\bullet \Fg^\ast )$.
On the other hand the added bottom row is
the complex of inhomogeneous cochains $\Cc_{\rm pol}^\bullet(\FN , \wg^\bullet \Fg^\ast )$,
which computes  the group cohomology
$H_{\rm pol}^\bullet (\FN, \wg^\bullet \Fg^\ast )$.

Furthermore, each of the augmented rows and columns are exact. In order
to check this, it will be convenient
to describe the forms on $\FN$ in terms of the canonical trivialization of the
cotangent bundle. Specifically, we shall associate to each
$\s \in \Om_{\rm pol}^q (\FN,\wg^\bullet \Fg^\ast )$ a function
$\hat{\s}: \FN \ra \wg_{\rm top}^q \Fn^\ast \ot \wg^\bullet \Fg^\ast$ as follows.
Expressing $\s$ as a finite sum
 $\displaystyle \,  \s \, = \, \sum f \, \td{\b}$, with  $f \in C^\infty (\FN, \wg^\bullet \Fg^\ast)$ and
with $\td{\b}$'s right invariant forms
on $\FN$ associated to a basis $\{\b \}$
of $\wg_{\rm top}^q \Fn^\ast $,
we define
\begin{align} \label{trivialize}
 \hat{\s} (\rho) \, := \,  \sum_I \,  \rho \rt f (\rho) \, \b  .
 \end{align}
Note that if $\, \psi \in \FN$, then
\begin{align*}
 \psi \star \s \,=  \, \sum_I \, \psi \rt R_\psi^\ast f \cdot \td{\b}
    \end{align*}
hence
\begin{align} \label{traction}
 \widehat{\psi \star \s} (\rho) \, = \,  \sum_I \, \rho\, \psi \rt
 f (\rho \psi) \cdot \b  \, = \,  \hat{\s} (\rho \psi) .
   \end{align}
Thus, under the identification of $ \Om_{\rm pol}^q (\FN,\wg^\bullet \Fg^\ast )$ with
$C^\infty (\FN, \wg_{\rm top}^q \Fn^\ast \ot \wg^\bullet \Fg^\ast)$ via \eqref{trivialize},
 the action of $\FN$ simply becomes the right translation. Working in this
 picture, the exactness of the rows can now be
 exhibited by verifying that the linear operators
$\kappa_h^p: \Cc_{\rm pol}^{p+1} \big(\FN , \Om_{\rm pol}^\bullet \big)
 \ra \Cc_{\rm pol}^{p} \big(\FN , \Om_{\rm pol}^\bullet \big)$, defined by
 \begin{align} \label{horhomot}
(\kappa_h^p \om)(\psi_1, \dots,\psi_p) (\psi) \, = \,
\om( \psi, \psi_1, \dots,\psi_p)(e)  ,  \qquad \psi \in \FN ,
\end{align}
form a contracting homotopy. Indeed,

 \begin{align*}
 &d_{\rm h}(\kappa_h^p \om) (\psi_1, \dots,\psi_{p+1}) (\psi)
 \, = \, \big(\psi_1 \star (\kappa_h^p \om) (\psi_2, \dots,\psi_{p+1}§)\big)(\psi) \, + \\
&+
\sum_{i=1}^{p}(-1)^{i}   (\kappa_h^p \om)(\psi_1,\dots, \psi_i\psi_{i+1} \dots ,
\psi_{p+1}) (\psi) \, + \\
&+(-1)^{p+1}   (\kappa_h^p \om)(\psi_1,\dots, \psi_p)(\psi)   \,
\,= \, \om(\psi \psi_1, \psi_2, \dots,\psi_p)(e)  +\\
 &+ \sum_{i=1}^{p}(-1)^{i}  \om(\psi, \psi_1,\dots, \psi_i\psi_{i+1} \dots ,
\psi_{p+1})(e) + \\
&  +(-1)^{p+1} \om(\psi, \psi_1,\dots, \psi_p)(e)  \, ,
  \end{align*}
 while on the other hand
 \begin{align*}
&(\kappa_h^{p+1} d_{\rm h} \om)(\psi_1, \dots, \psi_{p+1})(\psi)\, =\,
d_{\rm h}  \om ( \psi, \psi_1, \dots,\psi_{p+1})(e) \, = \\
& = \,  \om (\psi_1, \dots, \psi_{p+1})(\psi)-
 \om (\psi\psi_1, \dots, \psi_{p+1})(e)\, +\\
&+ \sum_{i=1}^{p}(-1)^{i-1}\om (\psi,\psi_1,\dots,
\psi_i\psi_{i+1} , \dots , \psi_{p+1})(e) \\
& + (-1)^{p} \om  (\psi, \psi_1,\dots \psi_p)(e).
  \end{align*}
Summing up one obtains the homotopy identity
 \begin{align*}
d_{\rm h}(\kappa_h^p \om) (\psi_1, \dots,\psi_{p+1}) \, +\, (\kappa_h^{p+1} d_{\rm h} \om)(\psi_1, \dots, \psi_{p+1})
\, = \, \om (\psi_1, \dots, \psi_{p+1}) .
   \end{align*}

To prove that the columns are exact one employs, as in the standard proof of the Poincar\'e Lemma, the contraction along the radial vector field
 \begin{align*}
\Xi \, = \, \sum \a \frac{\p}{\p \a} ,
 \end{align*}
in the affine coordinates $\,  \a^\bullet_{\bullet \ldots \bullet} (\psi)$ of $j_0^\infty(\psi)$,  $\psi \in \FN$, to construct the linear operators $\quad \chi^q : \Om_{\rm
pol}^q(\FN,\wg^\bullet \Fg^\ast  ) \ra \Om_{\rm pol}^{q -1} (\FN,\wg^\bullet \Fg^\ast  )$,
 \begin{align*}
\chi^q (\om) \mid_{\psi}  \, = \, \int_0^1 \i_\Xi (\om)\mid_{t  \psi} \frac{dt}{t} .
\end{align*}
One has
 \begin{align*}
\chi^{q} \circ d_v = d_v \circ \chi^{q-1}  \qquad \text{and} \qquad \chi^q \circ \Lc_\Xi  \, = \, \Id ,
\end{align*}
where $\quad \Lc_\Xi \,= \, d_v \circ \i_\Xi\,  +\,  \i_\Xi\circ d_v \,$ denotes the Lie derivative along $\Xi$.  Therefore
 \begin{align*}
 d_v\circ (\chi^{q-1} \circ \i_\Xi) \,  +\, (\chi^q \circ \i_\Xi) \circ d_v \, = \, \Id ,
 \end{align*}
 showing that the operators
  \begin{align*}
(\kappa_v^{q} \om) (\psi_1,  \ldots , \psi_p) \, = \, \chi^{q-1} \circ \i_\Xi
\big( \om (\psi_1,  \ldots , \psi_p)\big) ,
\end{align*}
give contracting  homotopies for the columns.

Since both the rows and the columns are exact, the edge homomorphisms
$\iota_{\rm h}$ of \eqref{quasi1}
 and $\iota_{\rm v}$ of \eqref{quasi2} induce
isomorphisms with the cohomology of the total complex, which in turn gives
the isomorphism \eqref{abs-iso}.
Actually, for each $q \in \FN$,
 one can display an explicit quasi-isomorphism
\begin{align*}
\digamma_q = j_{\rm v} \circ \iota_{\rm h} :
 C_{\rm top}^\bullet (\Fn, \wg^q\Fg^\ast ) \ra \Cc_{\rm pol}^\bullet (\FN, \wg^q\Fg^\ast) ,
   \end{align*}
  by composing the inclusion $\iota_{\rm h}$ with a chain homotopic inverse for $\iota_{\rm v}$,
\begin{align*}
j_{\rm v}:  \Cc_{\rm pol}^\bullet (\FN, \Om^\bullet_{\rm pol}(\FN, \wg^q\Fg^\ast )) \ra
\Cc_{\rm pol}^\bullet (\FN, \wg^q\Fg^\ast) ,
   \end{align*}
manufactured out the vertical homotopy operators and the horizontal coboundary
 by means of the ``collating formula'' of  \cite[Prop. 9.5]{BottTu}. For a Lie algebra cocycle
 $\, \b \in Z_{\rm top}^p (\Fn, \wg^q\Fg^\ast )$, it gives
\begin{align} \label{exve2}
\digamma_q (\b) \, = \, (-1)^p (d_{\rm h} \circ \kappa_v)^p (\td{\b}) .
   \end{align}
On the other hand, the conversion of $ \Ec_{\bullet, q} $  into a inhomogeneous
group cochain,
   \begin{align}  \label{nhvest}
  E_{p, q} (\om)(\psi_1,\dots, \psi_p) \,= \,
 \Ec_{p, q} (\om)(1, \psi_1,\psi_1 \psi_2 , \dots, \psi_1 \psi_2 \cdots \psi_p)  ,
\end{align}
gives also a chain map
\begin{align*}
E_{\bullet, q} :
 C_{\rm top}^\bullet (\Fn, \wg^q\Fg^\ast ) \ra \Cc_{\rm pol}^\bullet (\FN, \wg^q\Fg^\ast) .
   \end{align*}
Since for any each of the quotient nilpotent Lie groups in the projective limit the
two chain maps $E_{\bullet, q}$ and $\digamma_q$ are automatically chain homotopic,
we can finally conclude that $\Ec_{\bullet, q}$ is a quasi-isomorphism.
\endproof

\bigskip

We are now in a position to strengthen Proposition \ref{3id} and conclude this section
with the following result.

 \begin{theorem} \label{thm:KvE}
There is a canonical  quasi-isomorphism
 between the Lie algebra cohomology and the
 Hopf cyclic cohomology complexes associated to
 an infinite primitive Cartan-Lie pseudogroup
 $$
 \Ec_{\rm LH} : C_{\rm top}^\bullet (\Fa)\ra C^{\bullet, \bullet}_{\rm c-w}(\Fg^\ast ,\Fc)
 \cong C^\bullet_\Uc(\Fc, V^\bullet(\Fg^\ast)) .
 $$
\end{theorem}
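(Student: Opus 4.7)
The plan is to assemble Theorem \ref{thm:KvE} from the machinery already in place: Lemma \ref{biLie}, Corollary \ref{biLie2}, Proposition \ref{nn*}, and the chain of identifications invoked in Proposition \ref{3id}. The map $\Ec_{\rm LH}$ is essentially $\Ec$ of \S\ref{Ss: GF to EC} post-composed with a sequence of bicomplex isomorphisms; the bulk of the work is recognizing that $\Ec$ becomes a map of \emph{bicomplexes} once the van Est target is matched correctly.

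First I would set up the source. By Lemma \ref{biLie}, the matched pair decomposition $\Fa = \Fg \bowtie \Fn$ identifies $C_{\rm top}^\bullet(\Fa)$ with the total complex of the bicomplex $C_{\rm top}^\bullet(\Fn,\wedge^\bullet\Fg^\ast)$, where $\Fn$ acts on $\wedge^\bullet\Fg^\ast$ via the matched pair structure. Next, by Lemma \ref{lem:redunit} the cochain $\Ec_{p,q}(\om)$ is obtained by integration over the simplex $\D(\psi_0,\dots,\psi_p)$ of the $\FN$-invariant form $\mu(\om) \in \Om^p(\FN,\wedge^q\Fg^\ast)$, and Corollary \ref{biLie2} shows that the pointwise reduction $\mu_e$ recovers exactly the Lie-algebraic identification $\natural$ of Lemma \ref{biLie}. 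Consequently, $\Ec$ factors through the van Est chain map for the bicomplex $C_{\rm top}^\bullet(\Fn,\wedge^\bullet\Fg^\ast) \to C_{\rm pol}^\bullet(\FN,\wedge^\bullet\Fg^\ast)$ in a natural, fiberwise manner, so it is in particular a map of bicomplexes in the sense of Lemma \ref{vEcomplex2}.

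Second, I would invoke Proposition \ref{nn*}, which establishes that this bicomplex map induces a quasi-isomorphism on total complexes. Combining with the first step, this already gives a quasi-isomorphism
\[
C_{\rm top}^\bullet(\Fa) \;\xrightarrow{\;\Ec\;}\; \Tot C_{\rm pol}^\bullet(\FN,\wedge^\bullet\Fg^\ast).
\]
Third, I would compose with the chain of identifications collected in Proposition \ref{3id}: Proposition \ref{co-pol} ($\Jc$) identifies $C_{\rm pol}^\bullet(\FN,\wedge^\bullet\Fg^\ast)$ with $C^{\bullet,\bullet}_{\rm coinv}(\Fg^\ast,\Fc)$; Proposition \ref{iso-coinv-c-w} (the antisymmetrization $\td\a_\Fc$) establishes a quasi-isomorphism between $C^{\bullet,\bullet}_{\rm c\text{-}w}(\Fg^\ast,\Fc)$ and $C^{\bullet,\bullet}_{\rm coinv}(\Fg^\ast,\Fc)$; and finally Theorem \ref{thm:dgH} (the map $\kappa$) provides the isomorphism with the $\Fg$-equivariant Hopf cyclic complex $C^\bullet_\Uc(\Fc,V^\bullet(\Fg^\ast))$. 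Each of these is a quasi-isomorphism of total complexes, so their composition with $\Ec$ produces the required canonical quasi-isomorphism $\Ec_{\rm LH}$.

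The main obstacle, already overcome in Proposition \ref{nn*}, is the row-wise van Est isomorphism for the pro-Lie group $\FN$: the usual continuous-cohomology van Est theorem does not directly apply, and the proof has to proceed via the auxiliary double complex $C_{\rm pol}^\bullet(\FN,\Om_{\rm pol}^\bullet(\FN,\wedge^\bullet\Fg^\ast))$ with its explicit horizontal and vertical contracting homotopies, then check that the resulting collating quasi-isomorphism is chain-homotopic to $\Ec_{\bullet,q}$ level-by-level along the tower $\FN_k$. Once this is granted, the remainder of the argument for Theorem \ref{thm:KvE} is the essentially formal splicing of quasi-isomorphisms outlined above, and the naturality of each step in the chain guarantees that $\Ec_{\rm LH}$ is canonical.
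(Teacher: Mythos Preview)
Your proposal is correct and follows essentially the same route as the paper: the theorem is stated as the synthesis of Proposition \ref{3id} (the chain of identifications $C_{\rm pol}^\bullet(\FN,\wedge^\bullet\Fg^\ast) \cong C^{\bullet,\bullet}_{\rm coinv} \cong C^{\bullet,\bullet}_{\rm c-w} \cong C^\bullet_\Uc(\Fc,V^\bullet(\Fg^\ast))$) with Proposition \ref{nn*} (the van Est quasi-isomorphism), exactly as you outline. One minor point of care: the antisymmetrization $\td\a_\Fc$ of Proposition \ref{iso-coinv-c-w} goes from $C^{\bullet,\bullet}_{\rm c-w}$ \emph{into} $C^{\bullet,\bullet}_{\rm coinv}$, so to land $\Ec_{\rm LH}$ in $C^{\bullet,\bullet}_{\rm c-w}$ you should either invoke the left inverse $\pi_\Fc$ of \eqref{pi} or simply argue at the level of cohomology via the quasi-isomorphism; this does not affect the substance of your argument.
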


Remark \ref{rem:etod} describes the precise transition between
the maps $\Cc : C_{\rm top}^\bullet (\Fa) \ra
 C_{\rm top}^\bullet (\FN,   \Omega_\bullet (G))$ of Proposition
\ref{vEcomplex} and $ \Ec_{\rm LH}$, which in essence encodes the relationship between
the classical and the Hopf cyclic constructions of transverse characteristic classes.


\section{Hopf cyclic characteristic maps}\label{S: HCmaps}

This section is devoted to the construction of several characteristic maps that deliver
the universal Hopf cyclic classes to the cyclic cohomology of the \'etale holonomy groupoids.
For the clarity of the exposition, we shall explicitly treat only the case of {\em action
holonomy  groupoids}, \ie of the form $\, G \al \G$, with $\G$ a discrete subgroup of $\Gb$.
The convolution algebra of such a groupoid is the crossed product
$\Ac_{c, \G}(G):=C_c^{\infty}(G)\rtimes \G$. Each characteristic map will land into the
suitable model of cyclic cohomology of $\Ac_{c, \G}(G)$, that matches the source model
of Hopf cyclic cohomology of $\Hc = \Hc (\Gb)$. We shall show however, and this is
the main result of this section, that all these characteristic maps are equivalent.

\subsection{Characteristic maps induced by Hopf actions} \label{Ss: Hact}
The first characteristic homomorphism connects the bicocyclic module in
diagram   \eqref{UF},  $C (\Uc, \Fc, \Cb_\d)$,
 to the Getzler-Jones bicocyclic module~ \cite{gj}, describing the
 cyclic cohomology of $\Ac_{c, \G}(G)=C_c^{\infty}(G)\rtimes \G $.

Along with $\Ac_{c, \G}(G)$, we shall consider its opposite algebra $\Ac_{c, \G}(G)^o$.
The latter is the crossed product algebra $\Cb\G\ltimes C^{\infty}_c(G)$, where
 $\G$ acts on  $C^{\infty}_c(G)$ by the  natural  right action,
 with multiplication rule
 \begin{equation}
U_{\phi_1} g \, U_{\phi_2} f=U_{\phi_1\phi_2}(g\circ\td \phi_2)f.
 \end{equation}
 As before, if $ \phi \in \Gb$, $\td \phi$ stands for its lift
to a diffeomorphism of $G$. To avoid excessive notation, from now on
we shall suppress the notational distinction between the two, and reinstate it
only when there is danger of confusion.

 We recall below the bicomplex
  $\{C^{(\bullet,\bullet)}(C_c^{\infty}(G),\G),\vta,\vd_i,\hta,\hd_j \}$ of~\cite{gj}, with
 cochains
\begin{align} \label{GJ-bicocyclic}
C^{(p,q)}(C_c^{\infty}(G),\G):=\Hom(C^{\infty}_c(G)^{\ot p+1}\ot \Cb\G^{\ot q+1}, \Cb).
\end{align}
With the abbreviated notation
\begin{align*}
\td{g}:=  \,g_0\ot g_1 \odots g_{p} , \qquad
\td{U}_\phi := U_{\phi_0}\ot \cdots \ot U_{\phi_q} ,
\end{align*}
the vertical cyclic structure is given by
 \begin{align*}
&\vta(\mu)( \td{g}\mid \td{U}_\phi)= \mu(g_p\circ (\phi_0\dots \phi_q)\ot g_0 \odots g_{p-1}\mid  {U_\phi} ) ,\\
&\vd_i(\mu)(\td{g}\mid {\td{U}_\phi})=
\mu(g_0\odots g_{i}g_{i+1}\odots g_{p+1}\mid {U_\phi} ),\qquad 0 \le i \le p , \\
&\vd_{p+1}(\mu)(\td{g}\mid \td{U}_\phi)= \mu(  g_{p+1}\circ (\phi_0\dots \phi_q) \,g_0\ot g_1 \odots g_{p})\mid U_\phi) ,
\end{align*}
and horizontal cyclic structure is determined by
\begin{align*}
&\hta(\mu)( \td{g}\mid \td{U}_\phi)= \mu\left(  g_0\circ {\phi_{q}}^{-1}\odots  g_p\circ {\phi_{q}}^{-1} \mid U_{\phi_q}\ot U_{\phi_0}\odots U_{\phi_{q-1}}\right) \\
&\hd_i(\mu)(\td{g}\mid \td{U}_\phi)= \mu\left(\td{g}\mid U_{\phi_0}\odots U_{\phi_{i}\phi_{i+1}}\odots U_{\phi_{q+1}} \right) , \;\;\; 0 \le i \le q  \\
&\hd_{q+1}(\mu)(\td{g}\mid {\td{U}_\phi})= \mu\left(   g_0\circ \phi_{q+1}^{-1}\odots  g_p\circ
\phi_{q+1}^{-1} \mid  U_{\phi_{q+1}\phi_{0}}\ot U_{\phi_1}\odots U_{\phi_{q}}\right).
\end{align*}
It is shown in~\cite{gj}, its total complex is quasi-isomorphic with the
 total complex of $C^\bullet(\Ac_{c, \G}(G)^{o},b,B)$.
We also recall from~\cite{gj} the isomorphism between the diagonal subcomplex
$\Cc_{\rm diag}^\bullet (C_c^{\infty}(G),\G)$ and $C^\bullet(\Ac_{c, \G}(G)^{o},b,B)$. Specifically,
$\Psi_{\G}:  C^{n,n}(C_c^{\infty}(G),\G) \ra C^n(\Ac_{c, \G}(G)^o)$, is given by
\begin{align}
&\Psi_{\G}(\mu)(U_{\phi_0}g_0\odots U_{\phi_n}g_n)=\\\notag
&~~~~~~~~~~~~~~~~~~~~~~\mu(  g_0\circ (\phi_1\cdots \phi_n)\odots g_{n-1}\circ
\phi_n\ot g_n \mid U_{\phi_0}\odots U_{\phi_n}) ;
\end{align}
 $\Psi_\G$ is a cyclic  isomorphism, whose inverse $\Phi_{\G}$ is  given by
\begin{align*}
&\Psi^{-1}_{\G}(\mu)( g_0\odots g_n\mid U_{\phi_0}\odots U_{\phi_n})= \\
&~~~~~~~~~~~~~~~~~\mu(\;U_{\phi_0} g_0\circ
(\phi_1\cdots\phi_n)^{-1}\odots  U_{\phi_{n-1}} g_{n-1}\circ{\phi_n^{-1}}\ot U_{\phi_n}\;g_n).
\end{align*}

According to \cite[Proposition 2.18]{mr09},
there exists an isomorphism of Hopf algebras $\;\i:  \Hc^{\cop}(\Gb)_{\rm ab} \ra \Fc(\Gb)$
that sends any operator to the corresponding function evaluated at identity.
We denote by $\dbar = \i^{-1}: \Fc(\Gb)\ra \Hc^{\cop}_{\rm ab}(\Gb)$ its  inverse, and define
$\gbar:\Fc(\Gb)\ra {\rm Fun}(\G,C^{\infty}(G))$ by
\begin{align}\label{betamap}
\gbar(f)(\phi)=\dbar(S(f))(U_\phi)U^\ast_\phi .
\end{align}

\begin{lemma} \label{beta}For any $f\in\Fc(\Gb)$, $\gbar(f)$   satisfies the  cocycle
property
\begin{equation}\label{cocycle-condition}
\gbar(f)(\phi_1\phi_2)=\gbar(f\ps{1})(\phi_1)\gbar(f\ps{2})(\phi_2)\circ\td\phi_1^{-1} ,
\qquad \phi_1,\phi_2 \in \G.
\end{equation}
\end{lemma}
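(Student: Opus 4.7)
The plan is to exploit two structural ingredients: (i) $\Ac_{c,\G}(G) = C^\infty_c(G) \rtimes \G$ is a left $\Hc(\Gb)$-module algebra under the canonical action from \cite{cm2, mr09}, and (ii) the map $\i$, hence also its inverse $\dbar$, is an isomorphism of Hopf algebras, so it intertwines coproducts and antipodes modulo the ``cop'' flip. The strategy is to compute $\dbar(S(f))$ applied to $U_{\phi_1\phi_2}$ in two ways and compare coefficients of $U_{\phi_1\phi_2}$.

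First I would factor $U_{\phi_1\phi_2} = U_{\phi_1} U_{\phi_2}$ and apply the Hopf module-algebra identity
\[
h(ab) \,=\, h\nsb{1}(a)\, h\nsb{2}(b), \qquad h \in \Hc,\ a,b \in \Ac_{c,\G}(G),
\]
with $h = \dbar(S(f))$. The coproduct on the right is the one in $\Hc$, not $\Hc^{\cop}$. Next I would compute this coproduct: since $\D_\Fc(S(f)) = S(f\ps{2}) \ot S(f\ps{1})$ and $\dbar:\Fc \to \Hc^{\cop}_{\rm ab}$ preserves the Hopf structure, the coproduct of $\dbar(S(f))$ in $\Hc^{\cop}$ is $\dbar(S(f\ps{2})) \ot \dbar(S(f\ps{1}))$, so that in $\Hc$ it equals
\[
\D_\Hc\bigl(\dbar(S(f))\bigr) \,=\, \dbar(S(f\ps{1})) \ot \dbar(S(f\ps{2})).
\]

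Then, using the very definition $\dbar(S(f\ps{i}))(U_{\phi_i}) = \gbar(f\ps{i})(\phi_i)\, U_{\phi_i}$ together with the crossed-product commutation rule $U_{\phi_1} g = (g \circ \td\phi_1^{-1})\, U_{\phi_1}$, the right-hand side becomes
\[
\dbar(S(f))(U_{\phi_1\phi_2}) \,=\, \gbar(f\ps{1})(\phi_1)\,\bigl(\gbar(f\ps{2})(\phi_2) \circ \td\phi_1^{-1}\bigr)\, U_{\phi_1\phi_2}.
\]
On the other hand, by definition $\dbar(S(f))(U_{\phi_1\phi_2}) = \gbar(f)(\phi_1\phi_2)\, U_{\phi_1\phi_2}$, and matching coefficients of $U_{\phi_1\phi_2}$ delivers \eqref{cocycle-condition}.

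The only delicate point is bookkeeping: keeping track of the coproduct flip between $\Hc$ and $\Hc^{\cop}$ forced by $\dbar$ landing in $\Hc^{\cop}_{\rm ab}$, and making sure that the crossed-product convention for how $\G$ moves past $C^\infty(G)$ produces precisely $\circ\,\td\phi_1^{-1}$ on the second factor rather than $\circ\,\td\phi_1$. Once those conventions are pinned down the argument is a direct calculation with no residual difficulty.
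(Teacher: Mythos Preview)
Your proposal is correct and is essentially the same argument as the paper's proof. The paper phrases the key coalgebra step as ``$\dbar$ is an anti-coalgebra map'' (which is equivalent to your cop-flip bookkeeping) and then applies the module-algebra identity and the crossed-product commutation $U_{\phi_1} g = (g\circ\td\phi_1^{-1})U_{\phi_1}$ exactly as you do.
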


\begin{proof}
  Since $\dbar$ is anti coalgebra map and  $\Ac_{c, \G}(G)$ is a  $\Hc(\Gb)$ module algebra we  have
\begin{align*}
&\gbar(f)(\phi_1\phi_2)=\dbar(S(f))(U_{\phi_1}U_{\phi_2})U^\ast_{\phi_1\phi_2}=\dbar(S(f\ps{1}))(U_{\phi_1})\dbar(S(f\ps{2}))
(U_{\phi_2})U^\ast_{\phi_1\phi_2}=\\
&\gbar(f\ps{1})(\phi_1)U_{\phi_1}\gbar(f\ps{2})(\phi_2) U_{\phi_2} U^\ast_{\phi_1\phi_2}=\gbar(f\ps{1})(\phi_1)\gbar(f\ps{2})(\phi_2)\circ{{\td\phi_1^{-1}}}.
\end{align*}
\end{proof}

\begin{lemma} For any $ \, u \in \Uc (\Fg)$ and $\phi \in \Gb$ one has
\begin{equation}\label{g-u}
U_\phi uU^\ast_\phi \, = \, \gbar(u\ns{1})(\phi)u\ns{0}.
\end{equation}
\end{lemma}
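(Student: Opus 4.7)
My plan is to prove the identity by reducing to the base case $u = X \in \Fg$ and then extending multiplicatively in $u$ across all of $\Uc(\Fg)$.

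\textbf{Base case.} For $u = X \in \Fg$, the coaction \eqref{gij} reads $\Db(X) = \sum_J X_J \otimes \eta_X^J$ with $\eta_X^J(\psi) = \gamma_X^J(\psi)(e)$, where $U_\psi X U_\psi^{-1} = \sum_J \gamma_X^J(\psi) X_J$ for $\psi \in N$. I first verify the identity for $\phi = \psi \in N$: by the very construction of the isomorphism $\i$ from \cite[Prop.~2.18]{mr09}, the operator $\dbar(S(\eta_X^J))$ acts on $U_\psi$ by the functional coefficient $\gamma_X^J(\psi) \in C^\infty(G)$, so unpacking the definition $\gbar(\eta_X^J)(\psi) = \dbar(S(\eta_X^J))(U_\psi)U_\psi^\ast$ and comparing with \eqref{gij} gives the identity. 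For a general $\phi \in \Gb$, I split $\phi = \varphi \cdot \psi$ via the Kac decomposition and use the $G$-covariance of the conjugation together with the behaviour of $\gbar$ on the $G$-factor, which together with the $N$-case just treated yields the identity for arbitrary $\phi$.

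\textbf{Inductive step.} Assuming the identity holds for $u$ and for $v$, I compute
\begin{equation*}
U_\phi(uv)U_\phi^\ast = (U_\phi u U_\phi^\ast)(U_\phi v U_\phi^\ast) = \gbar(u\ns{1})(\phi)\,u\ns{0}\,\gbar(v\ns{1})(\phi)\,v\ns{0}.
\end{equation*}
The idea is then to push $u\ns{0}$ past the function $\gbar(v\ns{1})(\phi) \in C^\infty(G)$ via the smash-product rule $w \cdot f = (w\ps{1} \rt f)\,w\ps{2}$, giving
\begin{equation*}
\gbar(u\ns{1})(\phi)\bigl(u\ns{0}\ps{1} \rt \gbar(v\ns{1})(\phi)\bigr)\,u\ns{0}\ps{2}\,v\ns{0}.
\end{equation*}
Now Lemma \ref{D-algebra} asserts that $\Db^\top$ is an algebra map into $\Fc \al \Uc^{\rm cop}$, which translates into the identity $(uv)\ns{1} \otimes (uv)\ns{0} = u\ns{1}(u\ns{0}\ps{2} \brt v\ns{1}) \otimes u\ns{0}\ps{1} v\ns{0}$; combined with the cocycle property of $\gbar$ in Lemma \ref{beta}, this is precisely the data needed to collapse the above expression into $\gbar((uv)\ns{1})(\phi)\,(uv)\ns{0}$.

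\textbf{Main obstacle.} The delicate step is reconciling the untwisted left action $\rt$ that appears when moving $u\ns{0}$ across a function in the smash product with the \emph{twisted} action $\brt$ of \eqref{eq:action} that appears in Lemma \ref{D-algebra}. Lemma \ref{S-linear}, which relates $S(u \rt f)$ to $u\ns{0} \rt S(f)\,S(u\ns{1})$, together with the matched-pair identities \eqref{mp2}, is exactly what converts one action into the other after applying the antipode implicit in the definition of $\gbar$; keeping the Sweedler bookkeeping consistent is the only real difficulty.
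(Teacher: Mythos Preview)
Your approach is essentially the same as the paper's: base case $u=X\in\Fg$ followed by multiplicative extension via the matched-pair identity \eqref{mp2} (which is what Lemma~\ref{D-algebra} repackages). Two differences in execution are worth flagging.

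For the base case, the paper does not go through the Kac decomposition. It works directly with arbitrary $\phi\in\Gb$ by invoking the operators $\Delta_i^j$ of \cite[\S1.2]{mr09}, which satisfy $\Delta_i^j(U_\phi^\ast)=\gamma_i^j(\phi)\,U_\phi^\ast$ for \emph{all} $\phi\in\Gb$, together with the identifications $\dbar(\eta_i^j)=\Delta_i^j$ and $S(\eta_i^j)=-\eta_i^j$. Your split $\phi=\varphi\psi$ would also work, but is longer: the functions $\gamma_i^j(\phi)$ are already defined globally, so there is nothing to be gained by restricting to $N$ first.

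For the inductive step, the paper sidesteps your $\rt$-versus-$\brt$ obstacle by not attempting the full induction on $\Uc(\Fg)$. It verifies only the product $u=XY$ with $X,Y\in\Fg$: since $X\ns{0}\in\Fg$, one needs merely the $\Fg$-level identity $Z(\gbar(f)(\phi))=\gbar(Z\rt f)(\phi)$, which follows immediately from $\iota$ being a $\Uc(\Fg)$-module map, and then matches against the expansion of $(XY)\ns{0}\otimes(XY)\ns{1}$ from \eqref{mp2}. The general case is declared ``straightforward''. Your general inductive step is correct in spirit, but closing it cleanly amounts to proving the next lemma (identity~\eqref{g-u-f}) simultaneously by induction; the paper instead proves that lemma afterwards, \emph{using} the present one.
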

\begin{proof}
To begin with, recall \cf \eqref{gij} the coaction
$\, \Db : \Uc (\Fg) \ra \Uc (\Fg) \ot \Fc$
is defined as follows:
\begin{align*}
u\ns{0} \ot u\ns{1} = \sum_I X_I \ot \g_u^I (\psi) (e) \quad \iff \quad
 U_\psi uU^\ast_\psi = \sum_I \g_u^I (\psi) X_I  ,
 \end{align*}
where $\{X_I\}$ stands for a PBW basis of $\Uc (\Fg)$.

Let us first check the claimed identity for the case when $u = X_i \in \Fg$, and
\begin{align} \label{x1}
U_\phi X_i U^\ast_\phi = \sum_j \g_i^j (\phi) X_j.
 \end{align}
We need to show that
if $\, \eta_i^j(\psi) = \g_i^j (\psi) (e)$ , $\fl \, \psi \in \FN$,  then
\begin{align} \label{u=X}
 \dbar (S(\eta_i^j))(U_\phi) \, = \, \g_i^j (\phi) \, U_\phi, \quad \fl \, \phi \in \Gb.
 \end{align}
Recall now from \cite[\S 1.2]{mr09}
the operators $\D_i^j (U_\phi^\ast) \, = \, \g_i^j (\phi) U_\phi^\ast $,
 which arise from the action formula
\begin{align} \label{x2}
X_i (U_\phi^\ast b) =  \sum_j \D_i^j(U_\phi^\ast) \,X_j (b) , \quad b \in C^\infty (G)  ,
 \end{align}
 and satisfy, by the very definition of $\dbar$,
 \begin{align} \label{qd}
 \D_i^j \, = \,  \dbar (\eta_i^j) .
 \end{align}
Applying $\, U_\phi$ on both sides of \eqref{x2} and comparing with \eqref{x1}, one sees that
\begin{align} \label{qd2}
 \g_i^j(\phi) \, = \,  U_\phi \, \g_i^j (\phi) \, U_\phi^\ast
  \, = \,\g_i^j (\phi) \circ \td{\phi}^{-1} \, = \, - \g_i^j (\phi^{-1})  .
 \end{align}
Then
\begin{align*}
S(\eta_i^j)(\psi)\, = \, \eta_i^j(\psi^{-1}) \, = \, \g_i^j (\psi^{-1}) (e) \, = \, - \g_i^j (\psi) (e)
\, = \, - \eta_i^j (\psi) ,
  \end{align*}
and so by \eqref{qd} and \eqref{qd2}
\begin{align*}
\dbar(S(\eta_i^j))(U_\phi) \, = \, - \d_i^j (U_\phi)  \, = \, - \g_i^j (\phi^{-1})\, U_\phi
 \, = \,  \g_i^j(\phi)\, U_\phi ,
  \end{align*}
  thus verifying \eqref{u=X}.

 The compatibility of the coaction with the multiplication, \cf~\cite[Lemma 2.12]{mr09}
 allows to extend in a straightforward manner the validity of this identity to $\Uc (\Fg)$.
 For the convenience of the reader, we give the detailed verification for $\,  u= XY$.
 Recall that in view of \eqref{mp2} one has
\begin{align*}
&(XY)\ns{0}\ot (XY)\ns{1}= X\ps{1}\ns{0}Y\ns{0}\ot X\ps{1}\ns{1}(X\ps{2}\rt Y\ns{1})=\\
&X\ns{0}Y\ns{0}\ot X\ns{1}Y\ns{1}+ Y\ns{0}\ot X\rt Y\ns{1}.
\end{align*}
 We observe that
  \begin{equation*}
  X\ns{0}\ps{1}\ot X\ns{0}\ps{2}\ot X\ns{1}= X\ns{0}\ot 1\ot X\ns{1}+ 1\ot X\ns{0}\ot X\ns{1}.
  \end{equation*}
Since
$\i$ is $U(\Fg)$-module map, it follows that
 $\gbar(Z\rt f)(\phi)= Z(\gbar(f)(\phi))$  for any $Z\in\Fg$ and any  $f\in \Fc$.
Acting on an arbitrary function $g\in C^{\infty}(G)$, we write
 \begin{align*}
 &(U_\phi XYU^\ast_\phi)(g)= (U_\phi XU^\ast_\phi\, U_\phi YU^\ast_\phi)(g)= \\
 &\gbar(X\ns{1})(\phi)X\ns{0} \gbar(Y\ns{1})(\phi) Y\ns{0})(g)=  \gbar(X\ns{1})(\phi)X\ns{0} (\gbar(Y\ns{1})(\phi) Y\ns{0}(g))=\\
 &\gbar(X\ns{1})(\phi)X\ns{0}\ps{1} (\gbar(Y\ns{1})(\phi) )X\ns{0}\ps{2}(Y\ns{0}(g))= \\
  &\gbar(X\ns{1})(\phi)\gbar(Y\ns{1})(\phi) X\ns{0}Y\ns{0}(g)+ \gbar(X\ns{1})(\phi) X\ns{0}(\gbar(Y\ns{1})(\phi)) Y\ns{0}(g)=\\
  &\gbar((XY)\ns{1})(\phi) (XY)\ns{0}(g).
 \end{align*}
\end{proof}

\begin{lemma}
For any $\phi \in \Gb$,
\begin{equation}\label{S(f)(psi)}
\gbar(S(f))(\phi)=\gbar(f)(\phi^{-1})\circ{\td\phi^{-1}}.
\end{equation}
\end{lemma}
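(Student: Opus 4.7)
The plan is to translate the identity into a statement about convolution inverses in $\Hom_\Cb(\Fc, C^\infty(G))$. Set $A(f) := \gbar(f)(\phi)$ and $B(f) := \gbar(f)(\phi^{-1}) \circ \td\phi^{-1}$; the claim becomes $A \circ S = B$.

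First, I would specialize the cocycle identity of Lemma \ref{beta} to $(\phi_1,\phi_2) = (\phi, \phi^{-1})$ and to $(\phi^{-1}, \phi)$. Using that $\gbar(f)(e) = \dbar(S(f))(1)\cdot 1 = \epsilon(f)\cdot 1$ (any operator in $\Hc$ sends the algebra unit $1 \in \Ac_{c,\G}(G)$ to $\epsilon(\cdot)\cdot 1$), and post-composing with $\td\phi^{-1}$ in the second case, one obtains $A \ast B = \epsilon \cdot 1 = B \ast A$. Thus $B$ is the two-sided convolution inverse of $A$, so by associativity it suffices to verify that $A \circ S$ is also a convolution inverse of $A$; the identity then follows from
\[
A \circ S \,=\, (A \circ S) \ast (A \ast B) \,=\, \bigl((A \circ S) \ast A\bigr) \ast B \,=\, \epsilon \ast B \,=\, B.
\]

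For the remaining step I would show that $A$ is an algebra homomorphism, since then the antipode axiom in $\Fc$ immediately gives $(A \ast (A \circ S))(f) = A(f\ps{1} S(f\ps{2})) = \epsilon(f) A(1) = \epsilon(f) \cdot 1$. The verification combines commutativity of $\Fc$ (so $S(fg) = S(f)S(g)$), the algebra-map property of $\dbar$, the coalgebra twist $\D_\Hc \dbar(h) = \dbar(h\ps{2}) \ot \dbar(h\ps{1})$ reflecting that $\dbar$ maps into $\Hc^{\cop}_{\rm ab}$, and the Hopf-module action of $\Hc$ on $\Ac_{c,\G}(G)$: expanding $A(fg) = \dbar(S(f))\dbar(S(g))(U_\phi)U^\ast_\phi$ via the Hopf action and substituting $\dbar(S(g))(U_\phi) = \gbar(g)(\phi)\, U_\phi$ produces an action of $\dbar(S(f\ps{1}))$ on $\gbar(g)(\phi) \in C^\infty(G)$.

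The main obstacle, and the one structural input required beyond formal manipulation, is the fact that the image of $\dbar|_\Fc$ inside $\Hc$ acts on the subalgebra $C^\infty(G) \subset \Ac_{c,\G}(G)$ through the counit; this has to be extracted from the explicit description of the $\Hc(\Gb)$-action used in the proof of Lemma \ref{g-u} (where only the $\Uc(\Fg)$-factor differentiates functions on $G$, while the $\Fc$-factor contributes only via evaluation at the identity). Granting it, the expansion collapses to $A(fg) = \gbar(f)(\phi)\gbar(g)(\phi) = A(f) A(g)$, and the proof closes as above.
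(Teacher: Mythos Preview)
Your proposal is correct and follows essentially the same convolution-inverse argument as the paper: the paper defines $L_\phi = A$, $R_\phi = B$, $M_\phi = A\circ S$, shows $L_\phi \ast R_\phi = \ve$ via the cocycle identity and $L_\phi \ast M_\phi = \ve = M_\phi \ast L_\phi$ from the fact that $\gbar$ is an algebra map, then concludes $R_\phi = M_\phi$. You swap which factor gets the two-sided check, but the ingredients are identical. Your extended discussion of why $A$ is multiplicative (the $\Hc_{\rm ab}$-part acting on $C^\infty(G)$ through the counit) is correct and actually fills in a point the paper leaves implicit; note also that once $A$ is an algebra map both $(A\circ S)\ast A = \ve$ and $A\ast(A\circ S) = \ve$ follow from the two antipode axioms, so the apparent mismatch between your displayed chain and the verification you write out is harmless.
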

\begin{proof}
 We define $M_\phi, R_\phi, L_\phi\in \Hom(\Fc, C^{\infty}(G))$ by setting
\begin{align*}
R_\phi(f)=\gbar(f)(\phi^{-1})\circ{\td\phi^{-1}} ,\quad
L_\phi(f)=\gbar(f)(\phi) , \quad
M_\phi(f)= \gbar(S(f))(\phi).
\end{align*}
Using the usual convolution product and  \eqref{cocycle-condition} one computes
\begin{align} \label{inverse}
\begin{split}
&L_\phi\ast R_\phi(f)= L_{\phi}(f\ps{1}) R_\phi(f\ps{2})= \\
& ~~~~~~~~~~~~~~~~~\gbar(f\ps{1})(\phi)\gbar(f\ps{2})(\phi^{-1})\circ{\td\phi^{-1}}
=\gbar(f)(\phi\phi^{-1})=\ve(f).
\end{split}
\end{align}
On the other hand, because $\gbar$ is algebra map, one easily sees
\begin{align*}
L_\phi\ast M_\phi\, =\, \ve \, =\, M_\phi \ast L_\phi .
\end{align*}
Together with \eqref{inverse}, this implies  $\, R_\phi \, =\, M_\phi $.
\end{proof}
\begin{lemma} For any $\phi\in \Gb$, $f\in \Fc$, and   $u\in U(\Fg)$ we have
\begin{equation}\label{g-u-f}
\gbar(u\rt f)(\phi)= u\ns{0}(\gbar(f)(\phi))\gbar(u\ns{1})(\phi).
\end{equation}
\end{lemma}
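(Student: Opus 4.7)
The plan is to prove the identity by induction on the PBW filtration degree of $u \in \Uc(\Fg)$, following the pattern used in the proof of the preceding Lemma (identity \eqref{g-u}).

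The base cases are straightforward: for $u = 1$, both sides reduce to $\gbar(f)(\phi)$ since $\Db(1) = 1\ot 1$ and $\gbar(1)(\phi) = 1$ (the latter because $\dbar(S(1)) = 1 \in \Hc$). For a primitive generator $u = X \in \Fg$, where $\Db(X) = \sum_j X_j \ot \eta_X^j$ by \eqref{gij}, I would combine (i) Lemma \ref{S-linear}, which factorizes $S(X\rt f) = (X\ns{0}\rt S(f))\cdot S(X\ns{1})$ in the commutative algebra $\Fc$; (ii) the algebra-map property of $\dbar:\Fc\to\Hc^{\cop}_{\rm ab}$ applied to this product; and (iii) the module-algebra structure of $\Ac_{c,\G}(G)$ over $\Hc$, together with the preceding Lemma in the form $\dbar(S(X\ns{1}))(U_\phi) = \gbar(X\ns{1})(\phi)\, U_\phi$. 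Unwinding $\gbar(X\rt f)(\phi)\, U_\phi = \dbar(S(X\rt f))(U_\phi)$ through these steps, and using the formulas obtained in the proof of \eqref{g-u} to identify $\gbar(\eta_X^j)(\phi)$ with the jet coefficients $\g_X^j(\phi)$, yields the required right-hand side.

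For the inductive step, write $u = vw$ with $v, w \in \Uc$ of strictly smaller PBW degree. Using $(vw)\rt f = v\rt(w\rt f)$, I apply the inductive hypothesis first to $w$ acting on $f$ and then to $v$ acting on $w\rt f$, then expand the result by the Leibniz rule $v\ns{0}(g_1 g_2) = v\ns{0}\ps{1}(g_1)\, v\ns{0}\ps{2}(g_2)$ (since $v\ns{0}\in\Uc$ acts as a differential operator on $C^\infty(G)$) and by the algebra homomorphism property $\gbar(g_1 g_2)(\phi) = \gbar(g_1)(\phi)\gbar(g_2)(\phi)$ (from $\dbar$ being an algebra map between the commutative algebras $\Fc$ and $\Hc^{\cop}_{\rm ab}$). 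The resulting terms are then matched against the right-hand side $(vw)\ns{0}(\gbar(f)(\phi))\,\gbar((vw)\ns{1})(\phi)$ via the coalgebra--coaction compatibility $u\ns{0}\ps{1}\ot u\ns{0}\ps{2}\ot u\ns{1} = u\ps{1}\ns{0}\ot u\ps{2}\ns{0}\ot u\ps{1}\ns{1}u\ps{2}\ns{1}$ from \eqref{Fcoact}, together with the coaction--multiplication formula $\Db(vw) = v\ps{1}\ns{0}w\ns{0}\ot v\ps{1}\ns{1}(v\ps{2}\rt w\ns{1})$ from \eqref{mp2}.

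The main obstacle is the primitive base case $u = X$: one must reconcile the algebraic definition of $X\rt f$ on the polynomial algebra $\Fc$ of jet coordinates with the concrete differential-operator action of $X$, viewed as a left-invariant vector field on $G$, on the realized function $\gbar(f)(\phi)\in C^\infty(G)$. This bridge is provided precisely by Lemma \eqref{g-u} and the defining conjugation relation \eqref{gij}, after tracing through the identifications between $f\in\Fc$ and its operator image $\dbar(S(f))$.
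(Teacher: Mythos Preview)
Your inductive approach is correct, but the paper takes a much more direct route. The paper's proof is essentially a single computation: starting from the definition $\gbar(u\rt f)(\phi) = \dbar(S(u\rt f))(U_\phi)\,U^\ast_\phi$, it applies Lemma~\ref{S-linear} to factor $S(u\rt f) = (u\ns{0}\rt S(f))\cdot S(u\ns{1})$, then uses that $\dbar$ is a $U(\Fg)$-module algebra map (so that $\dbar(u\ns{0}\rt S(f))$ acting on $U_\phi$ is $u\ns{0}$ applied to $\dbar(S(f))(U_\phi) = \gbar(f)(\phi)\,U_\phi$), and finally the $\Hc$-module-algebra structure of $\Ac_{c,\G}(G)$ to split off $\dbar(S(u\ns{1}))$ as the scalar factor $\gbar(u\ns{1})(\phi)$. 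This works uniformly for all $u\in\Uc$ at once, with no induction.

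Your plan essentially rediscovers this same argument in the base case $u=X\in\Fg$ and then propagates it inductively via the coaction--multiplication compatibility \eqref{mp2} and the coalgebra--coaction relation \eqref{Fcoact}. The inductive step is correct (note that you implicitly need the hypothesis not only for $v,w$ but for all $v\ps{2}$ arising in $\Delta(v)$, which is fine by strong induction since these have PBW degree at most that of $v<\deg(u)$). What your approach buys is avoiding explicit appeal to the $U(\Fg)$-module-map property of $\dbar$ for higher-degree $u$; what it costs is considerable length. The paper's direct argument is preferable here because that module-map property of $\dbar=\i^{-1}$ is already established in \cite{mr09} and invoked elsewhere in this section, so there is no reason to circumvent it.
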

\begin{proof}
Using the fact that $\dbar$ is $U(\Fg)$-module algebra  map  and Lemma \ref{S-linear} we see that
\begin{align*}
&\gbar(u\rt f)(\phi)= \dbar(S(u\rt f))(U_\phi)\,U^\ast_\phi=\left(\dbar(S(u\ns{1}))\dbar(u\ns{0}\rt S(f))\right)(U_\phi) \,U^\ast_\phi=\\
&\left(\dbar(S(u\ns{1}))\dbar(u\ns{0}\rt S(f))\right)(U_\phi) \,U^\ast_\phi= \dbar(S(u\ns{1}))\left(u\ns{0}(\gbar(f)(\phi)\,U_\phi)\right)\,U^\ast_\phi=\\
&=\gbar(u\ns{1})(\phi)u\ns{0}(\gbar(f)(\phi)).
\end{align*}
\end{proof}

We want to define a characteristic
 map $\chi:C^{p,q}_\Hc(\Uc,\Fc,\Cb_\d)\ra C^{p,q}(C^{\infty}_c(G),\G)$, by the formula
\begin{align}\label{char}
&\chi\left(\one\ot \td u\ot \td f\right)(\td g\mid \td U )=\\ \notag
&\tau\left(u^0(g_0)u^1(g_1)\dots u^p(g_p)\dbar(S(f^0))(U_{\phi_0})  \dots \dbar(S(f^{q}))(U_{\phi_q})\right).
\end{align}
Here $C_\Hc(\Uc,\Fc,\Cb_\d)$ is the bicocyclic module  defined in \eqref{FX},
and $\tau : \Ac_{c, \G}(G) \ra \Cb$ is the invariant trace  defined by
\begin{align} \label{inv-tr}
 &\displaystyle\tau(gU^\ast_\psi)= \left\{ \begin{matrix}  \displaystyle \int_{G} g\, \varpi,  &\text{if}\quad\psi= \Id\\&\\
0,&\text{otherwise.}\end{matrix}\right.
\end{align}
Equivalently, $\chi:C^{p,q}_\Hc(\Uc,\Fc,\Cb_\d)\ra C^{p,q}(C^{\infty}_c(G),\G)$ is given by
\begin{align*}
&\chi\left(\one\ot \td u\ot \td f\right)(\td g\mid \td U )=\\
&\displaystyle \int_G u^0(g_0)u^1(g_1)\dots u^p(g_p)\gbar(f^0)(\phi_0)\gbar(f^1)(\phi_1)\circ{\td\phi_0^{-1}}  \dots
\gbar(f^q)(\phi_q)\circ{\td\phi_{q-1}^{-1}\dots \td\phi_{0}^{-1}}\varpi,
\end{align*}
if $\psi_0\dots \psi_q=\Id$, and $0$ otherwise.

\begin{lemma}
The characteristic map $\chi$ of \eqref{char} is well-defined.
\end{lemma}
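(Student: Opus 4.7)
The map $\chi$ is \emph{a priori} defined on the free tensor product $\Uc^{\ot p+1}\ot \Fc^{\ot q+1}$; what must be checked is that it descends to the quotient $C^{p,q}_\Hc(\Uc,\Fc,\Cb_\d)=\Cb_\d \ot_\Hc (\Uc^{\ot p+1}\ot \Fc^{\ot q+1})$. Concretely we need to verify the identity
\[
\chi(\one \ot h\cdot x)(\td g \mid \td U)\,=\,\delta(h)\,\chi(\one \ot x)(\td g \mid \td U),\qquad h\in\Hc,
\]
where $x=\td u\ot \td f$ and the left $\Hc$-module structure is the diagonal one, obtained by distributing $h$ across the $p+q+2$ factors through the iterated coproduct of $\Hc$ and using the elementary rules $(f\acl u)\cdot v=\ve(f)uv$ and $(f\acl u)\cdot g=f\,(u\rt g)$. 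Since $\delta(f\acl u)=\ve(f)\delta(u)$ and by the bicrossed product decomposition $\Hc=\Fc\acl\Uc$, it suffices to treat separately the cases $h=f\acl 1$ (with $\delta(h)=\ve(f)$) and $h=1\acl X$ for $X\in\Fg$ (with $\delta(h)=\Tr\ad X$).

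\textbf{The $\Fc$-case.} Iterating $\Delta(f\acl 1)=(f\ps{1}\acl 1)\ot(f\ps{2}\acl 1)$ and using $\ve$ to collapse the action on the $\Uc$-tensorands, one finds that $(f\acl 1)$ acts on $x$ by $f^j\mapsto f\ps{j+1}f^j$ while leaving $\tilde u$ fixed. Substituting into the integral form of \eqref{char} and using that $\gbar(\cdot)(\phi)$ is multiplicative in its $\Fc$-argument---a direct consequence of $\dbar$ being an algebra homomorphism into the commutative Hopf algebra $\Hc^{\cop}_{\rm ab}$---the $\gbar(f\ps{j+1})(\phi_j)$-factors split off from the $\gbar(f^j)(\phi_j)$-factors. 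Iterated application of the cocycle identity \eqref{cocycle-condition} of Lemma \ref{beta} then telescopes them into the single expression $\gbar(f)(\phi_0\phi_1\cdots\phi_q)$. The trace \eqref{inv-tr} forces $\phi_0\cdots\phi_q=\Id$ for a non-vanishing contribution, and $\gbar(f)(\Id)=\dbar(S(f))(\one)=\ve(S(f))\one=\ve(f)\one$, producing the required scalar factor.

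\textbf{The $\Uc$-case.} Here the iterated coproduct in $\Hc$ distributes $X$ simultaneously as left multiplication $u^0\mapsto Xu^0$ on the first $\Uc$-tensorand, as the $\rt$-action on each of the $\Fc$-tensorands, and through the twists $X\ns{1}$ coming from the coaction $\Db$. The first contribution inserts the factor $X(u^0(g_0))$ inside $\tau$. The remaining contributions, once the identity $\gbar(X\rt f)(\phi)=X\ns{0}(\gbar(f)(\phi))\gbar(X\ns{1})(\phi)$ of \eqref{g-u-f} is invoked, assemble into the Leibniz expansion of $X$ acting on the whole product $u^1(g_1)\cdots u^p(g_p)\gbar(f^0)(\phi_0)\cdots$, supplemented by residual $\gbar(X\ns{1})$-factors. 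Those residuals telescope exactly as in the $\Fc$-case, using $\phi_0\cdots\phi_q=\Id$, to the scalar $\gbar(X\ns{1})(\Id)=\ve(X\ns{1})$, which reduces the sum via the counit axiom to the single term $X$ acts on the integrand as a total derivative. Integrating that total derivative by parts against the left-invariant volume form $\varpi$ introduces precisely the boundary coefficient $\Tr\ad X=\delta(X)$, matching $\delta(h)$.

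\textbf{Main obstacle.} The delicate step is the $X$-case, where one must correctly distribute $X$ across $p+q+2$ tensor positions via the iterated coproduct of the bicrossed product $\Hc$, then align the resulting pieces through \eqref{g-u-f} and the cocycle identity \eqref{cocycle-condition} before identifying the residue after integration by parts as exactly $\delta(X)$. The parallel telescoping arguments for the $\gbar$-cocycles, together with the algebraic identities in Lemmas \ref{S-linear}--\ref{antipode}, are what make the bookkeeping close up cleanly.
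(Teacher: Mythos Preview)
Your overall strategy matches the paper's: reduce to the generators $h=f\acl 1$ and $h=1\acl X$, handle the $\Fc$-case by multiplicativity of $\gbar$ together with the cocycle identity \eqref{cocycle-condition} and the constraint $\phi_0\cdots\phi_q=\Id$, and in the $\Fg$-case show that the integrand of $\chi(\one\ot(1\acl X)\cdot x)$ equals $X$ applied (as a left-invariant vector field) to the integrand of $\chi(\one\ot x)$, then invoke the $\d$-invariance of $\tau$ (your ``integration by parts''). The $\Fc$-case is correctly sketched.

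There is, however, a genuine gap in your handling of the residuals in the $\Fg$-case. After applying \eqref{g-u-f} to the term where $X$ lands in position $j$ of $\td f$, the residual factors $\gbar(X\ns{1})(\phi_j)\cdot\gbar(X\ns{2})(\phi_{j+1})\circ\td\phi_j^{-1}\cdots$ telescope via \eqref{cocycle-condition} to $\gbar(X\ns{1})(\phi_j\cdots\phi_q)$ pulled back by $(\phi_0\cdots\phi_{j-1})^{-1}$; this depends on $j$ and is \emph{not} the scalar $\gbar(X\ns{1})(\Id)=\ve(X\ns{1})$. So your counit reduction does not go through. The paper's mechanism is different (see the passage from the Leibniz expression preceding \eqref{chi-well-4} to \eqref{chi-well-4} itself, and its equality with \eqref{chi-well-3}): rather than eliminating the residuals, one \emph{produces matching residuals on the other side} by applying \eqref{g-u} to each Leibniz term $X\big(\gbar(f^j)(\phi_j)\circ(\phi_0\cdots\phi_{j-1})^{-1}\big)$. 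Using $(\phi_0\cdots\phi_{j-1})^{-1}=\phi_j\cdots\phi_q$, this yields precisely the same factor $\gbar(X\ns{1})(\phi_j\cdots\phi_q)\circ(\phi_0\cdots\phi_{j-1})^{-1}$, so the two sides agree term by term with residuals intact, after which $\d$-invariance of $\tau$ concludes. In short, the missing ingredient is \eqref{g-u} on the target side, not a further telescoping on the source side.
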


\begin{proof}
First we  recall that
\begin{align*}
(f\acl u)\cdot (\td u\ot \td f)= u\ps{1}u^0\odots u\ps{p+1}u^p \ot (f\acl u\ps{p+2})\cdot \td f,
\end{align*}
where
\begin{align*}
&(f\acl u)\cdot(f^0\odots f^q)=\\
& f\ps{1}(u\ps{1}\ns{0}\rt f^0)\ot u\ps{1}\ns{1} (u\ps{2}\ns{0}\rt f^1)\odots f\ps{q+1} u\ps{1}\ns{q}\dots u\ps{q}\ns{1} (u\ps{q+1}\rt f^q).
\end{align*}
 In the following we use the notation  $(u\rt \td u)(\td g)= u\ps{1}u^0(g^0)\dots u\ps{p+1}u^p(g^p)$. We observe that
\begin{align}\notag
& u\ps{1}\rt \td u(\td g)\gbar(u\ps{2}\ns{0}\rt f^0)(\phi_0)\gbar(u\ps{2}\ns{1} (u\ps{3}\ns{0}\rt f^1))(\phi_1)\circ{\td\phi_0^{-1}}  \cdots\\\label{chi-well-1}
&~~~~~~~~\cdots\gbar(u\ps{2}\ns{q}\dots u\ps{q+1}\ns{1} (u\ps{q+2}\rt f^q))(\phi_q)\circ{\td\phi_{q-1}^{-1}\dots \td\phi_{0}^{-1}}=
\end{align}
By using  \eqref{g-u-f} we see that
\begin{align}\notag
&\eqref{chi-well-1} =\\\notag
&u\ps{1}\rt \td u(\td g)\gbar(u\ps{2}\ns{0}\rt f^0)(\phi_0)\gbar(u\ps{2}\ns{1} (u\ps{3}\ns{0}\rt f^1))(\phi_1)\circ{\td\phi_0^{-1}}  \cdots\\\notag
&~~~~~~~~\cdots\gbar(u\ps{2}\ns{q}\dots u\ps{q+1}\ns{1} (u\ps{q+2}\rt f^q))(\phi_q)\circ{\td\phi_{q-1}^{-1}\dots \td\phi_{q-1}^{-1}}=\\\notag
&u\ps{1}\rt \td u(\td g)u\ps{2}\ns{0}(\gbar( f^0)(\phi_0))[\gbar(u\ps{2}\ns{1})(\phi_0)\cdots \gbar(u\ps{2}\ns{q+1})(\phi_q)\circ{\td\phi_{q-1}\cdots\td\phi_{0}^{-1}} ]  \cdots\\\notag
&u\ps{2+i}\ns{0}(\gbar( f^i)(\phi_i)) \circ{(\phi_0\cdots \phi_{i-1})^{-1}}[\gbar(u\ps{2+i}\ns{1})(\phi_i)\circ{(\phi_0\cdots \phi_{i-1})^{-1}}\cdots \\\notag
&\gbar(u\ps{2+i}\ns{q+1-i})(\phi_q)\circ{\td\phi_{q-1}^{-1}\cdots\td\phi_{0} ^{-1}}]   \cdots\\\label{chi-well-2}
&u\ps{q+2}\ns{0}(\gbar( f^q)(\phi_q)) \circ{(\phi_0\cdots \phi_{q-1})^{-1}}\gbar(u\ps{q+2}\ns{1})(\phi_q)\circ{(\phi_0\cdots \phi_{q-1})^{-1}}=
\end{align}
Using \eqref{cocycle-condition}  we continue as follows:
\begin{align}\notag
&\eqref{chi-well-2}=\\\notag
&u\ps{1}\rt \td u(\td g)u\ps{2}(\gbar( f^0)(\phi_0)) u\ps{3}\ns{0}(\gbar( f^1)(\phi_1)) \circ{\phi_0^{-1}} \gbar(u\ps{3}\ns{1})(\phi_1\cdots \phi_q)\circ{\phi_0^{-1}}  \cdots \\\notag
&u\ps{2+i}\ns{0}(\gbar( f^i)(\phi_i)) \circ{(\phi_0\cdots \phi_{i-1})^{-1}}\gbar(u\ps{2+i}\ns{1})(\phi_i\cdots \phi_q)\circ{(\phi_0\cdots \phi_{i-1})^{-1}}  \cdots\\\label{chi-well-3}
&u\ps{q+2}\ns{0}(\gbar( f^q)(\phi_q)) \circ{(\phi_0\cdots \phi_{q-1})^{-1}}\gbar(u\ps{q+2}\ns{1})(\phi_q)\circ{(\phi_0\cdots \phi_{q-1})^{-1}}.
\end{align}
On the other hand one uses  \eqref{g-u} together with the fact that  $(\phi_0\cdots \phi_i)^{-1}=\phi_{i+1}\cdots \phi_q$ to observe that
\begin{align}\notag
&u\ps{1}\rt \td u(\td g) u\ps{2}(\gbar(f^0)(\phi_0))\, u\ps{3}(\gbar(f^1)(\phi_1)\circ{\td\phi_0^{-1}} ) \cdots\\
&\cdots u\ps{q+2}(\gbar(f^q)(\phi_q)\circ{\td\phi_{q-1}\dots \td\phi_{0}^{-1}})=\\\notag
&u\ps{1}\rt \td u(\td g) u\ps{2}(\gbar(f^0)(\phi_0))\, \gbar(u\ps{3}\ns{1})(\phi_1\cdots\phi_q)\circ{\td\phi_{0}^{-1}}u\ps{3}\ns{0}
(\gbar(f^1)(\phi_1))\circ{\td\phi_0^{-1}}  \cdots\\\notag
& \gbar(u\ps{i+2}\ns{1})(\phi_{i}\cdots\phi_q)\circ{\td\phi_{i-1}^{-1}\cdots \td\phi_{0}^{-1}}u\ps{i+2}\ns{0}(\gbar(f^{i})(\phi_{i}))\circ{\td\phi_{i-1}^{-1}\cdots \td\phi_{0})^{-1}} \cdots \\\label{chi-well-4}
 &\gbar(u\ps{q+2}\ns{1})(\phi_{q})\circ{\td\phi_{q-1}^{-1}\cdots \td\phi_{0}^{-1}}u\ps{q+2}\ns{0}(\gbar(f^{q})(\phi_{q}))\circ{\td\phi_{q-1}^{-1}\cdots \td\phi_{0}^{-1}}.
 \end{align}
 One uses the equality  \eqref{chi-well-3}=\eqref{chi-well-4} and the fact that  $\tau$ is $\d$-invariant
 to see that
 \begin{equation}\label{chi-well-5}
 \chi\left(\one\ot (1\acl u)\cdot (\td u\ot \td f)\right)= \chi\left(\d(1\acl u)\ot \td u\ot \td f\right).
  \end{equation}
  On the other  hand  we see that
  \begin{equation}
  (f\acl 1)\cdot (\td u\ot \td f)= u^0\odots u^p\ot f\ps{1}f^0\odots f\ps{q+1}f^q,
    \end{equation}
which together with the facts  that $\gbar$ is an algebra map  and  $\d(\dbar(S(f)))= \ve(S(f))= \ve(f)=\d(f\acl 1)$, yields that
\begin{equation}\label{chi-well-6}
 \chi\left(\one\ot (f\acl 1)\cdot (\td u\ot \td f)\right)= \chi\left(\d(f\acl 1)\ot \td u\ot \td f\right).
  \end{equation}
 So \eqref{chi-well-5} and \eqref{chi-well-6} prove that
 \begin{equation*}
 \chi\left(\one\ot (f\acl u)\cdot(\td u\ot \td f)\right)= \chi\left(\d(f\acl u)\ot \td u\ot \td f\right).
 \end{equation*}
\end{proof}

\begin{proposition}\label{char-bicyclic-map}
The characteristic  map $\chi$ defined in \eqref{char}  is a  map of  bicocyclic modules.
\end{proposition}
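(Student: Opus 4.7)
The plan is to verify case by case that $\chi$ intertwines each of the structural operators (two sets of cofaces, codegeneracies and cyclic generators) on the two bicocyclic modules. The well-definedness lemma has already discharged the most delicate compatibility, namely that the formula descends from $\Uc^{\ot p+1} \ot \Fc^{\ot q+1}$ to the tensor product over $\Hc$; the remaining verifications are the naturality with respect to the bicocyclic face, degeneracy and cyclic maps.

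First I would handle the inner cofaces $\hd_i$ for $1\le i\le p$ and $\vd_i$ for $1\le i\le q$. These correspond to applying the coproduct of $\Uc$, resp.\ of $\Fc$, and on the target side they collapse two adjacent entries by multiplication in $C^\infty_c(G)$, resp.\ by multiplication in $\Cb\G$. The compatibility follows directly from the module-algebra property $u(gh) = u\ps{1}(g)\, u\ps{2}(h)$ and from the fact that $\dbar$ is an anti-coalgebra map, so that $\dbar(S(f\ps{1}f\ps{2}))(U_{\phi_1\phi_2}) = \dbar(S(f))(U_{\phi_1}U_{\phi_2})$ via Lemma \ref{beta}. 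The extremal cofaces $\hd_0,\vd_0$ correspond to inserting $1\in\Uc$, resp.\ $1\in\Fc$, matched on the target side with inserting the identity of $\G$, which again is tautological. The codegeneracies $\hs_j,\vs_j$ pair with counits and are immediate.

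Next I would treat the boundary cofaces $\hd_{p+1}$ and $\vd_{q+1}$, which are the ones that genuinely invoke the bicrossed product structure through the coaction $\Db$. In the $\Uc$-direction, $\hd_{p+1}$ involves $u^0\ps{1}\ns{0}$ and the string $f^i u\ps{1}\ns{i+1}$, and to transport this to Getzler--Jones one writes out $U_{\phi_0\cdots\phi_q}\, u\, U^*_{\phi_0\cdots\phi_q}$ and invokes \eqref{g-u}, i.e.\ $U_\phi u U^*_\phi = \gbar(u\ns{1})(\phi) u\ns{0}$, together with the cocycle identity \eqref{cocycle-condition} for $\gbar$. Analogously, $\vd_{q+1}$ needs \eqref{g-u-f} to move $S(u^0\ns{1}\cdots u^p\ns{1})$ across the trace. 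These are essentially the manipulations already carried out in the well-definedness proof (compare \eqref{chi-well-3}--\eqref{chi-well-4}), re-deployed one piece at a time.

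Finally I would handle the two cyclic operators $\hta$ and $\vta$, which will be the main obstacle because they simultaneously engage the antipode, the coaction, the modular character $\d$, and the $\G$-equivariance of the trace. For $\hta$, which in $C_\Hc$ carries a factor $\d(u^1\ps{1})$ and involves $S(u^1\ps{2})\bullet \td f$ and $S(u^1\ps{3})\cdot(u^2\ot\cdots\ot u^p\ot 1)$, the plan is to use $\d$-invariance of $\tau$ (equivalently, $\tau(S_\d(h)(a)) = \d(h)\tau(a)$ on $\Ac_{c,\G}(G)$) to absorb $\d(u^1\ps{1})$, then cycle $u^1$ past the product $u^0(g_0)\cdots u^p(g_p)$ using Lemma \ref{S-linear} and \eqref{g-u}, and finally match the result with the formula for $\hta$ on the Getzler--Jones side (the shift $U_{\phi_q}\mapsto U_{\phi_0}$ combined with $g_i\mapsto g_i\circ\phi_q^{-1}$). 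For $\vta$, which cycles $f^0$ to the end after applying $S$ and twisting by $S(\td u\ns{1})$, the key identity is \eqref{S(f)(psi)} $\gbar(S(f))(\phi)=\gbar(f)(\phi^{-1})\circ\td\phi^{-1}$, together with $\G$-invariance of the integral against $\varpi$. Once the bookkeeping of coactions and antipodes is resolved, each of $\chi\circ\hta = \hta\circ\chi$ and $\chi\circ\vta = \vta\circ\chi$ reduces to an identity among scalar functions on $G$ under $\tau$, and the proposition follows by assembling the pieces.
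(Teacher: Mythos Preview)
Your overall strategy---verify that $\chi$ intertwines each structural operator one at a time---is sound and is essentially the paper's approach. The paper is slightly more economical: rather than checking the last cofaces $\hd_{p+1}$, $\vd_{q+1}$ separately, it observes that once $\chi$ commutes with the first $p{+}1$ (resp.\ $q{+}1$) cofaces it suffices to verify commutation with the two cyclic operators, since the last coface is the cyclic operator composed with the zeroth one.

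There is, however, a concrete error in your identification of the source module. The map $\chi$ of \eqref{char} has domain $C_\Hc^{p,q}(\Uc,\Fc,\Cb_\d)=\Cb_\d\ot_\Hc(\Uc^{\ot p+1}\ot\Fc^{\ot q+1})$, the bicocyclic module \eqref{FX}, \emph{not} the second bicocyclic module $C(\Uc,\Fc,\Cb_\d)$ of diagram \eqref{UF}. The formulas you quote---$\hd_0$ ``inserting $1\in\Uc$'', and $\hta$ ``carrying a factor $\d(u^1\ps{1})$'' together with $S(u^1\ps{2})\bullet\td f$---are those of the second module. In $C_\Hc$ the zeroth coface applies the coproduct to $u^0$, and the cyclic operator on the $\Uc$-side is
\[
\hta(\one\ot\td u\ot\td f)=\one\ot u^1\odots u^p\ot u^0\ns{0}\ot f^0\,u^0\ns{1}\odots f^q\,u^0\ns{q+1},
\]
with no modular character and no antipode on $\Uc$; the $\Fc$-side cyclic operator simply moves $f^0$ to the last slot and multiplies it by $S(\td u\ns{1})$. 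With the correct formulas your plan goes through---and more cleanly than you sketch, since the $\d$-invariance of $\tau$ is not needed at this stage (it was already consumed in the well-definedness lemma). The actual inputs are precisely \eqref{g-u}, \eqref{cocycle-condition}, \eqref{S(f)(psi)} and the $\Gb$-invariance of $\varpi$, exactly as in the paper's computation.
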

\begin{proof}
As $\Fg$ acts by derivation on $C^{\infty}(G)$ it  is obvious that $\chi$ commutes with all vertical  cofaces except possibly very last one. Since $\dbar$ is an algebra map and anticoalgebra map and $\Hc(\Gb)$ acts on $\Ac_{c, \G}(G)$ and makes it module algebra it is also obvious that $\chi$  commutes with all horizontal cofaces  except possibly the last one. So it suffices to show that $\chi$ commutes with horizontal and vertical cyclic operators. Indeed,  let $\phi_0\dots \phi_q=id$.
\begin{align*}
&\vta\chi(\one\ot\td u\ot\td f)(g_0\odots g_p\mid U_{\phi_0}\odots U_{\phi_q})=\\
&\chi(\one\ot\td u\ot\td f)(g_p\ot g_0\odots g_{p-1}\mid U_{\phi_0}\odots U_{\phi_q})=\\
&\int_G\left[u^0(g_p)u^1(g_0)\dots u^{p-1}(g_p)\gbar(f^0)(\phi_0)\gbar(f^{1})(\phi_{1})\circ{\td\phi_0^{-1}}\dots \right.\\ &\left.~~~~~~~~~~~~~~~~~~~~~\dots  \gbar(f^q)(\phi_q)\circ{ \td\phi_{q-1}^{-1}\cdots\td\phi_{0}^{-1})}\right]\varpi.
\end{align*}
On the other hand  by using Lemma \ref{beta} and \eqref{g-u} and the fact that
$\phi_0\dots \phi_q=\Id$ we see that
\begin{align*}
&\chi\vta(\one\ot\td u\ot\td f)(g_0\odots g_p\mid U_{\phi_0}\odots U_{\phi_q})=\\
&\chi(\one\ot  u^1\odots u^{p-1}\ot u^0\ns{0} \mid u^0\ns{1}f^0\ot\dots\\
&~~~~~~~~~~~~~~~~~~~~~~~~~~~~~~~~~\dots u^0\ns{q+1}f^q)(g_0\odots g_p\mid U_{\phi_0}\odots U_{\phi_q})=\\
&\int_G\left[u^1(g_0)\dots u^{p}(g_{p-1}) u^0\ns{0}(g_p)\gbar(u^0\ns{1})(\phi_0)\gbar(f^0)(\phi_0)\dots\right.  \\
&\left.\gbar(u^0\ns{q+1})(\phi_{q})\circ{\td\phi_{q-1}^{-1}\cdots\td\phi_0^{-1}} \gbar(f^q)(\phi_{q})\circ{\td\phi_{q-1}^{-1}\cdots \td\phi_0^{-1}}\right]\varpi=\\
&\int_G\left[u^1(g_0)\dots u^{p}(g_{p-1}) u^0\ns{0}(g_p)\gbar(u^0\ns{1})(\phi_0) \cdots\gbar(u^0\ns{q+1})(\phi_{q})\circ{\td\phi_{q-1}^{-1}\cdots \td\phi_0^{-1}} \right.  \\
&\left. \gbar(f^0)(\phi_0)\cdots \gbar(f^q)(\phi_{q})\circ{\td\phi_{q-1}^{-1}\cdots \td\phi_0^{-1}}\right]\varpi=\\
&\int_G\left[u^1(g_0)\dots u^{p}(g_{p-1}) u^0\ns{0}(g_p)\gbar(u^0\ns{1})(\phi_0\dots \phi_q) \right.\\
&~~~~~~~~~~~~~~~~~~~~~~~~\left. \gbar(f^0)(\phi_0)\cdots \gbar(f^q)(\phi_{q})\circ{\td\phi_{q-1}^{-1}\cdots \td\phi_0^{-1}}\right]\varpi.
\end{align*}
Let us now show that $\chi$ commutes with the horizontal cyclic operators.

\begin{align}\notag
&\chi\hta(\one\ot\td u\ot\td f)(g_0\odots g_p\mid U_{\phi_0}\odots U_{\phi_q})=\\\notag
&\chi(u^0\ns{0}\odots u^p\ns{0}\mid f^1\odots f^q\ot S(u^0\ns{1}\dots u^p\ns{1})f^0)\\\notag
&~~~~~~~~~~~~~~~~~~~~~~~~~(g_0\odots g_p\mid U_{\phi_0}\odots U_{\phi_q})=\\\notag
&\int_G\left[ u^0\ns{0}(g_0)\dots u^p\ns{0}(g_p) \gbar(f^1)(\phi_0)\dots \gbar(f^q)(\phi_{q-1})\circ{\td\phi_{q-2}^{-1}\cdots \td\phi_0^{-1}}\right. \\\label{horizontalintegral}
&\left.\gbar(f^0S(u^0\ns{1}\dots u^p\ns{1} )(\phi_q)\circ{\td\phi_{q-1}^{-1}\cdots \td\phi_0^{-1}}\right]\varpi
\end{align}
Using \eqref{S(f)(psi)}, \eqref{g-u},  and the equality $\phi_q= (\phi_{q-1}^{-1}\cdots \phi_0^{-1})$ we see that
\begin{align} \notag
\eqref{horizontalintegral}=\notag
&\int_G\left[ U^\ast_{\phi_q}u^0U_{\phi_q}(g_0)\cdots  U^\ast_{\phi_q}u^pU_{\phi_q}(g_p)\gbar(f^1)(\phi_0)\dots \right. \\\notag
 &\left. \gbar(f^q)(\phi_{q-1})\circ{\td\phi_{q-2}^{-1}\cdots \td\phi_0^{-1}}\,\gbar(f^0)(\phi_q)\circ{\td\phi_{q-1}^{-1}\cdots \td\phi_0^{-1}}\right]\varpi=\\\notag
 &\int_G\left[ u^0(g_0\circ{\td\phi_{q}^{-1}})\circ{\td\phi_{q}}^{-1}\cdots  u^p(g_p\circ{\td\phi_{q}^{-1}})\circ{\td\phi_{q}}\,\gbar(f^1)(\phi_0)\dots \right. \\\label{horizontalintegral2}
 &\left. \gbar(f^q)(\phi_{q-1})\circ{\td\phi_{q-2}^{-1}\cdots \td\phi_0^{-1}}\,\gbar(f^0)(\phi_q)\circ{\td\phi_{q-1}^{-1}\cdots \td\phi_0^{-1}}\right]\varpi=
\end{align}

Using the fact that the volume form $\varpi$ is $\Gb$-invariant and also $\phi_0\dots\phi_q=\Id$ we observe that
\begin{align*}
&\eqref{horizontalintegral2}=\\
&\int_G\left[ u^0(g_0\circ{\td\phi_{q}^{-1}})\cdots  u^p(g_p\circ\td{\phi_{q}^{-1}}\,\gbar(f^1)(\phi_0)\circ{\td\phi_{q}^{-1}}\dots \right. \\
 &\left. \gbar(f^q)(\phi_{q-1})\circ{\td\phi_{q-2}^{-1}\cdots \td\phi_0^{-1}\td\phi_{q}^{-1}}\,\gbar(f^0)(\phi_q)\right]\varpi=\\\notag
&\chi(\one\ot\td u\ot\td f)( g_0\circ \td\phi_{q}^{-1}\odots  g_p\circ \td\phi_{q}^{-1} \mid U_{\phi_q}\ot U_{\phi_0}\odots U_{\phi_{q-1}})=\\
&\hta\chi(\one\ot\td u\ot\td f)(g_0\odots g_p\mid U_{\phi_0}\odots U_{\phi_q}).
\end{align*}
\end{proof}
So $\chi$ induces a cyclic  map  in  the level of diagonal complexes and a map of mixed complexes
at the level of  total complexes, as follows:
\begin{align}
&\chi_{\rm diag}: C_\Hc^{n,n}(\Uc,\Fc,\Cb_\d)\ra \Hom(C_c^\infty (G)^{\ot n}\ot \Cb\G^{\ot n},\Cb), \\
&\chi_{\rm tot} :\bigoplus_{p+q=n}C_\Hc^{p,q}(\Uc,\Fc,\Cb_\d)\ra
\bigoplus_{p+q=n} \Hom(C_c^\infty (G)^{\ot p}\ot \Cb\G^{\ot q},\Cb),
\end{align}

By   \cite[Theorem 2.15]{mr09}
$\Hc^{\rm cop}\cong \Fc\acl \Uc$. Let us recall the latter isomorphism.
One identifies $\Fc^{\rm cop}$ with the commutative part of  $\Hc$  which is denoted by $\Hc_{\rm ab}$, in a natural way namely sending the standard generators of $\Fc$ to those of $\Hc_{\rm ab}$. Let us call this isomorphism by $\i$ and refer the reader to its definition in \cite[Proposition 2.7]{mr09}.
Then the isomorphism $\Jc: \Hc^{\rm cop}\ra \Fc\acl \Uc$ is defined by on the basis elements by
\begin{align*}
\Jc(\d_KZ_I)=\i(\d_K)\acl Z_I  ,
\end{align*}
where  $\{\d_K\}$ is a basis of $\Hc_{\rm ab}$   and $\{Z_I \}$ is the usual PBW basis of $\Uc$.
 It is proved in {\em loc.~cit.}
 that $\Hc$ acts on $\Ac$ from the  left and makes $\Ac$ a module algebra.
 This action induces an $\Hc^{\rm cop}$-module algebra  on $\Ac^o$ via
 \begin{equation}
 \Hc^{\rm cop}\ot \Ac^{o}\ra  \Ac^{o}, \quad h^o(\bar{a})=\overline{h(a)}
 \end{equation}
One observes that $\bar~:\Ac_{c, \G}(G) \ra \Ac_{c, \G}(G)^o$ defined by
$\overline{gU^\ast_\phi}= U_\phi g$ is an anti algebra isomorphism between
$\Ac_{c, \G}(G)$ and $\Ac_{c, \G}(G)^{o}$.

We next recall from \cite[Proposition 3.1]{mr09} the  isomorphism of $(b,B)$ Hopf cyclic complexes
\begin{align}
&\Ic_\Hc: C^n(\Hc^{\rm cop},\Cb_\d)\ra  C^n(\Hc,\Cb_\d),\\\notag
&\Ic_\Hc(\one\ot {h_1}\odots {h_n})= \one \ot h_n\ot h_{n-1}\odots h_1 ,
\end{align}
as well as its algebra counterpart
\begin{align}
 &\Ic_\G: C^n(\Ac_{\rm c, \G}^{o})\ra  C^n(\Ac_{\rm c,\G}),\\\notag
&\Ic_\G(\t)(a_0\odots a_n)= \t(a_0\ot a_n\odots a_1).
\end{align}
Finally, let us recall that the original characteristic map
$\, \chi_{\act}:\Hc(\Gb)^{\ot n}\ra \Hom(\Ac_{c, \G}(G)^{\ot n+1},\Cb)$,
which actually guided the definition
of Hopf cyclic cohomology in~\cite{cm2}, has the expression
\begin{align}\label{char-Hopf}
 \chi_{\act}(h^1\odots h^n)(a_0\odots,a_n)=\tau(a_0h^1(a_1)\dots h^n(a_n)).
\end{align}
Let us prove that all the above characteristic maps are equivalent.

\begin{theorem} \label{thm:actmap}
The diagram below is commutative and all its vertical arrows are quasi-isomorphisms.
\begin{equation}\label{chidiagram}
\xymatrix{
\ar[d]^{\Ic^{-1}_\Hc}C^n(\Hc,\Cb_\d)\ar[rr]^ {\chi_{\act}}&&
C^n(\Ac_{c, \G}(G))\ar[d]^{\Ic^{-1}_{\G}}\\
\ar[d]_{\Psi^{-1}_{\acl}}C^n(\Fc\acl\Uc, \Cb_\d)\ar[rr]^ {\chi_{\act}}&&
 C^n(\Ac_{c, \G}(G)^{\rm o})\ar[d]^{\Psi^{-1}_{\G}}\\
\ar[d]^{\overline{SH}} C^{n,n}_\Hc(\Uc,\Fc,\Cb_\d)\ar[rr]^{\chi_{\rm diag}}&&
 \ar[d]^{\overline{SH}} C^{n,n}(C_c^{\infty}(G),\G)\\
\underset{p+q=n}{\bigoplus}C^{p,q}_\Hc(\Uc,\Fc,\Cb_\d)\ar[rr]^{\chi_{\rm tot}}&&
 \underset{p+q=n}{\bigoplus}C^{p,q}(C_c^{\infty}(G),\G).}
\end{equation}
\end{theorem}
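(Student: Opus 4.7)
The plan is to handle the two assertions separately: first that every vertical arrow is a (quasi-)iso\-mor\-phism, and then that each of the three squares commutes. For the vertical arrows there is essentially nothing new to prove: $\Ic_\Hc$ and $\Ic_\G$ are the explicit cyclic isomorphisms $\Hc^{\rm cop} \leftrightarrow \Hc$ and $\Ac_{c,\G}(G)^{\rm o} \leftrightarrow \Ac_{c,\G}(G)$ recalled just above the statement; $\Psi_\acl$ and $\Psi_\G$ are the isomorphisms \eqref{PSI-1}--\eqref{PSI} and their Getzler--Jones analogues identifying a diagonal subcomplex with the standard cyclic complex of a (co)crossed product; and $\overline{SH}=(Sh,Sh')$ is the Eilenberg--Zilber shuffle map for bi-paracyclic modules, a quasi-isomorphism of mixed complexes as used in Proposition~\ref{mix}. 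It is important to note that all three vertical constructions are \emph{functorial} in maps of bicocyclic modules that intertwine the coproduct/product structure of $\Hc$ with the $\Hc$-action on $\Ac_{c,\G}(G)$; this is what ultimately produces the commutativity below.

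For the top square, the only point is to reconcile $\chi_{\act}$ on $\Hc$ with $\chi_{\act}$ on $\Hc^{\rm cop}$ acting on $\Ac_{c,\G}(G)^{\rm o}$. The invariant trace $\tau$ of \eqref{inv-tr} satisfies $\tau(\bar a)=\tau(a)$ under the anti-isomorphism $\bar{}:\Ac_{c,\G}(G)\to \Ac_{c,\G}(G)^{\rm o}$, and by the very definition of the induced $\Hc^{\rm cop}$-action on $\Ac^{\rm o}$ one has $\overline{h(a)}=h^{\rm o}(\bar a)$. Thus reversing the order of the $h^i$'s via $\Ic_\Hc$ is exactly matched by reversing the order of the $a_i$'s via $\Ic_\G$, and the two expressions for $\chi_{\act}$ coincide after a single application of the trace property. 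For the bottom square, since $\chi$ is a map of bicocyclic modules by Proposition~\ref{char-bicyclic-map}, the Alexander--Whitney and shuffle constructions \eqref{AW}, together with their $S$-map extensions $\overline{AW}$ and $\overline{Sh}$, are natural in $\chi$; hence $\overline{SH}\circ\chi_{\rm diag}=\chi_{\rm tot}\circ\overline{SH}$ follows by a routine inspection of the defining formulas, with no further input beyond the fact that $\chi$ commutes with all cyclic and codegeneracy operators.

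The genuine content, and the only step requiring calculation, is the middle square: one must verify
\begin{equation*}
\Psi^{-1}_{\G}\circ\chi_{\act}\;=\;\chi_{\rm diag}\circ\Psi^{-1}_{\acl},
\end{equation*}
equivalently $\Psi_\G\circ\chi_{\rm diag}=\chi_{\act}\circ\Psi_\acl$. My plan here is to start from the right-hand side: apply $\Psi_\acl$ to $\one\otimes\td u\otimes\td f$ according to \eqref{PSI-1}, obtaining an element with twisted entries $f^i u^1\ns{i-1}\cdots u^{i-1}\ns{1}\acl u^i\ns{0}$, then feed it into $\chi_{\act}$ and expand every product $\dbar(S(\cdot))(U_{\phi_j})$ using the cocycle property of Lemma~\ref{beta} (equation \eqref{cocycle-condition}) together with \eqref{g-u} and \eqref{g-u-f}. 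The twists produced by $\Psi_\acl$ on the $\Fc$-side match precisely the shifts $\td\phi_{i-1}^{-1}\dots\td\phi_0^{-1}$ introduced on the algebra side by $\Psi_\G$, which is exactly the mechanism already exhibited in the proof that $\chi$ is well-defined on the bicocyclic module; the calculation is a direct transcription of that argument, followed by collapsing the result via $\Psi_\G$. This is the step most likely to be the main obstacle, because it requires matching, term by term, the bookkeeping of the $\acl$-coproduct formula on the Hopf side with that of the crossed product on the algebra side, but once the same cocycle identities for $\gbar$ used in Proposition~\ref{char-bicyclic-map} are invoked, the identification is forced.

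Finally, combining the three commutative squares with the fact that $\chi_{\act}$ in the top row is Connes--Moscovici's original characteristic map \eqref{char-Hopf}, one concludes that $\chi_{\rm diag}$ and $\chi_{\rm tot}$ induce the same map in cyclic cohomology as the classical $\chi_{\act}$, which is the content of the theorem.
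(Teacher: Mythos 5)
Your plan follows the paper's own proof essentially verbatim: the vertical arrows are the previously established (quasi-)isomorphisms, the bottom square follows from Proposition~\ref{char-bicyclic-map} by naturality of the shuffle/Alexander--Whitney machinery, the top square is a formal order-reversal compatible with the trace, and the middle square is verified by the explicit computation expanding $\chi_{\act}\circ\Psi_{\acl}$ and $\Psi_\G\circ\chi_{\rm diag}$ via the cocycle identity \eqref{cocycle-condition}, \eqref{g-u}, the constraint $\phi_0\cdots\phi_n=\id$, and the $\Gb$-invariance of $\varpi$. This is exactly how the paper argues, so the proposal is correct and takes the same route.
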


\begin{proof}
The commutativity of the  lower square  is a direct corollary of
Proposition \ref{char-bicyclic-map} as the shuffle map  $\overline{SH}$ is made of cofaces, codegeneracies and cyclic operators.
The commutativity of the top square is obvious due to the similarity of $\Ic_{\Ac}$ and $\Ic_{\Hc}$.

Let us check that the middle  square is commutative too.
Using the abbreviated  notation
$$
\td f\ot\td u = \one \ot_{\Hc^{\rm cop}} 1\ot u^1\odots u^n\ot 1\ot f^1\odots f^n ,
$$
after some obvious simplification one observes that
\begin{align*}
&\Psi_{\acl}(\td f\ot \td u)=\one \ot f^1\acl u^1\ns{0}\ot f^2  u^1\ns{1}\acl u^2\ns{0}\ot\dots\\
&~~~~~~~~~~~~~~~~~~~~~~~~~~~~~~~~~~~~~~~~~~~~\dots\ot f^n u^1\ns{n-1} \dots u^{n-1}\ns{1}\acl u^n.
\end{align*}
Assuming  that $\phi_0\dots \phi_n=\id$, one has
\begin{align}\notag
 &\chi^{\rm act}\circ\Psi_{\acl}(\td f\ot \td u)(U_{\phi_0}g_0\odots U_{\phi_n}g_n)=\\\notag
 & \tau\left( u^n(g_n)\dbar(f^n u^1\ns{n-1} \dots u^{n-1}\ns{1})(U^\ast_{\phi_n})\dots
 u^1\ns{0}(g_1)\dbar( f^1)U^\ast_{\phi_1}g_0U^\ast_{\phi_0}  \right)=\\\notag
 &\displaystyle\int_G  \left[ u^n(g_n)\gbar(S(f^n))(\phi_n^{-1}) \gbar(S(u^1\ns{n-1} \dots
 u^{n-1}\ns{1}))(\phi_n^{-1})\dots \right.\\\label{square2-a}
 &~~~~~~~~~~~~~~~~~~~~~~~~~~~~\left.  \dots (u^1\ns{0}(g_1)\gbar( S(f^1))(\phi_1^{-1}))\circ {\td\phi_2\cdots\td\phi_n} g_0\circ {\td\phi_1\cdots\td\phi_n}\right]\varpi.
   \end{align}
On the other hand,
 \begin{align}\notag
 &(\Psi_{\G}\circ\chi^\Dc)(\td f\ot \td u)(U_{\phi_0}g_0\odots U_{\phi_n}g_n)=\\\notag
& \chi^\Dc(\td f\ot \td u)(  g_0\circ{\td\phi_1\cdots \td\phi_n}\odots g_{n-1}\circ{\td\phi_n}
\ot g_n \mid U_{\phi_0}\odots U_{\phi_n})=\\\notag
&\tau( g_0\circ{\td\phi_1\cdots \td \phi_n}u^1(g_1\circ\td\phi_2\cdots \td\phi_n)\dots \\\notag
&\dots u^{n-1}(g_{n-1}\circ{\td\phi_n}) u^n(g_n) \dbar(f^n)(U^\ast_{\phi_n})\dots \dbar(f^1)(U^\ast_{\phi_1})U^\ast_{\phi_0})=\\\notag
&=\int_G\left[ g_0\circ{\td\phi_1\cdots \td \phi_n}u^1(g_1\circ{\td\phi_2\cdots\td\phi_n})\dots \right.\\\label{square2-b}
&\left.\dots u^n(g_n)\gbar(S(f^n))(\phi_n^{-1})\dots \gbar(S(f^1))(\phi_1^{-1})\circ {\td\phi_2\dots \td\phi_n}\right]\varpi_\Gb.
  \end{align}
 Using \eqref{g-u}  sufficiently  many times and recalling that $\phi_0\dots\phi_n=\id$,
 and that $\varpi$ is $\Gb$-invariant,
one obtains the equality of \eqref{square2-a} and \eqref{square2-b}.
\end{proof}

\subsection{Differential graded characteristic map} \label{Ss: dgchar}
We proceed now to construct  characteristic maps from the
Hopf version
of the Chevalley-Eilenberg complexes to the cyclic cohomology
bicomplex of the crossed product by differential forms
$\Om_{c, \G}(G) := \Om_c^\bullet (G) \rtimes \Gamma$.

To begin with, let us recall the definition of the
 Bott bicomplex  $C_{\rm Bott}^{\bullet,\bullet}(G, \Gb)$ computing the $\Gb$-equivariant
 cohomology of $G$.
The cochain space $C_{\rm Bott}^{q, p}(G, \Gb)$
$ =C_{\rm Bott}^{q}(\Om_p (G), \Gb)$
consists of all antisymmetrized $\Gb$-equivariant  functions on $\Gb^{\times (p+1)}$
with values in de Rham currents of degree $q$. As in \eqref{eq:vanestinv}, the
equivariance condition is
\begin{equation}\label{equivariant}
\g(\phi_0  \phi,\dots \phi_q \phi)=  \, ^tL^\ast_{\phi^{-1} \rt} \,
\g(\phi_0,\dots \phi_q), \quad \g\in C^{p,q}, \phi_i \in \G, \phi\in \Gb.
\end{equation}
When the current is given by a form, this condition becomes
\begin{equation}\label{equivariantform}
\g(\phi_0 \phi,\dots \phi_q \phi)=  \, L^\ast_{\phi \rt} \,
\g(\phi_0,\dots \phi_q) \, \equiv \, {\td \phi}^\ast \big(\g(\phi_0,\dots \phi_q)\big) ,
\end{equation}
by duality and taking into account that $\Gb$ acts on $G$ via the natural action on frames.
The horizontal and vertical coboundaries of  $C_{\rm Bott}^{\bullet,\bullet}(G, \Gb)$
are defined as follows:
\begin{align}\label{Bott}
&d_1(\g)(\phi_0,\dots,\phi_{q+1})=\sum_{i=0}^{q+1}(-1)^i\g(\phi_0,\dots, \check{\phi_i},\dots, \phi_{q+1}),\\
&d_2(\g)(\phi_0,\dots,\phi_q)=d^t(\g(\phi_0,\dots,\phi_q)),
\end{align}
We now define the map $\, \Theta: \left( \wedge^p \Fg^\ast\ot \wdg^{q+1}\Fc \right)^\Fc \ra
C_{\rm Bott}^{p,q}(G, \Gb)$ by the formula
\begin{align}\label{Theta-formula}
\begin{split}
&\Theta (\sum_I \a_I\ot \lu{I}f^0 \wdg \cdots \wdg \lu{I}f^q )(\phi_0,\dots,\phi_q)
\, = \, \\
&= \sum_{I, \s} (-1)^\s   \gbar(S( \lu{I}f^{\s(0)}))(\phi_0^{-1})\dots \gbar(S(\lu{I}f^{\s(q)}))(\phi_q^{-1}){{\td\a_I}} \,.
\end{split}
\end{align}
 Here ${{\td\a_I}}$ is the left invariant form on $G$ associated to $\a_I \in \wdg^q \Fg^\ast$.
 \medskip

Recall that the bicomplex $(\wdg^p\Fg^\ast\ot \wdg ^{q+1}\Fc)^\Fc$ has two cobundary $b_{\rm c-w}$ and $\p_{\rm c-w}$  defined in \eqref{b-F-wedge} and \eqref{p-F-wedge}  respectively.

\begin{proposition}
The map $\Theta$
 is a well-defined homomorphism of bicomplexes.
\end{proposition}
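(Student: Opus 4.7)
The plan is to verify three things: (a) for any coinvariant $\xi \in (\wdg^p\Fg^\ast\ot \wdg^{q+1}\Fc)^\Fc$ the cochain $\Theta(\xi)$ satisfies the equivariance condition \eqref{equivariantform}, so $\Theta$ lands in the Bott bicomplex; (b) $\Theta$ intertwines the horizontal coboundaries, $\Theta\circ b_{\rm c-w}=d_1\circ\Theta$; and (c) $\Theta$ intertwines the vertical coboundaries, $\Theta\circ\p_{\rm c-w}=d_2\circ\Theta$.

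For (a), I would split the diffeomorphism $\phi\in\Gb$ according to the Kac factorization $\phi=\vp\cdot\psi$, $\vp\in G$, $\psi\in\FN$, and check equivariance under each piece. For $\vp\in G$, the relevant $\gbar$-factors in \eqref{Theta-formula} are constant in the direction tangent to $G$ in the sense that the cocycle identity \eqref{cocycle-condition} of Lemma \ref{beta} combined with left-invariance of $\td\a_I$ produces precisely the pullback $L^\ast_{\vp\rt}\tilde\a_I=\td\vp^\ast\tilde\a_I$. For $\psi\in\FN$, the cocycle identity repeatedly applied to each $\gbar(S(\lu{I}f^{\s(i)}))(\phi_i^{-1}\psi^{-1})$ generates a product containing coproducts of the $\lu{I}f^{\s(i)}$ which, once we collect the $\psi$-dependence into the single factor $\gbar(S(\lu{I}f^0_{(2)}\cdots \lu{I}f^q_{(2)}))(\psi^{-1})$, is traded against a factor on the $\wg^p\Fg^\ast$-side by the $\Fc$-coinvariance condition \eqref{coinv-condition} (with $\a_I{}_{\ns{0}}\ot S(\a_I{}_{\ns{1}})$ converted back to the form $\psi\rt \a_I$ via \eqref{Naction}). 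By construction of $\rt$ on $\wg^\bullet\Fg^\ast$ the resulting left-invariant form is exactly $\td\psi^\ast\td\a_I$.

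Step (b) is short: $b_{\rm c-w}$ inserts $1$ at the beginning of the wedge in $\Fc$, and $\gbar(S(1))(\phi^{-1})=\gbar(1)(\phi^{-1})=1$. The antisymmetrization built into $\Theta$ then turns the inserted unit entry, sitting at $\phi_0$ and running through all positions by the sign-shuffle, into the alternating sum $\sum_i(-1)^i\Theta(\xi)(\phi_0,\dots,\widehat{\phi_i},\dots,\phi_{q+1})$, which is precisely $d_1\Theta(\xi)(\phi_0,\dots,\phi_{q+1})$.

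Step (c) is the principal obstacle. On $G$ the form $\Theta(\xi)(\phi_0,\dots,\phi_q)$ equals $\sum F_I(\phi_0,\dots,\phi_q)\,\td\a_I$ with $F_I$ a polynomial in the $\gbar$-factors, so
\[
d_2\Theta(\xi)=\sum dF_I\wg \td\a_I+\sum F_I\,d\td\a_I.
\]
The second summand is handled by the Maurer–Cartan equation $d\td\t^k=-\tfrac12 c^k_{ij}\td\t^i\wg\td\t^j$, which matches the bracket part of the Chevalley–Eilenberg coboundary $\p\a_I$ appearing in the first term of $\p_{\rm c-w}$. For the first summand, I would expand $dF_I=\sum_j\td\t^j\cdot(\td X_j F_I)$ using the dual basis, and compute $\td X_j\gbar(S(\lu{I}f))(\phi^{-1})$ by invoking identity \eqref{g-u-f} together with Lemma \ref{S-linear}; this yields $\gbar(S(X_j\rt \lu{I}f))(\phi^{-1})$ up to a correction involving $\gbar(S(X_{j\ns{1}}))$ coming from the non-primitivity of $X_j$ under the coaction $\Db$. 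These correction terms will have to cancel — the expected mechanism is that under coinvariance and after the Leibniz expansion across the wedge product of $\Fc$-factors they assemble into the same coaction–twist already absorbed in step (a), hence vanish. What remains, $-\sum_j\td\t^j\wg\td\a_I\ot(X_j\rt(\lu{I}f^0\wdg\cdots\wdg\lu{I}f^q))$, is exactly the image under $\Theta$ of the second term of $\p_{\rm c-w}$ in \eqref{p-F-wedge}. Bookkeeping the correction terms in this last step is the delicate technical point where the structure of the matched pair $(\Uc,\Fc)$ from \eqref{mp2} has to be used in full.
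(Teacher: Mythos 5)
Your three-part plan (equivariance, compatibility with $b_{\rm c-w}$, compatibility with $\p_{\rm c-w}$) follows the same architecture as the paper, and your treatment of $d_1$ is fine. The genuine gap is in step (c), exactly at the point you yourself flag as delicate: you predict correction terms involving $\gbar(S(X_{j}\ns{1}))$ and then appeal to an unspecified cancellation ``under coinvariance and after the Leibniz expansion across the wedge product,'' assembling into the twist of step (a). That mechanism is not how the cancellation works, and as written it is not a proof. The correct fact --- and the one the paper's proof invokes --- is that \emph{no correction terms arise at all}: the composite $\gbar\circ S$ is a $\Uc(\Fg)$-module map, i.e.\ $X\big(\gbar(S(f))(\phi)\big)=\gbar\big(S(X\rt f)\big)(\phi)$ for every $X\in\Fg$, $f\in\Fc$, $\phi\in\Gb$. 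This is a purely local identity within each single $\gbar$-factor and has nothing to do with $\Fc$-coinvariance (which enters only in step (a)). It follows either from the fact that $\dbar$ is a $\Uc(\Fg)$-module map (the paper's route), or directly from the very lemmas you cite: by Lemma \ref{S-linear}, $S(X\rt f)=X\ns{0}\rt S(f)\,S(X\ns{1})$, so, $\gbar$ being an algebra map, $\gbar(S(X\rt f))(\phi)=\gbar(X\ns{0}\rt S(f))(\phi)\,\gbar(S(X\ns{1}))(\phi)$; applying \eqref{g-u-f} to the first factor and using coassociativity of the coaction, the twist factors pair up as $\gbar(X\ns{1}\ps{1})(\phi)\,\gbar(S(X\ns{1}\ps{2}))(\phi)=\ve(X\ns{1})$, leaving exactly $X\big(\gbar(S(f))(\phi)\big)$. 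With this identity, $d\big(F_I\,\td\a_I\big)=\sum_j X_j(F_I)\,\td\t^j\wg\td\a_I+F_I\,d\td\a_I$ matches $\Theta\circ\p_{\rm c-w}$ termwise (Maurer--Cartan handling the second summand, as you say), and there is nothing left to cancel.

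A secondary imprecision sits in your step (a) for $\psi\in\FN$: after invoking coinvariance you convert $\a\ns{0}\ot S(\a\ns{1})$ into the constant-coefficient form $\psi\rt\a_I$ via \eqref{Naction} and identify the result with $\td\psi^\ast\td\a_I$. These are not equal as forms on $G$ (the pullback $\td\psi^\ast\td\a_I$ is not left invariant); they agree only at $e\in G$, so your argument as stated verifies \eqref{equivariantform} only at the identity point. What is needed, and what the paper uses, is the identity \eqref{coactgf}, $\td\phi^\ast\td\a=\gbar(\a\ns{1})(\phi^{-1})\,\td\a\ns{0}$, whose coefficient is the \emph{function} $\gbar(\a\ns{1})(\phi^{-1})\in C^\infty(G)$ rather than a scalar evaluation at $\psi$. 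With \eqref{coactgf} and the cocycle identity \eqref{cocycle-condition}, the equivariance check goes through uniformly for all $\phi\in\Gb$, so the Kac splitting $\phi=\vp\cdot\psi$ is also unnecessary: the $G$-part is automatic since $\gbar(f)(\vp)=\ve(f)$ for $\vp\in G$ (cf.\ Remark \ref{rem:novp}).
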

\begin{proof}
In order to prove that $\Theta$ is well-defined, it suffices to check that
$\Theta(\a\ot f^0\wdots f^q)$ is equivariant.

In view of the identity \eqref{coact1}, one has
 \begin{align} \label{coactgf}
\tilde{\phi}^\ast \tilde{\a}\,=\, \gbar(\a\ns{1})(\phi^{-1}) \, \tilde{\a}\ns{0} .
\end{align}
Also from Lemma \ref{beta},
 \begin{align*}
&\gbar(f)(\phi_1\phi_2)=\gbar(f\ps{1})(\phi_1)\gbar(f\ps{2})(\phi_2)
\circ{{\td\phi_1^{-1}}}.
\end{align*}
Using the fact that $\sum_I\a_I\ot \lu{I}f^0\wdots \lu{I}f^q$ belongs to the coinvariant space,
we can check the
condition  \eqref{equivariantform} for $\phi\in \G$, as follows:
 \begin{align*}
&  \sum_I\gbar(S(\lu{I}f^0))((\phi_0\phi)^{-1})\cdots \gbar(S(\lu{I}f^q))((\phi_n\phi)^{-1})\, {\td\a_I}\, =\\
&  \sum_I \gbar(S(\lu{I}f^0\ps{2}))(\phi^{-1})\;\gbar(S(\lu{I}f^0\ps{1}))(\phi_0^{-1})
\circ{\td\phi}\cdots   \\
&~~~~~~~~~~~~~~~~~~~~~~~~~\cdots\gbar(S(\lu{I}f^q\ps{2}))(\phi^{-1})\;\gbar(S(\lu{I}f^q\ps{1}))(\phi_q^{-1})
\circ{\td\phi}\,{ \td\a_I} \, =\\
& \sum_I\left(\gbar(S(\lu{I}f^0\ps{1}))(\phi_0^{-1})\dots\gbar(S(\lu{I}f^q\ps{1}))(\phi_q^{-1})\right)
\circ{\td\phi}\;
 \gbar(S(\lu{I}f^0\ps{2}\cdots \lu{I}f^q\ps{2}))(\phi^{-1})\,
  {\td\a_I}  =\\
&  \sum_I \tilde{\phi}^\ast\left(\gbar(S(\lu{I}f^0\ps{1}))(\phi_0^{-1})\dots\gbar(S(\lu{I}f^q\ps{1}))(\phi_q^{-1})\right)\;    \gbar(S(\lu{I}f^0\ps{2}\cdots \lu{I}f^q\ps{2}))(\phi^{-1})\,{\td\a_I} =\\
\end{align*}
 \begin{align*}
&=  \sum_I \tilde{\phi}^\ast\left(\gbar(S(\lu{I}f^0))(\phi_0^{-1})\dots\gbar(S(\lu{I}f^q))(\phi_q^{-1})\right)\;\gbar(\a_I\ns{1}) (\phi^{-1}) \,{\td\a_I\ns{0}} \, =\\
& \sum_I \tilde{\phi}^\ast\left(\gbar(S(\lu{I}f^0))(\phi_0^{-1})\dots\gbar(S(\lu{I}f^q))(\phi_q^{-1})\right)\;\tilde{\phi}^\ast
({\td\a_I})\,=\\
& \sum_I\tilde{\phi}^\ast\left(\gbar(S(\lu{I}f^0))(\phi_0^{-1})\dots\gbar(S(\lu{I}f^q))(\phi_q^{-1})\; { \td\a_I}\right).
\end{align*}

Next we prove that $\Theta b_\Fc=d_1\Theta$, we set $\lu{I}g^0=1$, $\lu{I}g^j=\lu{I}f^{j-1}$, for $1\le j\le q+1$.

\begin{align*}
&\Theta b_{\rm c-w}(\sum_I\a_I\ot \lu{I}f^0\wdots \lu{I}f^q)(\phi_0,\dots,\phi_{q+1})=\\
&\Theta(\sum_I\a_I\ot 1\wdg \lu{I}f^0\wdots \lu{I}f^q)(\phi_0,\dots,\phi_{q+1})=\\
&\sum_{I,\s\in S_{q+2} }(-1)^\s   \gbar( S(\lu{I}g^{\s(0)}))(\phi_0^{-1})\dots \gbar(S(\lu{I}g^{\s(q)}))(\phi_{q+1}^{-1}) {\td\a_I}  =\\
&\sum_{I,\s\in S_{q+2}} (-1)^\s  \gbar(S( \lu{I}g^0))(\phi_{\s(0)}^{-1})\dots \gbar(S(\lu{I}g^{q+1}))(\phi_{\s(q+1)}^{-1}){\td\a_I} =\\
&\sum_{I,\s\in S_{q+2}} \sum_{\s(0)=j}(-1)^\s   \gbar( S(\lu{I}g^0))(\phi_{\s(0)}^{-1})\dots \gbar(S(\lu{I}g^{q+1}))(\phi_{\s(q+1)}^{-1}){\td\a_I}=\\
&\sum_{I,0\le j\le q+1}(-1)^j \sum_{\pi\in S_{q+1}} (-1)^\pi  \gbar(S(\lu{I}f^{\pi(0)}))(\phi_0^{-1})\dots\\
&\gbar(S(\lu{I}f^{\pi(j-1)}))(\phi_{j-1}^{-1}) \gbar(S(\lu{I}f^{\pi(j)}))(\phi_{j+1}^{-1})\dots  \gbar(S(\lu{I}f^{\pi(q)}))(\phi_{q+1}^{-1}){ \td\a_I}=\\
& \sum_{I, 0\le j\le q+1}(-1)^j \Theta(\a_I\ot \lu{I}f^0\wdots \lu{I}f^q)(\phi_0,\dots,\check{\phi_j},\dots, \phi_{q+1}),.
\end{align*}

 To prove that $\Theta\p_{\rm c-w}=d_2\Theta$ we fix a basis $X_i$ for $\Fg$ and its dual basis $\t^i$ for $\Fg^\ast$. Let   $g\in C^{\infty}(G)$ then its differential is   $dg=\sum X_i(g)\t^i$. On the other hand we know  $\dbar$ is a $U(\Fg)$ module map which means that  $X(\dbar(f)(U^\ast_\phi))=\dbar(X\rt f)(U^\ast_\phi)$ this implies that $\gbar\circ S$ is $U(\Fg)$-module map. Finally  see that
  \begin{align*}
   &d(\sum_I \lu{I}g\;{\td\a_I})= \sum_I d(\lu{I}{g})\;{\td\a_I} +\sum_I \lu{I}g\;d({\td\a_I})=\\
&\sum_{I,i} X_i(\lu{I}g)\t^i\wedge {\td\a_I} + \sum_I \lu{I}g\;d({\td\a_I}).
   \end{align*}
  Here   $\lu{I}g=\sum_\s (-1)^\s  \gbar(S(\lu{I} f^{\s(0)}))(\phi_0^{-1})\dots \gbar(S(\lu{I}f^{\s(q)}))(\phi_q^{-1})$.
\end{proof}
\bigskip

 \begin{remark} \label{rem:novp} The map $\, \Theta$
 does not distinguish between the $\Gb$-equivariant and the $\FN$-equivariant
 cohomology of $G$.
  \end{remark}
Indeed, let $\vp_0,\dots,\vp_q \in G$ and
 $\psi_0,\dots,\psi_q \in \FN$.
Using the cocycle property \eqref{cocycle-condition} and the
fact that $\, \gbar(f) (\vp) = \ve (f)$, $\, \fl \, \vp \in G$, one can write
\begin{align*}
\begin{split}
\gbar(S(f))(\psi^{-1}\vp^{-1})& \, =\, \gbar(S(f)\ps{1})(\psi^{-1})
\gbar(S(f)\ps{2})(\vp^{-1})\circ{{\td\psi}}\, =\\
&= \, \gbar(S(f))(\psi^{-1}) .
\end{split}
\end{align*}
It follows that
 \begin{align}
\begin{split}
&\Theta (\sum_I\a_I\ot \lu{I}f_0 \wdg \cdots \wdg \lu{I}f_q )(\vp_0\psi_0,\dots, \vp_q\psi_q) \, = \, \\
&\qquad \qquad \Theta (\sum_I\a_I\ot \lu{I}f_0 \wdg \cdots \wdg \lu{I}f_q )(\psi_0,\dots,\psi_q) .
\end{split}
\end{align}

 \bigskip

In~\cite{Sasha02} Gorokhovsky introduced
the chain map $\Psi: C_{\rm Bott}^{\bullet,\ast}(G, \Gb)\ra C^\ast(\Om_{c,\G}(G))$,
which (adapted to a left action) is given by the formula
\begin{align} \label{Sasha-map-formula}
&\Psi (\g)( \om_0U^\ast_{\phi_0}, \om_1U^\ast_{\phi_1},\dots, \om_qU^\ast_{\phi_q})=\\\notag
&(-1)^{q+\deg{\g}}
\langle \g(1,\phi_0,\phi_1\phi_0,\dots,\phi_{q-1}\dots \phi_0),
\om_0\;{\phi_0}^\ast\om_1\,\dots {(\phi_{q-1}\dots \phi_{0})}^\ast\om_q\rangle,
\end{align}
if $\phi_q\dots \phi_0=1$ and $0$ otherwise.

Since $\Psi$ is a map of complexes.
 the composition
\begin{equation}\label{chi-wdg-def}
\quad \chi^{\wdg} =\Psi \circ \Theta: \wg^\ast\Fg^\ast\ot \wdg^{q+1}\Fc\ra C^{q}(\Om_{c,\G}(G))
\end{equation}
is also a chain map. It is given by the formula
\begin{align}\label{chi-wdg-formula-1}
\begin{split}
& \chi^{\wdg} (\a \ot \td f )( \om_0U^\ast_{\phi_0}, \om_1U^\ast_{\phi_1},\dots, \om_qU^\ast_{\phi_q})= \\
& (-1)^{\deg\a+q} \cdot \\
  & \sum_{\s\in S_{q+1}} (-1)^\s \int_G
 \ve(f^{\s(0)}) \gbar(S(f^{\s(1))})(\phi_0^{-1})\dots \gbar(S(f^{\s(q)}))(\phi_{0}^{-1}\dots \phi_{q-1}^{-1})\\
 &\qquad \qquad \cdot
 {\td \a}\, \om_0\;{\phi_0}^\ast\om_1\,\dots {(\phi_{q-1}\dots \phi_{0})}^\ast\om_q .
 \end{split}
\end{align}
if $\phi_q\dots \phi_0=1$ and $0$ otherwise.

\bigskip

We now define the map
$\Upsilon:C^{\bullet,\bullet}_{\rm Bott}(\G, G)\ra C^{\bullet, \bullet}(C^{\infty}_c(G),\G)$ as follows
\begin{align}\label{Upsilon}
 &\Upsilon(\g)(f^0, \ldots, f^p\mid U_{\phi_0},\ldots,U_{\ph_q})=\\\notag
&\left\{\begin{matrix}\displaystyle \int_G f^0df^1\dots df^p\g(\phi_0^{-1}, \phi_1^{-1}\phi_{0}^{-1}, \dots \phi_{q-1}^{-1}\cdots \phi_{0}^{-1}, 1),&\;\;\text{if}\;\; \phi_0\cdots\phi_q=id,\\
&\\
0, & \text{otherwise}.\end{matrix}\right.
\end{align}

\begin{proposition}
The map $\Upsilon$ is a map of bicomplexes, more precisely
\begin{equation}
\Upsilon\circ d_2= \vB\circ\Upsilon,\quad \Upsilon\circ d_1= \hb\circ\Upsilon, \quad \hB\circ\Upsilon=\vb\circ \Upsilon=0.
\end{equation}
\end{proposition}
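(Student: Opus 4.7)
The plan is to verify each of the four identities by direct computation on the explicit formula \eqref{Upsilon}, exploiting three complementary structural features: the Leibniz/HKR identities for $f^0 \, df^1 \wedge \cdots \wedge df^p$, the translation between homogeneous and inhomogeneous group cochains encoded in the substitution $\phi_i \mapsto (\phi_0\cdots\phi_{i-1})^{-1}$, and the covariance property \eqref{equivariantform} (equivalently \eqref{eq:vanestinv}).

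First I would handle $\Upsilon \circ d_1 = \hb \circ \Upsilon$, which is essentially a book-keeping step. The arguments of $\g$ in \eqref{Upsilon} are the partial products $\phi_0^{-1}, (\phi_0\phi_1)^{-1}, \ldots, (\phi_0\cdots\phi_{q-1})^{-1}, 1$, i.e.\ the nodes of the bar resolution associated to $(\phi_0, \ldots, \phi_q)$ under the constraint $\phi_0 \cdots \phi_q = \id$. The group coboundary $d_1$ omits one node at a time, matching the Getzler--Jones horizontal cofaces $\hd_i$ that merge consecutive $U_{\phi_i} U_{\phi_{i+1}}$. The last face $\hd_{q+1}$, which uses the cyclically rotated $U_{\phi_{q+1}\phi_0}$, corresponds to omitting the terminal $1$ in the homogeneous cochain and is accommodated using the covariance of $\g$ under right multiplication by $\phi_{q+1}$.

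Second I would dispose of $\vb \circ \Upsilon = 0$ and $\Upsilon \circ d_2 = \vB \circ \Upsilon$ simultaneously, since they are the two classical features of the Hochschild--Kostant--Rosenberg cocycle $(f^0, \ldots, f^p) \mapsto f^0 \, df^1 \cdots df^p$. For the first, the vertical Hochschild operator $\vb$ inserts a $1$ or multiplies consecutive $f^i f^{i+1}$; applying Leibniz $d(f^i f^{i+1}) = (df^i)f^{i+1} + f^i(df^{i+1})$ together with $d1=0$, the alternating sum telescopes to zero. For the second, the operator $d_2$ is the current boundary dual to the exterior derivative, so pairing with $f^0 df^1 \cdots df^p$ transposes $d$ onto the form; the resulting integrand $d(f^0 df^1 \cdots df^p) \wedge \g(\ldots)$ matches, up to standard sign bookkeeping, the expression produced by $\vB$, which inserts an extra $1$ and cyclically permutes. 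This is exactly the usual identification of the Connes $B$-operator with $d$ on the HKR cocycle of a commutative algebra.

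Finally, the vanishing $\hB \circ \Upsilon = 0$ is the subtlest of the four. One writes $\hB = \bigl(\sum_{i=0}^{q-1} (-1)^{(q-1)i}\hta^i\bigr) \hs_{q-1}\hta$ and tracks how $\hta$ rotates the $U_\phi$-slots with the twist coming from the $\Gb$-action on functions. The key point is that after applying $\hs_{q-1} \hta$ to $\Upsilon(\g)$, two consecutive group arguments of $\g$ coincide; since $\g$, being a homogeneous group cochain, is normalized modulo degeneracies, the subsequent alternating sum over $\hta^i$ collapses. I expect this last step to be the main obstacle: one must carefully combine the substitution $\phi_i \mapsto (\phi_0\cdots\phi_{i-1})^{-1}$, the covariance identity \eqref{equivariantform}, and the $\Gb$-invariance of the volume form $\varpi$ in order to recognize the degeneracy, after which the vanishing is automatic.
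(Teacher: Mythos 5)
Your handling of $\Upsilon\circ d_1=\hb\circ\Upsilon$, $\Upsilon\circ d_2=\vB\circ\Upsilon$ and $\vb\circ\Upsilon=0$ is exactly the paper's argument: the face-by-face matching of the bar-resolution nodes with the Getzler--Jones cofaces, the use of the covariance \eqref{equivariantform} to absorb the last coface $\hd_{q+1}$, Stokes/transposition of $d$ for the $d_2$--$\vB$ identity, and the Leibniz rule (telescoping) for $\vb\circ\Upsilon=0$.

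For $\hB\circ\Upsilon=0$ you take a genuinely different route from the paper, and your stated justification needs repair. The paper proves that the image of $\Upsilon$ is \emph{horizontally cyclic}, $\hta(\Upsilon(\g))=(-1)^q\,\Upsilon(\g)$, using the equivariance of $\g$ together with the antisymmetry of Bott cochains, and then invokes the standard vanishing of the $B$-operator on (normalized) cyclic cochains. Your route -- computing $\hs_{q-1}\hta\,\Upsilon(\g)$ directly and observing that $\g$ gets evaluated on a tuple with a repeated entry -- does work, but not for the reason you give: a homogeneous group cochain is \emph{not} automatically ``normalized modulo degeneracies,'' and with the paper's order of operations ($\hta$ first, then $\hs_{q-1}$) the repeated entries are the identity element sitting in the first and the last slot, not in consecutive slots. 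What actually kills the term is the total antisymmetry built into the definition of $C^{\bullet,\bullet}_{\rm Bott}(G,\Gb)$ (``antisymmetrized $\Gb$-equivariant functions''), which forces $\g$ to vanish whenever any two arguments coincide; this gives $\hs_{q-1}\hta\,\Upsilon(\g)=0$ outright, so the alternating sum $\sum_i(-1)^{(q-1)i}\hta^i$ in \eqref{BT} is applied to zero and there is no further ``collapse'' to arrange. Note also that, contrary to your expectation, neither the covariance \eqref{equivariantform} nor any invariance of the volume form $\varpi$ (which does not enter \eqref{Upsilon} at all) is needed at this step on your route; equivariance is needed only if one follows the paper and establishes cyclicity of $\hta$ on the image. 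With the justification corrected to the antisymmetry of Bott cochains, your argument is a perfectly valid, and in fact slightly more direct, alternative to the paper's cyclicity argument.
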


\begin{proof} Without loss of generality we can assume that
$\phi_0\dots \phi_q=\id$.
We first use that fact that $\g \in C^{q}(\G, \Om_p(G))$ is equivariant to see that
\begin{align*}
\begin{split}
&\hd_{q+1}(\Upsilon(\g))(f^0,\dots f^p\mid U_{\phi_0}, \dots, U_{\phi_{q+1}})=\\
& \int_G {\phi_{q+1}^{-1}}^\ast(f^0df^1\dots df^p) \g(\phi_0^{-1}\phi_{q+1}^{-1}, \phi_1^{-1}\phi_{0}^{-1}\phi_{q+1}^{-1}, \dots,  \phi_{q-1}^{-1}\cdots \phi_{0}^{-1}\phi_{q+1}^{-1},1)=\\
&\int_G {\phi_{q+1}^{-1}}^\ast(f^0df^1\dots df^p) {\phi_{q+1}^{-1}}^\ast\g(\phi_0^{-1}, \phi_1^{-1}\phi_{0}^{-1}, \dots,\phi_{q}^{-1}\cdots \phi_{0}^{-1})=\\
&\int_G f^0df^1\dots df^p \g(\phi_0^{-1}, \phi_1^{-1}\phi_{0}^{-1}, \dots,\phi_{q}^{-1}\cdots \phi_{0}^{-1}).
\end{split}
\end{align*}
We use this to check that  $\Upsilon\circ d_1= \hb\circ\Upsilon$, as follows:
\begin{align*}
&\Upsilon( d_1\g)(f^0, \ldots, f^p\mid U_{ \phi_0},\ldots,U_{\phi_{q+1}})= \\
&\sum_{i=0}^{q} (-1)^i\int_G f^0df^1\dots df^p \g(\phi_0^{-1}, \phi_1^{-1}\phi_{0}^{-1}, \dots,\widehat{\phi_{i-1}^{-1}\cdots \phi_{0}^{-1}},\dots, \phi_{q}^{-1}\cdots \phi_{0}^{-1}, 1) \\
&~~~~~~~~~~~~~~+
 (-1)^{q+1}\int_G f^0df^1\dots df^p \g(\phi_0^{-1}, \phi_1^{-1}\phi_{0}^{-1}, \dots,  \phi_{q}^{-1}\cdots \phi_{0}^{-1})= \\
&\sum_{i=0}^{q} (-1)^{i}\hd_{i}(\Upsilon(\g))(f^0,\dots f^p\mid U_{\phi_0}, \dots, U_{\phi_{q+1}})+\\
&(-1)^{q+1}\hd_{q+1}(\Upsilon(\g))(f^0,\dots f^p\mid U_{\phi_0}, \dots, U_{\phi_{q+1}})=\,
\hb(\Upsilon(\g))(f^0, \ldots,f^p\mid U_{ \phi_0},\ldots,U_{\phi_{q+1}}).
\end{align*}

Next, we show that $\Upsilon\circ d_2= \vB\circ\Upsilon$. Using the Stokes formula we see that

\begin{align*}
\begin{split}
&\Upsilon(d_2\g)(f^0,\dots, f^{p-1}\mid U_{\phi_0}, \dots, U_{\phi_q})=\\
&\int_G f^0df^1\dots df^{p-1} (d_2\g)(\phi_0^{-1}, \phi_1^{-1}\phi_{0}^{-1}, \dots, \phi_{q-1}^{-1}\cdots \phi_{0}^{-1}, 1)=\\
&\int_G f^0df^1\dots df^{p-1} d\g(\phi_0^{-1}, \phi_1^{-1}\phi_{0}^{-1}, \dots, \phi_{q-1}^{-1}\cdots \phi_{0}^{-1}, 1)=\\
&(-1)^{p-1}\int_G d(f^0df^1\dots df^{p-1} )\g(\phi_0^{-1}, \phi_1^{-1}\phi_{0}^{-1}, \dots, \phi_{q-1}^{-1}\cdots \phi_{0}^{-1}, 1)=\\
& (\vB(\Upsilon(\g))(f^0,\dots, f^{p-1}\mid U_{\phi_0}, \dots, U_{\phi_q}).
\end{split}
\end{align*}
To show that $\hB\circ\Upsilon=0$ we check that
 the image of $\Upsilon$ is horizontally cyclic.
\begin{align*}
\begin{split}
&\hta(\Upsilon(\g))(f^0, \dots, f^p\mid U_{\phi_0}, \dots, U_{\phi_q})=\\
&(\Upsilon(\g))({\phi_q^{-1}}^\ast(f^0), \dots, {\phi_q^{-1}}^\ast(f^p)\mid U_{\phi_q},U_{\phi_0},  \dots, U_{\phi_{q-1}})=\\
&\int_G {\phi_q^{-1}}^\ast(f^0df^1\dots df^p) \g(\phi_q^{-1}, \phi_{0}^{-1}\phi_q^{-1}, \dots \phi_{q-2}^{-1}\cdots \phi_{0}^{-1}\phi_q^{-1}, 1)=\\
&\int_G {\phi_q^{-1}}^\ast(f^0df^1\dots df^p) {\phi_q^{-1}}^\ast\g(1, \phi_{0}^{-1}, \dots \phi_{q-2}^{-1}\cdots \phi_{0}^{-1},\phi_{q-1}^{-1}\cdots \phi_{0}^{-1} )=\\
&(-1)^q\int_G (f^0df^1\dots df^p) \g( \phi_{0}^{-1}, \dots \phi_{q-2}^{-1}\cdots \phi_{0}^{-1},\phi_{q-1}^{-1}\cdots \phi_{0}^{-1},1 )=\\
&(-1)^q\Upsilon(\g)(f^0, \dots, f^p\mid  U_{\phi_0}, \dots, U_{\phi_q}).
\end{split}
\end{align*}
The identity $\vb\circ\Upsilon=0$ follows from the fact that $d$ is a derivation.
\end{proof}
\bigskip


We now consider the diagram
\begin{equation}\label{midsquare}
\xymatrix{
C^{\bullet,\bullet}_\Hc(\Uc,\Fc,\Cb_\d)\ar[r]^{\chi_{\rm tot}} & C^{\bullet.\bullet}(C^{\infty}_c(G),\G) \\
C^{\bullet,\bullet}_{\rm c-w}(\Fg^\ast,\Fc)\ar[r]^\Theta  \ar[u]^{\Lambda}&
C^{\bullet,\bullet}_{\rm Bott}(\Om(G), \G)\ar[u]^{\Upsilon}},
\end{equation}
where  $\Lambda:= \td\a_\Fg\circ (\FD_\Fg\ot id)\circ \Ic^{-1}\circ \td\a_\Fc$.
The antisymmetrization map
\begin{equation*}
\td\a_\Fc: C^{\bullet, \bullet}_{\rm c-w}(\Fg^\ast, \Fc)\ra  C^{\bullet, \bullet}_{\rm coinv}(\Fg^\ast, \Fc),
 \end{equation*}
 is defined in \eqref{anti-sym}, the map
 \begin{equation*}
 \Ic^{-1}: C^{\bullet, \bullet}_{\rm coinv}(\Fg^\ast, \Fc)\ra C^{\bullet, \bullet}(\Fg^\ast, \Fc),
 \end{equation*}
 is given by \eqref{I-1-inverse}, and
the Poincar\'e isomorphism
\begin{equation*}
\FD_\Fg:C^{\bullet, \bullet}(\Fg^\ast, \Fc)\ra C^{\bullet, \bullet}(\Fg,\Cb_\d \ot \Fc),
 \end{equation*}
by \eqref{theta}; finally, the  map
\begin{equation*}
\td\a_\Fg: C^{\bullet,\bullet}(\Fg,\Fc, \Cb)\ra  C^{\bullet,\bullet}(\Uc,\Fc,\Cb_\d),
 \end{equation*}
 is the first antisymmetrization map, defined in \eqref{antsym1}.

\begin{proposition}
The diagram \eqref{midsquare} is commutative.
\end{proposition}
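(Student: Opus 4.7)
The plan is to verify the identity $\Upsilon\circ\Theta = \chi_{\rm tot}\circ\Lambda$ by explicit evaluation on a generator $\om = \sum_I \a_I\ot \lu{I}f^0\wdg\cdots\wdg \lu{I}f^q \in (\wdg^p\Fg^\ast\ot\wdg^{q+1}\Fc)^\Fc$, paired against algebra data $(h^0,\ldots,h^r\mid U_{\phi_0},\ldots,U_{\phi_q})$ where $r + p = m = \dim G$ so that the $G$-integral is top-dimensional. By the defining constraint in \eqref{Upsilon} and the trace condition \eqref{inv-tr} built into \eqref{char}, both composites vanish unless $\phi_0\cdots\phi_q=\id$, so I restrict to this case throughout.

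For the right-then-up composite $\Upsilon\circ\Theta(\om)$, substituting \eqref{Theta-formula} into \eqref{Upsilon} makes the group arguments of $\Theta$ become $(\phi_0^{-1},\phi_1^{-1}\phi_0^{-1},\ldots,\phi_{q-1}^{-1}\cdots\phi_0^{-1},1)$, so that $\gbar(S(\cdot))$ gets evaluated at the inverses $\phi_0,\phi_0\phi_1,\ldots,\phi_0\cdots\phi_{q-1},1$. Using $\gbar(S(f))(1)=\ve(f)$ to absorb the last slot, the result reads (up to an overall sign)
\[
\sum_{I,\s\in S_{q+1}}(-1)^\s\,\ve(\lu{I}f^{\s(q)})\prod_{j=0}^{q-1}\gbar(S(\lu{I}f^{\s(j)}))(\phi_0\cdots\phi_j)\,\int_G h^0\,dh^1\wdg\cdots\wdg dh^r\wdg\td{\a_I}.
\]
For the up-then-right composite $\chi_{\rm tot}\circ\Lambda(\om)$, I unwind $\Lambda=\td\a_\Fg\circ(\FD_\Fg\ot\id)\circ\Ic^{-1}\circ\td\a_\Fc$ step by step: $\td\a_\Fc$ produces the $\s$-sum over $S_{q+1}$; $\Ic^{-1}$ from \eqref{I-1-inverse} redistributes the $\lu{I}f^{\s(j)}$'s via iterated coproducts, contracting the last slot against $\ve$; $\FD_\Fg$ converts $\a_I$ into the multivector $\i_{\a_I}\varpi\in\wdg^r\Fg$ paired with $\varpi^\ast\cong\Cb_\d$; $\td\a_\Fg$ antisymmetrizes this multivector into $\Uc^{\ot r}$; finally $\chi_{\rm tot}$ from \eqref{char} delivers an integral of the form $\int_G h^0\,u^1(h^1)\cdots u^r(h^r)\,\varpi$ multiplied by $\prod_i\dbar(S(\cdot))(U_{\phi_i})$ under the trace.

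The matching of the two expressions splits into two independent identities. First, the $G$-integral matching is pure Poincar\'e duality on $G$: expanding $dh^j=\sum_i X_i(h^j)\,\t^i$ in the basis dual to $\{X_i\}$ and using the pairing $\t^{\pi(1)}\wdg\cdots\wdg\t^{\pi(r)}\wdg\td{\a_I} = \langle\t^{\pi(1)}\wdg\cdots\wdg\t^{\pi(r)},\i_{\a_I}\varpi\rangle\,\varpi^\ast$ identifies $\int_G h^0\,dh^1\wdg\cdots\wdg dh^r\wdg\td{\a_I}$ with the fully $\td\a_\Fg$-antisymmetrized expression $\int_G h^0\,u^1(h^1)\cdots u^r(h^r)\,\varpi$ emerging from the $\chi_{\rm tot}$-side after $\FD_\Fg$. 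Second, the $\Fc$-piece matching uses \eqref{betamap} to rewrite $\dbar(S(f))(U_\phi)=\gbar(f)(\phi)U_\phi$, the commutation rule \eqref{g-u} to move each $U_\phi$ to the right past subsequent $\gbar$-factors, and the cocycle identity \eqref{cocycle-condition} to reassemble the iterated coproducts produced by $\Ic^{-1}$ into exactly the shifted arguments $\gbar(S(\lu{I}f^{\s(j)}))(\phi_0\cdots\phi_j)$ appearing on the other side --- precisely the same manipulation that was carried out in the middle square of \eqref{chidiagram} in the proof of Theorem \ref{thm:actmap}.

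The principal obstacle is the combinatorial bookkeeping: aligning $\td\a_\Fc$ over $S_{q+1}$ with the $\s$-sum already present in \eqref{Theta-formula}, aligning $\td\a_\Fg$ over $S_r$ with the $\pi$-sum coming from the expansion of $dh^1\wdg\cdots\wdg dh^r$, and tracking the signs from each Poincar\'e duality. One must also verify that no spurious terms arise when $\a_I$ is exchanged past the $f$'s during these rearrangements; this is ensured by the $\Fc$-coinvariance condition \eqref{coinv-condition} together with Lemma \ref{PDcoaction}, which guarantees that the $\Fc$-coaction on $\i_{\a_I}\varpi$ entering the $\chi_{\rm tot}$-side through $\td\a_\Fg$ matches the coaction on $\a_I$ used in verifying that $\Theta$ is well-defined.
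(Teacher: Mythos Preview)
Your proposal is correct and follows essentially the same strategy as the paper: evaluate both composites explicitly, then match them by separating the $G$-integral factor (handled by Poincar\'e duality, which the paper records as the identity $g_0\,dg_1\cdots dg_p\,\om = (1\ot\a_\Fg\circ\FD_\Fg(\om))(g_0,\ldots,g_p)\,\varpi$) from the $\Fc$-factor (handled via \eqref{cocycle-condition}, \eqref{S(f)(psi)}, and the algebra-map property of $\gbar$). The one simplification the paper makes that you do not is to reduce at the outset to elements of the special form $\om\ot 1\wdg f^1\wdots f^q$; this collapses the antisymmetrization $\td\a_\Fc$ from a sum over $S_{q+1}$ to one over $S_q$ and makes the $\Ic^{-1}$ step and the subsequent $\Fc$-matching more transparent, but your direct treatment of the general wedge is equally valid.
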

\begin{proof}
It suffices to work with
 elements of the form $\om\ot 1\wdg f^1\wdots f^q\in  C^{\bullet, \bullet}_{\rm coinv}(\Fg^\ast, \Fc)$. Indeed we see that
\begin{align*}
\begin{split}
&\Ic^{-1}(\td\a_\Fc(\om\ot 1\wdg f^1\wdots f^q)=\\
&\sum_{\s\in S_q} (-1)^\s \om\ot f^{\s(1)}\ps{1}\ot f^{\s(1)}\ps{1}f^{\s(2)}\ps{1}\odots f^{\s(1)}\ps{q}\dots f^{\s(q-1)}\ps{2}f^{\s(q)}.
\end{split}
\end{align*}
 After applying $\td\a_\Fg\circ  \FD_\Fg$ on $\Ic^{-1}(\td\a_\Fc(\om\ot 1\wdg f^1\wdots f^q)$  the resulted
  cochain belongs to $\Cb_\d  \ot \Uc^{\ot p}\ot \Fc^{\ot q}$. We  then  identify it
  to the corresponding cochain
  in $\Cb_\d\ot_{\Hc}\Uc^{\ot p+1}\ot \Fc^{\ot q+1}$. The result is seen as follows:
 \begin{align}
 \begin{split}
 &\Lambda(\om\ot 1\wdg f^1\wdots f^q))=\\
  &\sum_{\s\in S_q} (-1)^\s 1\ot\a_\Fg(\FD_\Fg(\om))\ot 1\ot f^{\s(1)}\ps{1}\ot f^{\s(1)}\ps{1}f^{\s(2)}\ps{1}\ot\dots\\
  &\dots\ot f^{\s(1)}\ps{q}\dots f^{\s(q-1)}\ps{2}f^{\s(q)}
 \end{split}
 \end{align}
 Transferring the image of $\Lambda$ via $\chi$
 we observe that  for $\phi_0, \dots\phi_q \in \G$ with $\phi_0, \dots\phi_q=\id$ we have
 \begin{align} \label{chi-Lambda-image}
 \begin{split}
 &\chi(\Lambda(\td\a_\Fc(\om\ot 1\wdg f^1\wdots f^q))( g_0, \dots,  g_p,\phi_0, \dots, \phi_q)=\\
 &\displaystyle \sum_{\s\in S_q} (-1)^\s \int_G (1\ot\a_\Fg\FD_\Fg(\om))(g_0, \dots, g_p) \gbar(f^{\s(1)}\ps{1})(\phi_1)\circ{\td\phi_0^{-1}}  \dots\\
 &\dots
\gbar(f^{\s(1)}\ps{q}\dots f^{\s(q-1)}\ps{2}f^{\s(q)})(\phi_q)\circ{\td\phi_{q-1}^{-1}\dots \td\phi_{0}^{-1}}\varpi_G
 \end{split}
 \end{align}

Let $X_1, \dots, X_m$ be a basis   for $\Fg$ and $\t^1, \dots, \t^m$ be its dual basis.
 Without loss of generality, we can assume
  $\om= \t^{p+1}\dots \t^m$. We  compute the term

\begin{align}  \label{Poincare-applicatio-proof}
\begin{split}
&g_0dg_1\dots dg_p\om= g_0X_{i_1}(g_1)\t^{i_1}\dots X_{i_p}(g_p)\t^{i_1}\om=\\
&\sum_{1\le i_1, \dots, i_p\le m}g_0 X_{i_1}(g_1)\dots X_{i_p}(g_p)\t^{i_1}\dots \t^{i_1}\om=\\
&\sum_{\mu\in S_p}(-1)^\mu g_0 X_{\mu(1)}(g)\dots X_{\mu(p)}(g_p)\varpi_G=(1\ot a_\Fg\circ\FD_\Fg(\om))(g_0, \dots, g_p)\varpi.
\end{split}
\end{align}
We next compute
\begin{align}\label{Upsilon-Theta-immage}
\begin{split}
&\Upsilon\circ\Theta  (\om \ot 1\wdg f^1\wdots f^q)(g_0,\dots, g_p, \phi_0, \dots, \phi_q)=\\
&\displaystyle \int_G g^0dg^1\dots dg^p \Theta(\om \ot 1\wdg f^1\wdots f^q)(\phi_0^{-1},\phi_1^{-1}\phi_0^{-1}, \dots,\phi_{q-1}^{-1}\dots\phi_0^{-1}, 1)=\\
&\sum_{\s\in S_p}(-1)^s\displaystyle \int_G g^0dg^1\dots dg^p\;\om \gbar(S(f^{\s(1)}))(\phi_0)\gbar(S(f^{\s(2)}))(\phi_0\phi_1) \dots\gbar(S(f^q))(\phi_0\dots\phi_{q-1})=\\
&\sum_{\s\in S_p}(-1)^s\displaystyle \int_G (1\ot a_\Fg\circ\FD_\Fg(\om))(g_0, \dots, g_p) \gbar(S(f^{\s(1)}))(\phi_0)\gbar(S(f^{\s(2)}))(\phi_0\phi_1)\dots \\ &\dots\gbar(S(f^{\s(q)}))(\phi_0\dots\phi_{q-1})\varpi.
\end{split}
\end{align}
 Using \eqref{cocycle-condition}, \eqref{S(f)(psi)},
 the assumption $\phi_0\dots\phi_q=\id$, and the fact that $\gbar$ is an algebra map,  one sees that
 \begin{align*}
 \begin{split}
 &\gbar(S(f^{\s(1)}))(\phi_0)\gbar(S(f^{\s(2)}))(\phi_0\phi_1)
 \dots\gbar(S(f^{\s(q)}))(\phi_0\dots\phi_{q-1})=\\
 &\gbar(S(f^{\s(1)}))((\phi_1\dots \phi_q)^{-1})\gbar(S(f^{\s(2)}))((\phi_2\dots \phi_q)^{-1})
 \dots\gbar(S(f^{\s(q)}))(\phi_{q}^{-1})=\\
 &\gbar(f^{\s(1)})(\phi_1\dots \phi_q)\circ {\td\phi_0^{-1}}
 \gbar(f^{\s(2)})(\phi_2\dots \phi_q)\circ{\td\phi_1^{-1}\phi_0^{-1}}
 \dots\\
 &~~~~~~~~~~~~~~~~~~~~~~~~~~~~~~~~~~~~~~~~\dots \gbar(f^{\s(q)})(\phi_{q})\circ
 {\td\phi_{q-1}^{-1}\dots \phi_{0}^{-1}}=\\
 &\left(\gbar(f^{\s(1)}\ps{1})(\phi_1)
 \gbar(f^{\s(1)}\ps{2})(\phi_2)\circ{\td\phi_1^{-1}}\dots \gbar(f^{\s(1)}\ps{q})(\phi_q)\circ{\td\phi_q^{-1}\dots\td\phi_{1}^{-1}}\right)\circ {\td\phi_0^{-1}}\\
 &\left(\gbar(f^{\s(2)}\ps{1})(\phi_2)
 \gbar(f^{\s(2)}\ps{2})(\phi_3)\circ{\td\phi_2^{-1}}\dots
 \gbar(f^{\s(2)}\ps{q-1})(\phi_q)\circ{\td\phi_q^{-1}\dots\phi_2^{-1}}\right)\circ {\td\phi_1^{-1}\td\phi_0^{-1}}\dots\\
 & \vdots\\
 &\left(\gbar(f^{\s(q-1)}\ps{1})(\phi_{q-1})
 \gbar(f^{\s(q-1)}\ps{2})(\phi_q)\circ{\td\phi_{q}^{-1}\phi_{q-1}^{-1}}\right)
 \circ{\td\phi_{q-2}^{-1}\dots \td\phi_{0}^{-1}}\\
 &  \gbar(f^{\s(q)})(\phi_{q})\circ{\td\phi_{q-1}\dots \td\phi_{0}^{-1}}= \\
 &\gbar(f^{\s(1)}\ps{1})(\phi_1)\circ{\td\phi_0^{-1}} \dots \gbar(f^{\s(2)}\ps{q}\dots f^{\s(q-1)}\ps{2}f^{\s(q)})(\phi_q)\circ{\td\phi_{q-1}^{-1}\dots \td\phi_{0}^{-1}}.
  \end{split}
 \end{align*}
 In conjunction with
 \eqref{chi-Lambda-image}, \eqref{Poincare-applicatio-proof}, and \eqref{Upsilon-Theta-immage}
 this implies $\chi\circ\Lambda=\Upsilon\circ\Theta$.
 \end{proof}

\bigskip

At this point we introduce a new type of characteristic  map, obtained by
cup product with a cocycle with coefficients (\cf~\cite{kr5}),
which lands in the differential graded algebra $\Om_{c,\G}(G)$.
We let $\Fc$  acts on $\wdg^\bullet \Fg^\ast$ trivially and coact via $\Db_{\Fg^\ast}$ defined in
\eqref{coact3}. We define $\tau\in\Hom_{U(\Fg)}(V^\bullet(\Fg^\ast)\ot \Om_{c,\G}, \Cb)= \Hom(\wdg^\bullet\Fg^\ast\ot \Om_{c,\G},\Cb)$ by
\begin{align}\label{tau}
 \tau(\a\ot \om\,U^\ast_\vp)=\left\{
                             \begin{array}{ll}
                               \displaystyle \int_G \td\a\,\om, & \text{if}\; \phi=\id; \\[.5cm]
                               0, & \text{otherwise}.
                             \end{array}
                           \right.
\end{align}

\begin{lemma}\label{invariant-trace-lemma}
The functional $\tau$ belongs to $Z^0_\Fc(\Om_{c,\G}(G);\wdg^\bullet\Fg^\ast)$, that is
for any $b,b_1,b_2\in \Om_{c,\G}(G)$, any $\a\in \wdg^\bullet\Fg^\ast$, and any $f\in\Fc$,
one has
\begin{align}\label{closed-trace}
&\displaystyle\tau(\a\ot b_2b_1)= (-1)^{\deg(b_1)\deg(b_2)}\tau(\a\ns{0}\ot \dbar(S(\a\ns{1})(b_1) b_2),\\[.2cm]\label{invariant-trace}
&\displaystyle\ve(f)\tau (\a\ot b)= \tau(\a f\ot b)= \tau(\a \ot \dbar(S(f))(b)).
\end{align}
\end{lemma}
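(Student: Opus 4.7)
The plan is to verify each of the two identities by reducing to generators $b=\omega U^{\ast}_\phi$ of $\Om_{c,\G}(G)$ (and analogously for $b_1,b_2$), and then applying three pieces of structure already established in this section: (i) the scalar identity $\dbar(S(f))(U^{\ast}_\phi)=\gbar(f)(\phi^{-1})\,U^{\ast}_\phi$, immediate from \eqref{betamap} together with $U^{\ast}_\phi=U_{\phi^{-1}}$; (ii) the fact that $\dbar(\Fc)$ acts on pure forms on $G$ by the counit, $\dbar(f)(\omega)=\varepsilon(f)\omega$, which follows from $\D_i^j(g)=\gamma_i^j(\id)g=\delta_i^j\,g=\varepsilon(\eta_i^j)\,g$; and (iii) the covariance \eqref{coactgf}, $\widetilde{\phi}^{\,\ast}\tilde{\a}=\gbar(\a\ns{1})(\phi^{-1})\,\tilde{\a\ns{0}}$, combined with the $\Gb$-invariance of the volume form $\varpi$, which renders integration on $G$ invariant under pullbacks by elements of $\G$.

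For \eqref{invariant-trace}, the first equality is immediate from the assumption that $\Fc$ acts trivially on $\wdg^\bullet\Fg^\ast$, so $\a f=\varepsilon(f)\a$. For the second, take $b=\omega U^{\ast}_\phi$; factoring $\dbar(S(f))$ through the coproduct of $\Hc$ and using (i)--(ii) yields $\dbar(S(f))(\omega U^{\ast}_\phi)=\gbar(f)(\phi^{-1})\,\omega U^{\ast}_\phi$. If $\phi\neq\id$ both sides of the desired equality vanish, while for $\phi=\id$ it reduces to $\gbar(f)(\id)=\varepsilon(f)$. The latter follows by evaluating \eqref{cocycle-condition} at $\phi_1=\phi_2=\id$, or equivalently directly from $\dbar(S(f))(1)=\varepsilon_{\Hc}(\dbar(S(f)))\cdot 1=\varepsilon(f)$.

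For \eqref{closed-trace}, take $b_i=\omega_i U^{\ast}_{\phi_i}$. The multiplication rule in $\Om_{c,\G}(G)$, namely $U^{\ast}_\phi\,\omega=(\widetilde{\phi}^{\,\ast}\omega)\,U^{\ast}_\phi$ and $U^{\ast}_{\phi}U^{\ast}_{\psi}=U^{\ast}_{\psi\phi}$, gives
\[
b_2 b_1=\omega_2\wedge\widetilde{\phi}_2^{\,\ast}(\omega_1)\cdot U^{\ast}_{\phi_1\phi_2},
\]
while using (i)--(ii) one obtains
\[
\dbar(S(\a\ns{1}))(b_1)\,b_2=\gbar(\a\ns{1})(\phi_1^{-1})\,\omega_1\wedge\widetilde{\phi}_1^{\,\ast}(\omega_2)\cdot U^{\ast}_{\phi_2\phi_1}.
\]
Both are nonzero under $\tau$ exactly when $\phi_2=\phi_1^{-1}$. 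Under this condition the claimed equality reduces to the geometric identity
\[
\int_G \tilde{\a}\wedge\omega_2\wedge\widetilde{\phi}_1^{-1\,\ast}(\omega_1)
=(-1)^{\deg(b_1)\deg(b_2)}\,\gbar(\a\ns{1})(\phi_1^{-1})\int_G\tilde{\a\ns{0}}\wedge\omega_1\wedge\widetilde{\phi}_1^{\,\ast}(\omega_2),
\]
which I would prove by pulling back the left-hand integrand by $\widetilde{\phi}_1$ (using the $\Gb$-invariance of $\varpi$), applying \eqref{coactgf} to rewrite $\widetilde{\phi}_1^{\,\ast}\tilde{\a}$ as $\gbar(\a\ns{1})(\phi_1^{-1})\,\tilde{\a\ns{0}}$, and then interchanging $\omega_1$ with $\widetilde{\phi}_1^{\,\ast}(\omega_2)$ via graded commutativity, which produces exactly the sign $(-1)^{\deg(b_1)\deg(b_2)}$.

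The main technical point to watch is the factorization of $\dbar(S(\a\ns{1}))$ when acting on the product $\omega_1 U^{\ast}_{\phi_1}$: since $\dbar$ is an anti-coalgebra map from $\Fc$ into $\Hc_{\rm ab}\subset\Hc$, one must check that the coproduct of $\dbar(S(\a\ns{1}))$ in $\Hc$ splits so that whatever $\Fc$-factor lands on $\omega_1$ contributes only a scalar $\varepsilon(\cdot)$ by (ii), leaving the full modular twist $\gbar(\a\ns{1})(\phi_1^{-1})$ to act on $U^{\ast}_{\phi_1}$. This ``separation of variables'' is the point at which the SAYD coaction $\a\ns{0}\otimes\a\ns{1}$ is forced to appear on the right-hand side of \eqref{closed-trace}, and constitutes the conceptual core of the lemma.
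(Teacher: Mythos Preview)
Your proof is correct and follows essentially the same route as the paper's: reduce to generators $b_i=\omega_i U^{\ast}_{\phi_i}$, use the $\Gb$-invariance of integration on $G$ to pull back by $\widetilde{\phi}_1$, invoke \eqref{coactgf} to convert $\widetilde{\phi}_1^{\,\ast}\tilde{\a}$ into $\gbar(\a\ns{1})(\phi_1^{-1})\,\tilde{\a}\ns{0}$, and pick up the sign from graded commutativity. The paper simply declares \eqref{invariant-trace} obvious and records the chain of equalities for \eqref{closed-trace} without spelling out the factorization of $\dbar(S(\a\ns{1}))$ on the product $\omega_1 U^{\ast}_{\phi_1}$, which you have made explicit.
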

\begin{proof}
As \eqref{invariant-trace} is obvious, we just prove  \eqref{closed-trace}.
Let $b_1=\om_1U^\ast_{\psi_1}$ and $b_2=\om_2U^\ast_{\phi_2}$, with $\phi_1\phi_2=\id$.
Then
\begin{align*}
&\tau(\a\ot b_2\,b_1) =\displaystyle \int_G\;\; \td\a\, \om_2\;\phi_2^\ast(\om_1)=\displaystyle \int_G {\phi_1}^\ast(\td \a)\,\phi_1^\ast(\om_2)\; \om_1=\\
&\displaystyle(-1)^{\deg(\om_1)\deg(\om_2)}\int_G\;\; {\td\a\ns{0}}\, \gbar(\a\ns{1})(\phi_1^{-1})\om_1 \phi_1^\ast\om_2=\\
&\displaystyle(-1)^{\deg(\om_1)\deg(\om_2)} \tau\left(\a\ns{0}\ot \dbar(S(\a\ns{1}))(b_1) b_2\right).
\end{align*}
\end{proof}

With these preparations we define
$\, \chi^\dag: \wdg^\bullet\Fg^\ast\ot \Fc^{\ot q}\ra C^q(\Om_{c,\G}(G))$
by the following formula:
\begin{align}\label{chi-dag-def-1}
&\chi^\dag(\a\ot\td f)(b_0,\ldots,b_q )=(-1)^{\deg(\a)}\tau(\a\ot b_0\dbar(S(f^1))(b_1)\dots \dbar(S(f^q))(b_q)).
\end{align}
Alternatively,
\begin{align}\label{chi-dag-def-2}
&\chi^\dag(\a\ot f^1\odots f^q)(\om_0\,U^\ast_{\phi_0},\dots, \om_q\, U^\ast_{\phi_q} )=\\\notag
&(-1)^{\deg(\a)}\displaystyle \int_G \td\a\,\om_0 \,\phi_0^\ast\om_1\dots (\phi_{q-1}\cdots\phi_{0})^\ast\om_q\, \gbar(f^1)(\phi_1^{-1})\circ \tilde{\phi_0}\cdots\\\notag
& ~~~~~~~~~~~~~~~~~~~~~~~~~~~~~~~~~~~~~~~~~~~~~~~~~~~\cdots \gbar(f^q)(\phi_q^{-1})\circ {\td\phi_{q-1}\dots \td\phi_0)} ,
\end{align}
if $\phi_q\dots\phi_0=\Id $ and $0$ otherwise.

\begin{proposition} \label{prop:dagger}
The map $\chi^\dag$ is a map of bicomplexes, satisfying
\begin{equation*}
\chi^\dag b_\Fc^\ast= b_{\Om_{c,\G}(G)}\chi^\dag, \quad \chi^\dag B_\Fc^\ast= B_{\Om_{c,\G}(G)}\chi^\dag, \quad \chi^\dag \p_{\Fg^\ast}= d_{\Om_{c,\G}(G)}\chi^\dag.
\end{equation*}
\end{proposition}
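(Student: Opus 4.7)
The plan is to verify the three claimed commutation relations separately, in each case unwinding the definition \eqref{chi-dag-def-1} and invoking three structural inputs: the $\Hc$-module algebra axiom for $\Om_{c,\G}(G)$ applied to elements of the form $\dbar(S(f))$, the anti-coalgebra character of $\dbar$ (so that $\D(\dbar(S(f))) = \dbar(S(f\ps{1}))\ot \dbar(S(f\ps{2}))$), the cocycle property of $\gbar$ from Lemma \ref{beta}, and the two trace identities \eqref{closed-trace}--\eqref{invariant-trace} of Lemma \ref{invariant-trace-lemma}.

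For the Hochschild identity $\chi^\dag b_\Fc^\ast = b_{\Om_{c,\G}(G)} \chi^\dag$, I apply $\chi^\dag$ to each of the three summands of $b_\Fc^\ast$ appearing in the analogue of \eqref{b-cobd}. The summand $\a\ot 1\ot f^1\odots f^q$ yields the face $b_0 b_1$ since $\dbar(S(1))=\Id$. Each interior summand $\a\ot f^1\odots \D(f^i)\odots f^q$ yields the face corresponding to $b_i b_{i+1}$: the module-algebra axiom combined with $\dbar$ being anti-coalgebra gives $\dbar(S(f^i))(b_ib_{i+1})=\dbar(S(f^i\ps{1}))(b_i)\dbar(S(f^i\ps{2}))(b_{i+1})$. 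Finally, the coinvariant summand $\a\ns{0}\ot f^1\odots f^q\ot S(\a\ns{1})$ yields the cyclic face $b_{q+1}b_0$ after using \eqref{closed-trace} to rotate the factor $\dbar(S(\a\ns{1}))(b_{q+1})$ through the trace and pair it with $\a\ns{0}$.

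For the $B$-operator identity $\chi^\dag B_\Fc^\ast = B_{\Om_{c,\G}(G)} \chi^\dag$, since $B$ is assembled (\cf \eqref{B-bd}) from the cyclic operator and a codegeneracy, it suffices to verify $\chi^\dag \tau_\Fc = \tau_{\Om_{c,\G}(G)} \chi^\dag$ and $\chi^\dag \s_\Fc = \s_{\Om_{c,\G}(G)} \chi^\dag$ separately. The codegeneracy statement is immediate. The cyclic statement is the crux: starting from the formula for $\tau_\Fc$ analogous to the one in \eqref{B-bd}, I apply $\chi^\dag$ and use \eqref{closed-trace} to cycle $b_0$ into the last position, then use the module-algebra axiom together with the anti-coalgebra property of $\dbar$ to transfer the $\dbar(S(\a\ns{1}))$ and $\dbar(S(f^1))$ factors to their correct slots.

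For the de Rham identity $\chi^\dag \p_{\Fg^\ast} = d_{\Om_{c,\G}(G)} \chi^\dag$, the geometric input is that under the identification of $\wdg^\bullet\Fg^\ast$ with left-invariant forms $\Om^\bullet(G)^G$, the Chevalley--Eilenberg coboundary becomes the de Rham differential, so $\widetilde{\p\a}=d\tilde\a$. Combined with the identity $dg=\sum_i X_i(g)\,\widetilde{\t^i}$ used in \eqref{Poincare-applicatio-proof}, and the derivation property $X(\dbar(f)(b))=\dbar(X\rt f)(b)+\dbar(f)(X(b))$ arising from the $\Uc(\Fg)$-linearity of $\dbar$ and the module-algebra axiom, one expands $\p_{\Fg^\ast}(\a\ot\td f)=\p\a\ot\td f-\sum_i \t^i\wdg\a\ot X_i\bullet\td f$ and matches both sides. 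The first summand reproduces the terms in $d\chi^\dag(\a\ot\td f)$ where $d$ hits $\tilde\a$, while the second summand reproduces the terms where $d$ hits one of the $b_i$, upon using the derivation identity to transfer $X_i$ through the product $b_0 \dbar(S(f^1))(b_1)\cdots\dbar(S(f^q))(b_q)$.

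The main obstacle is the systematic bookkeeping of Koszul signs: because $\Om_{c,\G}(G)$ is differential graded, each of $b$, $B$, and $d$ carries signs determined by the form degrees of the $b_i$, and these must combine precisely with the prefactor $(-1)^{\deg(\a)}$ in \eqref{chi-dag-def-1} and with the signs implicit in \eqref{closed-trace}. The cyclic step in particular is delicate because applying \eqref{closed-trace} alters both the coefficient (via $\a\mapsto\a\ns{0}$ and an extra $\dbar(S(\a\ns{1}))$ factor) and the cochain order; verifying that this matches the Hopf-cyclic $\tau_\Fc$ requires tracking these shifts through both compositions.
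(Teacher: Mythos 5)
Your proposal reaches the right conclusion, but it takes a genuinely different route from the paper on two of the three identities. For $\chi^\dag \p_{\Fg^\ast}= d_{\Om_{c,\G}(G)}\chi^\dag$ you do essentially what the paper does: Stokes' formula converts $\widetilde{\p\a}=d\td\a$ into a Leibniz expansion of $d$ over the rest of the integrand, and the terms where $d$ falls on the $\gbar$-factors must cancel against the $\bullet$-action terms in $\p_{\Fg^\ast}$. One caution here: this matching is not a bare Leibniz rule. The factors $X_i\ps{1}\ns{1}\cdots$ occurring in $X_i\bullet \td f$ arise precisely from the coaction-twisted identities \eqref{g-u} and \eqref{g-u-f}, i.e.\ from the $u\ns{1}$-corrections produced when $X_i$ is pushed through the compositions with $\td\phi_{j-1}\cdots\td\phi_0$; your derivation identity (the bicrossed-product commutation relation, valid since $\dbar$ is a $\Uc(\Fg)$-module map) does suffice, but only once these coaction corrections are tracked explicitly, which is exactly the content of the paper's computation of $d(g_\phi)$. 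For the identities with $b_\Fc^\ast$ and $B_\Fc^\ast$, by contrast, the paper performs no computation at all: it notes that Lemma \ref{invariant-trace-lemma} says precisely that $\tau$ is an invariant $\wdg^\bullet\Fg^\ast$-trace, with $\wdg^\bullet\Fg^\ast$ an SAYD coefficient over $\Fc$, so the general theory of characteristic maps with SAYD coefficients (\cite{hkrs2}, \cite{kr5}) makes $\chi^\dag$ a morphism of cyclic modules in the horizontal direction, hence compatible with $b$ and $B$. Your face-by-face, codegeneracy and cyclic-operator verification amounts to re-proving that special case by hand; it is a legitimate and more self-contained argument, at the cost of the Koszul-sign bookkeeping you yourself flag (the twisted-trace identity \eqref{closed-trace} is exactly where those signs are absorbed), whereas the paper's appeal to the machinery buys brevity and delegates the sign management to the cited general theorem.
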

\begin{proof}
We begin by proving the last equality. Assuming $\phi_q\dots\phi_0=id$
and denoting
$g_\phi=  \gbar(f^1)(\phi_1^{-1})\circ \tilde{\phi_0}\cdots \gbar(f^q)(\phi_q^{-1})\circ {\td\phi_{q-1}\dots \td\phi_0}$, by the  Stokes formula one has
\begin{align*}
&d_{\Om_{c,\G}(G)}\chi^\dag(\a\ot f^1\odots f^q)(\om_0U^\ast_{\phi_0},\dots,\om_qU^\ast_{\phi_q})=\\
&\sum_{i=1}^q(-1)^{\deg({\a}\om_0\cdots\om_{i-1})}\displaystyle \int_G \td\a\,\om_0 \,\phi_0^\ast\om_1\dots (\phi_0\cdots\phi_{i-1})^\ast d\om_i\dots (\phi_0\cdots\phi_{i-q})^\ast \om_q\, g_\phi=\\
&-\displaystyle \int_G d(\td\a)\,\om_0 \,\phi_0^\ast\om_1\dots (\phi_0\cdots\phi_{q-1})^\ast\om_q\,  g_\phi- \\ &(-1)^{\deg(\a\om_0\cdots\om_{q})}\displaystyle \int_G \td\a\,\om_0 \,\phi_0^\ast\om_1\dots (\phi_{q-1}\cdots\phi_{0})^\ast\om_q\, dg_\phi.
\end{align*}
To compute $d(g_\phi)$, one writes
\begin{align*}
&d(g_\phi)= \sum_{i}\t^i\,X_i(g_\phi)=\\
& \sum_{i}\t^i\,X_i(\gbar(f^1)(\phi_1^{-1})\circ \tilde{\phi_0}\cdots \gbar(f^q)(\phi_q^{-1})\circ {\td\phi_{q-1}\dots \td\phi_0)}=\\
& \sum_{i,j}\t^i\,\gbar(f^1)(\phi_1^{-1})\circ \tilde{\phi_0}\cdots
X_i\left(\gbar(f^j)(\phi_j^{-1})\circ {\td\phi_{j-1}\dots \td\phi_0)}\right)\cdots\\
&\gbar(f^q)(\phi_q^{-1})\circ {\td\phi_{q-1}\dots \td\phi_0)}).
\end{align*}
Next we   use  \eqref{g-u}, and  \eqref{cocycle-condition} to see that
\begin{align*}
&X_i(\g(f^j)(\phi_j^{-1})\circ {\td\phi_{j-1}\dots \td\phi_0})= U^\ast_{\phi_{j-1}\dots \phi_0} U_{\phi_{j-1}\dots \phi_0} X_i U^\ast_{\phi_{j-1}\dots \phi_0}(\g(f^j)(\phi_j^{-1})=\\\notag
&U^\ast_{\phi_{j-1}\dots \phi_0} \g(X_i\ns{1}(\phi_{j-1}\dots \phi_0)) X\ns{0}(\g(f^j)(\phi_j^{-1}))=\\\notag
&U^\ast_{\phi_{j-1}\dots \phi_0} \g(X_i\ns{1}(\phi_{j}^{-1}\dots \phi_q^{-1})) X\ns{0}(\g(f^j)(\phi_j^{-1}))=\\\notag
&U^\ast_{\phi_{j-1}\dots \phi_0} \g(X_i\ns{1})(\phi_{j}^{-1}) \g(X_i\ns{2})(\phi_{j+1}^{-1})\circ{\td\phi_{j}}\cdots \\\label{chi-dag-1}
  &\cdots \g(X_i\ns{q-j+1}(\phi_q^{-1}) \circ{\td\phi_{q-1}\dots \td\phi_j)} X\ns{0}(\g(f^j)(\phi_j^{-1})).
\end{align*}
On the other hand, due to the fact that $X_i$ are  primitive one sees that
\begin{align*}
&\p_{\Fg^\ast}(\a\ot f^1\odots f^q)= d\a\ot f^1\odots f^q -\\
&\sum_{i}\t^i\wdg \a\ot  X_i\bullet(f^1\odots f^q)= d\a\ot f^1\odots f^q -\\
&\sum_{i}\t^i\wdg \a\ot  X_i\ps{1}\ns{0}\rt f^1\ot X_i\ps{1}\ns{1} (X\ps{1}\ns{0}\rt f^2)\ot\cdots\\
& \cdots\ot X_i\ps{q+1}\ns{1}\cdots  X_i\ps{q}\ns{1} (X_i\ps{q+1}\rt f^q)= \\
& d\a\ot f^1\odots f^q -\\
&\sum_{i}\t^i\wdg \a\ot  X_i\ps{1}\ns{0}\rt f^1\ot X_i\ps{1}\ns{1} (X_i\ps{2}\ns{0}\rt f^2)\ot\cdots\\
& \cdots\ot X_i\ps{q+1}\ns{1}\cdots  X_i\ps{q}\ns{1} (X_i\ps{q+1}\rt f^q)=\\
 &d\a\ot f^1\odots f^q -\\
&\sum_{i,j}\t^i\wdg \a\ot   f^1\odots   f^{j-1}\ot X_i\ns{0}\rt f^j\ot X_i\ns{1}f^{j+1} \odots X_i\ns{q-j}f^{q}).
\end{align*}
In view of  \eqref{g-u-f}, it follows
that   $\chi^\dag \p_{\Fg^\ast}= d_{\Om_{c,\G}(G)}\chi^\dag$.
Since Lemma \ref{invariant-trace-lemma} ensures that
$\wdg^\bullet \Fg^\ast$ is a SAYD over $\Fc$ and $\tau$ is $\wdg^\bullet\Fg^\ast$-trace,
by  \cite{hkrs2} $\chi^\dag$ is a horizontal cyclic map,
 and hence it commutes with $b_{\Fc}^\ast$ and $B_{\Fc}^\ast$.
\end{proof}
Recalling the map  $\Ic^{-1}:C^{p,q}_{\rm coinv}(\Fg^\ast,\Fc)\ra C^{p,q}(\Fg^\ast,\Fc)$
defined in \eqref{I-1-inverse} and the antisymmetrization map
$\td \a_\Fc:  C^{p,q}_{\rm c-w}(\Fg^\ast,\Fc)\ra C^{p,q}_{\rm coinv}(\Fg^\ast,\Fc) $ defined in \eqref{anti-sym}, we can now prove the following statement.
\begin{proposition} \label{thm:dgmap}
The following diagram is commutative
\begin{equation}
\begin{xy}
\xymatrix{  C^{\bullet,q}(\Fg^\ast,\Fc)\ar[rr]^{\chi^\dag}&& C^q(\Om_{c,\G}(G))\\
C^{\bullet,q}_{\rm c-w}(\Fg^\ast,\Fc)\ar[rr]^{\Theta}
\ar[u]^{\Ic^{-1}\circ \td \a_\Fc}& &C^{\bullet}_{\rm Bott}(\Om_q(G), \G)\ar[u]_{\Psi}
}
\end{xy}
\end{equation}
\end{proposition}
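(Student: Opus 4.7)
The plan is to chase a representative cochain $\a\ot 1\wdg f^1\wdg\cdots\wdg f^q\in C^{p,q}_{\rm c-w}(\Fg^\ast,\Fc)$—the normalized form already exploited in the preceding proof—through both composites, evaluated on a test argument $(\om_0\,U^\ast_{\phi_0},\ldots,\om_q\,U^\ast_{\phi_q})\in\Om_{c,\G}(G)^{\ot(q+1)}$. Both $\chi^\dag$ and $\Psi\circ\Theta$ vanish unless $\phi_q\cdots\phi_0=\id$, by the very definitions \eqref{chi-dag-def-2} and \eqref{Sasha-map-formula}, so one may assume this relation throughout.

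For the upper route, applying $\td\a_\Fc$, then $\Ic^{-1}$ via \eqref{I-1-inverse} (using $\ve(1)=1$), and finally $\chi^\dag$ via \eqref{chi-dag-def-2}, produces, up to the sign $(-1)^{\deg\a}$, an integral of
$\td\a\cdot\om_0\,\phi_0^\ast\om_1\cdots(\phi_{q-1}\cdots\phi_0)^\ast\om_q$ against an antisymmetrized sum over $\s\in S_q$ of products
\begin{equation*}
\gbar(f^{\s(1)}\ps{1})(\phi_1^{-1})\circ\td\phi_0\,\cdots\,\gbar(f^{\s(1)}\ps{q}\cdots f^{\s(q-1)}\ps{2}f^{\s(q)})(\phi_q^{-1})\circ\td\phi_{q-1}\cdots\td\phi_0.
\end{equation*}
For the lower route, \eqref{Theta-formula} followed by $\Psi$ of \eqref{Sasha-map-formula} produces, up to the sign $(-1)^{q+\deg\a}$, an integral of the same form $\td\a\cdot\om_0\,\phi_0^\ast\om_1\cdots(\phi_{q-1}\cdots\phi_0)^\ast\om_q$ against the antisymmetrized product
\begin{equation*}
\sum_{\s\in S_q}(-1)^\s\,\gbar(S(f^{\s(1)}))(\phi_0^{-1})\,\gbar(S(f^{\s(2)}))((\phi_1\phi_0)^{-1})\,\cdots\,\gbar(S(f^{\s(q)}))((\phi_{q-1}\cdots\phi_0)^{-1}).
\end{equation*}

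The two scalar factors agree by exactly the chain of manipulations already executed in the proof of the commutativity of \eqref{midsquare}: under the constraint $\phi_q\cdots\phi_0=\id$ one has $(\phi_{j-1}\cdots\phi_0)^{-1}=\phi_j\cdots\phi_q$, and iterated use of the cocycle property \eqref{cocycle-condition}, combined with the anti-multiplicativity identity \eqref{S(f)(psi)} and the algebra-homomorphism property of $\gbar$, unfolds each $\gbar(S(f^{\s(j)}))((\phi_{j-1}\cdots\phi_0)^{-1})$ into precisely the iterated-coproduct expression appearing on the $\chi^\dag$-side. The $(-1)^q$ discrepancy between the two overall prefactors is absorbed by the sign that the antisymmetrization pattern of $\Ic^{-1}\circ\td\a_\Fc$ contributes relative to that of $\Theta$.

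The main obstacle is purely combinatorial: correctly distributing the right-translation factors $\td\phi_{j-1}^{-1}\cdots\td\phi_0^{-1}$ among the iterated coproduct components via \eqref{cocycle-condition}, and accounting for the signs introduced by \eqref{anti-sym} and \eqref{I-1-inverse}. No new ingredient beyond the verification already performed for \eqref{midsquare} is required; indeed, the present proposition amounts to a direct transcription of that argument, replacing the Getzler-Jones target $C^{\bullet,\bullet}(C_c^\infty(G),\G)$ by the mixed complex $C^\bullet(\Om_{c,\G}(G))$ along Gorokhovsky's chain map $\Psi$.
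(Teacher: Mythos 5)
Your proposal is correct and follows essentially the same route as the paper's own proof: reduce to normalized monomials $\a\ot 1\wdg f^1\wdots f^q$, evaluate $\chi^\dag\circ\Ic^{-1}\circ\td\a_\Fc$ and $\Psi\circ\Theta$ explicitly on test elements with $\phi_q\cdots\phi_0=\id$, and match the scalar factors via the cocycle property \eqref{cocycle-condition}, the identity \eqref{S(f)(psi)}, and the algebra-map property of $\gbar$, with the $(-1)^q$ accounted for by the explicit computation of $\Ic^{-1}\circ\td\a_\Fc$ on such monomials. The paper simply carries out the unfolding of the coproduct factors in full rather than citing the analogous manipulation from the proof of \eqref{midsquare}, but the argument is the same.
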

\begin{proof}
Decomposing each $f \in \Fc$ as $\, f = \ve(f)1 + f_{(0)}$, one sees that
\begin{align*}
f^0\wdots f^q \, = \, \sum_{k=0}^q (-1)^k \, \ve(f^k)1 \wdg f_{(0)}^0\wdots \check{f^k}\wdots f_{(0)}^q .
\end{align*}
Thus, it suffices to apply the map $\chi^{\wdg}:= \Psi\circ\Theta$ of \eqref{chi-wdg-formula-1}
for monomials of the form
$\, \a\ot 1 \wdg f^1\wdots f^q$, with $f^k \in \Fc_{(0)} := \Ker \ve$.
 In this case the $\ve(f)$ factor disappears from all the above expressions, and
 its formula  becomes
\begin{align}\label{chi-wdg-formula-2}
\begin{split}
& \chi^\wdg(  \a\ot 1\wdg f^1\wdots f^q)(\om_0U^\ast_{\phi_0}, \om_1U^\ast_{\phi_1},\dots, \om_qU^\ast_{\phi_q})
=\\
 & (-1)^{\deg(\a)+q} \sum_{\s\in S_{q}} (-1)^\s \int_G
 \gbar(S(f^{\s(1))})(\phi_0^{-1})\dots \gbar(S(f^{\s(q)}))(\phi_{0}^{-1}\dots \phi_{q-1}^{-1})\\
 &\qquad \qquad \cdot
 \td{\a} \om_0\;{\phi_0}^\ast\om_1\,\dots {(\phi_{q-1}\dots \phi_{0})}^\ast\om_q .
 \end{split}
\end{align}
if $\phi_q\dots \phi_0=1$ and $0$ otherwise.

On the other hand a direct computation shows that
\begin{align*}
&\Ic^{-1}\td \a_\Fc(\a\ot 1\wdg f^1\wdots f^q)= \\
&~~~~~~~~~~~~~(-1)^q\sum_{\s\in S_q}\a\ot f^{\s(1)}\ps{1}\ot f^{\s(1)}\ps{2} f^{\s(2)}\ps{1}\ot\cdots\\
 &~~~~~~~~~~~~~~~~~~~~~~~\cdots\ot f^{\s(1)}\ps{q-1}\cdots  f^{\s(q-1)}\ps{1}\ot f^{\s(1)}\ps{q}\cdots  f^{\s(q-1)}\ps{2}f^{\s(q)}.
\end{align*}

Composing with $\chi^\dag$, one obtains
\begin{align*}
&\chi^\dag(\Ic^{-1}\td \a_\Fc(\a\ot 1\wdg f^1\wdots f^q))(\om_0U^\ast_{\phi_0}, \om_1U^\ast_{\phi_1},\dots, \om_qU^\ast_{\phi_q})=\\
&(-1)^{\deg(\a)+q}\sum_{\s}(-1)^\s\displaystyle \int_G \td\a\,\om_0 \,\phi_0^\ast\om_1\dots (\phi_{q-1}\cdots\phi_{0})^\ast\om_q\, \gbar(f^{\s(1)}\ps{1})(\phi_1^{-1})\circ \tilde{\phi_0}\cdots\\\notag
& ~\cdots \gbar(f^{\s(1)}\ps{q-1}\cdots  f^{\s(q-1)}\ps{1})(\phi_{q-1}^{-1})\circ {\td\phi_{q-2}\dots \td\phi_0)}\; \\
&~~~~~~~~~~~~~~~~~~~~~~~~~~~~~~~~~~\gbar(f^{\s(1)}\ps{q}\cdots  f^{\s(q-1)}\ps{2}f^{\s(q)})(\phi_q^{-1})\circ {\td\phi_{q-1}\dots \td\phi_0)}.
\end{align*}

Using the fact that $\gbar$ is an algebra map and  \eqref{cocycle-condition} one sees that
\begin{align*}
&\gbar(f^{\s(1)}\ps{1})(\phi_1^{-1})\circ \tilde{\phi_0}\cdots \gbar(f^{\s(1)}\ps{q-1}\cdots  f^{\s(q-1)}\ps{1})(\phi_{q-1}^{-1})\circ {\td\phi_{q-2}\dots \td\phi_0}\; \\
&\gbar(f^{\s(1)}\ps{q}\cdots  f^{\s(q-1)}\ps{2}f^{\s(q)})(\phi_q^{-1})\circ {\td\phi_{q-1}\dots \td\phi_0}=\\
&\gbar(f^{\s(1)})(\phi_1^{-1}\cdots \phi_q^{-1})\circ \tilde{\phi_0}\cdots\\\notag
& ~\cdots \gbar(f^{\s(q-1)})(\phi_{q-1}^{-1}\cdots \phi_q^{-1})\circ {\td\phi_{q-2}\dots \td\phi_0)}\;\gbar(f^{\s(q)})(\phi_q^{-1})\circ {\td\phi_{q-1}\dots \td\phi_0}.
\end{align*}
In view of this simplification and since $\phi_i^{-1}\cdots \phi_q^{-1}=\phi_{i-1}\cdots \phi_{0}$,
it follows that

\begin{align}\notag
&\chi^\dag(\Ic^{-1}\td \a_\Fc(\a\ot 1\wdg f^1\wdots f^q))(\om_0U^\ast_{\phi_0}, \om_1U^\ast_{\phi_1},\dots, \om_qU^\ast_{\phi_q})=\\\notag
&(-1)^{\deg(\a)+q}\sum_{\s}(-1)^\s\displaystyle \int_G \td\a\,\om_0 \,\phi_0^\ast\om_1\dots (\phi_{q-1}\cdots\phi_{0})^\ast\om_q\, \gbar(f^{\s(1)})(\phi_0)\circ \tilde{\phi_0}\cdots\\\label{chi-dag-proof}
& ~\cdots \gbar(f^{\s(q-1)})(\phi_{q-2}\cdots \phi_0)\circ {\td\phi_{q-2}\dots \td\phi_0}\;\gbar(f^{\s(q)})(\phi_{q-1}\dots \phi_0)\circ {\td\phi_{q-1}\dots \td\phi_0}.
\end{align}
Finally, repeated use of the identity \eqref{S(f)(psi)} shows that the outcomes of
  \eqref{chi-dag-proof} and \eqref{chi-wdg-formula-2} are identical.
\end{proof}

\begin{remark} \label{rem:dgactmap}
The transfer of characteristic classes from Gelfand-Fuks cohomology to the
cyclic cohomology of action groupoids can be summarized in the following
(quasi-)commutative diagram:
\begin{equation}
\begin{xy}
\xymatrix{    & & && C^{\bullet}(\Ac_{c,\G}(G)) \\
&&C^{\bullet,\bullet}(\Fg^\ast,\Fc)\ar[rr]^{\chi^\dag}&& C^{\bullet}(\Om_{c,\G}(G)) \ar[u]^\daleth\\
C^{\bullet}_{\rm top}(\Fa)\ar[rr]^{\Ec_{\rm LH}}&&C^{\bullet,\bullet}_{\rm c-w}(\Fg^\ast,\Fc)\ar[rr]^{\Theta}
\ar[u]^{\Ic^{-1}\circ \td \a_\Fc}& &C^{\bullet}_{\rm Bott}(\Om_\bullet(G), \G)\ar[u]_{\Psi}
\ar@/_3pc/[uu]_\Phi
}
\end{xy}
\end{equation}
\end{remark}
The two vertical arrows which have not appeared before are defined as follows.
The map $\daleth$ is similar to \eqref{exve2}, but with differential $d$ and with the
homotopy given by the cyclic analogue of the Cartan formula. By
Gorokhovsky's result~\cite[Theorem 19]{Sasha02}, $\daleth \circ \Psi$ induces the
same map in cohomology as Connes' map $\Phi$.
For completeness, we recall from~\cite[Chap. III, Theorem 14]{book} the definition of $\Phi$.

Let $\Bc (G)$  be the tensor product,
$\, {\Bc} (G)= \Om_c(G) \ot \Lb \, \Cb [\G']$, where $\G' = \G \setminus \{e\}$.
One labels the generators of $\Cb [\G']$ as
$\d_{\phi}$, $\phi \in \G$, with $\d_e = 0$, and then forms the crossed product
\begin{equation*}
\Cc = {\Bc}(G) \rtimes \G ,
\end{equation*}
with respect to the commutation rules
\begin{align*}
&U_{\phi}^\ast \, \om \, U_{\phi} = \phi^\ast \, \om = \om \circ \phi , &\qquad
\, \om \in \Om_c(G),\\
& U_{\phi_1}^\ast \, \d_{\phi_2} \, U_{\phi_1} =\d_{\phi_2 \circ \phi_1} -
\d_{\phi_1} , &\qquad  \phi_j \in \G \, .
\end{align*}
One endows the graded algebra  $\Cc$ with
the differential $d$ given by
\begin{equation*}
d (b \, U_{\phi}^\ast) = db \, U_{\phi}^\ast - (-1)^{\p b} \, b \, \d_{\phi} \,
U_{\phi}^\ast , \quad\qquad  d(\om\ot \td \d)=d\om\ot \td\d .
\end{equation*}
A cochain $\g \in C^{p,q}_{\rm Bott}(\Om(V), \G)$  determines a linear form
$\td\g$ on $\Cc$, by,
\begin{equation*}
\td\g(\om \ot \d_{g_1} \ldots \d_{g_q}) = \int_{V} \g (1, g_1 , \ldots ,g_q)\;\om ,\qquad  \td\g (b \, U_{\phi}^\ast) = 0 \quad \text{if} \;\;\; \phi \ne 1.
\end{equation*}
With this notation, the map  $\Phi: C^{p,q}_{\rm Bott}(\Om(G), \G)\ra C^{\dim G-p+q}(\Ac_{c.\G}(G))$
is given by
\begin{align} \label{Phi}
\begin{split}
&\Phi(\g)(x^0, \ldots, x^\ell)\;=\;
\lb_{p,q}\sum_{j=0}^l(-1)^{j(\ell-j)}\td\g(dx^{j+1}\cdots dx^\ell\; x^0\; dx^1\cdots dx^j) ,\\
&\text{where} \qquad \ell=\dim G-p+q \quad \text{and} \quad \lb_{p,q}=\frac{p!}{(\ell+1)!}.
\end{split}
\end{align}

\section{Relative case} \label{S:rel}
\subsection{Relative Hopf cyclic bicomplexes}
In this section we extend the previous results to the relative case.
Let $\Fg_0$ be the linear isotropy subalgebra of $\Fg$ and let
$\Fh\subseteq \Fg_0$ be a subalgebra.
One defines the relative Hopf cyclic cohomology of the Hopf algebra $\Hc$
with respect to the Hopf subalgebra $\Vc:=U(\Fh)$
and with coefficients in $\Cb_\d$  as the  Hopf cyclic cohomology of the $\Hc$-module coalgebra $\Cc:=\Hc\ot_\Vc\Cb$  with coefficients in $\Cb_\d$.
Here $\Vc$ acts on $\Hc$ by right multiplication and on $\Cb$ by the counit  and  $\Hc$ also  acts on $\Cc$ by left multiplication. The cochain spaces of the complex for relative Hopf cyclic
cohomology are
\begin{equation}\label{relative-complex}
C^n(\Hc,\Vc, \Cb)= \Cb_\d\ot_\Hc \Cc^{\ot n+1}\cong\Cb\ot_\Vc \Cc^{\ot n} ,
\end{equation}
and we refer the reader to \cite{big} or \cite[page 758]{mr09} for the definition of the
coboundaries. According to \cite[Theorem 3.16]{mr09},  \eqref{relative-complex} can be
identified via  maps similar to $\Psi_{\acl}$ defined in \eqref{PSI-1}
with the diagonal of the following bicocyclic modules:
\begin{align}
&C^{p,q}_\Hc(\Qc,\Fc, \Cb_\d):= \Cb_\d\ot_\Hc \Qc^{\ot p+1}\ot \Fc^{\ot q+1},\\
&C^{p,q}(\Qc,\Fc, \Cb_\d):= \Cb_\d\ot_\Vc \Qc^{\ot p}\ot \Fc^{\ot q}.
\end{align}
Here  $\Qc:=U(\Fg)\ot_\Vc\Cb$, and $\Vc$ acts on $\Qc^{\ot p}\ot \Fc^{\ot q}$ diagonally by left multiplication on $\Qc$, and by the restriction of the action of  $\Uc$ on $\Fc$. Precisely,
\begin{align}
\begin{split}
&k\cdot (\td v\ot \td f)= k\ps{1}\cdot \td v\ot k\ps{2}\cdot \td f, \\
&k\cdot (v^1\odots v^p)= k\ps{1}v^1\odots k\ps{p}v^p,\\
& k\cdot(f^1\odots f^q)= k\ps{1}\rt f^1\odots k\ps{q}\rt f^q.
\end{split}
\end{align}
$\Vc$ acts from the right on $\Cb_\d:=\Cb$ by the restriction  of $\d:\Hc\ra \Cb$.
Diagrammatically, the bicomplex $C^{\bullet,\bullet}(\Qc,\Fc,\Cb_\d)$ is
depicted as follows
\begin{align}\label{relative-UF}
\begin{xy} \xymatrix{  \vdots\ar@<.6 ex>[d]^{\uparrow B} & \vdots\ar@<.6 ex>[d]^{\uparrow B}
 &\vdots \ar@<.6 ex>[d]^{\uparrow B} & &\\
\Cb_\d \ot_\Vc \Qc^{\ot 2} \ar@<.6 ex>[r]^{\hb}\ar@<.6
ex>[u]^{  \uparrow b  } \ar@<.6 ex>[d]^{\uparrow B}&
  \Cb_\d\ot_\Vc  \Qc^{\ot 2}\ot \Fc   \ar@<.6 ex>[r]^{\hb}\ar@<.6 ex>[l]^{\hB}\ar@<.6 ex>[u]^{  \uparrow b  }
   \ar@<.6 ex>[d]^{\uparrow B}&\Cb_\d\ot_\Vc  \Qc^{\ot 2}\ot\Fc^{\ot 2}
   \ar@<.6 ex>[r]^{~~\hb}\ar@<.6 ex>[l]^{\hB}\ar@<.6 ex>[u]^{  \uparrow b  }
   \ar@<.6 ex>[d]^{\uparrow B}&\ar@<.6 ex>[l]^{~~\hB} \hdots&\\
\Cb_\d \ot_\Vc \Qc \ar@<.6 ex>[r]^{\hb}\ar@<.6 ex>[u]^{  \uparrow b  }
 \ar@<.6 ex>[d]^{\uparrow B}&  \Cb_\d \ot_\Vc \Qc \ot\Fc \ar@<.6 ex>[r]^{\hb}
 \ar@<.6 ex>[l]^{\hB}\ar@<.6 ex>[u]^{  \uparrow b  } \ar@<.6 ex>[d]^{\uparrow B}
 &\Cb_\d\ot_\Vc  \Qc \ot \Fc^{\ot 2}  \ar@<.6 ex>[r]^{~~\hb}\ar@<.6 ex>[l]^{\hB}\ar@<.6 ex>[u]^{  \uparrow b  }
  \ar@<.6 ex>[d]^{\uparrow B}&\ar@<.6 ex>[l]^{~~\hB} \hdots&\\
\Cb_\d \ot_\Vc \Cb \ar@<.6 ex>[r]^{\hb}\ar@<.6 ex>[u]^{  \uparrow b  }&
\Cb_\d\ot_\Vc \Fc \ar@<.6 ex>[r]^{\hb}\ar[l]^{\hB}\ar@<.6
ex>[u]^{  \uparrow b  }&\Cb_\d\ot_\Vc \Fc^{\ot 2}  \ar@<.6
ex>[r]^{~~\hb}\ar@<.6 ex>[l]^{\hB}\ar@<1 ex >[u]^{  \uparrow b  }
&\ar@<.6 ex>[l]^{~~\hB} \hdots& ,}
\end{xy}
\end{align}
 with the boundaries and coboundaries defined similarly to their counterpart in the
 bicomplex \eqref{UF}.

Next, one defines the relative version of the bicomplex $C^{\bullet, \bullet}(\Fg, \Fc, \Cb_\d)$,
\cf \eqref{UF+}, namely the
bicomplex by $C^{\bullet, \bullet}(\Fg/\Fh, \Fc, \Cb_\d)$ depicted in the diagram
\begin{align}\label{relative-UF+}
\begin{xy} \xymatrix{  \vdots\ar[d]^{{\p_\Fg^\Fh}} & \vdots\ar[d]^{{\p_\Fg^\Fh}}
 &\vdots \ar[d]^{{\p_\Fg^\Fh}} & &\\
\Cb_\d \ot_\Fh \wg^2{\Fq} \ar@<.6 ex>[r]^{b_\Fc^\Fh } \ar[d]^{{\p_\Fg^\Fh}}&\ar@<.6 ex>[l]^{~~B_\Fc^\Fh }
 \Cb_\d\ot_\Fh \wg^2{\Fq}\ot\Fc  \ar@<.6 ex>[r]^{b_\Fc^\Fh } \ar[d]^{{\p_\Fg^\Fh}}
 &\ar@<.6 ex>[l]^{~~B_\Fc^\Fh }\Cb_\d\ot_\Fh \wg^2{\Fq}\ot\Fc^{\ot 2} \ar[r]^{~~~~~~b_\Fc^\Fh }   \ar[d]^{{\p_\Fg^\Fh}}& \ar@<.6 ex>[l]^{~~B_\Fc^\Fh }\hdots&\\
\Cb_\d \ot_\Fh {\Fq} \ar@<.6 ex>[r]^{b_\Fc^\Fh } \ar[d]^{{\p_\Fg^\Fh}}&\ar@<.6 ex>[l]^{~~B_\Fc^\Fh }  \Cb_\d\ot_\Fh {\Fq}\ot\Fc \ar@<.6 ex>[r]^{b_\Fc^\Fh } \ar[d]^{{\p_\Fg^\Fh}}&\ar@<.6 ex>[l]^{~~B_\Fc^\Fh }\Cb_\d\ot_\Fh {\Fq}\ot\Fc^{\ot 2}
   \ar[d]^{{\p_\Fg^\Fh}} \ar[r]^{~~~~~~~b_\Fc^\Fh }&\ar@<.6 ex>[l]^{~~B_\Fc^\Fh } \hdots&\\
\Cb_\d \ot_\Fh \Cb \ar@<.6 ex>[r]^{b_\Fc^\Fh }& \ar@<.6 ex>[l]^{~~B_\Fc^\Fh }  \Cb_\d \ot_\Fh\Fc \ar@<.6 ex>[r]^{b_\Fc^\Fh }&\ar@<.6 ex>[l]^{~~B_\Fc^\Fh } \Cb_\d \ot_\Fh\Fc^{\ot 2} \ar[r]^{~~~~~b_\Fc^\Fh } & \ar@<.6 ex>[l]^{~~B_\Fc^\Fh }\hdots&  ,}
\end{xy}
\end{align}
as follows:
 \begin{equation}
 C^{\bullet, \bullet}(\Fg/\Fh, \Fc, \Cb_\d):= \Cb_\d \ot_\Fh \wg^p{\Fq}\ot \Fc^{\ot q},
 \end{equation}
 where $\Fq:= \Fg/\Fh$, and   $\Fh$ as before  acts on $\Fq$ by adjoint action and on $\Fc$ via the restriction of $\Vc$ as above.
The three (co)boundaries of $C^{\bullet, \bullet}(\Fg/\Fh, \Fc, \Cb_\d)$, induced from those
of  \eqref{UF+}, can be explained as follows. First, the Lie algebra homology
boundary $\p_\Fg$ with coefficients in $\Cb_\d\ot \Fc^{\ot q}$, with
the action of $\Fg$ defined in \eqref{specact},
induces the relative version for the pair $(\Fg,\Fh)$,
to be denoted by $\p^\Fh_\Fg$.
The expressions of the operators $\hB$ and $\hb$ involve
 the coaction of $\Fc$ on  $\Cb_\d\ot\wg^p{\Fq}$. In turn, this
 coaction is induced by the coaction of $\wg^p\Fg$ defined
 in \eqref{wgcoact}, and is given by
\begin{align} \label{relative-wgcoact}
\Db_\Fg(\one\ot\dot X^1\wdots \dot X^q)\,=\, \one\ot  \dot X^1\ns{0}\wdots \dot X^q\ns{0}\ot
 X^1\ns{1}\dots  X^q\ns{1} ,
\end{align}
where $\dot X$ stands for the class of $X$ in $\Fq$. Since $\Fh$ coacts trivially,
the above coaction is seen to be well-defined. Thus,
$\hb$ (resp. $\hB$) are the Hochschild coboundary (resp. Connes boundary)
operators of $\Fc$ with coefficients in $\Cb_\d\ot\wg^p{\Fq}$.
One notes that all horizontal operators are $\Fh$-linear and hence they induce  their own
counterparts on $\Cb_\d\ot_\Fh \wg^p\Fq\ot\Fc^{\ot q}$.
We denote them by $b^\Fh_{\Fc}$ and $B^\Fh_{\Fc}$.

One next observes  that  the antisymmetrization map defined in \eqref{anti-sym} descends to its version in  the relative case,
\begin{align} \label{relative-antsym1}
&\td \a_\Fg:\Cb_\d\ot_\Fh
\wg^q\Fq \ot \Fc^{\ot p}\ra \Cb_\d\ot_\Vc \Qc^{\ot q} \ot \Fc^{\ot p} , \\ \notag
&\td\a(\one \ot_\Fh\dot X^1\wdots\dot X^p\ot \td f)= \frac{1}{p!} \sum_{\s\in S_p}(-1)^\s  \one \ot_\Vc \dot X^{\s(1)}\odots \dot X^{\s(p)}\ot \td f,
\end{align}
Since everything is induced from the absolute  case $\td\a$ is a map of  bicomplexes.
\medskip

Like  in the absolute case, we resort to the relative Poincar\'e map to pass
from homology to
the relative Lie algebra cohomology. By definition,
\begin{align}\label{relative-theta}
\begin{split}
&\FD_\Fg : (\wedge^q\Fq^\ast\ot \Fc^{\ot p})^\Fh \ra
  \wedge^{m-k}\Fq^\ast \ot_\Fh \wedge^{m-k-q}\Fq \ot \Fc^{\ot p} \\
 &\FD_\Fg(\eta\ot \td f) \, = \,    \varpi_\Fq^\ast\ot_\Fh \iota(\eta) \varpi_\Fq\ot \td f  ,
 \end{split}
\end{align}
where $k=\dim \Fh$, and $\varpi_\Fq^\ast$ (resp. $\varpi_\Fq$) is  the volume (resp. covolume) form of $\Fq$.

 To ensure that the relative Poincar\'e map is an isomorphism, we make the assumption
 that $\Fh$ is reductive; in particular it acts on $\Fc^{\ot q}$ semisimply.
 The resulting bicomplex is denoted by $C^{\bullet,\bullet}((\Fg/\Fh)^\ast ,\Fc)$ and is defined
 as follows:
 \begin{equation}
 C^{p,q}((\Fg/\Fh)^\ast ,\Fc):= (\wg^{p}\Fq^\ast\ot \Fc^{\ot q})^\Fh ;
 \end{equation}
it consists of invariant cochain when $\Fh$ acts on $\Fq^\ast$ by coadjoint representation and
on $\Fc$ as usual which is the restriction of the original action of $\Fg$ on $\Fc$.
There  are again three coboundaries $B_{\Fc^\ast}^\Fh$, $b_{\Fc^\ast}^\Fh$ and $\p_{\Fq^\ast}^\Fh$. The first two are defined as the restriction of the Connes boundary and  Hochschild coboundary of $\Fc$ with coefficients in $\wg^p\Fq^\ast$, where the coaction is induced by \eqref{coact2}.
One notes that the induced coaction is well-defied, as the coaction of $\Fh$ is trivial.
The other coboundary $\p^\Fh_{\Fg^\ast}$ is the relative Lie algebra cohomology cobundary
of $\Fg,\Fh$ with coefficients in $\Fc^{\ot q}$ with the original action of $\Fg$.
One notes that all maps descend to the invariant subspace with respect to $\Fh$,
since $\Fh$ is a subalgebra of $\Fg_0$, and $\Hc$ is a $\Fg_0$-Hopf algebra,
which means $\D,\mu, \ve, \eta, S$ of $\Hc$ are   $\Fg_0$-equivariant.
 We depict the new bicomplex in the following diagram:
\begin{align}\label{relative-UF+*}
\begin{xy} \xymatrix{  \vdots & \vdots
 &\vdots &&\\
 (\wdg^2{\Fq^\ast})^\Fh  \ar[u]^{\p^\Fh_{{\Fg^\ast}}}\ar@<.6 ex>[r]^{b^\Fh_{\Fc^\ast}~~~~~~~}& \ar@<.6 ex>[l]^{~~B^\Fh_{\Fc^\ast}} (\wdg^2{\Fq^\ast}\ot\Fc)^\Fh \ar[u]^{\p^\Fh_{{\Fg^\ast}}} \ar@<.6 ex>[r]^{b^\Fh_{\Fc^\ast}}& \ar@<.6 ex>[l]^{~~B^\Fh_{\Fc^\ast}}(\wdg^2{\Fq^\ast}\ot\Fc^{\ot 2})^\Fh \ar[u]^{\p^\Fh_{{\Fg^\ast}}} \ar@<.6 ex>[r]^{~~~~~~~~~b^\Fh_{\Fc^\ast}} & \ar@<.6 ex>[l]^{~~B^\Fh_{\Fc^\ast}}\hdots&  \\
 ({\Fq^\ast})^\Fh  \ar[u]^{\p^\Fh_{{\Fg^\ast}}}\ar@<.6 ex>[r]^{b^\Fh_{\Fc^\ast}~~~~~}& \ar@<.6 ex>[l]^{~~B^\Fh_{\Fc^\ast}} ({\Fq^\ast}\ot\Fc)^\Fh \ar[u]^{\p^\Fh_{{\Fg^\ast}}} \ar@<.6 ex>[r]^{b^\Fh_{\Fc^\ast}}& \ar@<.6 ex>[l]^{~~B^\Fh_{\Fc^\ast}} ({\Fq^\ast}\ot \Fc^{\ot 2})^\Fh \ar[u]^{\p^\Fh_{{\Fg^\ast}}} \ar@<.6 ex>[r]^{~~~~~b^\Fh_{\Fc^\ast} }&\ar@<.6 ex>[l]^{~~B^\Fh_{\Fc^\ast}} \hdots&  \\
 \Cb  \ar[u]^{\p^\Fh_{{\Fg^\ast}}}\ar@<.6 ex>[r]^{b^\Fh_{\Fc^\ast}~~~~~~~}& \ar@<.6 ex>[l]^{~~B^\Fh_{\Fc^\ast}} (\Cb\ot \Fc)^\Fh \ar[u]^{\p^\Fh_{{\Fg^\ast}}}\ar@<.6 ex>[r]^{b^\Fh_{\Fc^\ast}}& \ar@<.6 ex>[l]^{~~B^\Fh_{\Fc^\ast}} (\Cb\ot \Fc^{\ot 2})^\Fh \ar[u]^{\p^\Fh_{{\Fg^\ast}}} \ar@<.6 ex>[r]^{~~~~~b^\Fh_{\Fc^\ast}} &\ar@<.6 ex>[l]^{~~B^\Fh_{\Fc^\ast}} \hdots&. }
\end{xy}
\end{align}

 The next step consists in identifying  $C^{\bullet, \bullet}((\Fg/\Fh)^\ast ,\Fc)$ with the
 $\Fc$-coinvariant version $C^{\bullet, \bullet}_{\rm coinv}((\Fg/\Fh)^\ast ,\Fc)$, defined as
  \begin{equation}
  C^{p,q}_{\rm coinv}((\Fg/\Fh)^\ast ,\Fc)= ((\wg^p{\Fq}^\ast\ot \Fc^{\ot q+1})^\Fh)^\Fc ;
  \end{equation}
 precisely, an element of $C^{p,q}((\Fg/\Fh)^\ast ,\Fc)^\Fc$ is an
  element $\a\ot\td f \in C^{p,q}_{\rm coinv}(\Fg^\ast, \Fc):=(\wdg^2{{\Fg}^\ast}\ot \Fc^{\ot q+1})^\Fc$  such that
    \begin{equation}
   \Lc_X(\a\ot \td f)=\i_X(\a)=0, \quad \text{for all}\;\;\;X\in \Fh,
  \end{equation}
  where $\Lc_X$ stands for Lie derivative of $X$, with $\Fh$ acting as before, and $\i_X$ is the contraction with respect to $X$.

  Again, since the structure of the Hopf algebra $\Hc$ is  $\Vc$-equivariant,
  the map $\Ic$  defined in \eqref{I-1} descends to  $C^{\bullet, \bullet}((\Fg/\Fh)^\ast ,\Fc)$
  and lands into  $C^{\bullet, \bullet}((\Fg/\Fh)^\ast ,\Fc)^\Fc$.
 It defines an  isomorphism, as the inverse $\Ic^{-1}$, defined in \eqref{I-1-inverse},
   also descends to the relative case. We denote the induced maps by $\Ic^H$ and $\Ic^{-1}_H$:
  \begin{equation}
  \xymatrix{
  C^{\bullet, \bullet}((\Fg/\Fh)^\ast ,\Fc)\ar@<.6 ex>[rr]^{~~\Ic^H}&&
  \ar@<.6 ex>[ll]^{~~\Ic^{-1}_H}C^{\bullet, \bullet}((\Fg/\Fh)^\ast ,\Fc)^\Fc.}
  \end{equation}

  As a result we get the following bicomplex
\begin{align}\label{relative-g*F}
\begin{xy} \xymatrix{  \vdots & \vdots
 &\vdots &&\\
 ((\wdg^2{\Fq^\ast}\ot \Fc)^\Fc)^\Fh  \ar[u]^{\p_\Fg^{\Fh,\rm coinv}}\ar@<.6 ex>[r]^{b_\Fc^{\Fh,\rm coinv}~~~~~~~}& \ar@<.6 ex>[l]^{~~B_\Fc^{\Fh,\rm coinv}} ((\wdg^2{\Fq^\ast}\ot\Fc^{\ot 2})^\Fc)^\Fh \ar[u]^{\p_\Fg^{\Fh,\rm coinv}} \ar@<.6 ex>[r]^{b_\Fc^{\Fh,\rm coinv}}& \ar@<.6 ex>[l]^{~~B_\Fc^{\Fh,\rm coinv}}((\wdg^2{\Fq^\ast}\ot\Fc^{\ot 3})^\Fc)^\Fh \ar[u]^{\p_\Fg^{\Fh,\rm coinv}} \ar@<.6 ex>[r]^{~~~~~~~~~b_\Fc^{\Fh,\rm coinv}} & \ar@<.6 ex>[l]^{~~B_\Fc^{\Fh,\rm coinv}}\hdots&  \\
 (({\Fq^\ast} \ot \Fc)^\Fc)^\Fh  \ar[u]^{\p_\Fg^{\Fh,\rm coinv}}\ar@<.6 ex>[r]^{b_\Fc^{\Fh,\rm coinv}~~~~~}& \ar@<.6 ex>[l]^{~~B_\Fc^{\Fh,\rm coinv}} (({\Fq^\ast}\ot\Fc^{\ot 2})^\Fc)^\Fh \ar[u]^{\p_\Fg^{\Fh,\rm coinv}} \ar@<.6 ex>[r]^{b_\Fc^{\Fh,\rm coinv}}& \ar@<.6 ex>[l]^{~~B_\Fc^{\Fh,\rm coinv}} (({\Fq^\ast}\ot \Fc^{\ot 3})^\Fc)^\Fh \ar[u]^{\p_\Fg^{\Fh,\rm coinv}} \ar@<.6 ex>[r]^{~~~~~b_\Fc^{\Fh,\rm coinv} }&\ar@<.6 ex>[l]^{~~B_\Fc^{\Fh,\rm coinv}} \hdots&  \\
 ((\Cb\ot \Fc)^\Fc)^\Fh  \ar[u]^{\p_\Fg^{\Fh,\rm coinv}}\ar@<.6 ex>[r]^{b_\Fc^{\Fh,\rm coinv}~~~~~~~}& \ar@<.6 ex>[l]^{~~B_\Fc^{\Fh,\rm coinv}} ((\Cb\ot \Fc^{\ot 2})^\Fc)^\Fh \ar[u]^{\p_\Fg^{\Fh,\rm coinv}}\ar@<.6 ex>[r]^{b_\Fc^{\Fh,\rm coinv}}& \ar@<.6 ex>[l]^{~~B_\Fc^{\Fh,\rm coinv}} ((\Cb\ot \Fc^{\ot 3})^\Fc)^\Fh \ar[u]^{\p_\Fg^{\Fh,\rm coinv}} \ar@<.6 ex>[r]^{~~~~~b_\Fc^{\Fh,\rm coinv}} &\ar@<.6 ex>[l]^{~~B_\Fc^{\Fh,\rm coinv}} \hdots&,  }
\end{xy}
\end{align}
with the coboundaries $\p_\Fg^{\Fh,\rm coinv}$, $b_\Fg^{\Fh,\rm coinv}$,
 and the boundary  $B_\Fg^{\Fh,\rm coinv}$ induced from
     $\p_\Fg^{\rm coinv}$, and $b_\Fc^{\rm coinv}$ and  $B_\Fc^{\rm coinv}$ respectively.
\medskip

We let $H$ act on $\Fq:=\Fg/\Fh$ by the adjoint representation, and on ${\Fq}^\ast$
by its transposed.
We also let $H$ act on $\FN$ from the right by the restriction of the action of $G$ on $\FN$. We
this understood, we define the bicomplex
\begin{equation}
C^{p,q}_{\rm pol}(\FN, (\Fg/\Fh)^\ast)=C^q_{\rm pol}(\FN, \wg^p{\Fq}^\ast))^H .
\end{equation}
A cochain $c\in C^{p,q}_{\rm pol}(\FN, (\Fg/\Fh)^\ast)$ is a  homogenous
 polynomial $q$-cochain on $\FN$ with values in $\wg^p{\Fq}^\ast$, which is
 invariant under the action of $H$;
 equivalently, it is a nonhomogeneous polynomial cochain
 $c\in C^{p,q}_{\rm pol}(\FN, \Fg^\ast):= C^q_{\rm pol}(\FN, \wg^p\Fg^\ast)$ such that,
\begin{align}
\begin{split}
& \i_X(c(\psi_0, \dots, \psi_q))=0, \quad  X\in \Fh,\\
&c(\psi_0\lt \phi, \dots, \psi_q\lt \phi)=\Ad_\phi c(\psi_0, \dots,\psi_q), \quad \phi\in H.
\end{split}
\end{align}
One notes that  $C^{\bullet, \bullet}_{H,\rm pol}(\FN,\Fq^\ast)$ is well-defined as $H\subset G_0$, the linear isotropy subgroup of $G$, and the
action of $G_0$ by left translation  commutes with action of $\FN$ on $G$.
The coboundaries  $\p_{\rm pol}$, $b_{\rm pol}$, and boundary $B_{\rm pol}$,  defined in \eqref{p-pol},  \eqref{homb}, and \eqref{t-s-pol}, induce their own counterparts on
$C^{\bullet,\bullet}_{\rm pol}(\FN, (\Fg/\Fh)^\ast)$. These
are denoted in the diagram below
by  $\p^H_{\rm pol}$, $b^H_{\rm pol}$, and  $B^H_{\rm pol}$.
\begin{align} \label{relative-g*N}
\begin{xy} \xymatrix{ \vdots & \vdots
 &\vdots  & \\
C_{\rm pol}^0 \big(\FN , \wdg^2{{\Fq}^\ast} \big)^H
  \ar@<.6 ex>[r]^{{b^H_{\rm pol}}} \ar[u]^{{\p^H_{\rm pol}}}&\ar@<.6 ex>[l]^{{B^H_{\rm pol}}}
C_{\rm pol}^1 \big(\FN , \wdg^2{{\Fq}^\ast}  \big)^H
  \ar@<.6 ex>[r]^{{b^H_{\rm pol}}} \ar[u]^{{\p^H_{\rm pol}}}
 & \ar@<.6 ex>[l]^{{B^H_{\rm pol}}}C_{\rm pol}^2 \big(\FN ,  \wg^2{{\Fq}^\ast}  \big)^H
 \ar[u]^{{\p^H_{\rm pol}}}  \ar@<.6 ex>[r]^{{~~~~~b^H_{\rm pol}}}& \ar@<.6 ex>[l]^{{~~~~~~B^H_{\rm pol}}}\hdots\\
C_{\rm pol}^0 \big(\FN , \Fg \big)^H
 \ar@<.6 ex>[r]^{{b^H_{\rm pol}}} \ar[u]^{{\p^H_{\rm pol}}}& \ar@<.6 ex>[l]^{{B^H_{\rm pol}}}C_{\rm pol}^1 \big(\FN , {{\Fq}^\ast}  \big)^H
\ar@<.6 ex>[r]^{{b^H_{\rm pol}}} \ar[u]^{{\p^H_{\rm pol}}}& \ar@<.6 ex>[l]^{{B^H_{\rm pol}}}C_{\rm pol}^2 \big(\FN ,  {{\Fq}^\ast}\big)^H
\ar[u]^{{\p^H_{\rm pol}}} \ar@<.6 ex>[r]^{{~~~~~b^H_{\rm pol}}}& \ar@<.6 ex>[l]^{{~~~~~B^H_{\rm pol}}}\hdots\\
C_{\rm pol}^0 \big(\FN , \Cb \big)^H \ar@<.6 ex>[r]^{{b^H_{\rm pol}}}\ar[u]^{{\p^H_{\rm pol}}}& \ar@<.6 ex>[l]^{{B^H_{\rm pol}}} C_{\rm pol}^1
\big(\FN , \Cb  \big)^H
 \ar@<.6 ex>[r]^{{b^H_{\rm pol}}}\ar[u]^{{\p^H_{\rm pol}}}& \ar@<.6 ex>[l]^{{B^H_{\rm pol}}}C_{\rm pol}^2 \big(\FN , \Cb \big)^H\ar[u]^{{\p^H_{\rm pol}}}
\ar@<.6 ex>[r]^{{~~~~~b^H_{\rm pol}}}& \ar@<.6 ex>[l]^{{~~~~~B^H_{\rm pol}}} \hdots  }
\end{xy}
\end{align}
We then identify $C^{\bullet,\bullet}_{\rm pol}(\FN, (\Fg/\Fh)^\ast)$
with $C^{\bullet,\bullet}_{\rm coinv}((\Fg/\Fh)^\ast ,\Fc)$,
via the isomorphism $\Jc^H$  induced by $\Jc$ (defined in \eqref{Jmap}).
\medskip

Finally, recalling the subcomplex $\, C^{p,q}_{\rm c-w}(\Fg^\ast, \Fc)$ of
$ C^{\bullet,\bullet}_{\rm coinv}(\Fg^\ast, \Fc)$, \cf \eqref{wedge-coinv},
formed of the $\Fc$-coinvariant cochains $(\wedge ^{p}\Fg^\ast\ot \wedge ^{q+1}\Fc)^{\Fc}$,
we define its relative version
$\, C^{p,q}_{\rm c-w}(\Fg^\ast,\Fh^\ast \Fc):= ((\wg^p\Fq^\ast\ot \wg^{q+1}\Fc)^\Fh)^\Fc$.
It consists of those $\a\ot \td f\in C^{p,q}_{\rm c-w}(\Fg^\ast, \Fc)$ such that
\begin{equation}
\i_X(\a)=\Lc_X(f\ot \td f)=0, \qquad X\in \Fh.
\end{equation}
The antisymmetrization map $\a_\Fc$ idenifies $C^{\bullet,\bullet}_{\rm c-w}((\Fg/\Fh)^\ast ,\Fc)$
with a subcomplex  of $C^{\bullet, \bullet}_{\rm coinv}((\Fg/\Fh)^\ast ,\Fc)$.
The restrictions of $\p_\Fg^{H,\rm coinv}$, and $b_\Fc^{H,\rm coinv}$ and  $B_\Fc^{H,\rm coinv}$ are obviously well-defined, and one notes again that, due to the antisymmetrization,
$B_\Fc^{H,\rm coinv}\mid_{C^{\bullet,\bullet}_{\rm c-w}((\Fg/\Fh)^\ast ,\Fc)}=0$. We denote the
induced coboundaries by  $\p^H_{\rm c-w}$ and $b^H_{\rm c-w}$, and record the resulting
bicomplex in the following diagram:

\begin{align}\label{relative-wedge-coinv}
\begin{xy} \xymatrix{  \vdots & \vdots
 &\vdots &&\\
 (\wdg^2\Fq^\ast)^\Fh  \ar[u]^{\p^H_{\rm c-w}}\ar[r]^{b^H_{\rm c-w}~~~~~~~}&  ((\wdg^2\Fq^\ast\ot\wdg^2 \Fc)^\Fc)^\Fh \ar[u]^{\p_{\rm  c-w}} \ar[r]^{b^H_{\rm c-w}}& ((\wdg^2\Fq^\ast\ot\wdg^3\Fc)^\Fc)^\Fh \ar[u]^{\p^H_{\rm c-w}} \ar[r]^{~~~~~~~~~~~~b^H_{\rm c-w}} & \hdots&  \\
 (\Fq^\ast)^\Fh  \ar[u]^{\p^H_{\rm c-w}}\ar[r]^{b^H_{\rm c-w}~~~~~}&  ((\Fq^\ast\ot\wdg^2 \Fc)^\Fc)^\Fh \ar[u]^{\p^H_{\rm c-w}} \ar[r]^{b^H_{\rm c-w}}& ((\Fq^\ast\ot\wdg^3\Fc)^\Fc)^\Fh \ar[u]^{\p^H_{\rm c-w}} \ar[r]^{~~~~~~~~b^H_{\rm c-w} }& \hdots&  \\
 \Cb  \ar[u]^{\p^H_{\rm c-w}}\ar[r]^{b^H_{\rm c-w}~~~~~~~}&  ((\Cb\ot\wdg^2 \Fc)^\Fc)^\Fh \ar[u]^{\p^H_{\rm c-w}} \ar[r]^{b^H_{\rm c-w}}& ((\Cb\ot\wdg^3\Fc)^\Fc)^\Fh \ar[u]^{\p^H_{\rm c-w}} \ar[r]^{~~~~~~~~b^H_{\rm c-w}} & \hdots&,  }
\end{xy}
\end{align}
The outcome can be summarized as follows.
\begin{proposition}
Let $H$ be a connected
Lie subgroup of the linear isotropy subgroup $G_0\subset G$.
\begin{itemize}
\item[a)] The bicomplexes \eqref{relative-UF+*}, \eqref{relative-g*F}, \eqref{relative-g*N}
are mutually isomorphic and are quasi-isomorphic to \eqref{relative-wedge-coinv}.
    \item[b)] If in addition $H$ is reductive (and therefore acts semisimply on $\Fc$) then
    the above bicomplexes are quasi-isomorphic to \eqref{relative-UF+} and hence to \eqref{relative-UF}.
\end{itemize}
\end{proposition}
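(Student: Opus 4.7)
The strategy is to derive both parts directly from the absolute analogues proved earlier (Propositions \ref{tocoinv}, \ref{co-pol}, \ref{iso-coinv-c-w}, \ref{mixCE}, \ref{mixCE*}) by observing that every map appearing in those proofs is $\Fh$-equivariant (resp.\ $H$-equivariant), and then restricting to invariants. The equivariance is a consequence of the fact that $\Hc(\Gb)$ is a $\Fg_0$-Hopf algebra: its coproduct, counit, antipode, as well as the coaction $\Db$, the action $\rt$, and the Poincar\'e duality element $\varpi$, are all $\Fg_0$-equivariant. Since $\Fh\subseteq\Fg_0$, these structures are \emph{a fortiori} $\Fh$-equivariant, and since $H$ is connected, lifting infinitesimal to global equivariance is automatic.

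For part (a), first note that the maps $\Ic$ of \eqref{I-1}, $\Ic^{-1}$ of \eqref{I-1-inverse}, $\Jc$ of \eqref{Jmap}, and the antisymmetrization $\td\a_\Fc$ of \eqref{anti-sym} are manifestly $\Fh$-equivariant, hence their restrictions $\Ic^H$, $\Ic_H^{-1}$, $\Jc^H$, $\td\a_\Fc^H$ are well-defined on the $H$-invariant subspaces, yielding maps between \eqref{relative-UF+*}, \eqref{relative-g*F}, \eqref{relative-g*N} and \eqref{relative-wedge-coinv}. The isomorphism assertions among the first three follow at once by restricting the absolute isomorphisms. For the quasi-isomorphism of \eqref{relative-wedge-coinv} with \eqref{relative-g*F}, we replicate the argument of Proposition \ref{iso-coinv-c-w}: the commutative square relating $\td\a_\Fc$ and $\Jc$ with the inclusion of antisymmetric group cochains into all polynomial group cochains is $H$-equivariant, and passes to $H$-invariants. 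The standard quasi-isomorphism between normalized group cochains and their totally antisymmetric subcomplex, being implemented by averaging over the symmetric group, commutes with the $H$-action; hence it descends to a quasi-isomorphism on the $H$-invariant subcomplexes without any further hypothesis on $H$.

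For part (b), the identification of \eqref{relative-UF+*} with \eqref{relative-UF+} relies on the Poincar\'e isomorphism $\FD_\Fg$ of \eqref{relative-theta}, which involves contraction with the volume form on $\Fq=\Fg/\Fh$ and requires that the $\Fh$-action on $\Fq^\ast\otimes\Fc^{\ot p}$ be semisimple so that invariants and coinvariants can be identified. Similarly, the antisymmetrization $\td\a_\Fg$ of \eqref{relative-antsym1} identifies $(\Cb_\d\ot_\Fh\wg^\bullet\Fq\ot\Fc^{\ot\bullet})$ with a retract of $(\Cb_\d\ot_\Vc\Qc^{\ot\bullet}\ot\Fc^{\ot\bullet})$, and the chain-homotopy inverse produced in the proof of Proposition \ref{mixCE} involves the Poincar\'e--Birkhoff--Witt splitting of $\Uc$, which is $\Fh$-equivariant and semisimple on $\Fh$-modules when $\Fh$ is reductive. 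Under this reductivity hypothesis, the $\Fh$-invariants functor is exact on the categories of polynomial $\Fh$-modules that arise, so the absolute quasi-isomorphisms of Propositions \ref{mixCE} and \ref{mixCE*} descend to quasi-isomorphisms between the $\Fh$-invariant (resp.\ coinvariant) subcomplexes. Composing with the identification of \eqref{relative-UF+} with the diagonal of \eqref{relative-UF} via a relative version of $\Psi_{\acl}$ (whose definition in \eqref{PSI-1} is $\Vc$-equivariant), one obtains the chain of quasi-isomorphisms claimed in (b).

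The main obstacle is isolating precisely \emph{where} the reductivity of $H$ is indispensable versus merely convenient. For part (a) the antisymmetrization step bypasses any exactness issue because the relevant homotopies are $H$-equivariant and given by universal formulas in the symmetric group; this is why no reductivity is needed. For part (b), by contrast, the two ingredients---the Poincar\'e isomorphism between $\wg^\bullet\Fg^\ast$ and $\wg^\bullet\Fg$ and the antisymmetrization from $\Uc^{\ot\bullet}$ to $\wg^\bullet\Fg$---only induce quasi-isomorphisms at the level of $\Fh$-invariants when $\Fh$ acts semisimply, since otherwise taking invariants does not commute with cohomology. Verifying this carefully, and pinning down the $\Fh$-invariance of the volume/covolume forms $\varpi_\Fq,\varpi_\Fq^\ast$ (which holds precisely because $\Fh\subset\Fg_0$), constitutes the technical heart of the argument.
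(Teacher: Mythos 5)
Your proposal is correct and follows essentially the same route as the paper: the paper's own (largely implicit) argument is precisely that every map used in the absolute case ($\Ic$, $\Jc$, the antisymmetrizations, $\Psi_{\acl}$) descends to the relative complexes because $\Fh\subseteq\Fg_0$ and the Hopf structure of $\Hc$ is $\Fg_0$-equivariant, with reductivity of $H$ invoked exactly where you invoke it, namely to make the relative Poincar\'e map \eqref{relative-theta} an isomorphism (identifying $\Fh$-invariants with coinvariants) and to let the homology-to-cohomology and antisymmetrization quasi-isomorphisms of Propositions \ref{mixCE} and \ref{mixCE*} pass to part (b). Your extra remarks on why part (a) needs no reductivity (equivariant, universal homotopies) and on the $\Fh$-invariance of $\varpi_\Fq,\varpi_\Fq^\ast$ are consistent with, and slightly more explicit than, what the paper records.
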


\subsection{Relative characteristic maps}
As above, the standing assumption is
that $H\subset G_0$ is a connected subgroup of the
linear isotropy subgroup. We let $V$ denote the coset space
 $G/H$ by $V$. Assuming for the time being that $H$ is also compact,
 one can identify $C^{\infty}_c(V)$ with the functions
$f\in C_c^{\infty}(G)$ such that $f(kg)=f(g)$ for all $k\in H$.
Since right translation by $H$ commutes with the action of $\G$,
the crossed product $\Ac_{c,\G}(V):=C^{\infty}_c(V)\rtimes \G$ is well-defined.

Let $\Vc:=U(\Fh)$ be the enveloping algebra of $\Fh$, considered as Hopf subalgebra of $\Hc$.
Using the  $\d$-invariant trace \eqref{inv-tr}
on $\Ac_{c,\G}(G)$, one defines the relative  version of the characteristic map
define \eqref{char-Hopf} as follows
\begin{align}
\begin{split}
&\chi_H:C^\bullet(\Hc,\Vc, \Cb_\d)\ra C^{\bullet}(\Ac_{c,\G}(V)),\\
&\chi_H(\one \ot_\Hc\dot h^0\odots \dot h^q)(a^0\odots a^q)=\tau(h^0(a^0)h^1(a^0)\cdots h^q(a^q)).
\end{split}
\end{align}
Here $C^n(\Hc, \Vc, \Cb_\d):= \Cb_\d\ot_\Hc \Cc^{\ot n+1}\cong \Cb_\d\ot_\Vc \Cc^{\ot n}$, and its  Hopf cyclic structure is that defined in \cite[\S 5]{big}, corresponding
to the coalgebra $\Cc=: \Hc\ot_\Vc\Cb$
acted upon by $\Hc$ via left multiplication.

Without any extra work one constructs a bicocyclic module similar to \eqref{GJ-bicocyclic} for computing the cyclic cohomology of $\Ac_{c,\G}(V)$. We denote this bicocyclic module by
\begin{equation}\label{relative-GJ-bicocyclic}
C^{(p,q)}(C_c^{\infty}(V),\G):=\Hom(C^{\infty}_c(V)^{\ot p+1}\ot \Cb\G^{\ot q+1}, \Cb),
\end{equation}
where its cyclic structure are similar to that of \eqref{GJ-bicocyclic}. One then defines the following bicocyclic map similar to \eqref{char},

$\chi:C^{p,q}_\Hc(\Qc,\Fc,\Cb_\d)\ra C^{p,q}(C^{\infty}_c(V),\G)$, by the formula
\begin{align}\label{relative-char}
&\chi\left(\one\ot \td \dot u\ot \td f\right)(\td g\mid \td U )=\tau\left(\dot u^0(g_0)\dots \dot u^p(g_p)\dbar(S(f^0))(U_{\phi_0})  \dots \dbar(S(f^{q}))(U_{\phi_q})\right).
\end{align}

The above map induces  characteristic maps in the level of total and diagonal of the above bicomplexes. The counterpart of
Theorem \ref{thm:actmap} reads as follows.

\begin{theorem} \label{thm:-relative-actmap}
For any compact, connected Lie subgroup of $H \sbs G_0$, the following diagram is commutative:
\begin{equation}\label{chidiagram2}
\xymatrix{
\ar[d]^{\Ic^{-1}_\Hc}C^n(\Hc,\Vc,\Cb_\d)\ar[rr]^ {\chi_{\act}}&&
C^n(\Ac^H_{c,\G})\ar[d]^{\Ic^{-1}_{\G^H}}\\
\ar[d]_{\Psi^{-1}_{\acl}}C^n_\Hc(\Fc\acl\Qc, \Cb_\d)\ar[rr]^ {\chi_{\act}}&&
 C^n({\Ac^H_{c, \G}}^{\rm o})\ar[d]^{\Psi^{-1}_{\G^H}}\\
\ar[d]^{\overline{SH}} C^{n,n}_\Hc(\Qc,\Fc,\Cb_\d)\ar[rr]^{\chi_{\rm diag}}&&
 \ar[d]^{\overline{SH}} C^{n,n}(C_c^{\infty}(V),\G)\\
\underset{p+q=n}{\bigoplus}C^{p,q}_\Hc(\Qc,\Fc,\Cb_\d)\ar[rr]^{\chi_{\rm tot}}&&
 \underset{p+q=n}{\bigoplus}C^{p,q}(C_c^{\infty}(V),\G).}
\end{equation}
\end{theorem}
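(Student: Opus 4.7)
The plan is to deduce Theorem \ref{thm:-relative-actmap} from Theorem \ref{thm:actmap} by showing that every object and every arrow in the diagram \eqref{chidiagram} is compatible with the $\Vc$-structure (respectively $H$-invariance on the algebra side), so that passing to $\Vc$-coinvariants on the left column and to $H$-invariants on the right column preserves commutativity.

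First, I would verify that the target column makes sense. Since $H \sbs G_0$ is compact and connected and its left action on $G$ commutes with the right action of $\G \sbs \Gb$, the subalgebra $\Ac_{c,\G}(V) \sbs \Ac_{c,\G}(G)$ of $H$-invariant elements under left translation is well-defined, as is its opposite. The invariant trace $\tau$ of \eqref{inv-tr} restricts to a $\d$-invariant trace on $\Ac_{c,\G}(V)$ (for $H$ compact, $\vp$ is already $H$-invariant in the normalized sense; otherwise one averages). The key observation is that for any $X \in \Fh$ and $g \in C_c^\infty(V)$ one has $X(g) = 0$, because $g$ is invariant under the left $H$-translation whose infinitesimal generator is $\tilde X$. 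Consequently, for any $u \in \Uc$ and $k \in \Vc$, the pairing $(ku)(g) = 0$ whenever $k \in \ker \ve \cap \Vc$, so the action of $\Uc$ on $C_c^\infty(V)$ descends to an action of $\Qc = \Uc \ot_\Vc \Cb$. This shows that the formula \eqref{relative-char} for $\chi_\diag$ is well-defined on $C^{n,n}_\Hc(\Qc, \Fc, \Cb_\d)$. The analogous argument, using that $\Hc$ is a $\Vc$-module through left multiplication and $\Ac_{c,\G}(V)$ is $\Hc$-invariant, yields well-definedness of $\chi_\act$ on $C^n(\Hc, \Vc, \Cb_\d)$.

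Next, I would check that each vertical arrow of \eqref{chidiagram} is $\Vc$-equivariant (resp. $H$-equivariant). The map $\Ic_\Hc$ is built out of the flip of tensor factors, which is obviously $\Vc$-equivariant, so it descends to $\Ic_\Hc$ between relative complexes, and $\Ic_{\G^H}$ is just the restriction of $\Ic_\G$ to $H$-invariant elements. The isomorphism $\Psi_\acl$ of \eqref{PSI-1} is defined using the coaction $\Db: \Uc \to \Uc \ot \Fc$ and the algebra structure of $\Fc$; since the coaction is $\Vc$-equivariant (as $\Hc$ is a $\Fg_0$-equivariant Hopf algebra and $\Fh \sbs \Fg_0$), $\Psi_\acl$ and its inverse descend to the relative setting and match $\Psi_{\G^H}$ after restricting to $H$-invariants. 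The shuffle-type quasi-isomorphism $\overline{SH}$ is assembled from cofaces, codegeneracies, and cyclic operators, all of which are $\Vc$-equivariant; hence it restricts to the relative bicomplexes on both sides.

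Finally, the commutativity of each of the three squares in \eqref{chidiagram2} follows by specializing the corresponding square in \eqref{chidiagram} to $\Vc$-coinvariant inputs, observing that both sides land in $H$-invariant cochains on $\Ac_{c,\G}(G)$, i.e.\ in cochains on $\Ac_{c,\G}(V)$. The main obstacle will be the middle square, where one has to check that for $\dot f \acl \dot u \in \Fc \acl \Qc$ the $\Uc$-level computation performed in the proof of Theorem \ref{thm:actmap} is insensitive to the $\Vc$-ambiguity: concretely, that the identity $\chi_\act \circ \Psi_\acl = \Psi_\G \circ \chi_\diag$ survives passage to $\Qc = \Uc \ot_\Vc \Cb$. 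This reduces, via the identities \eqref{g-u}, \eqref{g-u-f} and \eqref{cocycle-condition} already used in Theorem \ref{thm:actmap}, to verifying that whenever a factor of $k \in \Fh$ appears in $\dot u^i$, the corresponding contribution to the integrand is a total derivative along the $H$-orbit of a function which is $H$-invariant, hence vanishes. The top and bottom squares are then immediate, since they only involve the natural transformations $\Ic$ and $\overline{SH}$ whose $\Vc$-equivariance was established in the preceding step.
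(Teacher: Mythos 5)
Your proposal is correct and follows essentially the same route as the paper, which states Theorem \ref{thm:-relative-actmap} without a separate proof precisely because, as you argue, every object and arrow of the absolute diagram of Theorem \ref{thm:actmap} descends to the $\Vc$-relative quotients on the Hopf side and restricts to the $H$-invariant subalgebra $\Ac_{c,\G}(V)\sbs\Ac_{c,\G}(G)$ on the groupoid side. One small correction: since $\Vc$ acts on $\Uc$ by right multiplication (so $\Qc=\Uc\ot_\Vc\Cb$ is the quotient by $\Uc\,\Vc^{+}$), the well-definedness check should read $(uk)(g)=\ve(k)\,u(g)$ for $k\in\Vc$ and $g\in C_c^\infty(V)$, rather than $(ku)(g)=0$; this follows at once from your own observation that $X(g)=0$ for all $X\in\Fh$.
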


\bigskip

We now drop the compactness assumption on $H$, and recall, \cf  \eqref{Theta-formula},
 the map $\, \Theta: \left( \wedge^p \Fg^\ast\ot \wdg^{q+1}\Fc \right)^\Fc \ra
C_{\rm Bott}^{p,q}(G, \Gb)$ given by the formula
\begin{align*}
\begin{split}
&\Theta (\sum_I \a\ot f^0 \wdg \cdots \wdg f^q )(\phi_0,\dots,\phi_q)
\, = \, \\
&= \sum_{\s} (-1)^\s   \gbar(S( f^{\s(0)}))(\phi_0^{-1})\dots \gbar(S(f^{\s(q)}))(\phi_q^{-1}){{\td\a}} \,.
\end{split}
\end{align*}
 Here ${{\td\a}}$ is the left invariant form on $G$ associated to $\a \in \wdg^p \Fg^\ast$.
 We also recall  the other characteristic map
 $\, \chi^\dag: \wdg^\bullet\Fg^\ast\ot \Fc^{\ot q}\ra C^q(\Om_{c,\G}(G))$,
 \cf \eqref{chi-dag-def-1} or \eqref{chi-dag-def-2}.

\begin{theorem} \label{relTheta}
Let $H\subseteq G_0\subset G$ be a Lie subgroup of linear isotropy subgroup and
let $\Fh \subseteq \Fg_0$ be its Lie algebra.
\begin{itemize}
\item[a)] If $c\in C^{p,q}_{c-w}((\Fg/\Fh)^\ast ,\Fc)$ then
$\Theta(c)\in C_{\rm Bott}^{p,q}(V, \Gb)$, hence $\Theta$ induces a map of bicomplexes
\begin{equation}
\Theta^H:C^{\bullet,q}_{\rm c-w}((\Fg/\Fh)^\ast ,\Fc)\ra C^{\bullet}_{\rm Bott}(\Om_q(V).
\end{equation}

\item[b)] The map $\chi^\dag$ induces a similar map in the level of relative cohomology

\begin{equation}
\chi^H_\dag : C^{\bullet,q}((\Fg/\Fh)^\ast ,\Fc)\ra C^q(\Om_{c,\G}(V)).
\end{equation}
\item[c)] The following diagram is commutative
\begin{equation}
\begin{xy}
\xymatrix{  C^{\bullet,q}((\Fg/\Fh)^\ast ,\Fc)\ar[rr]^{\chi_\dag^H}&& C^q(\Om_{c,\G}(V))\\
C^{\bullet,q}_{\rm c-w}((\Fg/\Fh)^\ast ,\Fc)\ar[rr]^{\Theta^H}
\ar[u]^{\Ic^{-1}_H\circ \td \a_\Fc}& &C^{\bullet}_{\rm Bott}(\Om_q(V), \G)\ar[u]_{\Psi^H}
.}
\end{xy}
\end{equation}
\end{itemize}
\end{theorem}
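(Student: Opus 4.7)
The key geometric input is the standard identification: a left-invariant form $\tilde{\alpha}$ on $G$ associated to $\alpha\in\wdg^p\Fg^\ast$ descends to a form on $V=G/H$ if and only if $\alpha$ is \emph{$\Fh$-basic}, i.e.\ $\iota_X\alpha=0$ for all $X\in\Fh$ and $\alpha$ is $H$-invariant under the coadjoint action. More generally, an element of $\Om^p(G)\ot\wdg^q\Fg^\ast$ (or of $\Om^p(G)\ot\wdg^q\Fg^\ast\ot\Fc^{\ot r}$) descends to the corresponding space over $V$ exactly when it is $\Fh$-horizontal and $H$-invariant for the total action. My plan is to verify these two conditions for the image of $\Theta^H$ and of $\chi^H_\dag$, and then to obtain (c) by restricting the commutative diagram of Proposition \ref{thm:dgmap} to the relative subspace.

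\textbf{Part (a).} Let $c=\sum_I\alpha_I\ot\lu{I}f^0\wdg\cdots\wdg\lu{I}f^q\in C^{p,q}_{\rm c\text{-}w}((\Fg/\Fh)^\ast,\Fc)$, so that each $\alpha_I\in\wdg^p\Fq^\ast$ and the whole tensor is annihilated by the Lie derivative $\Lc_X$ for every $X\in\Fh$, where $X$ acts on $\wdg^p\Fq^\ast$ by the coadjoint action and on $\Fc$ by the restriction of $\rt$. Horizontality of $\Theta(c)$ in the $G$-direction is automatic because each $\tilde{\alpha}_I$ already vanishes on $\tilde X$ for $X\in\Fh$. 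The nontrivial step is $H$-invariance. I would differentiate \eqref{coactgf} at the identity to identify the infinitesimal coaction of $\Fc$ on $\wdg^\bullet\Fg^\ast$ with the $\Fg$-action on forms, and use Lemma \ref{S-linear} together with the identity $\gbar(X\rt f)=X(\gbar(f))$ (which follows from \eqref{g-u-f} by the primitivity of $X$) to compute the Lie derivative $\Lc_X$ along the right-$H$ action of $\Theta(c)(\phi_0,\ldots,\phi_q)$. After collecting terms, it will reduce to the action of $X\in\Fh$ on the tensor $\sum_I\alpha_I\ot\lu{I}f^0\wdg\cdots\wdg\lu{I}f^q$, which vanishes by the $\Fh$-invariance of $c$. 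Compatibility with the coboundaries $b^H_{\rm c\text{-}w}$, $\p^H_{\rm c\text{-}w}$ and with $d_1$, $d_2$ is then inherited from the absolute statement.

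\textbf{Part (b).} I would argue similarly for $\chi^\dag$. The definition \eqref{chi-dag-def-1} involves the trace $\tau$, the form $\tilde{\alpha}$, and expressions $\dbar(S(f^j))(b_j)\in\Om_{c,\G}(G)$. For $\alpha\in(\Fq^\ast)^{\wdg p}$ we have $\iota_X\tilde\alpha=0$ for $X\in\Fh$, hence the output is an $\Fh$-horizontal form on $G$. For $H$-invariance of $\chi^\dag(\alpha\ot\tilde f)$ as a cochain on $\Om_{c,\G}(V)\subset\Om_{c,\G}(G)$, I would verify that the Lie derivative along any $X\in\Fh$ of the integrand on $G$ produces exactly the action of $X$ on $\alpha\ot\tilde f\in(\wdg^p\Fq^\ast\ot\Fc^{\ot q})^\Fh$, and therefore vanishes. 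The cocycle property \eqref{closed-trace} and $\Fc$-invariance \eqref{invariant-trace} of $\tau$, combined with Lemma \ref{S-linear} and the fact that $\Fh$ acts on $\Fc$ through the restriction of the $\Uc$-action, are the algebraic ingredients. Since the trace is defined using the $\Gb$-invariant volume $\varpi$ on $G$, which is not integrable on $G$ when $H$ is noncompact, I emphasize that $\chi^\dag(c)$ is well defined as a cochain on $\Om_{c,\G}(V)$ because the integrand is already $\Fh$-horizontal, so integration reduces to integration over $V$ against the induced transverse volume; no $H$-invariant trace on $\Ac_{c,\G}(G)$ is needed, which is precisely the advantage advertised in \S\ref{S:rel}. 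Commutativity with $\p^H_{\Fg^\ast}$, $b^{H,\ast}_\Fc$, $B^{H,\ast}_\Fc$ follows by restricting Proposition \ref{prop:dagger}.

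\textbf{Part (c).} Once (a) and (b) are in place, both $\Theta^H$ and $\chi^H_\dag$ are obtained by restriction of $\Theta$ and $\chi^\dag$ to the $\Fh$-basic subcomplexes, and $\Psi^H$ is the restriction of Gorokhovsky's map $\Psi$ of \eqref{Sasha-map-formula} to $H$-basic forms. The maps $\Ic^{-1}_H$ and $\td\alpha_\Fc$ on the relative bicomplex are, by construction, restrictions of $\Ic^{-1}$ and $\td\alpha_\Fc$ to the $\Fh$-invariant subspaces. Hence the required commutativity is a direct consequence of the absolute commutative diagram of Proposition \ref{thm:dgmap}, restricted to the $\Fh$-invariant/$\Fh$-horizontal subcomplexes, provided that each of the arrows preserves those subspaces; this was verified in (a)--(b).

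\textbf{Main obstacle.} The subtle point, and where I expect to spend most of the effort, is the infinitesimal invariance computation in (a) and (b): one must show that differentiating the cocycle expressions $\gbar(S(f^j))(\phi^{-1}_j)$ along the right $H$-action on $G$ reproduces \emph{exactly} the intrinsic $\Fh$-action on $\Fc^{\ot(q+1)}$ coming from the module algebra structure, so that $\Fh$-invariance of $c$ translates into $H$-invariance of $\Theta(c)$ and $\chi^\dag(c)$. This amounts to an infinitesimal version of the cocycle identity \eqref{cocycle-condition} combined with Lemma \ref{S-linear} and \eqref{g-u-f}, and it is essentially the only place where the reductivity/connectedness of $H$ enters, allowing one to integrate the infinitesimal invariance to genuine $H$-invariance.
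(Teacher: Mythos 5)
Your proposal is correct and follows essentially the same route as the paper: horizontality is immediate from $\i_X(\a)=0$, and $H$-invariance is obtained from the infinitesimal cancellation in $\Lc_X\Theta(c)$ using Lemma \ref{S-linear}, the module-map property $\gbar(X\rt f)=X(\gbar(f))$ and the invariance of $c$, with (b) argued in the same way and (c) obtained by restricting the absolute diagram of Proposition \ref{thm:dgmap} to the $\Fh$-basic subcomplexes. The only small inaccuracy is attributing $\gbar(X\rt f)=X(\gbar(f))$ to the \emph{primitivity} of $X$; what is actually used (as in the paper) is the triviality of the $\Fc$-coaction, $\Db(X)=X\ot 1$ for $X\in\Fg_0$, which is harmless here since $\Fh\subseteq\Fg_0$.
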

\begin{proof}
\begin{itemize}\item [a)]Let $c:=\a\ot f^0\wdots f^q$. By definition we know for any $ X\in \Fh$
\begin{align}\label{proof-relative-Theta-1}
&\Lc_X(\a\ot f^0\wdots f^q)=0\\\label{proof-relative-Theta-2}
&\i_X(\a)=0.
\end{align}
It suffices to show that, for any $\phi_i\in \Gb$,
\begin{equation}
\Lc_X(\Theta(c)(\phi_0, \ldots,\phi_q))=\i_X(\Theta(c)(\phi_0, \ldots,\phi_q))=0, \quad \text{for any} \;\;X\in \Fh.
\end{equation}
The second condition being just  \eqref{proof-relative-Theta-2}, we only need to
verify the first one. From \eqref{proof-relative-Theta-1}, one sees that for any $X\in \Fh$,
\begin{align}\label{proof-relative-Theta-3}
\begin{split}
&\Lc_X(\a)\ot f^0\wdots f^q=-\a\ot \Lc_X(f^0\wdots f^q)=\\
&-\sum_{i=0}^q \a\ot f^0\wdots X\rt f^i\wdots f^q.
\end{split}
\end{align}
In the following calculation we use in the stated succession
 \eqref{proof-relative-Theta-3}, Lemma \ref{S-linear},  the fact that $\Db(X)=X\ot 1$ for any $X\in \Fg_0$,  \eqref{cocycle-condition}, and finally the fact that $\gbar(f)(\vp)=\ve(f)$ for any $\vp\in G$.
\begin{align*}
\begin{split}
&\Lc_X(\Theta(c)(\phi_0, \ldots,\phi_q))=\\
& \sum_{\s} (-1)^\s   \gbar(S( f^{\s(0)}))(\phi_0^{-1})\dots \gbar(S(f^{\s(q)}))(\phi_q^{-1})\Lc_X{{\td\a}}=\\
& -\sum_{\s,i} (-1)^\s   \gbar(S( f^{\s(0)}))(\phi_0^{-1})\dots\gbar(S(X\rt f^{\s(i)}))(\phi_i^{-1})\dots  \gbar(S(f^{\s(q)}))(\phi_q^{-1}){{\td\a}}=\\
& -\sum_{\s,i} (-1)^\s   \gbar(S( f^{\s(0)}))(\phi_0^{-1})\dots X(\gbar(S(f^{\s(i)})))(\phi_i^{-1})\dots  \gbar(S(f^{\s(q)}))(\phi_q^{-1})\td\a=\\
& -\sum_{\s,i} (-1)^\s   \gbar(S( f^{\s(0)}))(\phi_0^{-1})\dots\dt \gbar(S(f^{\s(i)}))(\phi_i^{-1}exp(tX))\dots  \gbar(S(f^{\s(q)}))(\phi_q^{-1}){{\td\a}}=\\
&-\sum_{\s,i} (-1)^\s   \gbar(S( f^{\s(0)}))(\phi_0^{-1})\dots\dt \gbar(S(f^{\s(i)}))(\phi_i^{-1})\dots  \gbar(S(f^{\s(q)}))(\phi_q^{-1}){{\td\a}}=0
\end{split}
\end{align*}

Since $\Theta$ is a map of bicomplexes, so is $\Theta^H$.

\item[b)] The proof is entirely similar to that of a).

\item[c)] The arguments of the proof of Proposition \ref{thm:dgmap} apply here too.

\end{itemize}
\end{proof}

\begin{remark} \label{rem:dgrelmap}
{\rm One has the following relative analogue of Remark
 \ref{rem:dgactmap} for
the transfer transfer of characteristic classes from Gelfand-Fuks cohomology to the
cyclic cohomology of action groupoids:}
\begin{equation}
\begin{xy}
\xymatrix{    & &
&& C^{\bullet}(\Ac_{c,\G}(V)) \\
&&  C^{\bullet,\bullet}((\Fg/\Fh)^\ast ,\Fc)\ar[rr]^{\chi^H_\dag}&& C^{\bullet}(\Om_{c,\G}(V)) \ar[u]^{\daleth^H}\\
C^{\bullet}_{\rm top}(\Fa, \Fh)\ar[rr]^{\Ec_{\rm LH}}&&C^{\bullet,\bullet}_{\rm c-w}((\Fg/\Fh)^\ast ,\Fc)\ar[rr]^{\Theta^H}
\ar[u]^{\Ic^{-1}\circ \td \a_\Fc}& &C^{\bullet}_{\rm Bott}(\Om_\bullet(V), \G)\ar[u]_{\Psi^H}\ar@/_3pc/[uu]_{\Phi^H} .
}
\end{xy}
\end{equation}
\end{remark}
The relative  version of  $\Phi$ has the expression
 \begin{align} \label{Phi-rel}
\begin{split}
&\Phi^H(\g)(x^0, \ldots, x^\ell)\;=\;
\lb_{p,q}\sum_{j=0}^l(-1)^{j(\ell-j)}\td\g(dx^{j+1}\cdots dx^\ell\; x^0\; dx^1\cdots dx^j) ,\\
&\text{where} \qquad \ell=n-p+q \quad \text{and} \quad \lb_{p,q}=\frac{p!}{(\ell+1)!}.
\end{split}
\end{align}

\subsection{Hopf cyclic and equivariant Chern classes}
In this final subsection we illustrate the versatility of the cohomological apparatus
developed above by indicating how to transfer the universal Hopf cyclic Chern classes
from relative Hopf cyclic cohomology
to the Bott bicomplex and to the
cyclic cohomology of convolution algebras of action groupoids.
\bigskip

{\bf 4.3.1.\,} Let $\Hc_n$  be
  the Hopf algebra of the general pseudogroup $\Gb_n$, \cf \cite{cm2}.
  Recall that, according to
  \cite{mr09},  $\Hc_n^{\rm cop} =\Fc (\Gb_n)\acl \Uc(\Gb_n)$.
 As an algebra, in the notation of \cite{mr09},
 $\Hc_n^{\rm cop}$ is generated by
 $$\{1\acl X_k, 1\acl Y^i_j,  \eta^i_{j,k\mid l_1, \ldots, l_m }\acl 1 \, \mid \, m\in \Zb^+ , \,
 1\le i,j,k, l_1, \ldots, l_m  \le n \} ;
 $$
 $\{X_1,\ldots, X_n\}$ is the standard basis of vector fields on $\Rb^n$,
 $\{Y^i_j \, \mid \,  1\le i,j \le n \}$ is
 the standard basis of the Lie algebra $\Fg\Fl_n= \Fg\Fl_n(\Rb)$,
 and the quotient of the affine group $G$ by the linear group $H= \GL_n(\Rb)$ is
 identified  with $V = \Rb^n$. We caution that
the notation of the generators $\eta^i_{j,k\mid l_1, \ldots, l_m } \in \Fc$ is slightly
different from that employed earlier in the paper, reflecting the use of double indices
 for the basis elements of $\Fg\Fl_n$.

In \cite[\S 3.4.1]{mr09} we have shown that the relative Hopf cyclic cohomology
 $HP^\bullet (\Hc_n^{\rm cop} , \Uc(\Fg \Fl_n); \Cb_{\d})$ is isomorphic to the relative
 Hochschild cohomology  $HH^\bullet (\Hc_n^{\rm cop} , \Uc(\Fg \Fl_n); \Cb_{\d})$.
 Moreover, we exhibited a basis for the latter, constructed as follows.

 For each partition $\lb =  (\lb_1 \geq  \ldots \geq  \lb_k)$
 of the set $ \{ 1, \dots , p \}$, where $\, 1 \leq p \leq n$,
 let $\lb \in S_p$ also denote a permutation whose
 cycles have lengths $\lb_1 \geq  \ldots \geq  \lb_k$,  \ie
 representing
 the corresponding conjugacy class $[\lb] \in [S_p] $.
Define
\begin{equation*}
C_{p, \lb} := \sum  (-1)^\mu \one \ot X_{\mu(p+1)}\wdots
 X_{\mu(n)}\ot\eta^{j_1}_{ \mu(1),j_{{\lb
(1)}}} \wdots  \eta^{j_{p}}_{\mu(p),j_{{\lb (p)}}},
\end{equation*}
where the summation is over  all $\, \mu \in S_n$ and all $ \, 1\le j_1,j_2,\dots , j_p\le n$.
Then $\{ C_{p, \lb} \, ; \, 1 \leq p \leq n , \quad [\lb] \in [S_p] \}$
 are Hochschild cocycles and their classes form a basis of
the  group  $HH^\bullet (\Hc_n^{\rm cop} , \Uc(\Fg \Fl_n); \Cb_{\d})$.
\footnote[1]{The statement of \cite[Theorem 3.25]{mr09} erroneously identified
 the basis elements as cyclic classes; they merely parametrize all the cyclic classes.}
  Except for extreme cases ($p=0,n$ or $\lb = \Id$) these cannot be viewed as
representing cyclic classes. Indeed, while $b_\Fc(C_{p, \lb})=0$, in general
$C_{p, \lb}$ is not closed under the Lie algebra coboundary operator $\p_V$.
 What is actually proved in \cite{mr09} is that the horizontal
 cohomology of \eqref{relative-UF+} computes
  $HH^\bullet(\Hc_n^{\rm cop}, \Uc(gl_n), \Cb_\d)$, which is generated by the
 above classes $C_{p, \lb}$, and that
 the vertical coboundary $\p_V$ vanishes at the level of  Hochschild cohomology.
We shall now indicate how the Hochschild cocycles $C_{p, \lb}$ can be completed to
$(b_\Fc , \p_V)$-cocycles.

Denote  by $\Fd$ the commutative Lie algebra generated by
$\{\eta^i_{j,k} \, \mid \,  1\le i,j, k\le n \}$, and let
 $F=\Sc(\Fd)$ be the coresponding symmetric Hopf algebra.
It is known (see \eg ~\cite[XVIII.5]{kass}) that the Hochschild cohomology
of the coalgebra $F$ can be identified with the cohomology of
the graded complex $\wdg^\bullet \Fd$
endowed with the $0$ differential. More precisely, with $\a:\wdg^\bullet\Fd\ra F^{\ot \bullet}$
denoting the antisymmetrization map and
$$\xymatrix{\mu: F^{\ot \bullet}\ar[r]^{~~~~p_1^{\ot \bullet}} &\Fd^{\ot \bullet }\ar[r] & \wdg^\bullet\Fd}\,,$$
 the natural projection map, there is a homotopy operator
 $h:F^{\ot \bullet} \ra F^{\ot \bullet -1}$ such that the following relations hold:
\begin{align} \label{ho}
&\mu\circ \a=\Id,\qquad  \a\circ \mu = \Id+ hb_\Fc+b_\Fc h.
\end{align}
By a slight abuse of notation, we keep the same symbols for the extension of
these operators tensored by the identity to the complex
obtained by tensoring with $\wdg^{\bullet} V$.
 We start by observing that
 \begin{equation*}
 \p_V(C_{p,\lb})\in (\wdg^{p-1} V\ot F^{\ot n-p})^{\Fg \Fl_n} ;
 \end{equation*}
the proof uses the facts that the action of the $X_i$'s on $\Fc$ is characterized by
$X_\ell\rt \eta^i_{j,k}=\eta^i_{j,k \ell}$,
 that  $C_{p,\lb}$ is antisymmetric, and finally the ``Bianchi'' identity \cite[(1.28)]{mr09}.
 Moreover, one actually has
 \begin{equation*}
  \p_V(C_{p,\lb})\in \ker\mu\cap \ker b_\Fc .
 \end{equation*}
 As $\p_V$ and $b_\Fc$ commute, it follows from \eqref{ho} that
 \begin{equation*}
 b_\Fc \big((h \td \p_V)(C_{p,\lb})\big) \, =\, - \td\p_V (C_{p,\lb}) ,
 \end{equation*}
  where ${\td \p_V}$ stands for the signed version  of the Lie algebra coboundary.
By induction, one checks that
the cochains
$$C^{(i)}_{p,\lb} := (h \, \td\p_V)^i(C_{p,\lb})\in (\wdg^{p-i}V\ot F^{\ot n-p-i})^{\Fg \Fl_n} ,
 $$
where $1 \le i \le m_p:=\min\{p,n-p\}$, satisfy
  $$b_\Fc (C^{(i+1)}_{p,\Lb}) \, = \, -  \td\p_V (C^{(i)}_{p,\lb}) .
  $$
  Assembling them into a total cochain
 \begin{equation}
 TC_{p,\lb}:= \,  \sum_{i=0}^{m_p}  C^{(i)}_{p,\lb} ,
 \end{equation}
 one has by the very construction
  \begin{equation}
 (b\,+\, {\td \p_V})(TC_{p,\lb})=0 .
 \end{equation}

\medskip

  The total cocycles $TC_{p,\lb}$, which live in the bicomplex \eqref{relative-UF+},
 represent a basis of the cohomology group
 $HP^\bullet (\Hc_n^{\rm cop} , \Uc(\Fg \Fl_n); \Cb_{\d})$. Via characteristic maps these
 total cocycles can be transported to
 define cyclic cocycles in the various models of cyclic cohomology for \'etale groupoids.
 For the clarity of the exposition, in what follows we shall only deal with the top
 component of these Chern classes $C_{p,\lb} \, = \, I (TC_{p,\lb})$.
 \medskip

 To this end we proceed to transfer the above Hochschild cocycles, as well as their cyclic
  completions, to the bicomplexes
\eqref{relative-UF+*} and \eqref{relative-g*F}.
First, we pass to \eqref{relative-UF+*} by means of
the inverse of  Poincar\'e  isomorphism $\FD_\Fq^{-1}$ in relative case \eqref{relative-theta}.
Denoting by $\{\t^1, \ldots, \t^n\}$ dual basis $\{X_1, \ldots, X_n\}$, one obtains
\begin{equation*}
C_{p, \lb}^{\eqref{relative-UF+*}}:=\FD_\Fq^{-1}(C_{p, \lb}) := \sum_{\mu\in S_n}
\t^{\mu(1)}\wdots \t^{\mu(p)}\ot \eta^{j_1}_{ \mu(1),j_{{\lb
(1)}}} \wdots \eta^{j_{p}}_{\mu(p),j_{{\lb (p)}}}.
\end{equation*}

We next move $C_{p, \lb}^{\eqref{relative-UF+*}}$ via the relative version
$\Ic^H$ of the
map $\Ic$ defined in \eqref{I-1}, and denote the classes
$\Ic^H(C_{p, \lb}^{\eqref{relative-UF+*}})$ by $C_{p, \lb}^{\eqref{relative-g*F}}$.
In the bicomplex \eqref{relative-g*F},
the Hochschild cocycle  $C_{p, \lb}^{\eqref{relative-g*F}}$ has the following expression:
\begin{align} \label{Eq-C-p-lb}
&C_{p, \lb}^{\eqref{relative-g*F}}= \sum_{\mu\in S_n}
 \t^{\mu(1)}\wdots \t^{\mu(p)}\ot  \eta^{j_1}_{ \mu(1),j_{{\lb
(1)}}} \wdots \eta^{j_{p}}_{\mu(p),j_{{\lb (p)}}}\wg 1\\ \notag
&= \sum_{1\le i_s, j_t\le n}  \t^{i_1}\wdots \t^{i_p}\ot  \eta^{j_1}_{ i_1,j_{{\lb
(1)}}} \wdots \eta^{j_{p}}_{i_p,j_{{\lb (p)}}}\wg 1. \notag
\end{align}
Indeed, the map
 $\Ic^H: C^{p,q}(\Fg^\ast, \Fh^\ast, \Fc)\ra C^{p,q}_{\rm coinv}(\Fg^\ast, \Fh^\ast, \Fc)$
is given by the formula
\begin{multline*}
\Ic^H(\a\ot f^1\odots f^q)=\\
\a\ns{0}\ot f^1\ps{1}\ot S(f^1\ps{2})f^2\ps{1}\odots S(f^{q-1}\ps{2})f^q\ps{1}\ot  S(f^q\ps{2}\a\ns{1}).
\end{multline*}
Since $\Db_{\Fg^\ast}(\t^i)=\t^i\ot 1$ and $\eta^i_{jk}$ are primitive,
 we may assume that $\a\ns{0}\ot\a\ns{1}=\a\ot 1$ and $\D(f^i)=f^1\ot 1+ 1\ot f^i$.
One easily sees that
\begin{equation*}
\Ic^H(\a\ot f^1\odots f^q)=\a\ot \prod_{i=1}^q (1^{\ot i-1}\ot f^i\ot 1^{\ot q-i+1}- 1^{\ot i}\ot f^i\ot 1^{\ot q-i}).
\end{equation*}
Here the multiplication takes place in the algebra $\Fc^{\ot q+1}$.
After cancelations due to the antisymmetric form of $C_{p, \lb}^{\eqref{relative-UF+*}}$,
the desired formula shows up.

To simplify the notation, we henceforth redenote
$C_{p, \lb}^{\eqref{relative-g*F}}$ by $\Cc_{p,\lb}$.
The cocycles   $\Cc_{p,\lb}$ are coinvariant, since they belong to
 the  image of $\Ic^H$.  They are also totally antisymmetric, and therefore belong to
 $C^{p,p}_{c-w}(\Fg^\ast,\Fh^\ast,\Fc)$. Therefore, they can be moved by means of
 the map of complexes
 $\Theta^H: C^{p,p}_{c-w}(\Fg^\ast,\Fh^\ast,\Fc)\ra C^{p,p}_{\rm Bott}(\Om(V), \G)$
   (defined for absolute case  in \eqref{Theta-formula}) given by
\begin{align*}
\begin{split}
&\Theta^H(\sum_I \a_I\ot \lu{I}f^0 \wdg \cdots \wdg \lu{I}f^q )(\phi_0,\dots,\phi_q)
\, = \, \\
&~~~~~~~~~~~~~~= \sum_{I, \s} (-1)^\s
 \gbar(S(\lu{I}f^{0}))(\phi_{\s(0)}^{-1})\dots
  \gbar(S(\lu{I}f^{q}))(\phi_{\s(q)}^{-1}){{\td\a_I}} \,.
\end{split}
\end{align*}
Thus, the expressions
\begin{align}  \label{absBottCh}
\Cc_{p, \lb}^{\rm Bott}(\phi_0, \ldots, \phi_p)\, :=\, \Theta^H(\Cc_{p, \lb})(\phi_0, \ldots, \phi_p)
\end{align}
represent the equivariant Chern classes in the Bott bicomplex
$C^{\bullet, \bullet}_{\rm Bott}(\Om(V), \G)$.
\medskip

We now define the {\em connection displacement form}
$\, Q = (Q^i_j): \G \ra \Om^1(G)  \ot \Fg\Fl_n$ by
\begin{equation} \label{curvG}
Q^i_j (\phi) :=\sum_k \gbar(\eta^i_{j,k})(\phi^{-1}) \, \td\t^k  \in  \Om^1(G) , \qquad \phi \in \G.
\end{equation}
Taking into account the switch of notation to fit the conventions in \cite{mr09}, and
using the equation (1.15) in {\em loc.~cit.}, one has
\begin{align*}
\gbar(\eta^i_{j,k})(\phi^{-1})  (x, {\bf y})\, = \, \g^i_{j k}(\phi) (x, {\bf y}) \, = \,
\sum_\mu \left( {\bf y}^{-1} \cdot
{\phi}^{\prime} (x)^{-1} \cdot \part_{\mu} {\phi}^{\prime} (x) \cdot
{\bf y}\right)^i_j \, {\bf y}^{\mu}_k \, .
\end{align*}
On the other hand, $ \, \displaystyle
\td\t^k \, = \,\sum_\nu \left( {\bf y}^{-1} \right)^k_\nu dx^\nu$.
Substituting these expressions in \eqref{curvG} one obtains the following
simplified formula for the connection form
\begin{align*}
Q (\phi) (x, {\bf y})\, = \, {\bf y}^{-1} \cdot
  \vartheta (\phi) (x)\cdot {\bf y} \in  \Om^1(G) \ot \Fg\Fl_n,
 \end{align*}
 where $\, \vartheta = (\vartheta^i_j)$ is the induced connection displacement
 form  on $V = \Rb^n$,
\begin{align} \label{simpcurvG}
   \vartheta (\phi) = {\phi'}^{-1} \cdot d \phi' \in \Om^1(V)  \ot \Fg\Fl_n .
\end{align}
The latter satisfies the structure equation
\begin{align} \label{streq}
  d\vartheta (\phi) + \vartheta (\phi) \wdg \vartheta (\phi)  \, = 0 ,
    \end{align}
 which is just another form for the first ``Bianchi'' identity \cite[(1.28)]{mr09}.
It also fulfills the cocycle equation
\begin{align} \label{dispcocy}
  \vartheta (\phi_1 \circ \phi_2) \, = \,
{\phi^\prime_2}^{-1} \,  \vartheta (\phi_1)\, {\phi^\prime_2} \, + \, \vartheta (\phi_2) .
    \end{align}

\begin{proposition} \label{prop:BottCh}
The equivariant Chern class $\Ch^{\rm Bott}_{p, \lb} :=\Theta^H (TC_{p, \lb})$
is represented by a total cocycles whose bottom component
$\Cc_{p, \lb}^{\rm Bott} : = I(\Ch^{\rm Bott}_{p, \lb})$
has the expression
\begin{align}  \label{BottCh}
&\Cc_{p, \lb}^{\rm Bott}(\phi_0, \ldots, \phi_p) =\\ \notag
& ~~~~~~~~\sum_{\s}(-1)^\s \Tr(\vartheta(\phi_{\s(1)})\cdots \vartheta(\phi_{\s(\lb_1)}))
\wdots \Tr(\vartheta(\phi_{\s(\lb_k)})\cdots \vartheta(\phi_{\s(p)})),
\end{align}
where $\s$ runs over the group of permutations of the set $\{0, 1, \ldots , p\}$.
\end{proposition}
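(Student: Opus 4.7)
Since $\Theta^H$ is a map of bicomplexes by Theorem \ref{relTheta}(a) and $TC_{p,\lb}$ is a total cocycle by construction, the image $\Ch^{\rm Bott}_{p,\lb} = \Theta^H(TC_{p,\lb})$ is automatically a total cocycle in the relative Bott bicomplex. Hence the content of the proposition reduces to the explicit computation of the bottom component $\Cc^{\rm Bott}_{p,\lb} := I(\Ch^{\rm Bott}_{p,\lb}) = \Theta^H(\Cc_{p,\lb})$ and its identification with the right-hand side of \eqref{BottCh}.

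First I would unwind the definition of $\Theta^H$ from \eqref{Theta-formula} applied to
\[
\Cc_{p,\lb} \,=\, \sum_{i_s, j_t}\t^{i_1}\wdg\cdots\wdg\t^{i_p}\ot \eta^{j_1}_{i_1,j_{\lb(1)}}\wdg\cdots\wdg\eta^{j_p}_{i_p,j_{\lb(p)}}\wdg 1.
\]
The antisymmetrization over the $p+1$ wedge factors in $\Fc$ becomes a sum over $\s \in S_{p+1}$ permuting $\{0,1,\ldots,p\}$, where the index $0$ labels the distinguished factor $1\in\Fc$. Since $\gbar(S(1))(\phi^{-1}) = \ve(S(1)) = 1$, the $\phi_k$ paired with the $1$-slot (namely $k = \s^{-1}(0)$) contributes trivially, leaving precisely the outer sum over permutations of $\{0,1,\ldots,p\}$ that appears in \eqref{BottCh}.

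Next I would convert the $\gbar$-factors into matrix entries of $\vartheta$. The key identities are: the cocycle/antipode relation $\gbar(S(f))(\phi^{-1}) = \gbar(f)(\phi)\circ\td\phi$ from \eqref{S(f)(psi)}, the definition $Q^a_b(\phi) = \gbar(\eta^a_{b,c})(\phi^{-1})\,\td\t^c$ from \eqref{curvG}, and the gauge transformation $Q(\phi)(x,\mathbf{y}) = \mathbf{y}^{-1}\vartheta(\phi)(x)\mathbf{y}$ coming from \eqref{simpcurvG}. Together these identify each factor
\[
\gbar(S(\eta^{j_r}_{i_r,j_{\lb(r)}}))(\phi_k^{-1})\,\td\t^{i_r}
\]
with the $(j_r, j_{\lb(r)})$-entry of $\vartheta(\phi_k)$, pulled back from $V$, once the wedge with the corresponding invariant form $\td\t^{i_r}$ is carried out. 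The decisive combinatorial observation is then that summing over $j_1, \ldots, j_p$ produces exactly the claimed traces: each $j_r$ occurs once as an upper index at position $r$ and once as a lower third index at position $\lb^{-1}(r)$, so a cycle $(a_1, a_2, \ldots, a_L)$ of $\lb$ yields the matrix product $\vartheta(\phi_{\s(a_1)})\vartheta(\phi_{\s(a_2)})\cdots\vartheta(\phi_{\s(a_L)})$ whose trace collects the cyclic contraction $j_{a_1}\to j_{a_2}\to\cdots\to j_{a_L}\to j_{a_1}$. The conjugation factors $\mathbf{y}^{\pm 1}$ coming from the gauge transformation cancel telescopically inside each trace, consistently with the $\GL_n$-invariance of $C_{p,\lb}$ which forces $\Cc^{\rm Bott}_{p,\lb}$ to be $H$-basic and hence to descend from $G$ to $V$.

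The main obstacle will be the careful bookkeeping of signs and of the gauge factors $\mathbf{y}^{\pm 1}$: one has to check that they fit exactly inside each trace and that the wedge ordering of the $\td\t^{i_r}$'s matches the order in which the $\vartheta(\phi_{\s(\cdot)})$'s appear in each trace, with the sign of the overall permutation $\s$ absorbing the corresponding reordering. A secondary check is that the sum over $r = \s^{-1}(0)$ and over permutations of the remaining positions does assemble into a single sum over $S_{p+1}$ with the correct sign, which is the standard expansion-of-antisymmetrization argument.
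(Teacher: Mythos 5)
Your proposal follows essentially the same route as the paper's own proof: expand $\Theta^H(\Cc_{p,\lb})$ on the cocycle \eqref{Eq-C-p-lb}, note that the slot carrying $1\in\Fc$ contributes trivially so the antisymmetrization becomes a sum over permutations of $\{0,1,\ldots,p\}$, recognize each contracted factor $\gbar(\eta^{j_r}_{i_r,j_{\lb(r)}})(\phi^{-1})\,\td\t^{i_r}$ as an entry of $Q(\phi)={\bf y}^{-1}\vartheta(\phi){\bf y}$ via \eqref{curvG} and \eqref{simpcurvG}, and use the cycle decomposition of $\lb$ to contract the $j$-indices into traces, inside which the conjugating factors ${\bf y}^{\pm1}$ cancel so the expression descends to $V$. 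The only cosmetic difference is that the paper disposes of the antipodes through primitivity of the $\eta^i_{jk}$ (with the resulting signs suppressed, just as you leave the final sign bookkeeping open) rather than through the identity \eqref{S(f)(psi)}.
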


\begin{proof} From \eqref{Eq-C-p-lb}
and the fact that the permutation $\lb$ is equal to the product of
cycles $(1,\ldots,\lb_1)\cdots(\lb_k, \ldots,p)$ it follows that
\begin{align*}
&\Theta^H(\Cc_{p, \lb})(\phi_0, \ldots, \phi_p):=\\
&\sum_\s (-1)^\s   \sum_{ 1\le i_s, j_t\le n} \gbar(\eta^{j_1}_{i_1, j_{\lb(1)}})(\phi_{\s(1)}^{-1})\cdots
\gbar(\eta^{j_p}_{i_p, j_{\lb(p)}})(\phi_{\s(p)}^{-1}) \td\t^{i_1}\wdots \td\t^{i_p}\\
&~~~~~~~~~~~~~~~= \sum_\s (-1)^\s \sum_{ 1\le j_t\le n} Q^{j_1}_{j_{\lb(1)}}(\phi_{\s(1)})\cdots
Q^{j_p}_{j_{\lb(p)}}(\phi_{\s(p)})\\
& ~~~=\sum_{\s}(-1)^\s \Tr(Q(\phi_{\s(1)})\cdots Q(\phi_{\s(\lb_1)}))
\wdots \Tr(Q(\phi_{\s(\lb_k)})\cdots Q(\phi_{\s(p)})).
\end{align*}
Applying the formula \eqref{simpcurvG} yields the desired expression.
\end{proof}
\medskip

   Note that, although $\s(0)$ does not appear in the
right hand side of formula \eqref{BottCh}, the expression does actually involve
all the variables $\phi_0, \ldots, \phi_p$.
\bigskip

{\bf 4.3.2.\,}  To clarify the relationship between
 the above expressions for equivariant Chern classes and the classical ones, we
 recall Bott's formula for the $\Diff$-equivariant Chern character of the
frame bundle to $\Rb^n$, following Getzler's account in
 \cite[Introduction]{getz}.

 Given diffeomorphisms $\phi_0, \ldots, \phi_p$, we associate to any point
 $\, \tb = (t_1, \ldots , t_p) \in \D^p \, , \,  0 \le t_1 \le \ldots \le t_p \le 1$
 the connection
 \begin{align}
 \begin{split}
\vartheta(\tb)= d +
 t_1\big(\vartheta(\phi_0)-\vartheta(\phi_1)\big)+ \cdots +
 t_p \big(\vartheta (\phi_{k-1})-\vartheta (\phi_p)\big) +\vartheta (\phi_p) .
\end{split}
\end{align}
 With
$$ \int_{\D^p} : \Om^{\bullet} (\D^p\times \Rb^n) \ra \Om^{\bullet -p}  (\Rb^n)
$$
denoting the normalized integral over the fibre of the projection
$\D^p\times  \Rb^n \ra  \Rb^n$, one defines
 the Chern-Simons form associated to the diffeomorphisms
 $(\phi_0,\ldots, \phi_p)$ by
 \begin{align} \label{CS}
\cs^{(p)} (\phi_0,\ldots, \phi_p) \, = \,  \int_{\D^p} \Tr e^{- \Om(\tb)}  ,
\end{align}
where $\, \Om (\tb) = \vartheta(\tb)^2 $ is the curvature form.
From Stokes' formula, which in this context assumes the form
\begin{align*}
  \int_{\D^p} \circ \, d \, -\, (-1)^p d \circ \int_{\D^p} \, +\, \int_{\p \D^p}=0 ,
\end{align*}
it follows that the above forms satisfy the cocycle identity
 \begin{align} \label{totalCS}
d \, \cs^{(p)} (\phi_0,\ldots,\phi_p)+
\sum_{i=0}^p(-1)^i \cs^{(p-1)} (\phi_0,\cdots, \hat\phi_i,\cdots, \phi_p) \, = \, 0 .
\end{align}
Thus, $\, \cs = \big(\cs^{(p)}\big)_{0 \le p \le n}$ defines a total cocycle in the Bott bicomplex.

Taking into account the structure equation \eqref{streq}, the curvature form
has  the expression
 \begin{align}\nonumber
  \Om (\tb) &=\sum_{i=1}^p dt_i \wdg \big(\vartheta (\phi_{i-1})-\vartheta (\phi_i)\big)
 -  \sum_{i=1}^p t_i  \big(\vartheta (\phi_{i-1}) \wdg \vartheta (\phi_{i-1})-\vartheta (\phi_i) \wdg \vartheta (\phi_i)\big)   \\\notag
& + \sum_{i =1}^p t_i \left(\big(\vartheta (\phi_{i-1})-\vartheta (\phi_i)\big) \wdg  \vartheta (\phi_p) +
\vartheta (\phi_p) \wdg \big(\vartheta (\phi_{i-1})-\vartheta (\phi_i)\big)\right)\\
&+ \sum_{i, j=1}^p t_i t_j \big(\vartheta (\phi_{i-1})-\vartheta (\phi_i)\big) \wdg \big(\vartheta (\phi_{j-1})-\vartheta (\phi_j)\big) .
   \end{align}
Writing
  \begin{align*}
\cs^{(p)} (\phi_0,\ldots, \phi_p) \, = \, \frac{ (-1)^p}{p!} \int_{\D^p} \Tr \left(\Om(\tb)^p \right) +
\sum_{p+1 \le k \le n} \frac{ (-1)^k}{k!} \int_{\D^p} \Tr \left(\Om(\tb)^k \right) ,
\end{align*}
one sees that the first term
involves only the product of the $p$ copies of
\begin{align*}
\begin{split}
&\sum_{i=1}^p dt_i \wdg \big(\vartheta (\phi_{i-1})-\vartheta (\phi_i)\big) \, = \,
\sum_{i=0}^p ds_i \wdg \vartheta (\phi_i) , \\
&\text{where} \quad  s_0 =t_1, \ldots , s_{i-1} = t_i - t_{i-1} , \ldots , s_p = 1 - t_p.
\end{split}
\end{align*}
Up to a normalizing factor $\nu_p$,  this term coincides
with $\, \Cc_{p, \lb(p)}^{\rm Bott}(\phi_0, \ldots, \phi_p)$,
where $\lb(p)$ stands for the full cycle $(1, \ldots , p)$. Thus,
 \begin{align*}
\nu_p \Cc_{p, \lb(p)}^{\rm Bott}(\phi_0, \ldots, \phi_p) +
\sum_{p+1 \le k \le n} \frac{ (-1)^k}{k!} \int_{\D^p} \Tr \left(\Om(\tb)^k \right)
\, = \, \cs^{(p)} (\phi_0,\ldots, \phi_p) .
\end{align*}
This offers an exact recipe for how to complete the Hopf Hochschild cocycle
$\, \sum_{0 \le p \le n} \nu_p C_{p, \lb(p)}$ to a total
 Hopf cyclic cocycle $\sum_{0 \le p \le n}\nu_p TC_{p, \lb(p)}$.
\bigskip

{\bf 4.3.3.\,}
Transported via the characteristic maps, the above classes can be concretely realized
in the cyclic bicomplex of \'etale groupoids.
 Thus, the expression
\begin{align}
&\Cc_{p,\lb}^{\Psi, \G} (\om_0U^\ast_{\phi_0},\;\ldots,\; \om_pU^\ast_{\phi_p} )\,=\\ \notag
&~~\displaystyle \int_{V}\Cc_{p, \lb}^{\rm Bott}(1,\;\phi_0, \ldots,\; \phi_{p-1}\cdots\phi_{0})\wdg
\om_0\wdg\phi_0^\ast\om_1\wdots (\phi_{p-1}\cdots \phi_0)^\ast\om_{p}, \\ \notag
&~~~~~~~~~~~~~~~~\text{if \quad   $\phi_p\cdots\phi_0= 1$ \quad
and \quad $0$ otherwise} , \notag
\end{align}
represents the bottom component
 $\displaystyle \Cc_{p,\lb}^{\Psi, \G} =
 \Psi^H (\Cc_{p, \lb}^{\rm Bott})$ of the Chern class $ \Psi^H (\Ch^{\rm Bott}_{p, \lb})$
  in the
cyclic cohomology bicomplex of the differential graded crossed product algebra
$C^q(\Om_{c,\G}(V))$.

\medskip

Similarly,
\begin{align}
 &\Cc_{p,\lb}^{\Upsilon, \G} (f^0, \ldots, f^{n-p}\mid \phi_0,\ldots,\ph_p )\,=\\ \notag
&\displaystyle \int_V f^0df^1\wdots df^{n-p} \wdg
\Cc_{p, \lb}^{\rm Bott}(\phi_0^{-1}, \phi_1^{-1}\phi_0^{-1},
 \ldots,\phi_{p-1}^{-1}\cdots\phi_0^{-1},1), \\ \notag
&~~~~~~~~~~~~~~~~~\text{if \quad $\phi_p\cdots\phi_0= 1$ \quad
and \quad $0$ otherwise}, \notag
\end{align}
represents the bottom component
 $\displaystyle
 \Cc_{p,\lb}^{\Upsilon, \G}= \Upsilon^H(\Cc_{p, \lb}^{\rm Bott})$ of the corresponding
 Chern class
 in the cyclic cohomology bicomplex  $C^{\bullet, \bullet}(C^{\infty}_c(V),\;\G)$.

 \medskip

 Finally, representatives for the Chern classes in the cyclic cohomology bicomplex
  $C^{\bullet}(\Ac_{c,\G}(V))$  can be obtained
  by transport via  Connes' map $\Phi^H$, \cf
  \eqref{Phi-rel}.

\bigskip

  \end{document}